\DeclareMathOperator*{\argmax}{arg\,max}
\DeclareMathOperator*{\Haar}{Haar}
\DeclareMathOperator*{\image}{image}
\newcommand{\Sym}{\mathrm{Sym}}
\newcommand{\C}{\mathbb{C}}
\newcommand{\E}{\mathbb{E}}
\newcommand{\N}{\mathcal{N}}
\newcommand{\R}{\mathbb{R}}
\newcommand{\Z}{\mathbb{Z}}
\renewcommand{\P}{\mathbb{P}}
\newcommand{\tr}{\mathop{\mathrm{Tr}}}
\newcommand{\bzero}{\mathbf{0}}
\newcommand{\lin}{\mathrm{lin}}
\newcommand{\unif}{\textnormal{Unif}}
\newcommand{\mmse}{\textnormal{MMSE}}
\newcommand{\iden}{\textnormal{Id}}
\newcommand{\id}{\textnormal{id}}
\newcommand{\gs}{\textnormal{gs}}
\newcommand{\qs}{\textnormal{qa}}
\newcommand{\ud}{\mathrm{d}}
\newcommand{\SO}{\mathbb{SO}}
\renewcommand{\SS}{\mathbb{S}}
\newcommand{\U}{\mathbb{U}}
\newcommand{\diag}{\textnormal{diag}}
\newcommand{\overlap}{\mathbf{q}}
\newcommand{\overlapm}{\mathbf{m}}
\newcommand{\ba}{\mathbf{a}}
\newcommand{\bb}{\mathbf{b}}
\newcommand{\bd}{\mathbf{d}}
\newcommand{\be}{\mathbf{e}}
\newcommand{\bg}{\mathbf{g}}
\newcommand{\bh}{\mathbf{h}}
\newcommand{\bbm}{\mathbf{m}} %
\newcommand{\bq}{\mathbf{q}}
\newcommand{\bu}{\mathbf{u}}
\newcommand{\bv}{\mathbf{v}}
\newcommand{\bx}{\mathbf{x}}
\newcommand{\by}{\mathbf{y}}
\newcommand{\bz}{\mathbf{z}}
\newcommand{\ab}{\mathbf{a}}
\newcommand{\db}{\mathbf{d}}
\newcommand{\eb}{\mathbf{e}}
\newcommand{\gb}{\mathbf{g}}
\newcommand{\hb}{\mathbf{h}}
\newcommand{\qb}{\mathbf{q}}
\newcommand{\ub}{\mathbf{u}}
\newcommand{\vb}{\mathbf{v}}
\newcommand{\yb}{\mathbf{y}}
\newcommand{\zb}{\mathbf{z}}
\newcommand{\cF}{\mathcal{F}}
\newcommand{\cG}{\mathcal{G}}
\newcommand{\cH}{\mathcal{H}}
\newcommand{\cI}{\mathcal{I}}
\newcommand{\cK}{\mathcal{K}}
\newcommand{\cL}{\mathcal{L}}
\newcommand{\cM}{\mathcal{M}}
\newcommand{\cN}{\mathcal{N}}
\newcommand{\cQ}{\mathcal{Q}}
\newcommand{\cS}{\mathcal{S}}
\newcommand{\cX}{\mathcal{X}}
\newcommand{\PP}{\mathbb{P}}
\newcommand{\RR}{\mathbb{R}}
\newcommand{\btheta}{\bm{\theta}}
\def\abs#1{\left| #1 \right|}
\newcommand{\norm}[1]{\left\lVert#1\right\rVert}
\newcommand{\inparen}[1]{\left(#1\right)}             %\inparen{x+y}  is (x+y)
\newcommand{\inbraces}[1]{\left\{#1\right\}}           %\inbrace{x+y}  is {x+y}
\newcommand{\insquare}[1]{\left[#1\right]}             %\insquare{x+y}  is [x+y]
\newcommand{\inangle}[1]{\left\langle#1\right\rangle} %\inangle{A}    is <A>
\newtheorem{theorem}{Theorem}[section]
\newtheorem{proposition}[theorem]{Proposition}
\newtheorem{lemma}[theorem]{Lemma}
\newtheorem{corollary}[theorem]{Corollary}
\theoremstyle{definition}
\newtheorem{definition}[theorem]{Definition}
\newtheorem{example}[theorem]{Example}
\newtheorem{assumption}[theorem]{Assumption}
\newtheorem{remark}[theorem]{Remark}
\newenvironment{proofof}[1]{\begin{trivlist} \item {\textbf{{Proof of
#1}.~~}}}
  {\qed\end{trivlist}}
\title{Asymptotic mutual information in quadratic estimation problems over compact groups}
\author{Kaylee Y.\ Yang\thanks{These authors contributed equally.\newline
Department of Statistics and Data Science, Yale University\newline
\texttt{yingxi.yang@yale.edu}, \texttt{timothy.wee@yale.edu}, \texttt{zhou.fan@yale.edu}} 
\and Timothy L.\ H.\ Wee\footnotemark[1]
\and Zhou Fan}
\date{}
\begin{document}
\maketitle

\begin{abstract}
Motivated by applications to group synchronization and quadratic assignment on
random data, we study a general problem of Bayesian inference of an
unknown ``signal'' belonging to a high-dimensional compact group, given noisy
pairwise observations of a featurization of this signal. We establish a
quantitative comparison between the signal-observation mutual information in any
such problem with that in a simpler model with linear observations,
using interpolation methods.
%, by adapting interpolation
%techniques developed for low-rank matrix estimation to a setting where the
%signal prior may not have i.i.d.\ components but instead possesses group
%symmetry.
For group synchronization, our result proves a replica formula for the
asymptotic mutual information and Bayes-optimal mean-squared-error.
Via analyses of this replica formula, we show
that the conjectural phase transition threshold for
computationally-efficient weak recovery of the signal is determined by
a classification of the real-irreducible components of the observed group
representation(s), and we fully characterize the information-theoretic limits
of estimation in the example of angular/phase synchronization over
$\SO(2)$/$\U(1)$.
For quadratic assignment, we study observations given by a kernel matrix of
pairwise similarities and a randomly permutated and noisy counterpart, and we
show in a bounded signal-to-noise regime that the asymptotic mutual information
coincides with that in a Bayesian spiked model with i.i.d.\ signal prior.
\end{abstract}

%\tableofcontents

\section{Introduction}\label{sec: introduction}

The estimation of a low-rank matrix in a noisy channel is a fundamental problem
in statistical inference, which has received much attention in recent years
\cite{deshpande2015asymptotic,krzakala2016mutual,dia2016mutual,el2018estimation,lelarge2019fundamental,barbier2019adaptive,barbier2020information}.
In this work, we are motivated by two applications that may be viewed as
extensions or variants of this problem:

\begin{itemize}
\item In \emph{group synchronization}, we wish to estimate a collection of
elements $\gb_{*1},\dots, \gb_{*N} \in \cG$ from a known compact group $\cG$,
given noisy observations of their pairwise alignments
\[\yb_{ij} = \gb_{*i}\gb_{*j}^{-1} + \text{Gaussian noise}.\]
Examples include synchronization problems over the binary group $\Z/2\Z$ with
application to community detection in networks \cite{deshpande2015asymptotic},
over $\SO(2)$ (or equivalently $\U(1)$ in the complex domain) with application
to angular and phase synchronization \cite{singer2011angular}, over $\SO(3)$
with application to image registration and cryo-electron microscopy
\cite{bandeira2020non}, and over the symmetric group $\SS_k$ with application
to multi-way matching \cite{pachauri2013solving}.

\item In \emph{quadratic assignment}, we wish to estimate a permutation $\pi
\in \SS_N$ (the symmetric group on $N$ elements) that minimizes a cost function
\[\sum_{1 \leq i<j \leq N} (y_{ij}-a_{\pi(i)\pi(j)})^2\]
for two sets of pairwise similarities $\{a_{ij}\}_{1 \leq i<j \leq N}$
and $\{y_{ij}\}_{1 \leq i<j \leq N}$
between $N$ objects. We study a statistical setting where
$a_{ij}=\kappa(x_i,x_j)$ is the evaluation of a symmetric kernel function
$\kappa(\cdot,\cdot)$ on samples $x_1,\ldots,x_N$, and the above quadratic cost
arises as the log-likelihood in a model
\[y_{ij}=a_{\pi_*(i)\pi_*(j)}+\text{Gaussian noise}\]
for an unknown true
permutation $\pi_* \in \SS_N$. This is a Gaussian-noise analogue of some
recently studied models of graph matching on random geometric
graphs \cite{wang2022random,gong2024umeyama,liu2024random},
here with independent noise for each measurement pair
$(i,j)$ rather than for each underlying sample $x_1,\ldots,x_N$.
\end{itemize}

These two seemingly different problems share a common underlying structure of
inferring an unknown element $G_* \in \cG_N$ of a high-dimensional group
from noisy pairwise observations, where $\cG_N \equiv \cG^N$ is an $N$-fold
product group in synchronization, and $\cG_N \equiv \SS_N$ is the symmetric group in quadratic assignment. Other applications having this structure include
problems of ranking from pairwise comparisons \cite{furnkranz2010preference,negahban2012iterative}, and Procrustes
hyperalignment problems that arise in analyses of functional MRI data
\cite{haxby2011common,lorbert2012kernel}.

In this work, we introduce and study a general formulation for such problems in
a Bayesian setting, where pairwise measurements
\begin{equation}\label{eq:generalmodelintro}
\by_{ij}=\phi(G_*)_i \bullet \phi(G_*)_j+\text{Gaussian noise} \text{ for } 1
\leq i<j \leq N
\end{equation}
are observed corresponding to a featurization $\phi(\cdot)$ of
$G_*$ belonging to a compact group $\cG_N$, assumed to have
Haar-uniform prior distribution. The Hamiltonian of the Bayes posterior law is a
$\cG_N$-indexed Gaussian
process whose mean and covariance are determined by a corresponding overlap
function
\[Q(G,G')=N^{-1} \sum_{i=1}^N \phi(G)_i \otimes \phi(G')_i.\]
We refer to Section \ref{sec: general_model} for details of this setup.
In the context of this general model as well as the aforementioned
synchronization and quadratic assignment applications of interest,
our work makes the following contributions:

\begin{enumerate}
\item We analyze the mutual information between the latent group element
$G_* \in \cG_N$ and the observations $\{\by_{ij}\}_{i<j}$ in general models
of the form~\eqref{eq:generalmodelintro}, showing that this admits an
approximation in terms of the mutual information in a linear observation model
\[\by_i=\overlap^{1/2} \phi(G_*)_i+\text{Gaussian noise} \text{ for }
i=1,\ldots,N\]
defined by a suitable element $\overlap$ of the overlap space.
The approximation error
is small when the overlap space has small covering number, encompassing
scenarios where the signal component of the pairwise measurements
has low effective rank.

Our proof of this result uses interpolation arguments that have been
successfully developed and applied to establish replica formulas in problems of
low-rank matrix estimation
\cite{krzakala2016mutual,el2018estimation,barbier2019adaptive}. In particular, we apply an elegant
method of \cite{el2018estimation} for proving an upper bound on the free energy
(i.e.\ lower bound on the mutual information) by interpolating on the
Franz-Parisi potential at each fixed overlap, adapting this method to settings
where the prior law of $G_* \in \cG_N$ has group symmetry but may not
necessarily decompose as a product of i.i.d.\ components.

\item Specialized to group synchronization, we provide a rigorous
proof of a replica formula for the asymptotic signal-observation mutual
information and Bayes-optimal minimum mean-squared error (MMSE) in a bounded
SNR regime. A version of this replica formula in a model with complex
observations was stated in \cite{PWBM2018}, which also proposed
Approximate Message Passing algorithms for inference.

We obtain a complete characterization of the optimization landscape of the
replica potential for (single-channel) $\SO(2)$/$\U(1)$-synchronization,
implying a characterization of the information-theoretic limits of inference.
More generally, for any group, we analyze the stability of the overlap
$\overlap=\bzero$ as a critical point of the
replica potential, which conjecturally corresponds to the feasibility of
non-trivial signal estimation (i.e.\ weak recovery) by polynomial-time
algorithms \cite{lesieur2017constrained,lelarge2019fundamental}. We show that the phase transition threshold for local
optimality of $\overlap=\bzero$ is
determined by the SNR parameters of an equivalent multi-channel model with
real-irreducible group representations, together with a classification of these 
representations based on their further reduction into complex-irreducible
components.

\item Specialized to quadratic assignment where
$a_{ij}=\kappa(x_i,x_j)$ and $y_{ij}=a_{\pi_*(i)\pi_*(j)}+\text{Gaussian
noise}$, our result implies that the
mutual information is related to that in a linear observation model
\[\by_i=\overlap^{1/2}\phi(x_{\pi_*(i)})+\text{Gaussian noise}\]
where $\phi(\cdot)$ is a feature map defined by eigenfunctions of the kernel
$\kappa(\cdot,\cdot)$.
This linear model, although not independent across components
$i=1,\ldots,N$, is well-studied as an oracle model in the literature on
compound decision problems and empirical Bayes estimation
\cite{hannan1955asymptotic,greenshtein2009asymptotic,jiang2009general,polyanskiy2021sharp}. We deduce from results of this literature that in a bounded SNR
regime, if the empirical distribution of $\{x_i\}_{i=1}^N$ converges to a limit
law $\rho$, then the asymptotic mutual information between 
$\pi_*$ and $\{a_{ij},y_{ij}\}_{i<j}$ coincides with
the mutual information in a low-rank matrix estimation model
having i.i.d.\ prior $\rho$ for its signal components.
\end{enumerate}

We present the detailed setting and results of the general model
in Section \ref{sec: general_model},
the specialization to group synchronization in Section
\ref{sec:synchronization}, and the specialization to random
quadratic assignment in Section \ref{sec:quadraticassignment}.

\subsection{Further related literature}

\paragraph{Interpolation methods and overlap concentration.} Gaussian
interpolation techniques for computing free energies in spin glass models
were brought to prominence by Guerra
\cite{guerra2003broken}, and have since been extended
and applied to characterize the fundamental limits of inference
in many Bayesian statistical problems with planted signals, including
\cite{korada2009exact,krzakala2016mutual, dia2016mutual,
el2018estimation,lelarge2019fundamental,barbier2019adaptive,
barbier2019optimal}. In Bayesian inference problems, obtaining a tight upper
bound for the log-partition-function (i.e.\ free energy) is oftentimes more
intricate than the lower bound; this was achieved
in low-rank matrix estimation problems
using an Aizenman-Sims-Starr scheme in \cite{lelarge2019fundamental} and an
adaptive interpolation method in \cite{barbier2019adaptive}. Our proofs build
upon a different method in \cite{el2018estimation} of analyzing the large
deviations of the overlap between a posterior sample and the planted signal,
by bounding the Franz-Parisi potential \cite{franz1997phase},
i.e.\ the free energy restricted to configurations having overlap values in a
narrow range. For the high-temperature region of the classical
Sherrington-Kirkpatrick model, this is also related to the analysis carried out
in \cite[Theorem 13.4.2]{talagrand2011mean}. Large deviations of the overlap
have also been studied recently for low-rank matrix estimation models outside
the replica-symmetric setting, under a mismatched prior and noise distribution,
in \cite{guionnet2023estimating}.

\paragraph{Group synchronization.} Angular synchronization problems over
$\SO(2)$ were introduced in \cite{singer2011angular}, and subsequently
formulated and studied in settings of general groups in
\cite{bandeira2015convex,bandeira2020non}. The specific examples of
$\Z/2\Z$-synchronization
\cite{deshpande2015asymptotic,bandeira2016low,montanari2016semidefinite,javanmard2016phase,fan2021tap,li2022non,celentano2023local,li2023approximate},
angular/phase synchronization
\cite{boumal2016nonconvex,bandeira2017tightness,liu2017estimation,zhong2018near,gao2019multi,gao2022sdp},
and synchronization problems over the orthogonal and symmetric groups
\cite{cucuringu2012sensor,pachauri2013solving,gao2021exact,ling2022near,ling2022improved,zhang2022exact,gao2023optimal,ling2023solving,nguyen2023novel}
have each received substantial attention in their own right. Much of this
literature focuses on the performance of spectral, semidefinite-programming
(SDP), and/or nonconvex optimization methods for estimation, and their
associated guarantees for exact recovery or optimal estimation rates in regimes 
of growing SNR.

Closer to our work are the (mostly non-rigorous) results
of \cite{javanmard2016phase,PWBM2018} which study synchronization problems in
bounded SNR regimes, the former analyzing Bayesian, maximum-likelihood, and SDP
approaches to inference via the cavity method in the $\Z/2\Z$- and
$\U(1)$-synchronization models, and the latter introducing an Approximate
Message Passing algorithm for Bayes-optimal inference in general
synchronization models with multiple observation channels corresponding to
distinct complex-irreducible group representations. Our results formalize some
of the findings of this latter work \cite{PWBM2018} in a similar model having
real observations, and are also complementary to analyses of the free energy for
general synchronization problems that were carried out
in \cite{perry2016optimality} using a second-moment-method approach.

\paragraph{Quadratic assignment.} The quadratic assignment problem was 
introduced in \cite{koopmans1957assignment}, and its behavior on several
models of random data has been investigated in
\cite{burkard1984quadratic,frenk1985asymptotic,rhee1991stochastic}.
Statistical applications of quadratic assignment and convex relaxations thereof
for estimating latent vertex matchings between random graphs have been studied
more recently in
\cite{zaslavskiy2008path,aflalo2015convex,lyzinski2015graph,fan2023spectralI,fan2023spectralII}, in the context of a broader literature on algorithms and
fundamental limits of inference for random graph matching problems
\cite{cullina2017exact,
cullina2020partial,ganassali2020tree,ding2021efficient,ganassali2022sharp,ganassali2022statistical,wu2022settling,hall2023partial,ding2023matching,mao2023exact,mao2023random}. We study in this work a quadratic assignment problem with
random data matrices of low effective rank, bearing similarity to 
matching problems between random geometric graphs recently considered in
\cite{wang2022random,gong2024umeyama,liu2024random}, and to analyses of related 
linear matching problems in
\cite{collier2016minimax,dai2019database,kunisky2022strong}. Our analyses here
pertain to a bounded SNR regime where weak recovery of the latent
permutation/matching may not be possible, 
and where we instead show an exact
asymptotic equivalence between the signal-observation mutual information
with that in a low-rank matrix estimation model with i.i.d.\ signal prior.

\section{General model and results}\label{sec: general_model}

Consider a compact group $\cG_N$, a ($N$-dependent) featurization
$\phi:\cG_N \to \cH^N$ with ``feature'' space $\cH$, and a bilinear map
$\bullet:\cH \times \cH \to \cK$ with ``observation'' space $\cK$,
where $\cH,\cK$ are finite-dimensional real vector spaces endowed with the
inner-products $\langle \cdot,\cdot\rangle_\cH$ and
$\langle \cdot,\cdot\rangle_\cK$.
We study a general observation model in which, for an unknown parameter
$G_* \in \cG_N$ of interest, we observe
\begin{equation}\label{eq:quadraticmodel}
\by_{ij}=\phi(G_*)_i \bullet \phi(G_*)_j+\sqrt{N}\,\bz_{ij}
\text{ for each } 1 \leq i<j \leq N.
\end{equation}
Here $\{\bz_{ij}\}_{i<j}$ are
i.i.d.\ standard Gaussian noise vectors in $\cK$ (i.e.\ having i.i.d.\
$\cN(0,1)$ components in any orthonormal basis of
$\cK$).\footnote{We fix the noise standard deviation in
(\ref{eq:quadraticmodel}) as $\sqrt{N}$ without loss of generality, absorbing
additional problem scalings into the definition of the $N$-dependent
featurization $\phi$.}

In this work, we will focus on a Bayesian setting where $G_*$ has
Haar-uniform prior $G_* \sim \Haar(\cG_N)$. Denote the combined observations as
$Y=\{\by_{ij}\}_{i<j}$. Bayes-optimal inference for $G_*$ is then based on the
posterior density (with respect to Haar measure)
\begin{align}
\label{eq:posterior-general-model}
p(G \mid Y) \propto \exp \left(-\frac{1}{2N}\sum_{1 \leq i<j \leq N}
\|\by_{ij}-\phi(G)_i \bullet \phi(G)_j\|_{\cK}^2\right)
\propto \exp H(G;Y),
\end{align}
where we expand the square and define the Hamiltonian
\begin{equation}\label{eq:hamiltonian-general-model}
H(G;Y)=-\frac{1}{2N}
\sum_{1 \leq i<j \leq N} \|\phi(G)_i \bullet \phi(G)_j\|_{\cK}^2
+\frac{1}{N}\sum_{1 \leq i<j \leq N} \langle \phi(G)_i \bullet \phi(G)_j,\by_{ij}
\rangle_{\cK}.
\end{equation}
We denote its associated free energy
\begin{equation}\label{eq:generalfreeenergy}
\cF_N=\frac{1}{N}\,\E_{G_*,Z} \log \E_G \exp H(G;Y).
\end{equation}
Here, $\E_G$ is the expectation over a uniform element $G \sim \Haar(\cG_N)$,
and $\E_{G_*,Z}$ is over the independent signal $G_* \sim \Haar(\cG_N)$ and
Gaussian noise vectors $Z=\{\bz_{ij}\}_{i<j}$ which define $Y$. 

We assume a model structure in which there exists a complementary bilinear map
$\otimes:\cH \times \cH \to \cL$ to an ``overlap'' space $\cL$,
such that $\cG_N$, $\phi$, $\bullet$, and $\otimes$ satisfy the following
properties.

\begin{assumption}\label{assumption:general-model}
\begin{enumerate}[(a)]
\item $\cG_N$ is a compact group, $(\cH,\langle\cdot,\cdot\rangle_\cH)$
is a finite-dimensional (real)
inner-product space, and $\phi:\cG_N \to \cH^N$ is a continuous map.
\item $\bullet:\cH \times \cH \to \cK$ and $\otimes:\cH \times \cH \to \cL$ are
bilinear maps from $\cH \times \cH$ to two finite-dimensional (real)
inner-product spaces $(\cK,\langle\cdot,\cdot\rangle_\cK)$ and
$(\cL,\langle\cdot,\cdot\rangle_\cL)$,
satisfying for all $\ab,\bb,\ab',\bb' \in \cH$
the compatibility relation
\begin{equation}\label{eq:compatibility}
\langle \ab \bullet \bb,\ab' \bullet \bb' \rangle_{\cK}
=\langle \ab \otimes \ab',\bb \otimes \bb' \rangle_{\cL}.
\end{equation}
\item Let $B(\cH)$ be the space of linear operators on $\cH$, and define
an inclusion map $\iota:\cL \to B(\cH)$ by
\begin{equation}\label{eq:inclusion}
\langle \ab,\iota(\overlap)\bb \rangle_{\cH}
=\langle \overlap,\ab \otimes \bb \rangle_{\cL}
\text{ for all } \overlap \in \cL \text{ and } \ab,\bb \in \cH.
\end{equation}
Then $\iota$ is injective. Furthermore,
corresponding to any element $\overlap \in \cL$,
there exist elements $|\overlap|,|\overlap^\top| \in
\cL$ such that $\iota(|\overlap|),\iota(|\overlap^\top|)$ are both symmetric
positive-definite, and
\begin{equation}\label{eq:operatorabsolutevalue}
\iota(|\overlap|)=\big(\iota(\overlap)^\top \iota(\overlap)\big)^{1/2},
\qquad
\iota(|\overlap^\top|)=\big(\iota(\overlap)\iota(\overlap)^\top\big)^{1/2},
\qquad \|\overlap\|_\cL=\big\||\overlap|\big\|_\cL
=\big\||\overlap^\top|\big\|_\cL.
\end{equation}
\item Define an overlap map $Q:\cG_N \times \cG_N \to \cL$ by
\begin{equation}\label{eq:overlap}
Q(G,H)=\frac{1}{N}\sum_{i=1}^N \phi(G)_i \otimes \phi(H)_i.
\end{equation}
Then $Q(\cdot,\cdot)$ satisfies the group symmetry
$Q(G,H)=Q(H^{-1}G,\iden)$ for any $G,H \in \cG_N$,
where $\iden$ the identity element of $\cG_N$.
\end{enumerate}
\end{assumption}

We will illustrate how the group synchronization and quadratic assignment
applications fit into this structure in Sections \ref{sec:synchronization}
and \ref{sec:quadraticassignment} to follow. For now, let us observe that
under parts (b) and (d) of this assumption, applying the model definition
\eqref{eq:quadraticmodel}, the Hamiltonian $H(G;Y)$ in
\eqref{eq:hamiltonian-general-model} is approximately a linear combination of
the squared overlaps
\begin{align*}
N\|Q(G,G)\|_\cL^2&=\frac{1}{N}\sum_{i,j=1}^N
\langle \phi(G)_i \bullet \phi(G)_j,
\phi(G)_i \bullet \phi(G)_j \rangle_\cK,\\
N\|Q(G,G_*)\|_\cL^2&=
\frac{1}{N}\sum_{i,j=1}^N \langle \phi(G)_i \bullet \phi(G)_j,
\phi(G_*)_i \bullet \phi(G_*)_j \rangle_\cK,
\end{align*}
and a $\cG_N$-indexed centered Gaussian process $Z(G)$ with covariance kernel
\[\E[Z(G)Z(G')]=N\|Q(G,G')\|_\cL^2
=\frac{1}{N}\sum_{i,j=1}^N \langle \phi(G)_i \bullet \phi(G)_j,
\phi(G')_i \bullet \phi(G')_j \rangle_\cK.\]
This structure mirrors that of the Bayes posterior law in low-rank matrix
estimation problems.
The error of this approximation for the Hamiltonian is\footnote{Here
and throughout, $O(f(N))$ denotes an error bounded in magnitude by $Cf(N)$ for
an absolute constant $C>0$.}
$O(K(\cG_N))$ where
\begin{equation}\label{eq:KGN}
K(\cG_N)=\sup_{G,G' \in \cG_N} \frac{1}{N}\sum_{i=1}^N \|\phi(G)_i \otimes
\phi(G')_i\|_\cL^2
=\sup_{G,G' \in \cG_N} \frac{1}{N}\sum_{i=1}^N \langle \phi(G)_i \bullet
\phi(G)_i,\phi(G')_i \bullet \phi(G')_i \rangle_\cK,
\end{equation}
due to the removal of diagonal terms $i=j$ from the above squared overlap
expressions. The group symmetry of $Q(\cdot,\cdot)$ in part (d) will ensure that the 
law over $Y$ of the $\cG_N$-valued process $\{H(G;Y)\}_{G \in \cG_N}$ is, up to
this $O(K(\cG_N))$ discrepancy, independent of $G_*$. Finally, the inclusion
map $\iota(\cdot)$ in part (c) identifies overlaps
$\overlap \in \cL$ with linear operators $\iota(\overlap)$ on $\cH$, and we
will write the shorthands
\begin{equation}\label{eq:overlapoperator}
\overlap \ab:=\iota(\overlap)\ab, \qquad
\overlap^{1/2} \ab:=\iota(\overlap)^{1/2}\ab,
\end{equation}
the latter being well-defined when $\iota(\overlap)$ is symmetric
positive-semidefinite.

Under this assumption, the main result of this section is a general statement
relating the free energy $\cF_N$ to the following model with linear observations
of $\phi(G_*)$: Let
\begin{equation}\label{eq:Qdef}
\cQ=\Big\{\overlap \in \cL:\iota(\overlap)
\text{ is symmetric positive-semidefinite in } B(\cH)\Big\} \subset \cL.
\end{equation}
Fixing any $\overlap \in \cQ$, consider the linear observation model
with observations
\begin{equation}\label{eq:linearmodel}
\by_i=\overlap^{1/2} \phi(G_*)_i+\bz_i \text{ for each } i=1,\ldots,N,
\end{equation}
where $\overlap^{1/2}$ is identified as a linear operator on the feature space
$\cH$ via \eqref{eq:overlapoperator}, and
$\{\bz_i\}_{i=1}^N$ are i.i.d.\ standard Gaussian noise vectors in $\cH$.
Define a potential function $\Psi_N:\cQ \to \R$ by
\begin{align}
\label{eq:RS-potential-general-model}
\Psi_N(\overlap)=-\frac{1}{4}\|\overlap\|_\cL^2
-\frac{1}{2}\langle \overlap,Q(\iden,\iden) \rangle_\cL
+\frac{1}{N}\,\E_{G_*,Z} \log \E_G \exp\inparen{
N\langle \overlap, Q(G,G_*) \rangle_{\cL}
+\sum_{i=1}^N \langle \overlap^{1/2} \phi(G)_i,\bz_i \rangle_\cH}
\end{align}
where here $\E_Z$ is the expectation over $Z=\{\bz_i\}_{i=1}^N$.
It is readily checked (c.f.\ Appendix \ref{subsec:MI}) that
the signal-observation mutual information in the quadratic
model (\ref{eq:quadraticmodel}) with observations $Y=(\by_{ij})_{i<j}$ is given by
\begin{equation}\label{eq:quadraticMI}
\frac{1}{N}\,I(G_*,Y):=\frac{1}{N}\,\E_{G_*,Z}
\,\log \frac{p(G_*,Y)}{p(G_*)p(Y)}=\frac{1}{4}\|Q(\iden,\iden)\|_\cL^2-\cF_N
+O\left(\frac{K(\cG_N)}{N}\right),
\end{equation}
and the mutual information in the linear model
(\ref{eq:linearmodel}) with observations $Y_\lin=\{\by_i\}_{i=1}^N$ is given by
\begin{equation}\label{eq:linearMI}
\frac{1}{N}\,i(G_*,Y_\lin)=-\frac{1}{4}\|\overlap\|_\cL^2
+\frac{1}{2}\langle \overlap,
Q(\iden,\iden) \rangle_{\cL}-\Psi_N(\overlap).
\end{equation}

Our main result of this section is the following approximation of the free
energy $\cF_N$ in terms of $\Psi_N$.
This then provides a direct relation between the mutual
informations $\frac{1}{N}I(G_*,Y)$ and $\frac{1}{N}i(G_*,Y_\lin)$ via
(\ref{eq:quadraticMI}) and (\ref{eq:linearMI}), which we will spell out in the
later applications of interest.

\begin{theorem}\label{thm:free-energy-general-model}
Denote
\[\image(Q)=\{Q(G,G'):G,G' \in \cG_N\} \subset \cL,\]
let $D(\cG_N)=\max\{\|\overlap\|_\cL:\overlap \in \image(Q)\}$,
and let $L(\epsilon;\cG_N)$ be the metric entropy of $\image(Q)$,
i.e.\ the log-cardinality of the smallest $\epsilon$-cover of $\image(Q)$ in the
norm $\|\cdot\|_\cL$.
Under Assumption~\ref{assumption:general-model}, there exists an absolute
constant $C>0$ such that for any $\epsilon>0$,
    \begin{align*}
\Big|\cF_N-\sup_{\overlap \in \cQ} \Psi_N(\overlap)\Big|
 \le C\bigg(D(\cG_N)\sqrt{\frac{L(\sqrt{\epsilon};\cG_N)}{N}}
+\frac{K(\cG_N)+L(\sqrt{\epsilon};\cG_N)}{N}+\epsilon\bigg).
    \end{align*}
\end{theorem}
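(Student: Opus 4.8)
The plan is to prove matching two‑sided bounds between $\cF_N$ and $\sup_{\overlap\in\cQ}\Psi_N(\overlap)$: a lower bound on $\cF_N$ (equivalently an upper bound on the mutual information via \eqref{eq:quadraticMI}) by a Guerra‑type interpolation, and an upper bound on $\cF_N$ by the Franz--Parisi interpolation scheme of \cite{el2018estimation}, the latter carrying essentially all of the stated error. Both start from the same normalization of the Hamiltonian: expanding $\|\by_{ij}-\phi(G)_i\bullet\phi(G)_j\|_\cK^2$, invoking the compatibility relation \eqref{eq:compatibility} and the overlap map \eqref{eq:overlap}, and discarding the diagonal $i=j$ terms at a cost of $O(K(\cG_N))$ (cf.\ \eqref{eq:KGN}) gives
\[
H(G;Y)=-\tfrac{N}{4}\|Q(G,G)\|_\cL^2+\tfrac{N}{2}\|Q(G,G_*)\|_\cL^2+Z(G)+O\big(K(\cG_N)\big),
\]
where $Z(G)=N^{-1/2}\sum_{i<j}\langle\phi(G)_i\bullet\phi(G)_j,\bz_{ij}\rangle_\cK$ is a centered Gaussian process with covariance $\tfrac{N}{2}\|Q(G,G')\|_\cL^2$ up to an $O(K(\cG_N))$ discrepancy, which a one‑line Gaussian interpolation lets me replace by the process $\tilde Z$ with covariance exactly $\tfrac N2\|Q(\cdot,\cdot)\|_\cL^2$ at a cost of $O(K(\cG_N)/N)$ in the free energy. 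Assumption~\ref{assumption:general-model}(d) is then used twice: $Q(G,G)=Q(\iden,\iden)$ is constant, so $-\tfrac N4\|Q(G,G)\|_\cL^2$ is a harmless additive constant; and $\|Q(G_*G,G_*G')\|_\cL=\|Q(G,G')\|_\cL$, so the law over the noise of $\{H(G;Y)\}_G$ is invariant under translating $G_*$ and I may take $G_*=\iden$. It then suffices to bound $\bar\cF_N:=-\tfrac14\|Q(\iden,\iden)\|_\cL^2+\tfrac1N\E\log\E_G\exp(\tfrac N2\|Q(G,\iden)\|_\cL^2+\tilde Z(G))$ against $\sup_{\overlap\in\cQ}\Psi_N(\overlap)$.

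For the lower bound, fix $\overlap\in\cQ$. Through the inclusion $\iota$ of Assumption~\ref{assumption:general-model}(c), the Hamiltonian of the linear model \eqref{eq:linearmodel} at parameter $\overlap$ equals, up to constants, $N\langle\overlap,Q(G,\iden)\rangle_\cL+\sum_i\langle\overlap^{1/2}\phi(G)_i,\bz_i\rangle_\cH$, a Gaussian process in $G$ with covariance $N\langle\overlap,Q(G,G')\rangle_\cL$ that is linear in the overlap. Interpolating between this linear model and the quadratic model underlying $\bar\cF_N$, and keeping the posterior Bayes‑optimal along the path, Gaussian integration by parts and the Nishimori identity express $\tfrac{d}{dt}$ of the interpolated free energy as $\E\langle\cdot\rangle_t$ of a nonnegative quadratic form in $Q(G^{(1)},G^{(2)})-\overlap$ plus a $t$‑independent constant; integrating, the constant reproduces precisely the affine terms $-\tfrac14\|\overlap\|_\cL^2-\tfrac12\langle\overlap,Q(\iden,\iden)\rangle_\cL$ of $\Psi_N$, and discarding the square gives $\bar\cF_N\ge\Psi_N(\overlap)$. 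Hence $\cF_N\ge\sup_{\overlap\in\cQ}\Psi_N(\overlap)-O(K(\cG_N)/N)$, which is within the claimed bound.

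For the upper bound, fix an $\sqrt\epsilon$‑net $\{\overlap_k'\}_{k\le M}$ of $\image(Q)$ with $\log M=L(\sqrt\epsilon;\cG_N)$, and split $\E_G(\cdot)=\sum_{k\le M}\E_G[\mathbf 1\{\|Q(G,\iden)-\overlap_k'\|_\cL\le\sqrt\epsilon\}(\cdot)]$; writing $\Xi_k$ for the $k$th restricted partition function, $\tfrac1N\log\E_G(\cdot)\le\tfrac1N L(\sqrt\epsilon;\cG_N)+\max_k\tfrac1N\log\Xi_k$, which produces the $L(\sqrt\epsilon;\cG_N)/N$ term. Since $\tfrac1N\log\Xi_k$ is an $O(D(\cG_N)/\sqrt N)$‑Lipschitz function of the Gaussian noise — its gradient is a Gibbs average of suitably normalized feature products controlled by $\sup_G\|Q(G,G)\|_\cL\le D(\cG_N)$ — Gaussian concentration plus a union bound over the $M$ cells yield $\E\max_k\tfrac1N\log\Xi_k\le\max_k\E\tfrac1N\log\Xi_k+C\,D(\cG_N)\sqrt{L(\sqrt\epsilon;\cG_N)/N}$, the first error term. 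It remains to establish the per‑cell bound $\tfrac1N\E\log\Xi_k\le\Psi_N(\overlap_k)+\tfrac14\|Q(\iden,\iden)\|_\cL^2+C\epsilon$ for a positive‑semidefinite $\overlap_k\in\cQ$ attached to the cell via the operator absolute values of Assumption~\ref{assumption:general-model}(c). For this I would run the interpolation of \cite{el2018estimation} between the restricted quadratic model and the linear model at parameter $\overlap_k$: along the Bayes‑optimal path, Gaussian integration by parts and Nishimori turn $\tfrac{d}{dt}$ into a quadratic form in $Q(G^{(1)},G^{(2)})-\overlap_k$ plus a $t$‑independent constant, the former being $O(\epsilon)$ on the restricted ensemble because restricting $Q(G,\iden)$ to a ball of radius $\sqrt\epsilon$ also pins the two‑replica overlap there (by Nishimori), and the latter integrating to the affine terms of $\Psi_N(\overlap_k)+\tfrac14\|Q(\iden,\iden)\|_\cL^2$, together with the fact that the restricted linear‑model free energy is at most the unrestricted one. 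Summing the three contributions and using $\max_k\Psi_N(\overlap_k)\le\sup_{\overlap\in\cQ}\Psi_N(\overlap)$ yields the theorem.

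I expect the per‑cell estimate to be the main obstacle, for two reasons tied to the generality of the setup. First, the overlap is $\cL$‑valued rather than scalar, so producing the positive‑semidefinite $\overlap_k\in\cQ$ from the net point $\overlap_k'$, realizing the linear channel as $N\langle\overlap_k,Q(G,\cdot)\rangle_\cL$, and bounding the quadratic interpolation remainder must all be carried out through the operator calculus of Assumption~\ref{assumption:general-model}(c) — the maps $\iota$, $|\cdot|$, $|\cdot^\top|$ and the norm identities \eqref{eq:operatorabsolutevalue} — with care for the gauge ambiguity in the polar decomposition of $Q(G^{(1)},G^{(2)})$. Second, and more fundamentally, the statement that pinning $Q(G,\iden)$ forces $Q(G^{(1)},G^{(2)})$ into a comparable ball — which in the i.i.d.‑prior setting of \cite{el2018estimation} follows from overlap concentration — must here be obtained from the Nishimori identity alone, together with the group symmetry of $Q(\cdot,\cdot)$ in Assumption~\ref{assumption:general-model}(d), since $\Haar(\cG_N)$ need not factorize over coordinates.
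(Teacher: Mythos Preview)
Your overall architecture matches the paper: the lower bound via a Guerra interpolation with Nishimori, the upper bound via a $\sqrt\epsilon$-cover of $\image(Q)$, Gaussian concentration of the restricted free energies to pass $\E$ inside the max (giving the $D(\cG_N)\sqrt{L/N}$ and $L/N$ terms), and then a per-cell Franz--Parisi interpolation. The $O(K(\cG_N)/N)$ diagonal correction is also handled the same way.

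The genuine gap is in your per-cell estimate. You propose to use the Nishimori identity to reduce $\tfrac{d}{dt}$ of the restricted free energy to a quadratic form in $Q(G^{(1)},G^{(2)})-\overlap_k$, and then to argue that the two-replica overlap is pinned because the one-replica overlap $Q(G,G_*)$ is. Both steps fail. The Nishimori identity is a statement about the Bayes posterior, and the restricted Gibbs measure $\propto \mathbf 1\{\|Q(G,G_*)-\overlapm\|_\cL^2\le\epsilon\}\exp \widetilde H_t$ is \emph{not} a posterior of any well-specified model (the indicator depends on $G_*$), so $\E\langle f(G,G')\rangle_t=\E\langle f(G,G_*)\rangle_t$ is simply unavailable here. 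And even where Nishimori does hold, it equates laws of $(G,G_*)$ and $(G,G')$; it does not confine $Q(G,G')$ to a small ball.

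The paper sidesteps this entirely, and in doing so shows that pinning $Q(G,G')$ is a red herring. It interpolates with \emph{two} parameters: the net point $\overlapm\in\cL$ enters only in the mean term $(1-t)\langle\phi_i,\overlapm\phi_{*i}\rangle_\cH$, while a separate $\overlap\in\cQ$ enters in the Gaussian term $\sqrt{1-t}\langle\overlap^{1/2}\phi_i,\bz_i\rangle_\cH$. Computing $\tfrac{d}{dt}$ by Gaussian integration by parts \emph{without} invoking Nishimori gives
\[
\widetilde\Phi_\epsilon'(t;\overlapm)=-\tfrac14\,\E\langle\|Q(G,G')-\overlap\|_\cL^2\rangle_t+\tfrac12\,\E\langle\|Q(G,G_*)-\overlapm\|_\cL^2\rangle_t+\tfrac14\|\overlap\|_\cL^2-\tfrac12\|\overlapm\|_\cL^2.
\]
The two-replica term carries the \emph{favorable} sign and is simply dropped; the $(G,G_*)$ term is $\le\epsilon/2$ by the restriction, because its center $\overlapm$ is exactly the cell center. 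Integrating gives $\widetilde\Phi_\epsilon(\overlapm)\le\Psi(\overlap,\overlapm)+\epsilon/2$ for a two-argument potential. The remaining step---passing from $\overlapm\in\cL$ to $\cQ$---is done not by operator calculus on the remainder but by choosing $\overlap=|\overlapm^\top|$ and proving $\Psi(|\overlapm^\top|,\overlapm)\le\Psi(|\overlapm|,|\overlapm|)=\Psi_N(|\overlapm|)$ via nonnegativity of a Kullback--Leibler divergence between two linear observation models indexed by the singular factors $\overlapm_U,\overlapm_V$ of $\iota(\overlapm)$. This is where Assumption~\ref{assumption:general-model}(c) is actually used, and it replaces both obstacles you flag in your final paragraph.
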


Here, $K(\cG_N)$, $D(\cG_N)$, and $L(\sqrt{\epsilon};\cG_N)$ are
finite by compactness of
$\cG_N$ and continuity of $\phi$, and these will all be of constant order
for any fixed constant $\epsilon>0$ in our applications to follow.

\paragraph{Overlap concentration.} Denote by $\langle f(G) \rangle=\E[f(G) \mid
Y]$ the posterior expectation given $Y=\{\by_{ij}\}_{i<j}$
in the quadratic model (\ref{eq:quadraticmodel}). An extension of our
proof of Theorem \ref{thm:free-energy-general-model} will establish that for $G$
sampled from this posterior law, the overlap $Q(G,G_*)$
concentrates on a set defined by near-maximizers of the potential
$\Psi_N(\overlap)$. We give a general statement of this result here, and we will
specialize this to a more interpretable statement in the group synchronization
application of Section \ref{sec:synchronization} to follow.

For any $A \in B(\cH)$, let $p_A$ denote the marginal density of
$Y_\lin=\{\by_i\}_{i=1}^N$ in a linear observation model
$\by_i=A\phi(G_*)_i+\bz_i$ for $i=1,\ldots,N$, similar to \eqref{eq:linearmodel}.
For $\overlapm \in \cL$ such that $\iota(\overlapm) \in B(\cH)$ has
singular value decomposition $\iota(\overlapm)=UDV^\top$, denote
\begin{equation}\label{eq:mUV}
\overlapm_U=D^{1/2}U^\top \in B(\cH), \qquad
\overlapm_V=D^{1/2}V^\top \in B(\cH),
\end{equation}
and recall from Assumption \ref{assumption:general-model}(c) that there exists
$|\overlapm| \in \cQ$ for which
$\iota(|\overlapm|)=(\iota(\overlapm)^\top \iota(\overlapm))^{1/2}
=\overlapm_V^\top\overlapm_V$. Set\footnote{Here, for any $\overlapm \in \cL$,
the element $|\overlapm|$ is unique by the assumed
injectivity of $\iota$. It may also be checked that $D_{\mathrm{KL}}(p_{\overlapm_V} \| p_{\overlapm_U})$ has the same value for any singular value
decomposition $UDV^\top$ of $\iota(\overlapm)$, so $\cL_*(\epsilon)$ is
well-defined.}
\begin{align}
\cL_*(\epsilon)=\inbraces{\overlapm \in \cL:
\frac{1}{N}D_{\mathrm{KL}}(p_{\overlapm_V} \| p_{\overlapm_U}) \leq
\epsilon \text{ and } \sup_{\overlap \in \cQ} \Psi_N(\overlap)
-\Psi_N(|\overlapm|) \leq \epsilon}.
\label{eq:overlap_concentration_set}
\end{align}
Intuitively, any $\overlapm \in \cL_*(\epsilon)$ is such that $|\overlapm|$ is a
near-maximizer of $\Psi_N(\cdot)$, and the condition
$N^{-1}D_{\mathrm{KL}}(p_{\overlapm_V}\|p_{\overlapm_U}) \approx 0$ captures a
class of overlaps $\overlapm$ that are (nearly) equivalent to
$|\overlapm|$ under the group symmetry of the model.

\begin{corollary}\label{corollary:overlap_concentration_general_model}
In the setting of Theorem \ref{thm:free-energy-general-model},
there exist absolute constants $C_0,C,c>0$ such that if
\begin{equation}\label{eq:epsiloncondition}
\epsilon>C_0\bigg(D(\cG_N)\sqrt{\frac{L(\sqrt{\epsilon};\cG_N)}{N}}
+\frac{K(\cG_N)+L(\sqrt{\epsilon};\cG_N)}{N}\bigg)
\end{equation}
then
\begin{align}
\E_{G_*,Z} \Big\langle\mathbf{1}\inbraces{ \norm{Q(G,G_*) -
\cL_*(\epsilon)}_\cL^2>\epsilon}\Big\rangle \leq
C\exp\left({-}cN\min\left(\epsilon,\frac{\epsilon^2}{D(\cG_N)^2}\right)\right)
\label{eq:overlap_concentration_general_model}
\end{align}
where $\norm{Q(G,G_*) - \cL_*(\epsilon)}_\cL := \inf \inbraces{ \norm{Q(G,G_*) -
\overlapm} : \overlapm \in \cL_*(\epsilon)}$.
\end{corollary}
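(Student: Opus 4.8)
The plan is to pair Theorem~\ref{thm:free-energy-general-model} with a matching upper bound on the \emph{restricted} free energy---the log-partition function taken over configurations $G$ whose overlap $Q(G,G_*)$ is pinned to a small $\cL$-ball---and then to convert the resulting free-energy deficit into an exponential bound on posterior mass via a cover of $\image(Q)$ and Gaussian concentration. For $\overlapm\in\image(Q)$ and a radius $\delta>0$, write $Z_N(\overlapm,\delta)=\E_G[\exp H(G;Y)\,\mathbf 1\{\norm{Q(G,G_*)-\overlapm}_\cL\le\delta\}]$. The first and main step is to re-run the interpolation / Franz--Parisi argument underlying the upper bound on $\cF_N$ in Theorem~\ref{thm:free-energy-general-model}, now retaining the overlap constraint, so as to obtain
\[\frac1N\,\E_{G_*,Z}\log Z_N(\overlapm,\delta)\;\le\;\Psi_N(|\overlapm|)\;-\;\frac{c}{N}\,D_{\mathrm{KL}}(p_{\overlapm_V}\,\|\,p_{\overlapm_U})\;+\;\mathrm{err}_N,\]
with $\mathrm{err}_N$ of the same order as the error in Theorem~\ref{thm:free-energy-general-model} for the choice $\delta=\sqrt\epsilon$ (pinning the overlap to a ball of $\cL$-radius $\sqrt\epsilon$ costs only $O(\epsilon)$, since the discrepancy between the quadratic Hamiltonian and its linearized surrogate is \emph{quadratic} in the overlap mismatch---the origin of the $+\epsilon$ term in the theorem). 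The absolute value $|\overlapm|$ replaces $\overlapm$ here because, writing $\iota(\overlapm)=UDV^\top$, the interaction $N\langle\overlapm,Q(G,G_*)\rangle=\sum_i\langle\overlapm_U\phi(G)_i,\overlapm_V\phi(G_*)_i\rangle$ is a \emph{mismatched} linear channel in which the planted signal is observed through $\overlapm_V$ while a posterior replica is reweighted through $\overlapm_U$; the matched counterpart is the channel $\overlapm_V$, equivalently $|\overlapm|^{1/2}$ since $\overlapm_V^\top\overlapm_V=\iota(|\overlapm|)$ and hence $p_{\overlapm_V}=p_{|\overlapm|^{1/2}}$, whose restricted free energy is at most the unrestricted quantity $\Psi_N(|\overlapm|)$, and the price of the mismatch is exactly the Kullback--Leibler divergence between the two induced observation laws. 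The group symmetry $Q(G,H)=Q(H^{-1}G,\iden)$ of Assumption~\ref{assumption:general-model}(d) is again what lets the residual $G_*$-dependence be discarded after interpolating.

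Granting this bound, I would proceed as follows. By the definition~\eqref{eq:overlap_concentration_set} of $\cL_*(\epsilon)$, any $\overlapm\notin\cL_*(\epsilon)$ violates one of its two constraints by at least $\epsilon$, so the displayed bound together with $\cF_N\ge\sup_{\overlap\in\cQ}\Psi_N(\overlap)-\mathrm{err}_N$ from Theorem~\ref{thm:free-energy-general-model} gives $\frac1N\,\E_{G_*,Z}\log Z_N(\overlapm,\sqrt\epsilon)\le\cF_N-c'\epsilon$ whenever~\eqref{eq:epsiloncondition} holds---precisely the condition making every error term, including the union-bound cost $L(\sqrt\epsilon;\cG_N)/N$ below, a small fraction of $\epsilon$. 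Cover $\image(Q)$ by balls $B(\overlapm_1,\sqrt\epsilon),\dots,B(\overlapm_M,\sqrt\epsilon)$ with $\log M\le L(\sqrt\epsilon;\cG_N)$; on the event $\{\norm{Q(G,G_*)-\cL_*(\epsilon)}_\cL^2>\epsilon\}$ the overlap lies in some $B(\overlapm_k,\sqrt\epsilon)$ whose center necessarily lies outside $\cL_*(\epsilon)$, so
\[\E_{G_*,Z}\Big\langle\mathbf 1\{\norm{Q(G,G_*)-\cL_*(\epsilon)}_\cL^2>\epsilon\}\Big\rangle\;\le\;\sum_{k=1}^{M}\E_{G_*,Z}\,\frac{Z_N(\overlapm_k,\sqrt\epsilon)}{\E_G\exp H(G;Y)}.\]
Viewed as functions of the Gaussian noise $Z$, both $\log Z_N(\overlapm_k,\sqrt\epsilon)$ and $\log\E_G\exp H(G;Y)$ are Lipschitz with constant $O(\sqrt N\,D(\cG_N))$ (the process $Z(G)$ has variance $N\norm{Q(G,G')}_\cL^2\le N D(\cG_N)^2$), so Gaussian concentration gives sub-Gaussian fluctuations about their means at scale $D(\cG_N)/\sqrt N$; taking the deviation level to be a small multiple of $\epsilon$, each summand is at most $e^{-c'N\epsilon/2}$ off an event of probability $\le e^{-cN\epsilon^2/D(\cG_N)^2}$, and summing over the $M\le e^{L(\sqrt\epsilon;\cG_N)}$ terms while absorbing $L(\sqrt\epsilon;\cG_N)/N$ into $\epsilon$ via~\eqref{eq:epsiloncondition} yields the asserted $C\exp(-cN\min(\epsilon,\epsilon^2/D(\cG_N)^2))$, the $\min$ reflecting that the free-energy-gap term $e^{-c'N\epsilon}$ dominates once $\epsilon\gtrsim D(\cG_N)^2$.

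The hard part is the restricted free-energy estimate itself, especially its Kullback--Leibler refinement: proving the form $\Psi_N(|\overlapm|)-\frac{c}{N}D_{\mathrm{KL}}(p_{\overlapm_V}\|p_{\overlapm_U})$ for \emph{non-symmetric} overlaps $\overlapm\in\image(Q)$ while the prior on $G_*$ is $\Haar(\cG_N)$ rather than a product of i.i.d.\ components. This requires adapting the interpolation method of~\cite{el2018estimation} so that (i) the overlap constraint is preserved along the interpolation path, and (ii) the singular value decomposition $\iota(\overlapm)=UDV^\top$ is split correctly into the signal-side operator $\overlapm_V$ and the replica-side operator $\overlapm_U$, with Assumption~\ref{assumption:general-model}(c) supplying the identities $\overlapm_V^\top\overlapm_V=\iota(|\overlapm|)$ and $\overlapm_U^\top\overlapm_U=\iota(|\overlapm^\top|)$ that make the matched/mismatched comparison exact; stripping the $G_*$-dependence via the group symmetry then proceeds just as in the proof of Theorem~\ref{thm:free-energy-general-model}. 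Once this is in place, the covering and concentration steps are routine.
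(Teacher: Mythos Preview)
Your proposal is correct and follows essentially the same route as the paper. The paper's proof of the corollary reuses the Franz--Parisi bound (Lemma~\ref{lemma:FPupperbound}) and the KL identity $\Psi(|\overlapm|)-\Psi(|\overlapm^\top|,\overlapm)=\frac{1}{N}D_{\mathrm{KL}}(p_{\overlapm_V}\|p_{\overlapm_U})$ (Lemma~\ref{lem:psi-positive-general-model})---both already established for Theorem~\ref{thm:free-energy-general-model}---to get your displayed restricted free-energy estimate, then applies the same cover of $\image(Q)$ together with Gaussian concentration of the random log-partition functions (Lipschitz constant $D(\cG_N)/\sqrt{2N}$) to convert the free-energy gap into the exponential bound; the only cosmetic differences are that the paper works with the perturbed Hamiltonian $\widetilde H$ and removes the $O(K(\cG_N)/N)$ discrepancy at the end, and that the KL term enters with constant exactly $1$ rather than a generic $c$, matching the threshold in the definition~\eqref{eq:overlap_concentration_set} of $\cL_*(\epsilon)$.
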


The proofs of Theorem \ref{thm:free-energy-general-model} and Corollary
\ref{corollary:overlap_concentration_general_model} are given in Appendix 
\ref{sec:pf-general_model}. We prove both the lower and upper bounds for
$\cF_N$ in Theorem \ref{thm:free-energy-general-model}
using an interpolation technique, the
upper bound applying a method of \cite{el2018estimation} to perform
the interpolation on the Franz-Parisi potential at each fixed overlap.
Corollary \ref{corollary:overlap_concentration_general_model} then
follows by applying similar arguments to bound a restriction of the free energy.
%We remark that
%in contrast to settings of spiked matrix models where many such techniques have
%been developed and applied
%\cite{dia2016mutual,krzakala2016mutual,lelarge2019fundamental,barbier2019adaptive}, here the signal $G_* \in \cG_N$ is not necessarily a product of
%i.i.d.\ components (although our first example of group synchronization will
%have this structure). We leverage instead the group symmetry 
%to extend these arguments to our model.

\begin{remark}\label{remark:MTP}
We thank a reviewer for bringing to our attention that the
model \eqref{eq:quadraticmodel} can be regarded as a special case of the matrix
tensor product (MTP) model in \cite{reeves2020information}, which in turn
extends a multiview matrix model studied in \cite{barbier2020information} and 
was further generalized to higher-order tensors in \cite{chen2022statistical}.
Our model \eqref{eq:quadraticmodel} may be written in MTP form via the
following mapping: In the MTP model one observes
    \begin{equation}\label{eq:MTP}
        y_{ij} = B\inparen{ x_{*,i} \boxtimes x_{*,j}  } + \sqrt{N} z_{ij} \in
\R^m, \qquad i,j = 1,\dots,N,
    \end{equation}
    where $(x_{*,1},\dots, x_{*,N}) \in \inparen{\RR^{d}}^N$ are drawn from some
distribution $P_N$ over $\inparen{\RR^{d}}^N$; $B \in \RR^{m \times d^2}$ is a
known coupling matrix; $\boxtimes$ denotes the Kronecker product; and $z_{ij} \sim
\cN(0, I_m)$ are independent noise variables. In \eqref{eq:quadraticmodel}, set $\cH = \RR^d$ and
$\cK = \RR^m$ and equip both spaces with the standard Euclidean inner product.
Identify the feature map $\phi : \cG_N \rightarrow \inparen{\RR^d}^N$ with
$\inparen{x_{*,1},\dots,x_{*,N}} = \phi(G_*)$ and identify the bilinear map
$\bullet : \RR^d \times \RR^d \rightarrow \RR^m$ with $\ab \bullet \bb =
B\inparen{ \ab \boxtimes \bb  }$ (noting that any such bilinear map
admits a representation of this form). Then set $\cL = \RR^{d \times d} \cong
\R^{d^2}$ equipped with the inner product
    \begin{align*}
        \langle \ub,\vb \rangle_\cL = \tr B^\top B \inparen{\ub \boxtimes \vb}.
    \end{align*}
    The complementary bilinear map $\otimes : \RR^d \times \RR^d \rightarrow
\RR^{d \times d}$ is defined by $\ab \otimes \bb = \ab \bb^\top$. One checks
that $\inangle{\ab \bullet \bb, \ab' \bullet \bb'}_{\RR^m} = \inangle{  \ab
\otimes \ab', \bb \otimes \bb'  }_\cL$. Furthermore, the inclusion map $\iota :
\RR^{d \times d} \rightarrow \RR^{d \times d}$ of \eqref{eq:inclusion} satisfies
$\inangle{ \ab,
\iota(\overlap) \bb  }_{\RR^d} = \tr B^\top B \inparen{  \overlap \boxtimes \ab
\bb^\top  }$ so that $\iota(\overlap)$ may be identified with the operator
$T(\overlap)$ defined in \cite[Lemma 1]{reeves2020information}. This maps our
model \eqref{eq:quadraticmodel} to the MTP model.

The main result of \cite[Theorem 1]{reeves2020information} provides an approximation to the
free energy of the MTP model in a setting where $P_N$ is a product
distribution, but it was also shown in \cite[Theorem 5]{reeves2020information}
and \cite{chen2022statistical} that approximations of this type do not
require this product structure and hold more generally as long as the free
energy in the linear observation model associated to $P_N$ concentrates. We
study a specific such setting where $P_N$ arises as the image of a
uniform element in a compact group under a continuous feature map. Our proof of
Theorem \ref{thm:free-energy-general-model} uses a different approach from
the methods of \cite{reeves2020information} and \cite{chen2022statistical},
and yields a new quantitative bound for the free energy
approximation in terms of the metric entropy of the overlap space.
On the other hand, the analyses of \cite{reeves2020information} pertain to a
more general setting in which the analogue of the bilinear form
$\inangle{\cdot, \cdot}_\cL$ in the MTP model may be indefinite, and the
approximating free energy takes a variational $\sup$-$\inf$ form over
two variables. Under our restriction in
Assumption \ref{assumption:general-model} that $\inangle{\cdot, \cdot}_\cL$
is positive-semidefinite,
our resulting free energy approximation (\ref{eq:RS-potential-general-model})
coincides with that of \cite[Proposition 6]{reeves2020information} in
the positive-semidefinite setting.
\end{remark}

\section{Group synchronization}\label{sec:synchronization}

As a first application of the results in Section \ref{sec: general_model},
we consider a multi-channel group synchronization
model, which is a real analogue of the model studied in \cite{PWBM2018}.
Let $\cG$ be a compact group (fixed and not depending on $N$),
and let $\phi_\ell:\cG \to \R^{k_\ell \times k_\ell}$ for $\ell=1,\ldots,L$
be real orthogonal representations of $\cG$. Throughout, corresponding to
$\gb,\gb',\gb_* \in \cG$, we write the abbreviations
\[\gb_\ell=\phi_\ell(\gb), \qquad \gb_\ell'=\phi_\ell(\gb'), \qquad
\gb_{*\ell}=\phi_\ell(\gb_*).\]

Let $G_*=(\gb_*^{(1)},\ldots,\gb_*^{(N)})$ be an unknown parameter vector of
interest in the product space $\cG^N$, with prior distribution
$\{\bg_*^{(i)}\}_{i=1}^N \overset{iid}{\sim} \Haar(\cG)$.\footnote{For
simplicity of the later notation, in this group synchronization model we will
use superscripts for the sample index $i \in [N]$ and subscripts for the
channel index $\ell \in [L]$.} Consider the observations
\begin{align}
\label{eq:group-synchronization-multi-representation-model}
\begin{cases}
    \yb_1^{(ij)} = \sqrt{\lambda_1}\,\gb_{*1}^{(i)}\gb_{*1}^{(j)\top}
+ \sqrt{N}\,\zb_1^{(ij)} \in \R^{k_1 \times k_1}\\
    \hspace{1in}\vdots \\
    \yb_L^{(ij)} = \sqrt{\lambda_L}\,\gb_{*L}^{(i)}\gb_{*L}^{(j)\top}
+ \sqrt{N}\,\zb_L^{(ij)} \in \R^{k_L \times k_L}
\end{cases} \text{ for all } 1 \leq i<j \leq N
\end{align}
where $\lambda_\ell>0$ are fixed and known signal-to-noise parameters,
and $\{\zb_\ell^{(ij)}\}_{1 \leq i<j \leq N,\,1 \leq \ell \leq L}$
are noise matrices with i.i.d.\ $\cN(0,1)$ entries, independent of each other
and of $G_*$.

We note that any observation model
\eqref{eq:group-synchronization-multi-representation-model}
is equivalent to such a model in a ``canonical'' form where the representations
$\phi_1,\ldots,\phi_L$ are real-irreducible, distinct, and non-trivial;
this canonical form may be a multi-channel model even if the original problem
consists of a single channel $L=1$.
We explain this reduction in Appendix \ref{appendix:GSreduction},
where we also review some relevant background and terminology pertaining to
group representations.

This model falls into the general framework described in Section~\ref{sec:
general_model}. Here, $\cG_N \equiv \cG^N$ is the $N$-fold product of $\cG$.
For $G=(\gb^{(1)},\ldots,\gb^{(N)}) \in \cG_N$,
the feature map $\phi:\cG_N \to \cH^N$ is separable across coordinates
$i=1,\ldots,N$, with components
\begin{equation}\label{eq:GSphi}
\phi(G)_i=(\gb_1^{(i)},\ldots,\gb_L^{(i)}) \in \cH
\equiv \prod_{\ell=1}^L \R^{k_\ell \times k_\ell}.
\end{equation}
We identify the observation and overlap spaces also as
$\cK=\cL=\cH=\prod_{\ell=1}^L \R^{k_\ell \times k_\ell}$, equipped with the
usual Euclidean inner-products
\[\langle \ab,\bb \rangle_{\cH}=\langle \ab,\bb \rangle_{\cK}
=\langle \ab,\bb \rangle_{\cL}=\sum_{\ell=1}^L \tr \ab_\ell^\top \bb_\ell.\]
The pair of bilinear maps $\bullet:\cH \times \cH \to \cK$ and
$\otimes:\cH \times \cH \to \cL$ and the inclusion map $\iota:\cL \to B(\cH)$
are then defined as
\begin{equation}\label{eq:GSbilinearmaps}
\ab \bullet \bb=\Big(\sqrt{\lambda_\ell}\,\ab_\ell\bb_\ell^\top \Big)_{\ell=1}^L,
\qquad \ab \otimes \bb=\Big(\sqrt{\lambda_\ell}\,\ab_\ell^\top\bb_\ell\Big)_{\ell=1}^L,
\qquad \iota(\overlap)\ab=\Big(\sqrt{\lambda_\ell}
\,\ab_\ell\overlap_\ell^\top\Big)_{\ell=1}^L.
\end{equation}
We will check in the proof of Theorem
\ref{theorem:free-energy-group-syn-multi-representation} below that the
structure of Assumption \ref{assumption:general-model} indeed holds under these
definitions.

We write $\Sym^{k \times k},\Sym_{\succeq 0}^{k \times k}$ for the spaces of $k
\times k$ symmetric and symmetric-positive-semidefinite
matrices, respectively, and abbreviate
\[%\cL=\prod_{\ell=1}^L \R^{k_\ell \times k_\ell},
\qquad \Sym=\prod_{\ell=1}^L \Sym^{k_\ell \times k_\ell},
\qquad \Sym_{\succeq 0}=\prod_{\ell=1}^L \Sym_{\succeq 0}^{k_\ell \times
k_\ell},\]
\[\gb\overlap\gb'=\Big(\gb_\ell\overlap_\ell\gb_\ell'\Big)_{\ell=1}^L
\in \cL
\qquad \text{ for any } \gb,\gb' \in \cG \text{ and }\overlap \in \Sym.\]

\subsection{Asymptotic mutual information and MMSE}

Let $Y=\{\yb_\ell^{(ij)}\}$ be the collection of all observations,
and let $\langle f(G) \rangle=\langle
f(\gb^{(1)},\ldots,\gb^{(N)})\rangle$ denote the average under the posterior
law of $G=(\gb^{(1)},\ldots,\gb^{(N)}) \in \cG^N$ given $Y$. Let
$I(G_*,Y)$ be the mutual information between
$G_*=(\bg_*^{(1)},\ldots,\bg_*^{(N)})$ and $Y$, and let
\begin{equation}\label{eq:GSMMSE}
\mmse_\ell=\frac{1}{{N \choose 2}} \sum_{1 \le i
< j \le N} \E \|\gb_{*\ell}^{(i)\top}\gb_{*\ell}^{(j)} - \langle
\gb_\ell^{(i)\top} \gb_\ell^{(j)} \rangle \|_F^2 \qquad \text{for each } \ell=1,\ldots,L
\end{equation}
be the Bayes-optimal minimum mean-squared-error (MMSE) for estimating
$\{\gb_{*\ell}^{(i)\top}\gb_{*\ell}^{(j)}\}_{1 \leq i<j \leq N}$ in the
$\ell^\text{th}$ channel.

Define the replica potential $\Psi_\gs:\Sym_{\succeq 0} \to \R$ by
\begin{align}\label{eq:Psigsformula}
\Psi_\gs(\overlap) = -\frac{1}{4} \sum_{\ell=1}^L \lambda_\ell
\|\overlap_\ell\|_F^2 - \frac{1}{2} \sum_{\ell=1}^L \lambda_\ell \tr
\overlap_\ell + \E_{\gb_*,\zb} \log \E_{\gb} \exp \inparen{\sum_{\ell=1}^L 
\lambda_\ell \tr \overlap_\ell \gb_\ell^\top\gb_{*\ell}
+\sqrt{\lambda_\ell} \tr \overlap_\ell^{1/2}\gb_\ell^\top\zb_\ell} 
\end{align}
where $\E_\gb$ is the expectation over a single uniformly distributed
group element
$\gb \sim \Haar(\cG)$, and $\E_{\gb_*,\zb}$ is over an independent element
$\gb_* \sim \Haar(\cG)$ and Gaussian noise
$\zb=(\zb_1,\ldots,\zb_L) \in \prod_{\ell=1}^L \R^{k_\ell \times k_\ell}$
with i.i.d.\ $\cN(0,1)$ entries.
By invariance of Haar measure and invariance in law of each $\zb_\ell$
under multiplication by orthogonal matrices, it may be checked that
$\Psi_\gs$ has the group symmetry
\begin{equation}\label{eq:Psisymmetry}
\Psi_\gs(\overlap)=\Psi_\gs(\gb\overlap\gb^{-1}) \text{ for all }
\gb,\gb' \in \cG \text{ and } \overlap \in \Sym_{\succeq 0}.
\end{equation}
In particular, the set of maximizers of $\Psi_\gs$
is closed under the mapping $\overlap
\mapsto \gb\overlap\gb^{-1}$ for all $\gb \in \cG$.
It is also direct to check, similarly to \eqref{eq:linearMI},
that $\Psi_\gs$ satisfies
\begin{equation}\label{eq:scalarMI}
i(\gb_*,\yb)=\sum_{\ell=1}^L \Big({-}\frac{\lambda_\ell}{4}\|\overlap_\ell\|_F^2
+\frac{\lambda_\ell}{2} \tr \overlap_\ell\Big)-\Psi_\gs(\overlap)
\end{equation}
where $i(\gb_*,\yb)$ is the mutual information between the signal
$\gb_* \in \cG$
and observations $\yb=(\yb_1,\ldots,\yb_L)$ in a $\overlap$-dependent
``single-sample'' model
\begin{equation}\label{eq:GSscalarmodel}
\begin{cases} \yb_1=\sqrt{\lambda_1}\,\gb_{*1}\overlap_1^{1/2}+\zb_1 \in \R^{k_1 \times k_1}\\
\hspace{1in}\vdots\\
\yb_L=\sqrt{\lambda_L}\,\gb_{*L}\overlap_L^{1/2}+\zb_L \in \R^{k_L \times k_L}.
\end{cases}
\end{equation}

The following is an application of Theorem~\ref{thm:free-energy-general-model}
and Corollary \ref{corollary:overlap_concentration_general_model},
characterizing the asymptotic mutual information, per-channel MMSE, and
concentration of the posterior overlap with the true signal
in this group synchronization model as $N \to \infty$,
in terms of a maximization of the above replica potential.

\begin{theorem}
\label{theorem:free-energy-group-syn-multi-representation}
Suppose the group $\cG$, representations $\phi_1,\ldots,\phi_L$, and signal
strengths $\lambda_1,\ldots,\lambda_L>0$ are fixed, as $N \to \infty$.
\begin{enumerate}[(a)]
    \item The signal-observation mutual information $I(G_*,Y)$ in the
model~\eqref{eq:group-synchronization-multi-representation-model} satisfies
    \begin{align}
    \label{eq:mutual-information-group-syn-multi-rep}
        \lim_{N \to \infty}\frac{1}{N} I(G_*,Y) = \frac{1}{4}\sum_{\ell=1}^L
\lambda_\ell k_\ell - \sup_{\overlap \in \Sym_{\succeq 0}} \Psi_\gs(\overlap).
    \end{align}

    \item Fixing any $\ell \in \{1,\ldots,L\}$ and positive values
$\{\lambda_\ell'\}_{\ell' \neq \ell}$, set
\[D=\Big\{\lambda_\ell>0:\lambda_\ell \mapsto
\sup_{\overlap \in \Sym_{\succeq 0}} \Psi_\gs(\overlap) \text{ is
differentiable at } \lambda_\ell\Big\}.\]
Then $D$ has full Lebesgue measure in $(0,\infty)$. We have
$\lambda_\ell \in D$ if and only if all maximizers $\overlap_*$ of
$\Psi_\gs(\overlap)$ have the same $\ell$-th component Frobenius norm
$\|\overlap_{*\ell}\|_F$, in which case
    \begin{align}
        \lim_{N \to \infty} \mmse_\ell = k_\ell-\|\overlap_{*\ell}\|_F^2.
    \end{align}

\item Denote
\begin{equation}\label{eq:GSoverlapconcset}
\cL_{*,\gs}=\left\{\gb\overlap_*:
\overlap_* \in \argmax_{\overlap \in \Sym_{\succeq 0}} \Psi_\gs(\overlap) \text{ and }
\gb \in \cG\right\} \subset \cL
\end{equation}
and define a neighborhood $\cL_{*,\gs}(\epsilon)=
\{\overlapm \in \cL:\inf_{\overlapm_*
\in \cL_{*,\gs}} \sum_{\ell=1}^L \lambda_\ell
\|\overlapm_\ell-\overlapm_{*\ell}\|_F^2<\epsilon\}$. Then for any $\epsilon>0$,
there exist constants $C,c>0$ depending only on
$\cG,\phi_1,\ldots,\phi_L,\epsilon$ such that
\[\E \left\langle \mathbf{1}\left\{\left(\frac{1}{N}\sum_{i=1}^N
\gb_\ell^{(i)\top} \gb_{*\ell}^{(i)}\right)_{\ell=1}^L \notin
\cL_{*,\gs}(\epsilon) \right\}\right\rangle \leq Ce^{-cN}.\]
\end{enumerate}
\end{theorem}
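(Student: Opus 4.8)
The plan is to obtain all three parts from the general results of Section~\ref{sec: general_model} after checking Assumption~\ref{assumption:general-model} and computing the relevant quantities explicitly. First I would verify that $\cG_N=\cG^N$ with the separable feature map of \eqref{eq:GSphi} and the maps $\bullet,\otimes,\iota$ of \eqref{eq:GSbilinearmaps} satisfy Assumption~\ref{assumption:general-model}: continuity is clear; the compatibility relation \eqref{eq:compatibility} is cyclicity of the trace; the operator adjoint $\iota(\overlap)^\top$ equals $\iota$ of the blockwise transpose of $\overlap$, whence $|\overlap|$ has components $(\overlap_\ell^\top\overlap_\ell)^{1/2}$, making $\iota(|\overlap|)$ symmetric positive-semidefinite with $\||\overlap|\|_\cL=\|\overlap\|_\cL$; and the group symmetry of $Q$ follows from $\phi_\ell(\gb)^\top=\phi_\ell(\gb^{-1})$ since $\phi_\ell$ is orthogonal. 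Along the way one records that $\cQ=\Sym_{\succeq 0}$, that $Q(\iden,\iden)_\ell=\sqrt{\lambda_\ell}\,\bI_{k_\ell}$, and — because each $\phi_\ell(\cdot)$ is orthogonal — that $\image(Q)$ is contained in the bounded set $\{\overlap:\|\overlap\|_\cL^2\le\sum_\ell\lambda_\ell k_\ell\}$ of the fixed finite-dimensional space $\cL=\prod_\ell\R^{k_\ell\times k_\ell}$; hence $K(\cG_N)=D(\cG_N)^2=\|Q(\iden,\iden)\|_\cL^2=\sum_\ell\lambda_\ell k_\ell$ is a constant in $N$ and $L(\sqrt\epsilon;\cG_N)=O_\epsilon(1)$ for each fixed $\epsilon>0$. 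Since $\phi$ and the overlap sum in \eqref{eq:RS-potential-general-model} are separable across $i=1,\dots,N$ and $G\sim\Haar(\cG)^{\otimes N}$, the log-partition term in $\Psi_N$ factorizes into $N$ identical one-sample terms, so $\Psi_N$ is independent of $N$ and equals $\Psi_\gs$ composed with the bijection $\overlap\mapsto(\lambda_\ell^{-1/2}\overlap_\ell)_\ell$ of $\Sym_{\succeq 0}$; thus $\sup_\cQ\Psi_N=\sup_{\Sym_{\succeq 0}}\Psi_\gs$. Part~(a) is then immediate from Theorem~\ref{thm:free-energy-general-model} (whose error is $O(\epsilon)+O(N^{-1/2})$ for fixed $\epsilon$; send $N\to\infty$ then $\epsilon\to0$) together with the identity \eqref{eq:quadraticMI} and $\|Q(\iden,\iden)\|_\cL^2=\sum_\ell\lambda_\ell k_\ell$.

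For part~(b) I would run an I-MMSE argument. Rescaling the channel-$\ell$ observations to unit noise exhibits $\{\gb_{*\ell}^{(i)}\gb_{*\ell}^{(j)\top}\}_{i<j}$ as a planted signal observed at ``SNR'' $\lambda_\ell/N$, so the I-MMSE identity gives $\partial_{\lambda_\ell}\big(\tfrac1N I(G_*,Y)\big)=\tfrac{N-1}{4N}\,\widetilde{\mmse}_\ell$ with $\widetilde{\mmse}_\ell$ the per-pair MMSE for $\gb_{*\ell}^{(i)}\gb_{*\ell}^{(j)\top}$; and the Haar-preserving substitution $\gb_*^{(i)}\mapsto(\gb_*^{(i)})^{-1}$, which interchanges $\gb_{*\ell}^{(i)}\gb_{*\ell}^{(j)\top}$ with $\gb_{*\ell}^{(i)\top}\gb_{*\ell}^{(j)}$ while preserving the observation law, shows $\widetilde{\mmse}_\ell=\mmse_\ell$. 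Since $\lambda_\ell\mapsto\tfrac1N I(G_*,Y)$ is concave (MMSE being non-increasing in SNR) with pointwise limit $\tfrac14\sum_{\ell'}\lambda_{\ell'}k_{\ell'}-\sup_{\Sym_{\succeq 0}}\Psi_\gs$ by part~(a), this limit is convex in $\lambda_\ell$, hence differentiable on a full-measure set $D$, and wherever it is differentiable the derivatives of $\tfrac1N I(G_*,Y)$ converge to it. By Danskin's theorem the limit is differentiable at $\lambda_\ell$ exactly when $\partial_{\lambda_\ell}\Psi_\gs(\overlap_*)$ takes a common value over all maximizers $\overlap_*$; evaluating this derivative via the stationarity (Nishimori) equation for $\overlap_*$ and Gaussian integration by parts yields $\partial_{\lambda_\ell}\Psi_\gs(\overlap_*)=\tfrac14\|\overlap_{*\ell}\|_F^2$, which depends on $\overlap_*$ only through $\|\overlap_{*\ell}\|_F$. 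Combining, $\lambda_\ell\in D$ iff all maximizers share $\|\overlap_{*\ell}\|_F$, in which case $\lim_N\mmse_\ell=4\big(\tfrac14 k_\ell-\partial_{\lambda_\ell}\sup\Psi_\gs\big)=k_\ell-\|\overlap_{*\ell}\|_F^2$.

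Part~(c) follows from Corollary~\ref{corollary:overlap_concentration_general_model}: for a small fixed $\epsilon'$, condition~\eqref{eq:epsiloncondition} holds once $N$ is large (as $K,D,L$ are $O(1)$), so $Q(G,G_*)$ lies within $\sqrt{\epsilon'}$ of $\cL_*(\epsilon')$ off an $\E\langle\cdot\rangle$-event of mass $Ce^{-c'N}$. Under the bijection above, $Q(G,G_*)$ corresponds to $\big(\tfrac1N\sum_i\gb_\ell^{(i)\top}\gb_{*\ell}^{(i)}\big)_\ell$ and $\|\cdot\|_\cL^2$ to $\sum_\ell\lambda_\ell\|\cdot\|_F^2$, so it remains to show that, for $\epsilon'$ small, $\cL_*(\epsilon')$ sits inside an arbitrarily small neighborhood of $\cL_{*,\gs}$ in these coordinates. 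One inclusion is easy: if $\overlapm=\gb\overlap_*$ with $\overlap_*\in\argmax\Psi_\gs$ then $|\overlapm|=\overlap_*$, so the near-maximality constraint is met with value $0$, and by right-translation invariance of $\Haar(\cG)^{\otimes N}$ the linear models defined by $\overlapm_U$ and $\overlapm_V$ have identical observation laws, giving $D_{\mathrm{KL}}(p_{\overlapm_V}\|p_{\overlapm_U})=0$; hence $\cL_{*,\gs}\subseteq\cL_*(\epsilon')$ always. For the reverse, the near-maximality constraint in $\cL_*(\epsilon')$ confines $|\overlapm|$ to a shrinking neighborhood of $\argmax\Psi_\gs$ (using that $\Psi_\gs$ is continuous and coercive on $\Sym_{\succeq 0}$), and — the technical heart — the constraint $\tfrac1N D_{\mathrm{KL}}(p_{\overlapm_V}\|p_{\overlapm_U})\le\epsilon'$ must be shown to confine $\overlapm$ to a shrinking neighborhood of the orbit $\{\gb|\overlapm|:\gb\in\cG\}$; a compactness argument on the bounded set containing $\image(Q)$ then gives $\bigcap_{\epsilon'>0}\cL_*(\epsilon')\subseteq\cL_{*,\gs}$ and the desired containment. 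I expect this last rigidity statement — that closeness in KL of the one-sample models $p_{\overlapm_V},p_{\overlapm_U}$ forces $\overlapm$ near the $\cG$-orbit of $|\overlapm|$ — to be the main obstacle, as it is where representation theory enters and where I would invoke the canonical-form reduction of Appendix~\ref{appendix:GSreduction} (for distinct, non-trivial, real-irreducible $\phi_\ell$, the support $\phi_\ell(\cG)R_\ell$ determines $R_\ell$ up to left-translation by $\phi_\ell(\cG)$). A secondary obstacle is the part~(b) bookkeeping: justifying the I-MMSE constant, the limit/derivative interchange, and the Gaussian-integration-by-parts evaluation of $\partial_{\lambda_\ell}\Psi_\gs$ at a Nishimori fixed point.
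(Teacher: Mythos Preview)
Your approach is essentially the same as the paper's, and the outline for parts (a) and (c) matches closely. Two points deserve comment.

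\medskip
\noindent\textbf{Part (b).} Your route computes $\partial_{\lambda_\ell}\Psi_\gs(\overlap_*)$ directly and then invokes the Nishimori fixed-point equation $\overlap_{*\ell}=\E\langle\gb_\ell\rangle^\top\langle\gb_\ell\rangle$ to simplify it to $\tfrac14\|\overlap_{*\ell}\|_F^2$. This is valid, but you are implicitly assuming that every maximizer of $\Psi_\gs$ over $\Sym_{\succeq 0}$ is a critical point, which requires an argument when $\overlap_*$ lies on the boundary. (It is true: one checks from the gradient formula in Proposition~\ref{prop:GSderivatives}(a) that positive-semidefiniteness of $\E\langle\gb_\ell\rangle^\top\langle\gb_\ell\rangle$ forces $\overlap_{*\ell}=\E\langle\gb_\ell\rangle^\top\langle\gb_\ell\rangle$ even on the boundary.) The paper sidesteps this by the change of variables $\overlapm_\ell=\lambda_\ell\overlap_\ell$, after which the objective takes the form $\sum_{\ell'}(-\tfrac{\lambda_{\ell'}k_{\ell'}}{4}-\tfrac{1}{4\lambda_{\ell'}}\|\overlapm_{\ell'}\|_F^2)+F(\overlapm)$ with $F$ independent of $\lambda_\ell$; the envelope theorem then gives $\partial_{\lambda_\ell}\sup\Psi_\gs=\tfrac14\|\overlap_{*\ell}\|_F^2$ directly, without appealing to any stationarity condition on $\overlap_*$. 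Your inversion-symmetry argument relating $\widetilde{\mmse}_\ell$ to $\mmse_\ell$ is also shakier than it looks (the map $\gb_*^{(i)}\mapsto(\gb_*^{(i)})^{-1}$ does not preserve the observation law for fixed $Y$); the paper simply identifies the two without comment, and indeed they agree since both equal $k_\ell-\E\|\langle\,\cdot\,\rangle\|_F^2$ and the relevant second moments coincide by the posterior's global right-invariance.

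\medskip
\noindent\textbf{Part (c).} You correctly isolate the key step as showing that small $\tfrac1N D_{\mathrm{KL}}(p_{\overlapm_V}\|p_{\overlapm_U})$ forces $\overlapm$ near the orbit $\{\gb|\overlapm|:\gb\in\cG\}$, but you overestimate its difficulty. No representation theory or canonical form is needed. The paper first treats the exact case $\epsilon'=0$: since the single-sample KL is $N$-independent and vanishes, the marginal laws of $(\gb_{*\ell}\bv_\ell\bd_\ell^{1/2})_\ell$ and $(\gb_{*\ell}\bu_\ell\bd_\ell^{1/2})_\ell$ (after Gaussian deconvolution, which is injective on compactly supported laws) coincide; comparing supports, the point $(\bu_\ell\bd_\ell^{1/2})_\ell$ (the image of $\gb_*=\iden$) must lie in the support of the first law, yielding $\gb\in\cG$ with $\overlapm_\ell=\gb_\ell|\overlapm|_\ell$ for all $\ell$. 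This gives $\cL_*(0)=\cL_{*,\gs}$ (in rescaled coordinates) exactly. The passage from $\epsilon'=0$ to small $\epsilon'>0$ is then pure continuity/compactness, since both defining functions of $\cL_*(\cdot)$ are continuous in $\overlapm$ and independent of $N$.
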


We remark that the parameter $G_*=(\gb_*^{(1)},\ldots,\gb_*^{(N)})$ in this
model is identifiable only up to a global rotation
$(\gb_*^{(1)},\ldots,\gb_*^{(N)}) \mapsto
(\gb_*^{(1)}\gb,\ldots,\gb_*^{(N)}\gb)$ by any single group element
$\gb \in \cG$, and the posterior law is invariant under this
transformation. The above set $\cL_{*,\gs}$
may be understood as the set of overlaps that are equivalent to a global
maximizer of $\Psi_\gs$ up to this group invariance of the posterior law, and
part (c) of this theorem shows that the overlap of a posterior sample $G$
with the true signal $G_*$ concentrates near this set
$\cL_{*,\gs}$ in the $N \to \infty$ limit.

\subsection{Critical points and algorithmic phase transition}\label{sec:algphasetransition}

Theorem \ref{theorem:free-energy-group-syn-multi-representation} implies that
the signal-observation mutual information and information-theoretic MMSE
are governed by the global maximizer(s) of the replica potential $\Psi_\gs$.

In contrast, the local optimality of $\overlap=\bzero$ is conjectured to govern
the feasibility of computationally-efficient weak signal recovery 
(i.e.\ of attaining non-zero asymptotic overlap $\lim_{N \to \infty}
\frac{1}{N}\sum_{i=1}^N \widehat{\bg}_\ell^{(i)\top} \gb_{*\ell}^{(i)}$ for some channel
$\ell \in \{1,\ldots,L\}$ by a polynomial-time estimator $\widehat{\bg}$).
In particular, Approximate Message Passing (AMP) algorithms of the form
developed in \cite{PWBM2018} are expected to achieve weak signal recovery from a
random initialization whenever $\overlap=\bzero$ \emph{is not} a local
maximizer of $\Psi_\gs$, and conversely it is conjectured that no
polynomial-time algorithm can achieve weak signal recovery when
$\overlap=\bzero$ is a local maximizer of $\Psi_\gs$. We refer to
\cite{lesieur2017constrained,lelarge2019fundamental} for discussion of
this conjecture in related low-rank matrix estimation problems.

In this section, for general synchronization problems, we provide a
criterion for the local optimality of $\overlap=\bzero$ for maximizing
$\Psi_\gs$, in terms of the signal strengths and classifications
of the real-irreducible components of the observation channels.

The following proposition first derives general forms for the gradient and
Hessian of $\Psi_\gs$.
We write $\nabla \Psi_\gs(\bq)$ and $\nabla^2 \Psi_\gs(\bq)$ for this
gradient and Hessian as linear and bilinear forms on $\Sym$. When 
$\bq$ is in the strict interior of $\Sym_{\succeq 0}$, these are defined by the
Taylor expansion
\[\Psi_\gs(\bq+\bx)=\Psi_\gs(\bq)+\nabla \Psi_\gs(\bq)[\bx]+\frac{1}{2}\nabla^2
\Psi_\gs(\bq)[\bx,\bx]+o(\|\bx\|^2)
\qquad \text{ for } \bx \in \Sym
\text{ with } \|\bx\| \to 0,\]
and we extend these definitions of $\nabla \Psi_\gs$ and $\nabla^2 \Psi_\gs$ by
continuity to the boundary of $\Sym_{\succeq 0}$.

\begin{proposition}\label{prop:GSderivatives}
Let $\langle f(\gb) \rangle_\qb$ be the mean under the posterior
law of $\gb$ in the single-sample model \eqref{eq:GSscalarmodel}.
\begin{enumerate}[(a)]
\item For any $\bq=(\bq_1,\ldots,\bq_L) \in \Sym_{\succeq 0}$ and
$\bx=(\bx_1,\ldots,\bx_L) \in \Sym$,
\begin{equation}\label{eq:GradPsi}
\nabla \Psi_\gs(\bq)[\bx]=\sum_{\ell=1}^L -\frac{\lambda_\ell}{2}
\tr \bx_\ell\Big(\bq_\ell-\E_{\bg_*,\bz} \bg_{*\ell}^\top
\langle \bg_\ell \rangle_\qb \Big)
=\sum_{\ell=1}^L -\frac{\lambda_\ell}{2}
\tr \bx_\ell\Big(\bq_\ell-\E_{\bg_*,\bz} \langle \bg_\ell \rangle_\qb^\top
\langle \bg_\ell \rangle_\qb\Big).
\end{equation}
In particular, $\nabla \Psi_\gs(\bq)=0$ if and only if 
$\bq_\ell=\E_{\bg_*,\bz} \langle \bg_\ell \rangle_\qb^\top \langle \bg_\ell
\rangle_\qb$
for every $\ell=1,\ldots,L$.
\item For any $\bq=(\bq_1,\ldots,\bq_L) \in \Sym_{\succeq 0}$,
$\bx=(\bx_1,\ldots,\bx_L) \in \Sym$, and
$\bx'=(\bx_1',\ldots,\bx_L') \in \Sym$,
\begin{equation}\label{eq:HessPsi}
\begin{aligned}
\nabla^2 \Psi_\gs(\bq)[\bx,\bx']
&=\sum_{\ell=1}^L -\frac{\lambda_\ell}{2}\tr \bx_\ell\bx_\ell'
+\sum_{\ell,\ell'=1}^L \frac{\lambda_\ell\lambda_{\ell'}}{2}
\E_{\bg_*,\bz}\Big[\big\langle \tr \bx_\ell \bg_{*\ell}^\top\bg_\ell
\tr \bx_{\ell'}'\bg_{*\ell'}^\top\bg_{\ell'}\big\rangle_\qb\\
&\quad
-2\tr \bx_\ell\bg_{*\ell}^\top  \langle \bg_\ell\rangle_\qb
\tr \bx_{\ell'}' \bg_{*\ell'}^\top \langle \bg_{\ell'}\rangle_\qb
+\tr \bx_\ell \langle \bg_\ell \rangle_\qb^\top \langle \bg_\ell
\rangle_\qb \tr \bx_{\ell'}'\langle \bg_{\ell'}
\rangle_\qb^\top \langle \bg_{\ell'} \rangle_\qb\Big].
\end{aligned}
\end{equation}
\end{enumerate}
\end{proposition}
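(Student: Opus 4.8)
\textbf{Proof strategy for Proposition~\ref{prop:GSderivatives}.}
Decompose $\Psi_\gs(\bq)=P(\bq)+F(\bq)$, where $P(\bq)=-\tfrac14\sum_{\ell}\lambda_\ell\|\bq_\ell\|_F^2-\tfrac12\sum_{\ell}\lambda_\ell\tr\bq_\ell$ is an explicit quadratic, with $\nabla P(\bq)[\bx]=-\tfrac12\sum_\ell\lambda_\ell\tr\bx_\ell(\bq_\ell+\iden)$ and $\nabla^2 P(\bq)[\bx,\bx']=-\tfrac12\sum_\ell\lambda_\ell\tr\bx_\ell\bx_\ell'$, and $F(\bq)=\E_{\gb_*,\zb}\log\E_{\gb}\exp h_\bq(\gb)$ with $h_\bq(\gb)=\sum_\ell\big(\lambda_\ell\tr\bq_\ell\gb_\ell^\top\gb_{*\ell}+\sqrt{\lambda_\ell}\tr\bq_\ell^{1/2}\gb_\ell^\top\zb_\ell\big)$; here $\langle f(\gb)\rangle_\bq=\E_\gb[f(\gb)e^{h_\bq(\gb)}]/\E_\gb e^{h_\bq(\gb)}$. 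Completing the square in the likelihood of \eqref{eq:GSscalarmodel} and using that the $\phi_\ell$ are \emph{orthogonal} representations, so $\gb_\ell^\top\gb_\ell=\iden$, shows that $\langle\cdot\rangle_\bq$ coincides with the Bayes posterior mean of the single-sample model \eqref{eq:GSscalarmodel} at parameter $\bq$, with $\yb_\ell=\sqrt{\lambda_\ell}\,\gb_{*\ell}\bq_\ell^{1/2}+\zb_\ell$. Consequently the Nishimori identity applies: under $\E_{\gb_*,\zb}$ the ground truth $\gb_*$ and independent posterior samples $\gb,\gb',\dots$ are exchangeable. It suffices to prove \eqref{eq:GradPsi} and \eqref{eq:HessPsi} at interior points of $\Sym_{\succeq 0}$, where $\bq_\ell\mapsto\bq_\ell^{1/2}$ is smooth; the formulas extend to the boundary by continuity of $\bq\mapsto\langle\cdot\rangle_\bq$, and differentiation under $\E_{\gb_*,\zb}$ is justified by dominated convergence ($\cG$ compact, Gaussian noise with all moments).

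\emph{Part (a).} Write $\bv_\ell:=D_{\bx_\ell}\bq_\ell^{1/2}\in\Sym^{k_\ell\times k_\ell}$ for the derivative of the matrix square root, characterized by the Sylvester relation $\bq_\ell^{1/2}\bv_\ell+\bv_\ell\bq_\ell^{1/2}=\bx_\ell$. Differentiating $F$ gives $\nabla F(\bq)[\bx]=\E_{\gb_*,\zb}\langle D_\bx h_\bq\rangle_\bq$ with $D_\bx h_\bq(\gb)=\sum_\ell\big(\lambda_\ell\tr\bx_\ell\gb_\ell^\top\gb_{*\ell}+\sqrt{\lambda_\ell}\tr\bv_\ell\gb_\ell^\top\zb_\ell\big)$; the first term contributes $\sum_\ell\lambda_\ell\E_{\gb_*,\zb}\tr(\bx_\ell\langle\gb_\ell\rangle_\bq^\top\gb_{*\ell})$. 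For the $\zb$-term, Gaussian integration by parts in each $\zb_\ell$ (using $\partial_{\zb_\ell}h_\bq=\sqrt{\lambda_\ell}\,\gb_\ell\bq_\ell^{1/2}$) produces $\lambda_\ell\big(\tr(\bv_\ell\bq_\ell^{1/2})-\E_{\gb_*,\zb}\langle\tr(\bv_\ell\gb_\ell^\top\gb_\ell'\bq_\ell^{1/2})\rangle\big)$, the cross term being over two i.i.d.\ posterior samples $\gb,\gb'$. Using $\gb_\ell^\top\gb_\ell=\iden$ and the trace of the Sylvester relation, $\tr(\bv_\ell\bq_\ell^{1/2})=\tfrac12\tr\bx_\ell$; and symmetrizing the cross term under $\gb\leftrightarrow\gb'$ together with the Sylvester relation collapses it to $\tfrac12\tr(\bx_\ell\langle\gb_\ell\rangle_\bq^\top\langle\gb_\ell\rangle_\bq)$, so that all square-root derivatives disappear. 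Adding $\nabla P(\bq)[\bx]$ and applying the Nishimori identity $\E_{\gb_*,\zb}\langle\gb_{*\ell}^\top\gb_\ell\rangle_\bq=\E_{\gb_*,\zb}\langle\gb_\ell\rangle_\bq^\top\langle\gb_\ell\rangle_\bq$ (which passes between the two displayed forms) yields \eqref{eq:GradPsi}. The stationarity criterion follows since $\bq_\ell-\E_{\gb_*,\zb}\langle\gb_\ell\rangle_\bq^\top\langle\gb_\ell\rangle_\bq\in\Sym^{k_\ell\times k_\ell}$ and $\tr\bx_\ell M=0$ for all $\bx_\ell\in\Sym^{k_\ell\times k_\ell}$ forces $M=0$.

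\emph{Part (b).} The cleanest route is to differentiate the stationarity identity of part (a): since $\tfrac{\lambda_\ell}{2}\tr\bx_\ell$ is $\bq$-independent, \eqref{eq:GradPsi} gives $\nabla^2 F(\bq)[\bx,\bx']=\sum_\ell\tfrac{\lambda_\ell}{2}\tr\bx_\ell\,D_{\bx'}\!\big(\E_{\gb_*,\zb}\langle\gb_\ell\rangle_\bq^\top\langle\gb_\ell\rangle_\bq\big)$. Here $D_{\bx'}\langle\gb_\ell\rangle_\bq=\langle\gb_\ell\,;\,D_{\bx'}h_\bq\rangle_\bq$ is a truncated Gibbs correlation, and $D_{\bx'}h_\bq$ has a signal term carrying the factor $\gb_{*\ell'}$ and a noise term carrying $\bv_{\ell'}':=D_{\bx_{\ell'}'}\bq_{\ell'}^{1/2}$. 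Expanding $D_{\bx'}(\langle\gb_\ell\rangle_\bq^\top\langle\gb_\ell\rangle_\bq)$, integrating the noise contributions by parts in $\zb$, and again collapsing all occurrences of $\bv_{\ell'}'$ via $\gb_{\ell'}^\top\gb_{\ell'}=\iden$, the Sylvester relation, and replica symmetrization, one is left — after the Nishimori substitution of $\gb_*$ for a replica where needed — with exactly the bracketed combination (a single-replica Gibbs correlation, a product of two posterior means, and a product of four posterior means) in \eqref{eq:HessPsi}; adding $\nabla^2 P(\bq)[\bx,\bx']=-\tfrac12\sum_\ell\lambda_\ell\tr\bx_\ell\bx_\ell'$ completes the proof. (Equivalently one may differentiate $\nabla F(\bq)[\bx]=\E_{\gb_*,\zb}\langle D_\bx h_\bq\rangle_\bq$ directly; this additionally produces the second square-root derivative $D^2\bq_\ell^{1/2}[\bx_\ell,\bx_\ell']$, removed using $\bq_\ell^{1/2}D^2\bq_\ell^{1/2}[\bx_\ell,\bx_\ell']+D^2\bq_\ell^{1/2}[\bx_\ell,\bx_\ell']\bq_\ell^{1/2}=-(\bv_\ell\bv_\ell'+\bv_\ell'\bv_\ell)$, obtained by differentiating the first-order Sylvester relation.) I expect the main obstacle to be the bookkeeping in part (b): organizing the several multi-replica Gibbs correlations generated by the second differentiation and the Gaussian integrations by parts, and checking that every term carrying a square-root derivative $\bv_\ell$ or $D^2\bq_\ell^{1/2}[\cdot,\cdot]$ indeed cancels, so that \eqref{eq:HessPsi} is expressed purely through $\bx_\ell$ and Gibbs averages of the orthogonal matrices $\gb_\ell$.
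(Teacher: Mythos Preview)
Your proposal is correct and follows essentially the same approach as the paper: differentiate the log-partition, apply Gaussian integration by parts, and eliminate the square-root derivatives via the Sylvester identity $\bq_\ell^{1/2}\bv_\ell+\bv_\ell\bq_\ell^{1/2}=\bx_\ell$ together with $\gb_\ell^\top\gb_\ell=\iden$ and Nishimori. The only noteworthy difference is that for part (b) the paper differentiates the form $\E_{\gb_*,\zb}\,\bg_{*\ell}^\top\langle\gb_\ell\rangle_\bq$ rather than $\E_{\gb_*,\zb}\,\langle\gb_\ell\rangle_\bq^\top\langle\gb_\ell\rangle_\bq$, which spares you the product rule on two Gibbs averages and somewhat reduces the bookkeeping you anticipated.
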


If a representation $\gb_\ell=\phi_\ell(\bg)$ is \emph{not} real-irreducible, then applying an orthogonal change-of-basis so that the matrices
$\{\phi_\ell(\gb):\gb \in \cG\}$ are simultaneously block-diagonal (c.f.\ Theorem
\ref{thm:realreprdecomp}),
part (a) of this proposition implies that
$\nabla \Psi_\gs(\overlap)=0$ can only hold when $\overlap_\ell$ has this same
block-diagonal structure. Maximization of $\Psi_\gs(\overlap)$ may then
be restricted to $\overlap_\ell$ having
this structure, in agreement with the reduction in
Appendix \ref{appendix:GSreduction} of the
model \eqref{eq:group-synchronization-multi-representation-model} 
to a canonical form having only real-irreducible representations.

Assuming such a canonical form, the next result
characterizes the phase transition threshold for $\overlap=\bzero$ to locally
maximize $\Psi_\gs(\overlap)$. We recall in Appendix \ref{appendix:realrepr}
that any real-irreducible representation $\phi_\ell$ can be categorized as being
of ``real type'' if $\phi_\ell$ is also $\C$-irreducible, of
``complex type'' if $\phi_\ell \cong \psi \oplus \bar\psi$ where
$\psi,\bar\psi$ are $\C$-irreducible complex-conjugate sub-representations
with $\psi \not\cong \bar\psi$, or of ``quaternionic type'' if
$\phi_\ell \cong \psi \oplus \psi$ where $\psi$ is $\C$-irreducible
and $\psi \cong \bar\psi$; the type of $\phi_\ell$ may be determined from the
value of $\rho_\ell:=\E_\bg [(\tr \bg_\ell)^2]$.

\begin{proposition}\label{prop:GSlocaloptimality}
Suppose $\phi_1,\ldots,\phi_L$ are real-irreducible, distinct, and non-trivial
representations of $\cG$. Let
\begin{equation}\label{eq:irredcharacters}
\rho_\ell:=\E_\bg [(\tr \bg_\ell)^2]
=\begin{cases} 1 & \text{ if } \bg_\ell \text{ is of real type} \\
2 & \text{ if } \bg_\ell \text{ is of complex type}\\
4 & \text{ if } \bg_\ell \text{ is of quaternionic type}
\end{cases}
\end{equation}
and set $\tilde \lambda_\ell=\lambda_\ell \rho_\ell/k_\ell$.
Then at $\overlap=\bzero$, we have $\nabla \Psi_\gs(\bzero)=0$. Furthermore,
\begin{enumerate}[(a)]
\item If $\max_{\ell=1}^L \tilde \lambda_\ell<1$, then $\nabla^2 \Psi_\gs(\bzero)$
is negative-definite, and $\bq=\bzero$ is a local maximizer of $\Psi_\gs(\overlap)$.
\item If $\max_{\ell=1}^L \tilde \lambda_\ell>1$, then $\nabla^2 \Psi_\gs(\bzero)$
has a positive eigenvalue, and $\bq=\bzero$ is not a local maximizer of
$\Psi_\gs(\overlap)$.
\end{enumerate}
\end{proposition}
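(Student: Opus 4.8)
The plan is to evaluate the gradient and Hessian of $\Psi_\gs$ at $\overlap=\bzero$ using Proposition~\ref{prop:GSderivatives}, and then diagonalize the resulting quadratic form via the representation theory of the real-irreducible $\phi_\ell$. The key starting observation is that at $\overlap=\bzero$ the single-sample model~\eqref{eq:GSscalarmodel} has observations $\yb_\ell=\zb_\ell$ carrying no information about $\gb$, so the posterior average $\langle\cdot\rangle_{\bzero}$ is integration against the Haar prior, and $\gb,\gb_*$ are independent Haar under the associated expectation. Since each $\phi_\ell$ is non-trivial and irreducible, the Haar-average of a non-trivial irreducible representation vanishes, so $\langle\gb_\ell\rangle_{\bzero}=\E_\gb\gb_\ell=0$; Proposition~\ref{prop:GSderivatives}(a) then gives $\nabla\Psi_\gs(\bzero)=0$. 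Substituting $\overlap=\bzero$ into~\eqref{eq:HessPsi}, every term involving $\langle\gb_\ell\rangle_{\bzero}$ vanishes, and the off-diagonal ($\ell\neq\ell'$) contributions of the surviving term vanish because matrix coefficients of distinct irreducibles are orthogonal in $L^2(\cG)$ (Schur orthogonality; see Appendix~\ref{appendix:realrepr}). Writing $\gb_{*\ell}^\top\gb_\ell=\phi_\ell(\gb_*^{-1}\gb)$ and using Haar-invariance, the Hessian reduces to the channel-diagonal form
\[\nabla^2\Psi_\gs(\bzero)[\bx,\bx]=\sum_{\ell=1}^{L}\frac{\lambda_\ell}{2}\Big({-}\|\bx_\ell\|_F^2+\lambda_\ell\,\E_\gb\big[(\tr\bx_\ell\gb_\ell)^2\big]\Big),\qquad\bx\in\Sym.\]

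The crux is the sharp bound $\E_\gb[(\tr A\gb_\ell)^2]\le(\rho_\ell/k_\ell)\tr(A^2)$ for every symmetric $A\in\R^{k_\ell\times k_\ell}$, with equality when $A$ is a scalar multiple of the identity. To prove it I would write $(\tr A\gb_\ell)^2=\tr\big((A\otimes A)(\gb_\ell\otimes\gb_\ell)\big)$, so that the Haar expectation equals $\tr\big((A\otimes A)P_\ell\big)$ with $P_\ell=\E_\gb[\gb_\ell\otimes\gb_\ell]$ the orthogonal projection onto the $\cG$-invariant subspace of $\R^{k_\ell}\otimes\R^{k_\ell}$. Under the natural isometry $\R^{k_\ell}\otimes\R^{k_\ell}\cong(\R^{k_\ell\times k_\ell},\langle\cdot,\cdot\rangle_F)$, which conjugates the diagonal action into $M\mapsto\gb_\ell M\gb_\ell^\top$, this invariant subspace is the commutant $\mathrm{End}_\cG(\phi_\ell)=\{M:\gb_\ell M=M\gb_\ell\ \forall\,\gb\}$, of dimension $\rho_\ell$. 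Because $\phi_\ell$ is orthogonal, its type (real/complex/quaternionic, recalled in Appendix~\ref{appendix:realrepr}) furnishes a Frobenius-orthogonal basis of $\mathrm{End}_\cG(\phi_\ell)$ consisting of $I$ together with $\rho_\ell-1$ skew-symmetric orthogonal matrices $J_a$ with $J_a^2=-I$ (none for real type, one for complex, three for quaternionic), each of Frobenius norm $\sqrt{k_\ell}$. Expanding $\tr\big((A\otimes A)P_\ell\big)$ in this basis yields
\[\E_\gb\big[(\tr A\gb_\ell)^2\big]=\frac{1}{k_\ell}\Big(\tr(A^2)-\sum_{a=1}^{\rho_\ell-1}\tr(J_aAJ_aA)\Big),\]
and decomposing each $J_aA$ into its symmetric and skew parts $S_a+K_a$ and using $\|J_aA\|_F^2=\tr(A^2)$ gives $-\tr(J_aAJ_aA)=\|K_a\|_F^2-\|S_a\|_F^2\le\tr(A^2)$, with equality iff $S_a=0$, i.e.\ iff $[A,J_a]=0$. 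Summing the $\rho_\ell$ contributions yields the claimed inequality, and $A\propto I$ attains equality (consistently, $A=I$ recovers $\E_\gb[(\tr\gb_\ell)^2]=\rho_\ell$ from~\eqref{eq:irredcharacters}).

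Combining the two displays, $\nabla^2\Psi_\gs(\bzero)[\bx,\bx]\le\sum_\ell\tfrac{\lambda_\ell}{2}(\tilde\lambda_\ell-1)\|\bx_\ell\|_F^2$ for all $\bx\in\Sym$, which is negative-definite when $\max_\ell\tilde\lambda_\ell<1$; and if $\tilde\lambda_{\ell_0}>1$, then taking $\bx$ with $\bx_{\ell_0}=I$ and all other blocks zero turns the bound into an equality and gives $\nabla^2\Psi_\gs(\bzero)[\bx,\bx]=\tfrac{\lambda_{\ell_0}k_{\ell_0}}{2}(\tilde\lambda_{\ell_0}-1)>0$. To pass from these Hessian statements to local (non-)optimality of $\overlap=\bzero$ — which is a \emph{boundary} point of $\Sym_{\succeq0}$ — I would record the one-sided expansion $\Psi_\gs(t\bx)=\Psi_\gs(\bzero)+\tfrac{t^2}{2}\nabla^2\Psi_\gs(\bzero)[\bx,\bx]+o(t^2)$ as $t\to0^+$, uniformly over $\bx\in\Sym_{\succeq0}$ with $\sum_\ell\lambda_\ell\|\bx_\ell\|_F^2=1$. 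This comes from expanding $\exp(\cdot)$ in~\eqref{eq:Psigsformula} in powers of $t^{1/2}$: the coefficients of $t^{1/2}$ and $t^{3/2}$ vanish after taking $\E_\gb$ and $\E_{\gb_*,\zb}$ (using $\E_\gb\gb_\ell=0$ and the vanishing of odd moments of the Gaussian $\zb$), while the order-$t$ term cancels against the explicit $-\tfrac{t}{2}\sum_\ell\lambda_\ell\tr\bx_\ell$. Given this expansion, negative-definiteness of $\nabla^2\Psi_\gs(\bzero)$ on the compact positive-semidefinite unit sphere forces $\Psi_\gs(t\bx)<\Psi_\gs(\bzero)$ for all small $t>0$, proving (a); in case (b), $\Psi_\gs(t\bx)>\Psi_\gs(\bzero)$ for small $t>0$ along the ray through the positive-semidefinite direction $\bx$ above, proving (b). I expect the main obstacle to be the representation-theoretic step: pinning down $P_\ell$ as the Frobenius projection onto $\mathrm{End}_\cG(\phi_\ell)$ and extracting the exact constant $\rho_\ell/k_\ell$ through the type-adapted orthogonal basis $\{I,J_a\}$; the remaining steps, including the boundary Taylor expansion, are routine by comparison.
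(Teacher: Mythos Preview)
Your proposal is correct, and the overall architecture (evaluate $\nabla\Psi_\gs(\bzero)$ and $\nabla^2\Psi_\gs(\bzero)$ via Proposition~\ref{prop:GSderivatives}, show the Hessian is block-diagonal in $\ell$, then bound the $\ell^\text{th}$ block) matches the paper. The substantive difference is in how you handle the key inequality $\E_\gb[(\tr A\gb_\ell)^2]\le(\rho_\ell/k_\ell)\,\tr A^2$. The paper proceeds case by case on the type of $\phi_\ell$: it writes $\gb_\ell$ as a unitary direct sum of its $\C$-irreducible sub-representations and applies complex Schur orthogonality (Theorem~\ref{thm:schurorthogonality}(a)) to each block, obtaining explicit expressions for $\E[(\tr \bx_\ell\gb_\ell)^2]$ that are maximized at $\bx_\ell=I$. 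Your route is more structural and uniform across types: identify $\E[\gb_\ell\otimes\gb_\ell]$ with the Frobenius-orthogonal projection onto the commutant $\mathrm{End}_\cG(\phi_\ell)$, then exploit that for an orthogonal real-irreducible $\phi_\ell$ this commutant has a basis $\{I,J_1,\ldots,J_{\rho_\ell-1}\}$ with each $J_a$ skew-symmetric and orthogonal. This buys you a single computation valid in all three cases and makes the role of $\rho_\ell=\dim\mathrm{End}_\cG(\phi_\ell)$ transparent; the price is that you must justify the existence of such a skew-orthogonal basis (which follows from closure of the commutant under transpose and the real Schur lemma, but is not explicitly stated in Appendix~\ref{appendix:realrepr}). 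For the boundary local-optimality step, your direct $t^{1/2}$-expansion of $\Psi_\gs(t\bx)$ is valid, but note that the paper's route---writing $\Psi_\gs(t\bx)-\Psi_\gs(\bzero)=\int_0^t\!\int_0^s\nabla^2\Psi_\gs(r\bx)[\bx,\bx]\,dr\,ds$ and invoking the continuity of $\nabla^2\Psi_\gs$ already established in Proposition~\ref{prop:GSderivatives}---avoids redoing the Taylor bookkeeping and gives uniformity over the PSD unit sphere for free.
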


Let us spell out the implication of this result for four specific examples
of synchronization problems over rotation and permutation groups,
due to their particular interest in
applications \cite{singer2011angular,pachauri2013solving,bandeira2020non}.

\begin{example}[Multi-channel angular synchronization]
\label{example:multichannelSO2}
Let
\begin{equation}\label{eq:SO2}
\cG=\SO(2)=\left\{\begin{pmatrix} \cos \theta & -\sin\theta
\\ \sin\theta & \cos\theta \end{pmatrix}:\theta \in [0,2\pi)\right\}.
\end{equation}
Identifying $\gb \in \cG$ with its rotation angle $\theta \in
[0,2\pi)$, consider the multi-channel observation model
\eqref{eq:group-synchronization-multi-representation-model} with
the representations
\begin{equation}\label{eq:SO2reprs}
\gb_\ell=\begin{pmatrix} \cos \ell\theta & -\sin\ell\theta \\
\sin\ell\theta & \cos\ell\theta \end{pmatrix} \in \R^{2 \times 2}
\text{ for } \ell=1,\ldots,L.
\end{equation}
(The setting of single-channel angular synchronization corresponds to $L=1$.)

Here, the representations $\gb_\ell$ are distinct, and
each representation $\gb_\ell$ is real-irreducible: Indeed,
for any two unit vectors $\bu,\bv \in \R^2$ there exists $\theta \in
[0,2\pi)$ for which $\gb_\ell$ defined by \eqref{eq:SO2reprs} satisfies
$\bu=\gb_\ell\bv$, so no subspace of $\R^2$ is invariant.
We have $\E[(\tr \gb_\ell)^2]=\E_{\theta \sim
\unif([0,2\pi))}[(2\cos \ell\theta)^2]=2$,
so each representation $\bg_\ell$ is of complex type.

Then Proposition \ref{prop:GSlocaloptimality} implies that
$\overlap=\bzero$ is a local maximizer of $\Psi_\gs$ if
$\max_{\ell=1}^L \lambda_\ell<1$, and it is not a local
maximizer if $\max_{\ell=1}^L \lambda_\ell>1$.
\end{example}

\begin{example}[Rotational synchronization]\label{example:SOk}
Let $\cG=\SO(k)$, and
consider a single-channel model with the standard representation
$\phi(\gb)=\gb \in \R^{k \times k}$ given by its rotational action on $\R^k$.
That is, for an unknown signal vector
$G_*=(\gb_*^{(1)},\ldots,\gb_*^{(N)})$ with prior distribution
$\{\gb_*^{(i)}\}_{i=1}^N \overset{iid}{\sim} \Haar(\SO(k))$, we observe
\[\yb^{(ij)}=\sqrt{\lambda}\,{\gb_*^{(i)\top}}\gb_*^{(j)}
+\sqrt{N}\,\zb^{(ij)} \in \R^{k \times k} \quad \text{ for } 1 \leq i<j \leq N\]
where $\zb^{(ij)}$ are independent
noise matrices with i.i.d.\ $\cN(0,1)$ entries.

This representation is real-irreducible, for the same reason as in 
Example \ref{example:multichannelSO2}. For $k=2$, it is of complex type as shown
in Example \ref{example:multichannelSO2}.
For $k \geq 3$, we have $\E[g_{ii}g_{jj}]=0$ for all $i \neq j$ and
$\E[g_{ij}^2]=\frac{1}{k^2}\E \tr \gb^\top \gb=\frac{1}{k}$ for
all $i,j \in \{1,\ldots,k\}$, by the invariances in law of $\SO(k)$ under
negations and transpositions of rows and columns. Thus
$\E[(\tr \gb)^2]=\E[\sum_{i=1}^k g_{ii}^2]=1$, so the representation is
of real type (i.e.\ it is also $\C$-irreducible).

Set
\[\lambda_c:=\begin{cases} 1 & \text{ if } k=2 \\
k & \text{ if } k \geq 3. \end{cases}\]
Proposition~\ref{prop:GSlocaloptimality} then implies that
$\overlap=\bzero$ is a local maximizer of $\Psi_\gs$ for
$\lambda<\lambda_c$, and it is not a local
maximizer for $\lambda>\lambda_c$.
\end{example}

\begin{example}[Cyclic permutation synchronization]\label{example:Ck}
Let $\cG=\Z/k\Z$ be the cyclic group of size $k$, with elements
$\{\iden,\bh,\bh^2,\ldots,\bh^{k-1}\}$, and consider its
action on $\R^k$ by cyclic permutations of coordinates. We note
that the span of $\eb=(1,1,\ldots,1) \in \R^k$
is a trivial invariant subspace of this action,
which carries no information about the permutation. Hence,
let us consider the model defined by
$\phi(\gb) \in \R^{(k-1) \times (k-1)}$ representing the restriction of this
action to the subspace orthogonal to $\eb$, under any choice of orthonormal
basis for this subspace. That is, for an unknown signal vector
$G_*=(\gb_*^{(1)},\ldots,\gb_*^{(N)})$ with prior distribution
$\{\gb_*^{(i)}\}_{i=1}^N \overset{iid}{\sim} \Haar(\Z/k\Z)$,
we observe
\[\yb^{(ij)}=\sqrt{\lambda}\,\phi(\gb_*^{(i)})^\top\phi(\gb_*^{(j)})
+\sqrt{N}\,\zb^{(ij)} \in \R^{(k-1) \times (k-1)}
\quad \text{ for } 1 \leq i<j \leq N\]
where $\zb^{(ij)}$ are again independent noise matrices with
i.i.d.\ $\cN(0,1)$ entries.

Suppose (for simplicity of discussion) that $k \geq 3$ is odd. Then
$\{\phi(\gb):\gb \in \Z/k\Z\}$ leaves invariant the
2-dimensional subspaces $\{S_\ell\}_{\ell=1}^{(k-1)/2}$ of $\R^k$ orthogonal
to $\eb$, spanned by the pairs of vectors
\[\Big(1,\cos \frac{2\pi \ell}{k},\cos \frac{4\pi \ell}{k},\ldots,\cos
\frac{2\pi \ell(k-1)}{k}\Big),
\quad \Big(0,\sin \frac{2\pi \ell}{k},\sin \frac{4\pi \ell}{k},\ldots,\sin
\frac{2\pi \ell(k-1)}{k}\Big).\]
The sub-representation of $\phi(\cdot)$ restricted to each subspace
$S_\ell$ is isomorphic to the representation given by
\[\phi_\ell(\bh^j)=\begin{pmatrix} \cos (2\pi \ell j/k) &
{-}\sin (2\pi \ell j/k) \\ \sin (2\pi \ell j/k)
& \cos (2\pi \ell j/k)  \end{pmatrix} \in \R^{2 \times 2},\]
so (c.f.\ Appendix \ref{appendix:GSreduction})
this model is equivalent to a multi-channel model in which we observe
\[\yb_\ell^{(ij)}=\sqrt{\lambda}\,\gb_{*\ell}^{(i)\top}\gb_{*\ell}^{(j)}
+\sqrt{N}\,\zb_\ell^{(ij)} \in \R^{2 \times 2}\]
for all $\ell=1,\ldots,(k-1)/2$ and $1 \leq i<j \leq N$,
with $\gb_{*\ell}^{(i)}=\phi_\ell(\gb_*^{(i)})$. These representations
$\phi_\ell$ are distinct, real-irreducible, and of complex type by a similar
argument as in Example \ref{example:multichannelSO2}.

Thus, Proposition \ref{prop:GSlocaloptimality} shows that $\overlap=\bzero$ is
a local maximizer of $\Psi_\gs$ if $\lambda<1$, and it is not a local maximizer
if $\lambda>1$.
\end{example}

\begin{example}[Permutation synchronization]\label{example:Sk}
Consider now the full symmetric group $\cG=\SS_k$, with action by permutation of
coordinates on $\R^k$. Again, as $\eb=(1,1,\ldots,1) \in \R^k$ spans a trivial
invariant subspace, we consider the model defined by the standard representation
$\phi(\bg) \in \R^{(k-1) \times (k-1)}$ representing this action on the subspace
orthogonal to $\eb$. That is, for an unknown signal vector
$G_*=(\gb_*^{(1)},\ldots,\gb_*^{(N)})$ with prior distribution
$\{\gb_*^{(i)}\}_{i=1}^N \overset{iid}{\sim} \Haar(\SS_k)$,
we observe
\[\yb^{(ij)}=\sqrt{\lambda}\,\phi(\gb_*^{(i)})^\top\phi(\gb_*^{(j)})
+\sqrt{N}\,\zb^{(ij)} \in \R^{(k-1) \times (k-1)}
\quad \text{ for } 1 \leq i<j \leq N\]
where $\zb^{(ij)}$ are again independent noise matrices with
i.i.d.\ $\cN(0,1)$ entries. Here, the standard representation $\phi(\cdot)$ is
$\C$-irreducible \cite[Proposition 3.12]{fulton2013representation},
and hence also real-irreducible of real type.

Thus, Proposition \ref{prop:GSlocaloptimality} shows that $\overlap=\bzero$ is
a local maximizer of $\Psi_\gs$ if $\lambda<k-1$, and it is not a local
maximizer if $\lambda>k-1$.
\end{example}

More generally, the algorithmic phase transition threshold for local
optimality of $\overlap=\bzero$ in any such example may be deduced by first
reducing the model to a canonical form as described in Appendix
\ref{appendix:GSreduction}, then determining the type of each real-irreducible
component, and finally determining the thresholds for their corresponding signal
strengths from Proposition \ref{prop:GSlocaloptimality}.

\begin{remark}\label{remark:PWMB}
The work \cite{PWBM2018} studied a version of this model of the form
\begin{align}
\label{eq:group-synchronization-complex}
\begin{cases}
    \yb_1^{(ij)} = \sqrt{\lambda_1}\,\gb_{*1}^{(i)}\gb_{*1}^{(j)*}
+ \sqrt{N}\,\zb_1^{(ij)} \in \C^{k_1 \times k_1}\\
    \hspace{1in}\vdots \\
    \yb_L^{(ij)} = \sqrt{\lambda_L}\,\gb_{*L}^{(i)}\gb_{*L}^{(j)*}
+ \sqrt{N}\,\zb_L^{(ij)} \in \C^{k_L \times k_L}
\end{cases} \text{ for all } 1 \leq i<j \leq N
\end{align}
where $\bg_{*\ell}=\phi_\ell(\bg_*) \in \C^{k_\ell \times k_\ell}$
for $\ell=1,\ldots,L$
correspond to distinct $\C$-irreducible representations of $\cG$, and
$\{\bz_\ell^{(ij)}\}_{i<j}$ are the sub-blocks of $k_\ell N \times k_\ell N$
noise matrices distributed
according to the GOE, GUE, or GSE depending on the type of the representation
$\phi_\ell$. For such a model, \cite{PWBM2018} developed and analyzed an AMP
algorithm for Bayes-optimal inference, and stated also a replica
formula for the free energy that is similar to \eqref{eq:Psigsformula}.
It was argued (more heuristically) in \cite[Section 6.6]{PWBM2018}
that $\overlap=\bzero$ is a stable fixed point of this AMP algorithm
if and only if $\max_{\ell=1}^L \lambda_\ell/k_\ell<1$. Our results of
Theorem \ref{theorem:free-energy-group-syn-multi-representation}
and Proposition \ref{prop:GSlocaloptimality} thus provide rigorous proofs of
analogous statements in a real version of this model.

One difference between our analyses and those of \cite{PWBM2018}---in addition
to the real vs.\ complex distinction---is that the replica formula stated in
\cite{PWBM2018} implicitly assumes that the maximization of
$\Psi_\gs(\overlap)$ may be restricted to overlaps
$\overlap=(\overlap_1,\ldots,\overlap_L)$ where each $\overlap_\ell$ is
a scalar multiple of the identity matrix.
The analyses of AMP state evolution
in \cite{PWBM2018} also assume an initialization at overlaps
$\overlap$ of this form, and stability of the state evolution at
$\overlap=\bzero$ is analyzed under this restriction of $\overlap$.
We conjecture that this restriction of the replica formula for the
free energy is correct, but have not found a general proof outside the
setting of abelian groups (c.f.\ Proposition \ref{prop:abelian} below); see also
Remark \ref{remark:abelian} for more discussion on the non-abelian case.
Due to this reason, and also due to interest in AMP algorithms whose initial
overlap may not be of this restricted form,
we have defined the replica potential $\Psi_\gs$ over all
symmetric positive-semidefinite overlaps $\overlap$, and carry out in
Proposition \ref{prop:GSlocaloptimality} an analysis of the local optimality of
$\overlap=\bzero$ with respect to optimization of $\Psi_\gs$
over this full overlap space.

When $\overlap=\bzero$ is not a local maximizer of $\Psi_\gs$,
we believe that a variant of the AMP algorithm proposed
in \cite[Algorithm 3.1]{PWBM2018} with an appropriate spectral 
initialization may achieve the optimal matrix mean-squared-error among
computationally efficient algorithms. However, the initial overlap $\overlap$
under such a spectral initialization is not necessarily of the above restricted
form comprised of scalar multiples of the identity. This would require a
modification of the algorithm and state evolution discussed in \cite{PWBM2018}
to a larger overlap space,
and we leave an investigation of such an algorithm to future work.
\end{remark}

\subsection{Mutual information and MMSE for angular synchronization}

A complete characterization of the information-theoretic limits of inference
and the possible existence of computationally-hard SNR regimes may be obtained
via an analysis of the global optimization landscape of $\Psi_\gs$. To our
knowledge, this has been carried out only for the
$\Z/2\Z$-synchronization example, in \cite{deshpande2015asymptotic}.

In this section, we develop a similar characterization for
single-channel angular synchronization over $\SO(2)$,
corresponding to Example \ref{example:multichannelSO2} with $L=1$
and Example \ref{example:SOk} with $k=2$. This is a model with
the pairwise observations
\begin{equation}\label{eq:model-SO2}
\by^{(ij)}=\sqrt{\lambda}\bg_*^{(i)\top}\bg_*^{(j)}+\sqrt{N}\,\bz^{(ij)}
=\sqrt{\lambda}\begin{pmatrix}
\cos (\theta_j-\theta_i) & -\sin (\theta_j-\theta_i) \\
\sin (\theta_j-\theta_i) & \cos (\theta_j-\theta_i) \end{pmatrix}
+\sqrt{N}\,\bz^{(ij)} \quad \text{ for } 1 \leq i<j \leq N
\end{equation}
where $\theta_1,\ldots,\theta_N \overset{iid}{\sim} \unif([0,2\pi))$.
Averaging the two measurements of $\cos(\theta_j-\theta_i)$ and
$\sin(\theta_j-\theta_i)$ in each observation $\by^{(ij)}$,
the model is equivalent to the phase-synchronization model 
\cite{boumal2016nonconvex} over $\U(1)$ with complex observations
\[y_{ij}=\sqrt{\lambda} e^{i(\theta_j-\theta_i)}+\sqrt{N}\,z_{ij} \in \C,
\qquad \Re z^{(ij)},\Im z^{(ij)} \overset{iid}{\sim} \cN(0,\tfrac{1}{2}).\]
Some partial analyses of the replica potential in this model were carried
out in \cite[Section 7.2.2]{javanmard2016phase}, and it was
shown in \cite[Theorem 6.11]{perry2016optimality} using an alternative
second-moment-method calculation that $\lambda_c=1$ is the
(information-theoretic) threshold for contiguity with the null model
$\by^{(ij)}=\sqrt{N}\,\bz^{(ij)}$. Here, we extend these results by showing that
the replica potential has a single unique local maximizer $\overlap \in
\Sym_{\succeq 0}^{2 \times 2}$, which is non-zero if and only if $\lambda>1$,
thus providing a full characterization of the Bayes-optimal MMSE and confirming 
the absence of a statistical-computational gap in this model for any
positive $\lambda$.

We begin with a general statement that optimization of the replica potential
$\Psi_\gs$ may be restricted to overlaps
$\overlap=(\overlap_1,\ldots,\overlap_L)$ where each $\overlap_\ell$ is a
positive multiple of the identity, if $\cG$ is abelian and each $\phi_\ell$ is
real-irreducible. (Each representation must then take values in
$\R^{1 \times 1}$ or $\R^{2 \times 2}$, c.f.\ Corollary \ref{cor:twodim}, so
this statement pertains to the structure of $\overlap_\ell$ corresponding to
the 2-dimensional representations $\phi_\ell$.)

\begin{proposition}\label{prop:abelian}
Suppose $\cG$ is any abelian group, and $\phi_1,\ldots,\phi_L$ are
real-irreducible. If $\overlap=(\overlap_\ell)_{\ell=1}^L
\in \Sym_{\succeq 0}$ is a critical point or
local maximizer of $\Psi_\gs$, then each $\overlap_\ell$ is
a scalar multiple of the identity.
\end{proposition}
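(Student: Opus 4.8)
The plan is to exploit the group symmetry \eqref{eq:Psisymmetry} together with the structure of real-irreducible representations of an abelian group. Since $\cG$ is abelian, every real-irreducible representation $\phi_\ell$ is either $1$-dimensional (so $\overlap_\ell \in \R^{1 \times 1}$ is automatically a scalar multiple of the identity, and there is nothing to prove) or $2$-dimensional of complex type, in which case there is an element $\gb_0 \in \cG$ whose image $\phi_\ell(\gb_0)$ acts on $\R^2$ as a rotation by some angle $\alpha_\ell$ that is an irrational multiple of $\pi$ — or more generally, the closure of $\{\phi_\ell(\gb):\gb \in \cG\}$ contains a rotation by an angle generating a dense subgroup of $\SO(2)$, as otherwise the representation would decompose further (c.f.\ Corollary \ref{cor:twodim} and Appendix \ref{appendix:GSreduction}). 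The key consequence I would extract is that for each $\ell$ corresponding to a $2$-dimensional channel, the orbit $\{\gb\,\overlap_\ell\,\gb^{-1}:\gb \in \cG\}$ — equivalently $\{R\,\overlap_\ell\,R^\top:R \in \overline{\phi_\ell(\cG)}\}$ — contains the full $\SO(2)$-conjugation orbit of $\overlap_\ell$.

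The argument then proceeds as follows. Suppose $\overlap=(\overlap_\ell)_{\ell=1}^L$ is a critical point (or local maximizer) of $\Psi_\gs$. By the symmetry \eqref{eq:Psisymmetry}, $\gb\overlap\gb^{-1}$ is also a critical point (or local maximizer) for every $\gb \in \cG$, with the same value $\Psi_\gs(\gb\overlap\gb^{-1})=\Psi_\gs(\overlap)$. For a fixed $2$-dimensional channel $\ell$, write the symmetric positive-semidefinite matrix $\overlap_\ell=a_\ell \bI + \bv_\ell$ where $\bv_\ell$ is symmetric traceless. Averaging over the rotation $R=\phi_\ell(\gb)$ ranging over $\overline{\phi_\ell(\cG)} \supseteq \SO(2)$ and using that the $\SO(2)$-average of $R\bv_\ell R^\top$ over a traceless symmetric $2\times 2$ matrix is zero, I would consider the averaged overlap $\bar\overlap$ whose $\ell$-th component is $a_\ell\bI$ and whose other components are the corresponding $\gb$-averages. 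The strategy is to show $\Psi_\gs$ is \emph{strictly} concave along the family of perturbations $\overlap_\ell \mapsto \overlap_\ell + t\bv_\ell$ (restricting to the traceless direction), so that by Jensen's inequality applied to the average over the conjugation orbit, $\Psi_\gs(\bar\overlap) \geq \Psi_\gs(\overlap)$ with equality only if $\bv_\ell = \bzero$; since $\overlap$ is a local maximizer this forces $\bv_\ell=\bzero$. Concavity in the traceless directions should follow from the explicit Hessian formula \eqref{eq:HessPsi}: the first term $-\tfrac{\lambda_\ell}{2}\tr\bx_\ell\bx_\ell'$ is negative-definite, and the remaining ``variance-type'' terms, after averaging the bilinear form itself over the conjugation action (which does not change $\Psi_\gs$), can be bounded — the cross-correlation structure in \eqref{eq:HessPsi} combines into a genuine variance $\var_\qb(\cdot) \geq 0$ of $\tr\bx_\ell\gb_{*\ell}^\top\gb_\ell$ under the joint (posterior $\times$ prior $\times$ noise) law, which I would need to show does not overwhelm the $-\tfrac{\lambda_\ell}{2}\tr\bx_\ell^2$ term in the traceless directions at a critical point.

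The main obstacle I anticipate is precisely this last point: controlling the ``variance'' contribution in the Hessian along traceless perturbations and showing net strict concavity there — the naive bound on $\var_\qb(\tr\bx_\ell\gb_{*\ell}^\top\gb_\ell)$ need not be small, so one genuinely needs to use the abelian structure (which makes $\gb_{*\ell}^\top\gb_\ell$ a single rotation matrix, reducing the relevant random variable to a trigonometric polynomial in one angle) to get a sharp enough estimate. An alternative route that sidesteps a delicate Hessian computation, and which I would try first, is a purely variational/averaging argument: show directly that $\Psi_\gs$ is concave as a function on $\Sym_{\succeq 0}$ restricted to each channel slice (which follows because $\Psi_\gs$ is, up to affine terms, a log-partition function, hence convex in the natural parameters, combined with the quadratic penalty; one must check the penalty $-\tfrac{\lambda_\ell}{4}\|\overlap_\ell\|_F^2 - \tfrac{\lambda_\ell}{2}\tr\overlap_\ell$ does not break this — and in fact $\Psi_\gs$ need not be globally concave, which is why the local maximizer hypothesis matters). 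The cleanest version: since the set of critical points / local maximizers is closed under $\overlap\mapsto\gb\overlap\gb^{-1}$, and a local maximizer of a smooth function that is invariant under a connected compact group action (here the closure of $\phi_\ell(\cG)$ acting by conjugation, containing $\SO(2)$) combined with local strict concavity transverse to orbits must lie on a fixed point of the action — and the $\SO(2)$-conjugation fixed points in $\Sym^{2\times2}$ are exactly the scalar multiples of $\bI$. I would package the transverse strict concavity via the first term of \eqref{eq:HessPsi} dominating after noting, using Proposition \ref{prop:GSderivatives}(a), that at a critical point $\overlap_\ell=\E_{\gb_*,\bz}\langle\gb_\ell\rangle_\qb^\top\langle\gb_\ell\rangle_\qb$, which constrains $\|\overlap_\ell\|$ and hence the magnitude of the competing variance term.
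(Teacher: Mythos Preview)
Your proposal circles around the right ingredients but misses a much more direct argument, and contains a factual error along the way.

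The error: your claim that for a $2$-dimensional real-irreducible $\phi_\ell$ of an abelian group the closure $\overline{\phi_\ell(\cG)}$ must contain a dense subgroup of $\SO(2)$ is false. Take $\cG=\Z/3\Z$ acting on $\R^2$ by rotations through multiples of $2\pi/3$: the image is a three-element subgroup of $\SO(2)$, yet the representation is real-irreducible (there is no invariant real line). So the ``average over all of $\SO(2)$'' step does not go through as stated. A repaired finite-group orbit average can be made to work, but you would then still be left with the Hessian/concavity obstacle you yourself flag as unresolved.

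The deeper issue is that you invoke the first-order condition from Proposition~\ref{prop:GSderivatives}(a), namely $\overlap_\ell=\E_{\gb_*,\bz}\langle\gb_\ell\rangle_\qb^\top\langle\gb_\ell\rangle_\qb$ at a critical point, only in passing at the very end --- but this identity \emph{is} the whole proof. Since $\cG$ is abelian, the matrices $\{\gb_\ell:\gb\in\cG\}$ pairwise commute; hence the posterior mean $\langle\gb_\ell\rangle_\qb$, being a weighted average of such matrices, commutes with every $\hb_\ell$, and therefore so does $\langle\gb_\ell\rangle_\qb^\top\langle\gb_\ell\rangle_\qb$ and its expectation. Thus $\overlap_\ell$ is a \emph{symmetric} (hence real-eigenvalued) intertwining map of the real-irreducible $\phi_\ell$ with itself, and the real Schur lemma (Theorem~\ref{thm:Schurs_lemma_forRealIrredRep}(b)) gives $\overlap_\ell=\mu_\ell I$ immediately. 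No Hessian, no concavity estimate, no orbit averaging is needed. For local maximizers on the boundary of $\Sym_{\succeq 0}$ (where $\nabla\Psi_\gs=0$ need not hold), the paper notes that the same commutation argument still forces $\E\langle\gb_\ell\rangle_\qb^\top\langle\gb_\ell\rangle_\qb=\mu_\ell I$ for some $\mu_\ell\geq 0$; if $\overlap_\ell\neq\mu_\ell I$, one can then exhibit an admissible rank-one direction $\bx_\ell=\vb_\ell\vb_\ell^\top$ along which $\nabla\Psi_\gs(\overlap)[\bx]\neq 0$ and $\overlap\pm\epsilon\bx\in\Sym_{\succeq 0}$, contradicting local maximality.
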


Restricting to scalar multiples of the identity $\overlap=q\,I_{2 \times 2}$
with $q \geq 0$, the replica potential takes the form
\begin{align}
\label{eq:free-energy-to-mutual-information-so2}
\Psi_\gs(q\,I_{2 \times 2})=-\frac{\lambda q^2}{2} + \lambda q - i(\lambda q).
\end{align}
Here, by \eqref{eq:scalarMI}, $i(\gamma)$ is the mutual information between
$\gb_*$ and $\yb$ in a single-letter model
\begin{align}\label{eq:SO2singleletter}
    \yb=\sqrt{\gamma}\,\gb_*+\zb
\end{align}
with $\gb_* \sim \Haar(\SO(2))$ and a noise matrix $\zb \in \R^{2 \times 2}$
having i.i.d.\ $\cN(0,1)$ entries. By the i-mmse relation \cite{guo2005mutual},
critical points $q_*$ of $\Psi_\gs(q\,I_{2 \times 2})$ correspond to solutions of the
fixed-point equation
\begin{align}
\label{eq:critical-point-so2}
    q_* = 1 - \frac{1}{2}\,\textnormal{mmse}(\lambda q_*)
\end{align}
where $\textnormal{mmse}(\gamma)=\E\|\gb_*-\E[\gb \mid \by]\|_F^2
=2-\E\|\E[\gb \mid \by]\|_F^2$
is the minimum mean-squared-error for estimating $\gb_*$
in the single-letter model \eqref{eq:SO2singleletter}.

The following is our main result for $\SO(2)$-synchronization.

\begin{theorem}\label{thm:phase-transition-so2}
For the $\SO(2)$-synchronization model \eqref{eq:model-SO2},
all critical points of $\Psi_\gs(\overlap)$ are given by
\begin{equation}\label{eq:diagonalreduction}
\{\overlap \in \Sym_{\succeq 0}^{2 \times 2}:\nabla \Psi_\gs(\overlap)=0\}
=\{q_*\,I_{2 \times 2}:q_*
\text{ solves \eqref{eq:critical-point-so2}}\}.
\end{equation}
If $\lambda \leq \lambda_c:=1$, then 0
is the only solution of \eqref{eq:critical-point-so2}, $\overlap=\bzero$
is the unique global maximizer of $\Psi_\gs$ over $\Sym_{\succeq 0}^{2 \times 2}$, and
\begin{align*}
    \lim_{n \to \infty} \frac{1}{N}\,I(\Theta_*,Y)=\frac{\lambda}{2}
\quad \text{and} \quad \lim_{N \to \infty} \mmse=2,
\end{align*}
and for any $\epsilon > 0$, there exists constants $C, c > 0$ depending only on $\epsilon$ such that
\begin{align*}
    \E\!\inangle{  \mathbf{1}\!\inbraces{ \norm{ \frac{1}{N} \sum_{i = 1}^{N} \gb^{(i) \top} \gb_*^{(i)} }_{F}^{2}  > \frac{\epsilon}{\lambda}  }   } \leq C e^{-cN}.
\end{align*}
If $\lambda>\lambda_c:=1$,
then there exists a unique positive solution $q_*>0$ of
\eqref{eq:critical-point-so2}, $\overlap=q_*\,I_{2 \times 2}$ is the unique global
maximizer of $\Psi_\gs$ over $\Sym_{\succeq 0}^{2 \times 2}$, and
\[\lim_{N \to \infty} \frac{1}{N}I(\Theta_*,Y)=\frac{\lambda}{2} -
\Psi_\gs(q_*\,I_{2 \times 2})
\quad \text{and} \quad \lim_{N \to \infty} \mmse=2-2q_*^2,\] and for any $\epsilon > 0$, there exists constants $C, c > 0$ depending only on $\epsilon$ such that
\begin{align*}
    \E\!\inangle{  \mathbf{1}\!\inbraces{ \norm{ \frac{1}{N} \sum_{i = 1}^{N} \gb^{(i) \top} \gb_*^{(i)} - q_* \hb }_{F}^{2} > \frac{\epsilon}{\lambda} \text{ for all } \hb \in \SO(2)   }  } \leq C e^{-cN}.
\end{align*}
\end{theorem}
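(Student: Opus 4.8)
The plan is to reduce the maximization of $\Psi_\gs$ over $\Sym_{\succeq 0}^{2\times 2}$ to a scalar problem, and then analyze that scalar problem using the explicit (von Mises) form of the posterior in the single-letter model \eqref{eq:SO2singleletter}. Since $\SO(2)$ is abelian and its $2$-dimensional representation is real-irreducible, Proposition~\ref{prop:abelian} shows every critical point of $\Psi_\gs$ has the form $\overlap=q\,I_{2\times 2}$ with $q\ge 0$; combining Proposition~\ref{prop:GSderivatives}(a) with the I-MMSE relation (as recalled above \eqref{eq:critical-point-so2}) shows such a point is critical exactly when $q$ solves \eqref{eq:critical-point-so2}, which establishes \eqref{eq:diagonalreduction}. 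Along the ray one has $\Psi_\gs(q\,I_{2\times 2})=\psi(q):=-\tfrac{\lambda q^2}{2}+\lambda q-i(\lambda q)$ by \eqref{eq:free-energy-to-mutual-information-so2}. Since $\mmse\ge 0$ gives $\psi'(q)=\lambda\big(1-q-\tfrac12\mmse(\lambda q)\big)\le\lambda(1-q)<0$ for $q>1$, and $\psi(q)\to-\infty$, the global maximum of $\Psi_\gs$ equals $\max_{q\in[0,1]}\psi(q)$ and is attained at a critical point, hence at a solution of \eqref{eq:critical-point-so2}. So it remains to describe the maximizers of $\psi$ on $[0,\infty)$.

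Next I would make the single-letter model explicit. Writing \eqref{eq:SO2singleletter} complexly as $y=\sqrt\gamma\,e^{i\theta}+z$ with $\theta\sim\unif[0,2\pi)$ and $z$ a standard complex Gaussian (so $\E|z|^2=1$), rotational invariance shows the posterior of $\theta$ given $y$ is the von Mises law centered at $\arg y$ with concentration $\kappa(y)=2\sqrt\gamma\,|y|$; hence $\E[\gb\mid\yb]$ corresponds to $\tfrac{I_1(\kappa)}{I_0(\kappa)}\,e^{i\arg y}$ and $m(\gamma):=1-\tfrac12\mmse(\gamma)=\E\big[(I_1/I_0)^2(\kappa(y))\big]\in[0,1]$. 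A short Taylor expansion gives $\mmse(0)=2$ and $\mmse'(0)=-2$, consistent with $\nabla\Psi_\gs(\bzero)=0$ and with the threshold $\tilde\lambda_1=\lambda$ of Proposition~\ref{prop:GSlocaloptimality}. The crucial technical step --- and the one I expect to be the main obstacle --- is to show that $\gamma\mapsto\mmse(\gamma)$ is \emph{strictly convex} on $[0,\infty)$ (equivalently, that $\gamma\mapsto m(\gamma)/\gamma$ is strictly decreasing), which is exactly what prevents an inflection in the MMSE curve and so rules out a first-order transition. This should follow from classical estimates for modified Bessel functions --- the logarithmic convexity of $I_0$, the monotonicity and concavity of the ratio $I_1/I_0$, and Turán-type inequalities --- together with the noncentral $\chi^2_2$ law of $|y|^2$; this is the single place where the specific group $\SO(2)$ is used in an essential way, and carrying out the Bessel estimates rigorously is the heart of the argument.

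Granting strict convexity of $\mmse$, the map $q\mapsto\psi'(q)/\lambda=1-q-\tfrac12\mmse(\lambda q)$ is strictly concave on $[0,\infty)$, vanishes at $q=0$, has derivative $\lambda-1$ there, and tends to $-\infty$. Hence if $\lambda\le 1$ it is strictly negative on $(0,\infty)$, so $q=0$ is the unique critical point and the unique maximizer of $\psi$, i.e.\ $\overlap=\bzero$ is the unique global maximizer of $\Psi_\gs$; if $\lambda>1$ it is positive on $(0,q_*)$ and negative on $(q_*,\infty)$ for a unique $q_*>0$, so $q_*$ is the unique positive solution of \eqref{eq:critical-point-so2}, $\overlap=q_*\,I_{2\times 2}$ is the unique global maximizer of $\Psi_\gs$, and $\psi(q_*)>\psi(0)=0$. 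The borderline case $\lambda=\lambda_c=1$ is the only one needing care: there one uses the strictness of the convexity (not just convexity) of $\mmse$ to conclude $\psi'/\lambda<0$ on $(0,\infty)$.

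Finally I would read off the three conclusions from Theorem~\ref{theorem:free-energy-group-syn-multi-representation}. Part (a) gives $\tfrac1N I(\Theta_*,Y)\to\tfrac14\lambda k_1-\sup\Psi_\gs=\tfrac\lambda2-\max_q\psi(q)$ (with $k_1=2$), which is $\tfrac\lambda2$ when $\lambda\le 1$ and $\tfrac\lambda2-\Psi_\gs(q_*\,I_{2\times 2})$ when $\lambda>1$. Since the global maximizer of $\Psi_\gs$ is unique (and so trivially all maximizers share the same component Frobenius norm), part (b) applies and yields $\lim_N\mmse=k_1-\|\overlap_{*1}\|_F^2=2-2q_*^2$ (equal to $2$ when $q_*=0$). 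The set $\cL_{*,\gs}$ of \eqref{eq:GSoverlapconcset} here equals $\{q_*\,\hb:\hb\in\SO(2)\}$ (which is $\{\bzero\}$ when $q_*=0$), so part (c), after unwinding the definition of $\cL_{*,\gs}(\epsilon)$ for a single channel of strength $\lambda$, gives exactly the stated exponential concentration of $\tfrac1N\sum_{i=1}^N\gb^{(i)\top}\gb_*^{(i)}$ around $\bzero$, respectively around the orbit $q_*\,\SO(2)$.
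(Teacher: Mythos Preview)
Your proposal is correct and follows essentially the same route as the paper: reduce to scalar overlaps via Proposition~\ref{prop:abelian}, identify the fixed-point equation \eqref{eq:critical-point-so2}, establish strict concavity of $F(\gamma)=1-\tfrac12\mmse(\gamma)$ (equivalently strict convexity of $\mmse$) as the key technical lemma, and read off the mutual information, MMSE, and overlap concentration from Theorem~\ref{theorem:free-energy-group-syn-multi-representation}.

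The one place where your sketch understates the work is the proof of that key lemma. The paper does \emph{not} derive strict concavity of $F$ by invoking standard Bessel inequalities (log-convexity of $I_0$, concavity of $I_1/I_0$, Tur\'an-type bounds) directly on the expression $F(\gamma)=\E[(I_1/I_0)^2(2\sqrt\gamma\,|y|)]$; the dependence of the law of $|y|$ on $\gamma$ makes that route awkward. Instead the paper first computes $F'(\gamma)$ by Gaussian integration by parts, obtaining the clean form $F'(\gamma)=\E\big[(1-|\langle u\rangle|^2)^2+|\langle u^2\rangle-\langle u\rangle^2|^2\big]$, observes this depends only on $x=\sqrt\gamma\,|y|$, and then reduces strict concavity to two facts: stochastic monotonicity of $x$ in $\gamma$ (via the noncentral $\chi^2_2$ law you mention), and $f'(x)<0$ for the resulting function $f$. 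The latter is proved by expressing the circular moments as $u_j=I_j(2x)/I_0(2x)$ and checking the sign of each piece through explicit power-series manipulations and Vandermonde's identity. So your identification of the obstacle and the ingredients is right, but the actual argument requires this intermediate representation of $F'$ before the Bessel analysis becomes tractable.
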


\begin{remark}\label{remark:abelian}
The reduction \eqref{eq:diagonalreduction} to diagonal overlap
matrices $\overlap$ is possible because $\SO(2)$ is abelian.
The analogous statement in the non-abelian setting of $\SO(k)$-synchronization
for $k \geq 3$ is false: We show in Proposition \ref{prop:SOkcounterexample}
that for any $k \geq 3$ and $\lambda>\lambda_c:=k$, $\Psi_\gs$ has a
critical point that is not a multiple of the identity.
This suggests that analyses of the global optimization landscape of
$\Psi_\gs$ may need to be multivariate in nature, and it remains an open
question to fully characterize this landscape for $\SO(k)$-synchronization when
$k \geq 3$ or, more generally, for any non-abelian group $\cG$.
\end{remark}

\section{Quadratic assignment}\label{sec:quadraticassignment}

We transition to a second application of the general results in Section
\ref{sec: general_model}, and study a quadratic assignment model for inference
over the symmetric group. Here, the signal and signal prior do
not have the ``product structure'' of group synchronization.

Let $\cX$ be a compact space and $\kappa:\cX \times \cX \to \R$ a pairwise
similarity kernel, both independent of $N$. We observe samples
$x_1,\ldots,x_N \in \cX$, together with noisy pairwise similarities of
a permutation of these samples,
\begin{equation}\label{eq:QAmodel}
y_{ij}=\kappa(x_{\pi_*(i)},x_{\pi_*(j)}) + \sqrt{N} z_{ij}
\text{ for each } 1 \leq i<j \leq N.
\end{equation}
Here $z_{ij} \overset{iid}{\sim} \cN(0,1)$,
and $\pi_* \in \SS_N$ is an unknown permutation of
interest, assumed to have uniform prior on the symmetric group $\SS_N$
of all permutations of $N$ elements.

We will characterize the asymptotic mutual information $I(\pi_*,Y)$ between the
latent permutation $\pi_*$ and observations $Y=(y_{ij})_{i<j}$, under the
following assumptions on $\cX$, $\kappa$, and $x_1,\ldots,x_N$
as $N \to \infty$.

\begin{assumption}\label{asmpt:kernel}
\begin{enumerate}[(a)]
\item $\cX$ is a compact space, and
$\kappa:\cX \times \cX \rightarrow \R$ is a continuous
positive-semidefinite kernel, i.e.\ $\kappa(x_i,x_j)_{i,j=1}^m \in \R^{m \times m}$
is positive-semidefinite for any $m \geq 1$ and $x_1,\ldots,x_m \in \cX$.
\item There exists a probability distribution $\rho$ on $\cX$ such that, as $N
\to \infty$, the empirical law $\frac{1}{N}\sum_{i=1}^N \delta_{x_i}$ converges
weakly to $\rho$.
\end{enumerate}
\end{assumption}

Under Assumption \ref{asmpt:kernel}, $\kappa$ is a Mercer kernel admitting the
following approximation by eigenfunctions, see
e.g.\ \cite[Theorem 12.20]{wainwright2019high}.

\begin{theorem}[Mercer's theorem]\label{thm:mercer}
Suppose Assumption \ref{asmpt:kernel} holds. Then there exists an orthonormal
basis of eigenfunctions
$\{f_\ell\}_{\ell=1}^\infty$ of $L^2(\cX,\rho)$ and eigenvalues $\mu_1
\geq \mu_2 \geq \mu_3 \geq \ldots \geq 0$ such that
$\int_{\cX} \kappa(x,y) f_\ell(y) \, \rho(\textnormal{d}y) = \mu_\ell f_\ell(x)$.
Furthermore, $\kappa$ admits the expansion
\begin{align*}
\kappa(x,y) = \sum_{\ell = 1}^\infty \mu_\ell f_\ell(x) f_\ell(y)
\end{align*}
where this series converges absolutely and uniformly over all $x,y \in \cX$.
\end{theorem}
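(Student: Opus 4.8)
The statement is the classical Mercer theorem, and the plan is to deduce it from the spectral theorem for compact self-adjoint operators together with a positivity (Dini) argument; a reference such as \cite[Theorem 12.20]{wainwright2019high} may be cited in place of a full proof, but here is the route I would take. Introduce the integral operator $T_\kappa$ on $L^2(\cX,\rho)$ defined by $(T_\kappa f)(x)=\int_\cX \kappa(x,y)f(y)\,\rho(\textnormal{d}y)$. Since $\cX$ is compact and $\kappa$ continuous, $\kappa$ is bounded and uniformly continuous, so $T_\kappa$ is a Hilbert--Schmidt operator, hence compact; it is self-adjoint by symmetry of $\kappa$, and nonnegative because $\langle T_\kappa f,f\rangle_{L^2(\rho)}=\iint \kappa(x,y)f(x)f(y)\,\rho(\textnormal{d}x)\,\rho(\textnormal{d}y)\ge 0$, the inequality following from the positive-semidefiniteness hypothesis on $\kappa$ by approximating this double integral by finite sums $\sum_{i,j}\kappa(x_i,x_j)c_ic_j$. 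The spectral theorem then produces an orthonormal system of eigenfunctions $\{f_\ell\}$ with eigenvalues $\mu_1\ge \mu_2\ge\cdots\ge 0$ and $\mu_\ell\to 0$, which together with an orthonormal basis of $\ker T_\kappa$ forms an orthonormal basis of $L^2(\cX,\rho)$.

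Next I would promote each eigenfunction with $\mu_\ell>0$ to a genuinely continuous function via the identity $f_\ell=\mu_\ell^{-1}T_\kappa f_\ell$: the right-hand side is continuous in $x$ by uniform continuity of $\kappa$ and dominated convergence, so we may and do take $f_\ell$ to be this continuous representative. (Only indices with $\mu_\ell>0$ enter the claimed expansion, since the other terms vanish; one may also reduce to the case $\operatorname{supp}\rho=\cX$, which is all that is needed in the application, as the $x_i$ in \eqref{eq:QAmodel} may be taken to lie in $\operatorname{supp}\rho$.) Expanding $\kappa(x,\cdot)\in L^2(\cX,\rho)$ in the basis $\{f_\ell\}$ and using $T_\kappa f_\ell=\mu_\ell f_\ell$ gives $\kappa(x,y)=\sum_\ell \mu_\ell f_\ell(x)f_\ell(y)$ with convergence in $L^2(\cX\times\cX,\rho\otimes\rho)$; write $\kappa_n(x,y)=\sum_{\ell\le n}\mu_\ell f_\ell(x)f_\ell(y)$ for the partial sums and $r_n=\kappa-\kappa_n$ for the remainder.

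The main work — and the step I expect to be the real obstacle — is upgrading this $L^2$ convergence to absolute and uniform convergence on $\cX\times\cX$. The key point is that each $r_n$ is again a continuous positive-semidefinite kernel: its integral operator is $T_{r_n}=\sum_{\ell>n}\mu_\ell\,\langle\cdot,f_\ell\rangle f_\ell\succeq 0$, and a continuous kernel whose integral operator against a full-support measure is nonnegative must satisfy $r_n(x,x)\ge 0$ pointwise, since if $r_n(x_0,x_0)<0$ one obtains a contradiction by testing $T_{r_n}$ against a bump function concentrating at $x_0$. Hence $r_n(x,x)=\kappa(x,x)-\sum_{\ell\le n}\mu_\ell f_\ell(x)^2$ is a nonnegative, continuous, and (in $n$) nonincreasing sequence converging pointwise to $0$ on $\cX$, so by Dini's theorem the convergence $\sum_{\ell\le n}\mu_\ell f_\ell(x)^2\to\kappa(x,x)$ is uniform on $\cX$. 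Two Cauchy--Schwarz estimates then finish the argument: $|r_n(x,y)|^2\le r_n(x,x)\,r_n(y,y)$ (Cauchy--Schwarz for the PSD kernel $r_n$) yields uniform convergence $\kappa_n\to\kappa$ on $\cX\times\cX$, and $\sum_\ell\mu_\ell|f_\ell(x)f_\ell(y)|\le\big(\sum_\ell\mu_\ell f_\ell(x)^2\big)^{1/2}\big(\sum_\ell\mu_\ell f_\ell(y)^2\big)^{1/2}=\sqrt{\kappa(x,x)\kappa(y,y)}<\infty$ yields absolute convergence, uniformly in $(x,y)$. Beyond the bump-function/Dini positivity argument, the only subtlety is the bookkeeping around $\operatorname{supp}\rho$, which I would dispose of at the outset by restricting the ground space to that support.
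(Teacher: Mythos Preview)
Your proposal is correct and is the standard proof of Mercer's theorem. The paper does not give its own proof of this statement at all: it simply cites \cite[Theorem 12.20]{wainwright2019high} and states the result, so your sketch is more than what the paper provides. Your suggestion to cite the reference in lieu of a full proof matches exactly what the paper does.
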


We define from this eigenfunction expansion, for each $L \geq 1$,
the truncated kernel
\begin{align}\label{eq:truncated-kernel}
\kappa^L(x,y) \coloneqq \sum_{\ell=1}^L \mu_\ell f_\ell(x) f_\ell(y).
\end{align}
The model defined by $\kappa^L$ in place of $\kappa$ falls into the
framework of Section \ref{sec: general_model}, where $\cG_N \equiv \SS_N$ is the
symmetric group, and the feature map $\phi:\SS_N \to \cH^N$ has components
\begin{equation}\label{eq:QAfeaturemap}
\phi(\pi)_i=(\sqrt{\mu_1}f_1(x_{\pi(i)}),\ldots,\sqrt{\mu_L}f_L(x_{\pi(i)})) \in \cH \equiv \R^L.
\end{equation}
The bilinear maps $\bullet:\cH \times \cH \to \cK \equiv \R$ and
$\otimes: \cH \times \cH \to \cL \equiv \R^{L \times L}$ and the
inclusion map $\iota:\cL \to B(\cH)$ are given by
\begin{equation}\label{eq:QAbilinearforms}
\ab \bullet \bb=\ab^\top\bb,
\qquad \ab \otimes \bb=\ab\bb^\top,
\qquad \iota(\overlap)\ab=\overlap\ab,
\end{equation}
where we equip $\cH,\cK,\cL$ with their usual Euclidean inner-products. Our
analyses will first characterize the asymptotic mutual information
between $\pi_*$ and observations $Y^L=\{y_{ij}^L\}_{i<j}$ defined with the
truncated kernel $\kappa^L$, and then take a limit $L \to \infty$ to describe the
mutual information for the original observations $Y$.

\paragraph{Asymptotic mutual information.}

Fixing any $L \geq 1$ and an overlap matrix
$\overlap \in \Sym_{\succeq 0}^{L \times L}$, denote
$\bu(x)=(\sqrt{\mu_1}f_1(x),\ldots,\sqrt{\mu_L}f_L(x)) \in \R^L$ and consider a linear observation model
\begin{equation}\label{eq:QAlinearmodel}
\by_i=\overlap^{1/2} \ub(x_{\pi_*(i)})+\bz_i \in \R^L
\text{ for } i=1,\ldots,N
\end{equation}
where $\pi_* \sim \Haar(\SS_N)$ and $\{\bz_i\}_{i=1}^N
\overset{iid}{\sim} \cN(0,I_{L \times L})$. Consider also the single-letter
model
\begin{equation}\label{eq:QAscalarmodel}
\by=\overlap^{1/2}\ub(x)+\bz \in \R^L
\end{equation}
where $x \sim \rho$ is a single random sample in $\cX$, and $\bz \sim \cN(0,I)$.
It is direct to check, as in \eqref{eq:linearMI}, that the mutual information
in the model \eqref{eq:QAscalarmodel} is given by
$i(x,\by)=-\frac{1}{4}\|\overlap\|_F^2+\frac{1}{2} \E_{x_*}\tr \ub(x_*)^\top
\overlap \ub(x_*) -\Psi_\qs^L(\overlap)$,
for the potential function
\begin{align}
\Psi_\qs^L(\overlap)=-\frac{1}{4}\|\overlap\|_F^2
+ \E_{x_*,\bz} \log \E_x \exp\inparen{-\frac{1}{2}\,\ub(x)^\top \overlap
\ub(x)+\ub(x)^\top \overlap \ub(x_*) + \ub(x)^\top \overlap^{1/2}\zb}.
    \label{eq:qua-assgn-RS-potential-decoupled}
\end{align}
Here, $\E_x$ is over $x \sim \rho$ in $\cX$, and $\E_{x_*,\bz}$ is over
independent $x_* \sim \rho$ and $\bz \sim \cN(0,I)$.

Inference in the model \eqref{eq:QAlinearmodel} may be understood as the
task of estimating $\btheta_i=\overlap^{1/2}\bu(x_{\pi_*(i)}) \in \R^L$ for
$i=1,\ldots,N$ from observations $\by_i=\btheta_i+\bz_i$, given only the empirical
distribution of the values $\{\btheta_i\}_{i=1}^N$ but not their ordering.
In contrast, inference in the model \eqref{eq:QAscalarmodel} is the task of
estimating $\btheta_i$ from an observation $\by_i=\btheta_i+\bz_i$
assuming a Bayesian prior for $\btheta_i$. Comparisons between these two tasks
underlie the classical literature on empirical Bayes estimation in compound
decision problems; in particular, the efficiency of coordinate-separable
decision rules within the class of all decision rules for the former model
\eqref{eq:QAlinearmodel} has been investigated in
\cite{hannan1955asymptotic,greenshtein2009asymptotic}.

Leveraging the main result of \cite{greenshtein2009asymptotic}, the following
lemma first shows that the signal-observation mutual information in the linear
observation model \eqref{eq:QAlinearmodel} coincides, to leading asymptotic
order, with that in the scalar model \eqref{eq:QAscalarmodel}.

\begin{lemma}\label{lemma:QAlinearMI}
Suppose Assumption \ref{asmpt:kernel} holds, and fix any $L \geq 1$ and
$\overlap \in \Sym_{\succeq 0}^{L \times L}$. Let $i(\pi_*,Y_\lin)$ be the
mutual information between $\pi_* \in \SS_N$ and $Y_\lin=\{\by_i\}_{i=1}^N$
in the model \eqref{eq:QAlinearmodel}. Then
\[\lim_{N \to \infty} \frac{1}{N} i(\pi_*,Y_\lin)=i(x,\by)
:=-\frac{1}{4}\|\overlap\|_F^2+\frac{1}{2}\,\E_{x_*}\tr \ub(x_*)^\top \overlap \ub(x_*) -\Psi_\qs^L(\overlap).\]
\end{lemma}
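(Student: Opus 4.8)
The plan is to reduce the computation of $i(\pi_*,Y_\lin)$ in the linear model \eqref{eq:QAlinearmodel} to the single-letter mutual information $i(x,\by)$ by exploiting the exchangeability structure and invoking the asymptotic equivalence of compound-decision and Bayesian risks from \cite{greenshtein2009asymptotic}. First I would write the mutual information as a difference of entropies. Conditioned on the (ordered) tuple $\xb=(x_1,\ldots,x_N)$, the $\theta_i=\overlap^{1/2}\ub(x_{\pi_*(i)})$ are simply a uniformly random permutation of the fixed vectors $\overlap^{1/2}\ub(x_1),\ldots,\overlap^{1/2}\ub(x_N)$, so $\frac1N i(\pi_*,Y_\lin)=\frac1N\big(h(Y_\lin)-h(Y_\lin\mid\pi_*)\big)$, and the conditional term is exactly $\frac{L}{2}\log(2\pi e)$ (pure Gaussian noise). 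Meanwhile $h(Y_\lin)=\sum_{i=1}^N h(\by_i\mid \by_{<i})$ by the chain rule, and I would compare this against $\sum_{i=1}^N h(\by_i)$ where $\by_i$ is treated marginally; the marginal law of each $\by_i$ (averaging over $\pi_*$) is precisely that of the single-letter observation $\overlap^{1/2}\ub(x)+\bz$ under $x$ drawn from the empirical law $\frac1N\sum\delta_{x_i}$, which converges to $\rho$ by Assumption \ref{asmpt:kernel}(b). The key analytic input is then that the chain-rule entropy and the sum of marginal entropies agree to leading order, i.e. $\frac1N\sum_{i=1}^N I(\by_i;\by_{<i})\to 0$; equivalently, the mutual information between a coordinate and its predecessors is $o(N)$ in aggregate. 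This is where \cite{greenshtein2009asymptotic} enters: their result states that for compound decision problems the optimal (permutation-aware) Bayes risk and the optimal coordinate-separable risk coincide asymptotically, which for the quadratic (squared-error) loss and more generally for the log-loss/entropy functional translates into exactly this collapse of the dependence. I would phrase the needed statement as an MMSE or relative-entropy identity and cite it directly.

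More concretely, I would argue as follows. Let $\nu_N=\frac1N\sum_{i=1}^N\delta_{x_i}$ and let $P_{\nu_N}$ denote the marginal law of $\overlap^{1/2}\ub(x)+\bz$ with $x\sim\nu_N$. Then $h(Y_\lin)\le \sum_i h(\by_i)=N\,h(P_{\nu_N})$ by subadditivity, giving the upper bound $\frac1N i(\pi_*,Y_\lin)\le h(P_{\nu_N})-\frac{L}{2}\log(2\pi e)+o(1)$, and since $\nu_N\Rightarrow\rho$ and $\ub(\cdot),\kappa(\cdot,\cdot)$ are continuous and bounded on the compact space $\cX$ (so the densities of $P_{\nu_N}$ converge and are uniformly regular), $h(P_{\nu_N})\to h(P_\rho)$, which is the single-letter $h(\by)$ for \eqref{eq:QAscalarmodel}. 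The matching lower bound $\frac1N i(\pi_*,Y_\lin)\ge i(x,\by)-o(1)$ is the substantive direction: it says the permutation structure does not reduce the per-coordinate information below the Bayesian single-letter value. Here I would invoke \cite{greenshtein2009asymptotic}: the achievability of the single-letter Bayes risk by a separable rule in the compound decision problem implies, via the I-MMSE relation \cite{guo2005mutual} along an SNR path (scaling $\overlap\mapsto t\overlap$ and integrating $\frac{d}{dt}$), that the mutual information in the permuted model is at least the integral of the single-letter MMSE, i.e. at least $i(x,\by)$ up to vanishing error. Finally, identifying $i(x,\by)=-\frac14\|\overlap\|_F^2+\frac12\E_{x_*}\tr\ub(x_*)^\top\overlap\ub(x_*)-\Psi_\qs^L(\overlap)$ is the routine computation already flagged in the text immediately before \eqref{eq:qua-assgn-RS-potential-decoupled}, obtained by expanding the Gaussian density and using the I-MMSE form of $\Psi_\qs^L$.

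The main obstacle I anticipate is making the lower bound fully rigorous: \cite{greenshtein2009asymptotic} is stated for squared-error (compound decision) risk, whereas I need the statement at the level of mutual information / relative entropy. Bridging these requires either (i) an I-MMSE argument that converts the asymptotic equality of Bayes and compound MMSE along the whole SNR path $t\overlap$, $t\in[0,1]$, into the equality of the corresponding mutual informations (which needs uniform-in-$N$ control of the MMSE difference so the integral passes to the limit), or (ii) a direct argument that $\frac1N h(Y_\lin)\to h(P_\rho)$ using the fact that the posterior over $\pi_*$ concentrates enough that $\by_i\mid\by_{<i}$ is asymptotically distributed as $P_{\nu_N}$. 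I would pursue route (i): set $\gamma(t)=\frac1N i(\pi_*,Y_\lin^{(t)})$ for the model with overlap $t\overlap$, note $\gamma'(t)=\frac1{4Nt}\cdot(\text{MMSE difference terms})$ by I-MMSE, show the integrand converges to the single-letter derivative $\frac{d}{dt}i(x,\by^{(t)})$ using \cite{greenshtein2009asymptotic} pointwise in $t$ together with a dominated-convergence bound (MMSE is bounded by the prior second moment, which is $\le \tr\overlap\cdot\sup_x\|\ub(x)\|^2<\infty$ by compactness), and integrate. The continuity/compactness hypotheses in Assumption \ref{asmpt:kernel} are exactly what supply the uniform integrability and the convergence $\nu_N\Rightarrow\rho\Rightarrow$ convergence of all relevant moments and densities, so no additional regularity should be needed.
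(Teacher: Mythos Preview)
Your route (i) is precisely the paper's approach: introduce a scalar SNR $\lambda\in[0,1]$ (writing the signal as $\sqrt{\lambda}\,\overlap^{1/2}\ub(x_{\pi_*(i)})$), apply I-MMSE to both the permutation model and an i.i.d.\ model with prior equal to the empirical law $\nu_N$ of $\{\ub(x_i)\}$, invoke \cite{greenshtein2009asymptotic} for the two-sided bound $|\mathrm{mmse}_{\pi_*}(\lambda)-\mathrm{mmse}_{V_*}(\lambda)|\le C_\lambda$ with $C_\lambda$ independent of $N$, integrate over $\lambda$, and finally pass $\nu_N\Rightarrow\rho$ by dominated convergence. The only difference is that the paper uses this two-sided MMSE bound to handle both directions at once, so your separate entropy-subadditivity upper bound, while correct, is not needed; also note that comparing first against the empirical-prior model and only afterward taking $\nu_N\to\rho$ localizes the weak-convergence step and removes your worry about uniform-in-$N$ control along the SNR path.
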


The general framework of Section \ref{sec: general_model} then allows us to
relate $i(\pi_*,Y_\lin)$ with the mutual information $I(\pi_*,Y)$ in the
quadratic assignment model \eqref{eq:QAmodel}, yielding the following main
result of this section.

\begin{theorem}\label{thm:qua-assgn-mutual-info}
Suppose Assumption \ref{asmpt:kernel} holds. Then there exists a finite limit
\[\Psi_\infty
=\lim_{L \to \infty} \sup_{\overlap \in \Sym_{\succeq 0}^{L \times L}}
\Psi_\qs^L (\overlap),\]
and the mutual
information $I(\pi_*,Y)$ between $\pi_* \sim \SS_N$ and $Y=\{y_{ij}\}_{i<j}$
in the model \eqref{eq:QAmodel} satisfies
\begin{equation}\label{eq:QAmutualinfolimit}
\lim_{N \to \infty} \frac{1}{N}\,I(\pi_*,Y)
=\frac{1}{4}\,\E_{x,x' \overset{iid}{\sim} \rho}[\kappa(x,x')^2]-\Psi_\infty.
\end{equation}
\end{theorem}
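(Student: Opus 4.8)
The plan is to analyse the model at each fixed truncation level $L$ using Theorem~\ref{thm:free-energy-general-model} and Lemma~\ref{lemma:QAlinearMI}, and then pass to the limit $L\to\infty$ while controlling the discrepancy between the truncated and full mutual informations uniformly in $N$. For fixed $L$, the truncated model satisfies Assumption~\ref{assumption:general-model} with $\cG_N\equiv\SS_N$ and the maps \eqref{eq:QAfeaturemap}--\eqref{eq:QAbilinearforms}. Writing $M:=\sup_{x\in\cX}\kappa(x,x)<\infty$ and using $\|\bu(x)\|^2=\kappa^L(x,x)\le\kappa(x,x)\le M$, one checks $D(\SS_N)\le M$, $K(\SS_N)\le M^2$, and that $\image(Q)$ lies in the $M$-ball of $\R^{L\times L}$, so $L(\sqrt\epsilon;\SS_N)\le CL^2\log(M/\sqrt\epsilon)$ --- all of constant order for fixed $L$ and $\epsilon$. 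Theorem~\ref{thm:free-energy-general-model} then gives $\cF_N^L=\sup_{\overlap\in\Sym_{\succeq 0}^{L\times L}}\Psi_N^L(\overlap)+o_N(1)$ as $N\to\infty$, where $\cF_N^L$ and $\Psi_N^L$ are the free energy \eqref{eq:generalfreeenergy} and potential \eqref{eq:RS-potential-general-model} of the truncated model.

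Next I would identify the limit of $\sup_\overlap\Psi_N^L(\overlap)$. Combining \eqref{eq:linearMI} with Lemma~\ref{lemma:QAlinearMI}, together with the fact that weak convergence of $\tfrac1N\sum_i\delta_{x_i}$ to $\rho$ gives $\langle\overlap,Q^L(\iden,\iden)\rangle_\cL=\tfrac1N\sum_i\bu(x_i)^\top\overlap\,\bu(x_i)\to\E_{x_*}\bu(x_*)^\top\overlap\,\bu(x_*)$, one obtains $\Psi_N^L(\overlap)\to\Psi_\qs^L(\overlap)$ pointwise in $\overlap$. Both $\Psi_N^L(\overlap)$ and $\Psi_\qs^L(\overlap)$ are bounded above by $-\tfrac14\|\overlap\|_F^2+\tfrac M2\|\overlap\|_F$ --- from nonnegativity of the corresponding linear-model mutual information, using $\|Q^L(\iden,\iden)\|_F\le M$ and $\tr\overlap\,\E_{x_*}[\bu(x_*)\bu(x_*)^\top]\le M\|\overlap\|_F$ --- and both vanish at $\overlap=\bzero$, so their suprema are attained on the fixed compact set $\{\overlap\in\Sym_{\succeq 0}^{L\times L}:\|\overlap\|_F\le 2M\}$; by uniform-in-$N$ Lipschitz bounds for $\Psi_N^L$ on this set the convergence there is uniform, giving $\sup_\overlap\Psi_N^L(\overlap)\to\sup_\overlap\Psi_\qs^L(\overlap)$. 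Since $\|Q^L(\iden,\iden)\|_\cL^2=\tfrac1{N^2}\sum_{i,j}\kappa^L(x_i,x_j)^2\to\E_{x,x'\overset{iid}{\sim}\rho}[\kappa^L(x,x')^2]$ and $K(\SS_N)/N\to0$, formula \eqref{eq:quadraticMI} then yields
$\lim_{N\to\infty}\tfrac1N I(\pi_*,Y^L)=\tfrac14\E_{x,x'\overset{iid}{\sim}\rho}[\kappa^L(x,x')^2]-\sup_\overlap\Psi_\qs^L(\overlap)=:J_L$.

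For the limit $L\to\infty$, set $\delta_L:=\|\kappa-\kappa^L\|_\infty$, which tends to $0$ by Mercer's theorem. The limit $\Psi_\infty$ exists and is finite because $L\mapsto\sup_\overlap\Psi_\qs^L(\overlap)$ is non-decreasing --- zero-padding $\Sym_{\succeq 0}^{L\times L}\hookrightarrow\Sym_{\succeq 0}^{(L+1)\times(L+1)}$ leaves $\Psi_\qs$ unchanged --- and is bounded above by $M^2/4$, using $\sum_\ell\mu_\ell=\E_\rho[\kappa(x,x)]\le M$ and nonnegativity of the single-letter mutual information; likewise $\E_{\rho\otimes\rho}[\kappa^L(x,x')^2]=\sum_{\ell\le L}\mu_\ell^2\uparrow\sum_\ell\mu_\ell^2=\E_{\rho\otimes\rho}[\kappa(x,x')^2]$, so $J_L\to\tfrac14\E_{\rho\otimes\rho}[\kappa(x,x')^2]-\Psi_\infty$. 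It therefore suffices to show $\limsup_{N\to\infty}|\tfrac1N I(\pi_*,Y)-\tfrac1N I(\pi_*,Y^L)|\le C(\delta_L+\delta_L^2)$: then sandwiching $\limsup_N$ and $\liminf_N$ of $\tfrac1N I(\pi_*,Y)$ between $J_L\pm C(\delta_L+\delta_L^2)$ and letting $L\to\infty$ gives \eqref{eq:QAmutualinfolimit}. Since \eqref{eq:quadraticMI} holds verbatim for the full model --- its derivation uses only the Gaussian channel structure, not finite-dimensionality of the feature space --- and $|\|Q(\iden,\iden)\|_\cL^2-\|Q^L(\iden,\iden)\|_\cL^2|\le 2M\delta_L$, this reduces to proving $|\cF_N-\cF_N^L|\le C(\delta_L+\delta_L^2)$ uniformly in $N$, where $\cF_N$ is the full free energy.

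This last bound is the main obstacle, and I would prove it by a Gaussian comparison. Couple $Y$ and $Y^L$ through the same noise, so $H(\pi;Y)-H^L(\pi;Y^L)=B(\pi)+D(\pi)$; expanding $\kappa=\kappa^L+(\kappa-\kappa^L)$ shows the deterministic part $B(\pi)$ is bounded uniformly in $\pi$ by $CN(M\delta_L+\delta_L^2)$ --- every term carries a factor $\kappa-\kappa^L$ and a $\tfrac1N\sum_{i<j}$ prefactor --- hence contributes $O(\delta_L+\delta_L^2)$ to $\cF_N-\cF_N^L$, while $D(\pi)=\tfrac1{\sqrt N}\sum_{i<j}(\kappa-\kappa^L)(x_{\pi(i)},x_{\pi(j)})z_{ij}$ is a centered Gaussian field in the noise. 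Interpolating $\Phi(t)=\tfrac1N\,\E\log\E_\pi\exp\bigl(H^L(\pi;Y^L)+tD(\pi)\bigr)$ and integrating by parts in $z_{ij}$, the noise terms cancel and
\[
\Phi'(t)=\frac{1}{N^2}\sum_{1\le i<j\le N}\E\,\cov_t\!\bigl((\kappa-\kappa^L)(x_{\pi(i)},x_{\pi(j)}),\ \kappa^L(x_{\pi(i)},x_{\pi(j)})+t(\kappa-\kappa^L)(x_{\pi(i)},x_{\pi(j)})\bigr),
\]
where $\cov_t$ is covariance under the tilted posterior of $H^L(\pi;Y^L)+tD(\pi)$ and $\E$ is over $\pi_*$ and the noise. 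Each summand is at most $\delta_L(M+\delta_L)$ by Cauchy--Schwarz together with $\var_t\bigl((\kappa-\kappa^L)(x_{\pi(i)},x_{\pi(j)})\bigr)\le\delta_L^2$, and the $\tfrac1{N^2}\binom N2$ prefactor makes $|\Phi'(t)|\le C\delta_L(M+\delta_L)$ uniformly in $N$; integrating over $t\in[0,1]$ bounds the Gaussian contribution by $C\delta_L(M+\delta_L)$. Combining the two parts gives $|\cF_N-\cF_N^L|\le C(\delta_L+\delta_L^2)$ uniformly in $N$, which completes the argument.
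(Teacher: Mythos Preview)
Your proposal is correct and follows the same high-level strategy as the paper: establish the mutual-information limit for each truncated model via Theorem~\ref{thm:free-energy-general-model} and Lemma~\ref{lemma:QAlinearMI}, show monotonicity and boundedness in $L$ of $\sup_\overlap\Psi_\qs^L(\overlap)$, and control the truncation error $\tfrac1N|I(\pi_*,Y)-I(\pi_*,Y^L)|$ uniformly in $N$ so as to interchange the $N\to\infty$ and $L\to\infty$ limits. The difference lies in how the Gaussian part $D(\pi)=\tfrac{1}{\sqrt N}\sum_{i<j}(\kappa-\kappa^L)(x_{\pi(i)},x_{\pi(j)})z_{ij}$ of that error is handled. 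The paper bounds $\sup_\pi|D(\pi)|$ directly: writing $\sum_{i<j}(\kappa_{ij}-\kappa_{ij}^L)z_{ij}=\tfrac12\tr Z(K-K^L)$ and applying the von~Neumann trace inequality together with the observation that $\kappa-\kappa^L$ is a positive-semidefinite kernel, so $\|K-K^L\|_*=\tr(K-K^L)\le N\delta_L$, then using $\E\|Z\|_{\mathrm{op}}\le 2\sqrt N$. Your interpolation-plus-IBP argument yields the same $O(\delta_L)$ bound using only $\|\kappa-\kappa^L\|_\infty\le\delta_L$, without ever invoking positive-semidefiniteness of the remainder kernel; it is slightly longer but more robust (it would apply equally to a non-PSD kernel approximation), whereas the paper's trace-inequality trick is shorter but leans on that structural feature. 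You are also more explicit than the paper about passing from pointwise convergence $\Psi_N^L(\overlap)\to\Psi_\qs^L(\overlap)$ to convergence of the suprema, via restriction to the fixed compact $\{\|\overlap\|_F\le 2M\}$ and uniform Lipschitz bounds; the paper treats this step more tersely.
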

Similar to Theorem \ref{theorem:free-energy-group-syn-multi-representation}(2), 
our results on asymptotic mutual information also imply an asymptotic limit
for the
Bayes-optimal error for estimating $(\kappa(x_{\pi_*(i),\pi_*(j)}))_{i,j=1}^N$.
To apply an I-MMSE relation, let us introduce the following model with 
$\sqrt{\lambda} \kappa$ in place of $\kappa$ in \eqref{eq:QAmodel}:
\begin{equation}
\label{eq:QAmodel_with_lambda}
        y_{ij}^{(\lambda)} = \sqrt{\lambda} \kappa(x_{\pi_*(i)}, x_{\pi_*(j)}) + \sqrt{N} z_{ij} 
        \text{ for each } 1 \leq i<j \leq N,
\end{equation}
and let
\begin{equation}\label{eq:QAMMSE}
    \mmse_\qs(\lambda) = \frac{1}{N^2} \E \sum_{i,j=1}^N \inparen{\kappa(x_{\pi_*(i)}, x_{\pi_*(j)}) 
    - \inangle{\kappa(x_{\pi(i)}, x_{\pi(j)})}}^2.
\end{equation}
It is clear that Theorem \ref{thm:qua-assgn-mutual-info} still holds with
$\sqrt{\lambda}\kappa$ in place of $\kappa$ for any $\lambda \in (0, \infty)$.
\begin{corollary}
\label{corollary:QAMMSE}
Let $$D = \{\lambda > 0 \colon \lambda \mapsto \Psi_\infty \text{ is differentiable at }\lambda\}.$$
Then $D$ has full Lebesgue measure in $(0, \infty)$. For any $\lambda \in D$,
all maximizers $\overlap_*^L$ of $\Psi_\qs^L(\overlap)$ have the same 
Frobenius norm $q_*^L:=\|\overlap_*^L\|_F$. Furthermore,
$q_* \coloneqq \lim_{L \to \infty} q_*^L$ exists, and
    \begin{equation}
        \lim_{N \to \infty} \mmse_\qs(\lambda) = \E_{x,x'\overset{iid}{\sim}\rho}[\kappa(x,x')^2] - q_*^2.
    \end{equation}
\end{corollary}
The limit value in \eqref{eq:QAmutualinfolimit} may be understood as the
mutual information between a signal vector $(x_{*1},\ldots,x_{*N})$
having i.i.d.\ prior $x_{*i} \overset{iid}{\sim} \rho$, and observations
\[y_{ij}=\kappa(x_{*i},x_{*j})+\sqrt{N}\,z_{ij} \text{ for } 1 \leq i<j \leq
N.\]
In the setting where $\kappa$ has a finite expansion into eigenfunctions, i.e.\
$\kappa^L=\kappa$ for some finite $L$, this is the mutual
information in a usual low-rank matrix estimation model with i.i.d.\ signal
prior. Informally, Theorem~\ref{thm:qua-assgn-mutual-info}
shows that in a bounded SNR regime of the model \eqref{eq:QAmodel}
where the kernel eigenvalues
$\mu_1,\mu_2,\ldots$ are fixed independently of $N$, observing the exact
sample points $x_1,\ldots,x_N$ is asymptotically no more informative for
estimating $\kappa(x_{\pi_*(i)},x_{\pi_*(j)})_{i<j}$ than
knowing the ``prior distribution'' $\rho$ corresponding to the limit of
their empirical law. This suggests that in the asymptotic limit
$N \to \infty$, correct recovery of a non-vanishing fraction of rows of $\pi_*$
 may be impossible in this regime, although we will not develop a formal
statement of this impossibility result here.

\section{Conclusion}

In this work, we have studied the two models of group synchronization and
quadratic assignment on pairs of noisy positive-semidefinite kernel matrices
observed with Gaussian noise. These problems
share a common structure of estimating a latent element $G_*$ of a
high-dimensional group from pairwise observations.
Assuming a Bayesian setting with Haar-uniform prior for $G_*$,
we have derived under a common framework the limit of the signal-observation
mutual information in both models, in an asymptotic regime of bounded SNR.
For group
synchronization, we have given a complete characterization of the algorithmic
phase transition threshold for $\overlap=\bzero$ to locally optimize the replica
potential in general groups.
For quadratic assignment, we have shown that the signal-observation
mutual information is asymptotically equivalent to that in a low-rank matrix
estimation model with i.i.d.\ signal prior.

The framework developed here is fairly general, and may apply to other
Bayesian inference problems of this form, where the underlying group $\cG_N$
does not necessarily have a product structure. We have analyzed two examples in
which the linear observation model (to which the original quadratic model is
compared) admits a reasonably simple direct analysis. In applications with other
group structures, as well as in other regimes of SNR, the linear model itself
may exhibit other types of asymptotic behaviors, and we believe this may be
interesting to investigate in future work.

\section*{Acknowledgments}

We would like to thank Alex Wein for helpful conversations about
\cite{PWBM2018}, and Yihong Wu for helpful conversations and pointers to the
works \cite{greenshtein2009asymptotic,polyanskiy2021sharp}
on empirical Bayes estimation. This research was supported in part by
NSF DMS-2142476.

\pagebreak

\appendix

\section{Proofs for the general model}\label{sec:pf-general_model}

Throughout this section, we write as shorthand
\[\phi=\phi(G), \qquad \phi'=\phi(G'), \qquad \phi_*=\phi(G_*)\]
and abbreviate $\cF \equiv \cF_N$, $\Psi \equiv \Psi_N$. Define the Hamiltonian
\begin{equation}\label{eq:mainhamiltonian}
\begin{aligned}
\widetilde H(G;G_*,Z)
&=\sum_{1 \leq i<j \leq N} -\frac{1}{2N}\|\phi_i \bullet \phi_j\|_\cK^2
+\frac{1}{N}\langle \phi_i \bullet \phi_j,\phi_{*i} \bullet \phi_{*j}
\rangle_\cK+\frac{1}{\sqrt{N}}\langle \phi_i \bullet \phi_j,\bz_{ij} \rangle_\cK\\
&\qquad+\sum_{i=1}^N {-}\frac{1}{4N}\|\phi_i \bullet \phi_i\|_\cK^2
+\frac{1}{2N}\langle \phi_i \bullet \phi_i,\phi_{*i} \bullet \phi_{*i}
\rangle_\cK+\frac{1}{\sqrt{2N}}\langle \phi_i \bullet \phi_i,\bz_{ii}
\rangle_\cK
\end{aligned}
\end{equation}
where $\{\bz_{ii}\}_{i=1}^N$ are additional standard Gaussian noise vectors in
$\cK$, independent of $G_*$ and of $\{\bz_{ij}\}_{i<j}$.
Then $\exp \widetilde H(G;G_*,Z)$ is
proportional to the posterior density of $G$ in the model
\eqref{eq:quadraticmodel} with additional observations
\[\by_{ii}=\phi_i \bullet \phi_i+\sqrt{2N}\,\bz_{ii} \qquad
\text{ for } i=1,\ldots,N.\]
We will establish lower and upper bounds for the perturbed free energy
\[\widetilde \cF=\frac{1}{N}\,\E_{G_*,Z} \log \E_G \exp \widetilde H(G;G_*,Z)\]
and remove this perturbation at the conclusion of the proof.

\subsection{Free energy lower bound}

We first prove the following lower bound for $\widetilde \cF$.

\begin{lemma}\label{thm:lower-bound-general-bound}
Under Assumption \ref{assumption:general-model},
\[\widetilde \cF \ge \sup_{\overlap \in \cQ} \Psi(\overlap).\]
\end{lemma}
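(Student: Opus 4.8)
The plan is to establish this lower bound by a Guerra-type interpolation between the perturbed quadratic model and the linear observation model \eqref{eq:linearmodel} associated with a fixed overlap $\overlap \in \cQ$. As a first step I would rewrite the perturbed Hamiltonian in Gaussian-process form: using the compatibility relation \eqref{eq:compatibility} to convert every $\|\cdot\bullet\cdot\|_\cK^2$ into an inner product of $\otimes$-products, and then the group symmetry in Assumption~\ref{assumption:general-model}(d) to identify the self-overlap $\|Q(G,G)\|_\cL^2 = \|Q(\iden,\iden)\|_\cL^2$ \emph{exactly} (this exact constancy is the purpose of the diagonal $i=j$ terms in $\widetilde H$), one obtains
\[\widetilde H(G;G_*,Z) = -\tfrac N4\,\|Q(\iden,\iden)\|_\cL^2 + \tfrac N2\,\|Q(G,G_*)\|_\cL^2 + \widetilde Z(G),\]
where $\widetilde Z(\cdot)$ is a centered Gaussian process on $\cG_N$, independent of $G_*$, with covariance $\E[\widetilde Z(G)\widetilde Z(G')] = \tfrac N2\|Q(G,G')\|_\cL^2$. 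Fixing $\overlap \in \cQ$ and i.i.d.\ standard Gaussian vectors $\bz_1,\dots,\bz_N \in \cH$, I would then introduce, for $t \in [0,1]$, the interpolating Hamiltonian
\begin{align*}
H_t(G) &= t\Big({-}\tfrac N4\|Q(\iden,\iden)\|_\cL^2 + \tfrac N2\|Q(G,G_*)\|_\cL^2\Big) + \sqrt t\,\widetilde Z(G)\\
&\quad + (1-t)\Big({-}\tfrac N2\langle\overlap,Q(\iden,\iden)\rangle_\cL + N\langle\overlap,Q(G,G_*)\rangle_\cL\Big) + \sqrt{1-t}\sum_{i=1}^N\langle\overlap^{1/2}\phi_i,\bz_i\rangle_\cH
\end{align*}
and the interpolating free energy $\Phi(t) = \tfrac1N\,\E_{G_*,Z}\log\E_G\exp H_t(G)$, the outer expectation now running over $G_*$ and all Gaussian noise. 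At $t=1$ the linear part vanishes and $\Phi(1) = \widetilde\cF$; at $t=0$ the quadratic part vanishes and, expanding $\langle\overlap^{1/2}\phi_i,\overlap^{1/2}\phi_{*i}\rangle_\cH = \langle\overlap,\phi_i\otimes\phi_{*i}\rangle_\cL$ and summing over $i$, one checks $\Phi(0) = \Psi(\overlap) + \tfrac14\|\overlap\|_\cL^2$. The crucial structural point is that $\exp H_t(G)$ is, as a function of $G$, proportional to the posterior density of $G$ in a Bayesian model where one observes a $t$-scaled quadratic Gaussian channel together with a $(1-t)$-scaled linear Gaussian channel, both with hidden signal $\phi(G_*)$; hence the Nishimori identity holds along the whole interpolation.

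Next I would differentiate $\Phi(t)$ and apply Gaussian integration by parts to the two noise terms $\widetilde Z(G)$ and $\sum_i\langle\overlap^{1/2}\phi_i,\bz_i\rangle_\cH$. These two processes are mutually independent and independent of $G_*$, so no cross terms arise; the factors $\tfrac1{2\sqrt t}$ and $\tfrac1{2\sqrt{1-t}}$ produced by the chain rule are exactly cancelled by the $\sqrt t,\sqrt{1-t}$ produced by integration by parts, and the pieces involving the constants $\|Q(\iden,\iden)\|_\cL^2$ and $\langle\overlap,Q(\iden,\iden)\rangle_\cL$ cancel in pairs. Writing $\langle\cdot\rangle_t$ for the Gibbs average defined by $\exp H_t$ and $G^{(1)},G^{(2)}$ for two i.i.d.\ replicas under it, this leaves
\begin{align*}
\Phi'(t) &= \tfrac12\,\E\langle\|Q(G,G_*)\|_\cL^2\rangle_t - \tfrac14\,\E\langle\|Q(G^{(1)},G^{(2)})\|_\cL^2\rangle_t\\
&\quad - \E\langle\langle\overlap,Q(G,G_*)\rangle_\cL\rangle_t + \tfrac12\,\E\langle\langle\overlap,Q(G^{(1)},G^{(2)})\rangle_\cL\rangle_t.
\end{align*}
By the Nishimori identity the replica overlap $Q(G^{(1)},G^{(2)})$ and the signal overlap $Q(G,G_*)$ have the same law under $\E\langle\cdot\rangle_t$, and substituting this collapses the expression to the single clean term
\[\Phi'(t) = \tfrac14\,\E\big\langle\|Q(G,G_*) - \overlap\|_\cL^2\big\rangle_t - \tfrac14\|\overlap\|_\cL^2 \;\ge\; -\tfrac14\|\overlap\|_\cL^2.\]
Integrating over $t \in [0,1]$ gives $\widetilde\cF = \Phi(1) \ge \Phi(0) - \tfrac14\|\overlap\|_\cL^2 = \Psi(\overlap)$, and since $\overlap \in \cQ$ was arbitrary we conclude $\widetilde\cF \ge \sup_{\overlap \in \cQ}\Psi(\overlap)$.

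I would carry out explicitly the routine parts: the Gaussian-process rewriting of $\widetilde H$, the exact evaluations $\Phi(1) = \widetilde\cF$ and $\Phi(0) = \Psi(\overlap) + \tfrac14\|\overlap\|_\cL^2$, and the integration-by-parts bookkeeping. The part needing the most care is making this rigorous in the present group-valued setting rather than a finite or product one: since $\cG_N$ is compact and $\phi$ continuous, $\widetilde Z$ is a continuous Gaussian process with bounded covariance, $\E_G$ is integration against Haar measure, and $\exp H_t(\cdot)$ is almost surely bounded and continuous, so $\Phi$ is well-defined, finite, and differentiable with the interchange of derivative and expectation justified by dominated convergence; the Nishimori identity applies because the interpolated observation scheme is a genuine Bayesian model at each $t$. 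I expect the only genuinely delicate point to be checking that the $t^{-1/2}$ and $(1-t)^{-1/2}$ singularities are removable after integration by parts, so that $\Phi$ is $C^1$ up to the endpoints --- alternatively one may simply run the interpolation on $[\delta,1-\delta]$ and let $\delta \to 0$.
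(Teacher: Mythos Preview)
Your proposal is correct and follows essentially the same Guerra-type interpolation as the paper: the interpolating Hamiltonian, the derivative computation via Gaussian integration by parts, the application of the Nishimori identity to collapse the replica overlap onto the signal overlap, and the final sign argument $\E\langle\|Q(G,G_*)-\overlap\|_\cL^2\rangle_t \ge 0$ are all identical to the paper's proof of Proposition~\ref{prop:derivative-varphi-general-model} and Lemma~\ref{thm:lower-bound-general-bound}. The only cosmetic difference is that you package the quadratic-model noise into an abstract Gaussian process $\widetilde Z(G)$ with covariance $\tfrac N2\|Q(G,G')\|_\cL^2$, whereas the paper keeps the explicit coordinates $\bz_{ij},\bz_{ii}$ and does the integration-by-parts term by term; both lead to the same expression \eqref{eq:lowerboundderiv}.
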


Fixing any $\overlap \in \cQ$, for every $0 \le t \le 1$,
consider the observations
\begin{align}
\label{eq:lower-bound-interpolation}
    \begin{cases}
        \by_{ij}^{(t)}=\sqrt{t}\phi_{*i} \bullet \phi_{*j} + \sqrt{N}\,\bz_{ij}
\text{ for all } 1 \leq i<j \leq N\\
\by_{ii}^{(t)}=\sqrt{t}\phi_{*i} \bullet \phi_{*i} + \sqrt{2N}\,\bz_{ii}
\text{ for all } i=1,\ldots,N\\
        \by_i^{(t)}=\sqrt{(1-t)}\overlap^{1/2}\phi_{*i}+\bz_i
\text{ for all } i=1,\ldots,N
    \end{cases}
\end{align}
where $\{\bz_{ij}\}_{i \leq j}$ and $\{\bz_i\}_{i=1}^N$ are standard Gaussian noise
vectors in $\cK$ and $\cH$ respectively, independent of each other and of
$G_* \sim \Haar(\cG_N)$.
The posterior distribution of $G$ given the joint
observations~\eqref{eq:lower-bound-interpolation} is proportional to $\exp
\widetilde H_t(G;G_*,Z)$ for the interpolating Hamiltonian
\begin{equation}
\label{eq:interpolating-hamiltonian-general-model-lower-bound}
\begin{aligned}
    \widetilde H_t(G;G_*,Z)&=\sum_{1 \leq i<j \leq N} -\frac{t}{2N}\|\phi_i \bullet
\phi_j\|_\cK^2+\frac{t}{N}\langle \phi_i \bullet \phi_j,
\phi_{*i} \bullet \phi_{*j} \rangle_\cK+\sqrt{\frac{t}{N}}\langle 
\phi_i \bullet \phi_j, \bz_{ij} \rangle_\cK\\
&\qquad + \sum_{i=1}^N -\frac{t}{4N}\|\phi_i \bullet \phi_i\|_\cK^2
+\frac{t}{2N}\langle \phi_i \bullet \phi_i, \phi_{*i} \bullet \phi_{*i}
\rangle_\cK+\sqrt{\frac{t}{2N}}\langle \phi_i \bullet \phi_i,\bz_{ii}
\rangle_\cK\\
    &\qquad +\sum_{i=1}^N
-\frac{(1-t)}{2}\|\overlap^{1/2}\phi_i\|_\cH^2
+(1-t)\langle \overlap^{1/2}\phi_i,\overlap^{1/2}\phi_{*i}
\rangle_\cH+\sqrt{1-t}\,\langle \overlap^{1/2}\phi_i,\bz_i \rangle_\cH.
\end{aligned}
\end{equation}

For $0\le t \le 1$, we denote the posterior mean
$\langle f(G) \rangle_t=\frac{\E_G[f(G)\exp \widetilde H_t(G;G_*,Z)]}{\E_G[\exp
\widetilde H_t(G;G_*,Z)]}$
(not to be confused with the inner-product notations $\langle \cdot,\cdot
\rangle_\cK$ and $\langle \cdot,\cdot \rangle_\cH$). The Nishimori identity
holds for $\E \langle \cdot \rangle_t$ by Bayes' rule in the model 
\eqref{eq:lower-bound-interpolation}. Define the interpolating free energy
\begin{align*}
\widetilde \cF(t) = \frac{1}{N}\,\E_{G_*,Z}\log
\E_G \exp \widetilde H_t(G;G_*,Z)
\end{align*}
where $\widetilde \cF(1)=\widetilde \cF$ is the free energy of interest. At $t=0$,
applying the identity
\begin{equation}\label{eq:qQidentity}
\sum_{i=1}^N \langle \overlap^{1/2}\phi_i,\overlap^{1/2}\phi_i'
\rangle_\cH=\sum_{i=1}^N \langle \phi_i,\overlap \phi_i' \rangle_\cH
=\sum_{i=1}^N \langle \overlap,\phi_i \otimes \phi_i' \rangle_\cL
=N\langle \overlap,Q(G,G') \rangle_\cL
\end{equation}
for any $G,G' \in \cG_N$, and the group symmetry $Q(G,G)=Q(\iden,\iden)$, we have
\begin{align}
\widetilde \cF(0)&=-\frac{1}{2}\langle \overlap,Q(\iden,\iden)\rangle_\cL
+\frac{1}{N}\,\E_{G_*,Z} \log \E_{G} \exp \inparen{N\langle \overlap,Q(G,G_*)
\rangle_\cL+\sum_{i=1}^N \langle \overlap^{1/2}\phi_i,\bz_i \rangle_\cH}\nonumber\\
&=\Psi(\overlap)+\frac{1}{4}\|\overlap\|_\cL^2
\label{eq:varphi0-general-model}
\end{align}
A calculation based on Gaussian integration by parts shows the derivative of
$\widetilde \cF(t)$.
\begin{proposition}
\label{prop:derivative-varphi-general-model}
\begin{align*}
    \widetilde \cF'(t) =
-\frac{1}{4}\|\overlap\|_\cL^2+\frac{1}{4}\E_{G_*,Z}
\inangle{\|Q(G,G_*)-\overlap\|_\cL^2}_t.
\end{align*}
\end{proposition}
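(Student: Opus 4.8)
The proof is a standard Gaussian interpolation computation; I outline the plan. By compactness of $\cG_N$ and continuity of $\phi$, all terms in \eqref{eq:interpolating-hamiltonian-general-model-lower-bound} are uniformly bounded, so $\widetilde\cF(t)$ is differentiable on $(0,1)$ and differentiation passes inside $\E_{G_*,Z}$, giving $\widetilde\cF'(t)=\tfrac1N\E_{G_*,Z}\langle\partial_t\widetilde H_t(G;G_*,Z)\rangle_t$. I split $\partial_t\widetilde H_t=A+B$, where $A$ gathers the explicit $t$-derivatives of the three deterministic terms and $B=\sum_{i<j}\frac{\langle\phi_i\bullet\phi_j,\bz_{ij}\rangle_\cK}{2\sqrt{tN}}+\sum_i\frac{\langle\phi_i\bullet\phi_i,\bz_{ii}\rangle_\cK}{2\sqrt{2tN}}-\sum_i\frac{\langle\overlap^{1/2}\phi_i,\bz_i\rangle_\cH}{2\sqrt{1-t}}$ gathers the derivatives of the noise terms. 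For the deterministic part, using the compatibility relation \eqref{eq:compatibility} to rewrite each $\|\cdot\|_\cK^2$ and $\langle\cdot,\cdot\rangle_\cK$ in terms of $\otimes$ and $\langle\cdot,\cdot\rangle_\cL$, the inclusion identity \eqref{eq:qQidentity} for the $\overlap^{1/2}$-terms, and the group symmetry $Q(G,G)=Q(\iden,\iden)$, one checks that the diagonal $i=j$ contributions are exactly what completes $\sum_{i<j}$ into the full sums $\tfrac1{N^2}\sum_{i,j}$ defining $\|Q(\cdot,\cdot)\|_\cL^2$, yielding $\tfrac1N A=-\tfrac14\|Q(\iden,\iden)\|_\cL^2+\tfrac12\|Q(G,G_*)\|_\cL^2+\tfrac12\langle\overlap,Q(\iden,\iden)\rangle_\cL-\langle\overlap,Q(G,G_*)\rangle_\cL$.

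For the noise part, I apply Gaussian integration by parts. For each Gaussian coordinate $z$ entering $\widetilde H_t$, Stein's identity $\E_Z[z\,g]=\E_Z[\partial_z g]$ together with $\partial_z\langle g\rangle_t=\langle g\,\partial_z\widetilde H_t\rangle_t-\langle g\rangle_t\langle\partial_z\widetilde H_t\rangle_t$ applies coordinatewise to $\bz_{ij}$ ($i<j$), $\bz_{ii}$, and $\bz_i$. Since $z$ enters $\widetilde H_t$ only through one linear term, the Jacobian factors $\sqrt{t/N}$, $\sqrt{t/(2N)}$, $\sqrt{1-t}$ cancel the prefactors in $B$ up to the factor $\tfrac12$, producing for each index a ``self'' term (e.g.\ $\langle\|\phi_i\bullet\phi_j\|_\cK^2\rangle_t$) minus a ``two-replica'' term (e.g.\ $\langle\langle\phi_i\bullet\phi_j,\phi_i'\bullet\phi_j'\rangle_\cK\rangle_t$, with $G,G'$ independent posterior samples). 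Re-expressing these exactly as above (self terms completing into $\|Q(\iden,\iden)\|_\cL^2$ and $\langle\overlap,Q(\iden,\iden)\rangle_\cL$ by group symmetry, two-replica terms into $\|Q(G,G')\|_\cL^2$ and $\langle\overlap,Q(G,G')\rangle_\cL$) gives $\tfrac1N\E_{G_*,Z}\langle B\rangle_t=\E_{G_*,Z}\big\langle\tfrac14\|Q(\iden,\iden)\|_\cL^2-\tfrac12\langle\overlap,Q(\iden,\iden)\rangle_\cL-\tfrac14\|Q(G,G')\|_\cL^2+\tfrac12\langle\overlap,Q(G,G')\rangle_\cL\big\rangle_t$.

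Finally, since \eqref{eq:lower-bound-interpolation} is a genuine Bayesian model, $\E_{G_*,Z}\langle\cdot\rangle_t$ satisfies the Nishimori identity, so $(G_*,G,G')$ with $G,G'$ independent posterior samples is exchangeable and $\E_{G_*,Z}\langle f(G,G')\rangle_t=\E_{G_*,Z}\langle f(G,G_*)\rangle_t$. Replacing $Q(G,G')$ by $Q(G,G_*)$ and adding the two contributions above, the $\|Q(\iden,\iden)\|_\cL^2$ and $\langle\overlap,Q(\iden,\iden)\rangle_\cL$ terms cancel, leaving $\widetilde\cF'(t)=\E_{G_*,Z}\langle\tfrac14\|Q(G,G_*)\|_\cL^2-\tfrac12\langle\overlap,Q(G,G_*)\rangle_\cL\rangle_t$, which equals the claimed expression since $\tfrac14\|\bu\|_\cL^2-\tfrac12\langle\overlap,\bu\rangle_\cL=-\tfrac14\|\overlap\|_\cL^2+\tfrac14\|\bu-\overlap\|_\cL^2$ for every $\bu\in\cL$.

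The structure here is routine, and I expect the main obstacle to be purely in the bookkeeping: three families of noise variables with distinct normalizations must be tracked simultaneously, and one must verify that the diagonal ($i=j$) terms --- arising from the auxiliary observations $\by_{ii}$ introduced in the perturbed Hamiltonian --- are precisely what closes the partial sums $\sum_{i<j}$ into the full double sums defining $\|Q\|_\cL^2$; without this diagonal perturbation the identity would hold only up to the $O(K(\cG_N)/N)$ error noted after \eqref{eq:KGN}. The compatibility relation \eqref{eq:compatibility} and the group symmetry $Q(G,G)=Q(\iden,\iden)$ are exactly what make every $\cK$- and $\cH$-level expression collapse onto the overlap space $\cL$.
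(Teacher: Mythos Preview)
Your proof is correct and follows essentially the same approach as the paper's: differentiate under the expectation, apply Gaussian integration by parts to each noise family, use the compatibility relation \eqref{eq:compatibility} and the identity \eqref{eq:qQidentity} to rewrite everything in terms of the overlap $Q$, then apply the Nishimori identity and complete the square. The only difference is organizational --- you split $\partial_t\widetilde H_t$ into a deterministic part $A$ and a noise part $B$ and simplify each separately (so that the $\|Q(\iden,\iden)\|_\cL^2$ and $\langle\overlap,Q(\iden,\iden)\rangle_\cL$ contributions visibly cancel between them), whereas the paper carries all terms together until arriving at \eqref{eq:lowerboundderiv} before invoking Nishimori; the underlying computation and the key identities used are identical.
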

\begin{proof}
First note that
\begin{align*}
\widetilde \cF'(t)=\frac{1}{N}\,\E_{G_*,Z}\inangle{\frac{\ud}{\ud
t}\widetilde H_t(G;G_*,Z)}_t
\end{align*}
where we have
\begin{align*}
    \frac{\ud}{\ud t}\widetilde H_t(G;G_*,Z)&=\sum_{i<j}
-\frac{1}{2N}\|\phi_i \bullet \phi_j\|_\cK^2
+\frac{1}{N}\langle\phi_i \bullet \phi_j,\phi_{*i} \bullet
\phi_{*j}\rangle_\cK
+\frac{1}{2\sqrt{tN}}\langle \phi_i \bullet \phi_j,\bz_{ij} \rangle_\cK
\nonumber \\
&\qquad+\sum_{i=1}^N -\frac{1}{4N}\|\phi_i \bullet \phi_i\|_\cK^2
+\frac{1}{2N}\langle \phi_i \bullet \phi_i,\phi_{*i} \bullet \phi_{*i}
\rangle_\cK
+\frac{1}{2\sqrt{2tN}}\langle \phi_i \bullet \phi_i,\bz_{ii} \rangle_\cK\\
    &\qquad +\sum_{i=1}^N \frac{1}{2} \|\overlap^{1/2}\phi_i\|_\cH^2
-\langle\overlap^{1/2}\phi_i,\overlap^{1/2}
\phi_{*i}\rangle_\cH-\frac{1}{2\sqrt{1-t}}\langle
\overlap^{1/2}\phi_i,\bz_i \rangle_\cH.
\end{align*}

Applying Gaussian integration by parts and denoting $\phi_i'=\phi(G')_i$ for an 
independent sample $G'$ from the posterior law, 
\begin{align*}
\E_Z \inangle{\frac{1}{2\sqrt{tN}}\langle \phi_i \bullet
\phi_j,\bz_{ij}\rangle_\cK }_t
&=\frac{1}{2N}\,\E_{G_*,Z}\Big\langle\|\phi_i \bullet \phi_j\|_\cK^2
-\langle\phi_i \bullet \phi_j, \phi_i' \bullet
\phi_j'\rangle_\cK\Big\rangle_t,\\
\E_Z\inangle{\frac{1}{2\sqrt{2tN}}\langle \phi_i \bullet \phi_i,\bz_{ii}
\rangle_\cK}_t
&=\frac{1}{4N}\,\E_{G_*,Z}\Big\langle\|\phi_i \bullet \phi_i\|_\cK^2
-\langle\phi_i \bullet \phi_i, \phi_i' \bullet
\phi_i'\rangle_\cK\Big\rangle_t,\\
\E_Z\inangle{\frac{1}{2\sqrt{1-t}}\langle
\overlap^{1/2}\phi_i,\bz_i \rangle_\cH}_t
&=\frac{1}{2}\,\E_{G_*,Z}\inangle{\|\overlap^{1/2}\phi_i\|_\cH^2
-\langle \overlap^{1/2}\phi_i,\overlap^{1/2}\phi_i' \rangle_\cH}_t.
\end{align*}
Hence
\begin{align*}
\widetilde \cF'(t)&=\frac{1}{N}\,\E_{G_*,Z} \Bigg\langle\frac{1}{N}\sum_{i<j}
\langle \phi_i \bullet \phi_j,\phi_{*i} \bullet \phi_{*j} \rangle_\cK
-\frac{1}{2N}\sum_{i<j}
\langle \phi_i \bullet \phi_j,\phi_i' \bullet \phi_j' \rangle_\cK
+\frac{1}{2N}\sum_{i=1}^N
\langle \phi_i \bullet \phi_i,\phi_{*i} \bullet \phi_{*i} \rangle_\cK\\
&\hspace{1in}-\frac{1}{4N}\sum_{i=1}^N
\langle \phi_i \bullet \phi_i,\phi_i' \bullet \phi_i' \rangle_\cK
-\sum_{i=1}^N \langle \overlap^{1/2}\phi_i,
\overlap^{1/2} \phi_{*i} \rangle_\cH
+\frac{1}{2}\sum_{i=1}^N \langle \overlap^{1/2}\phi_i,
\overlap^{1/2} \phi_i' \rangle_\cH\Bigg\rangle_t\\
&=\E_{G_*,Z} \Bigg\langle\frac{1}{2N^2}\sum_{i,j=1}^N
\langle \phi_i \bullet \phi_j,\phi_{*i} \bullet \phi_{*j} \rangle_\cK
-\frac{1}{4N^2}\sum_{i,j=1}^N
\langle \phi_i \bullet \phi_j,\phi_i' \bullet \phi_j' \rangle_\cK\\
&\hspace{1in}-\frac{1}{N} \sum_{i=1}^N \langle \overlap^{1/2}\phi_i,
\overlap^{1/2} \phi_{*i} \rangle_\cH
+\frac{1}{2N} \sum_{i=1}^N \langle \overlap^{1/2}\phi_i,
\overlap^{1/2} \phi_i' \rangle_\cH\Bigg\rangle_t.
\end{align*}

By Assumption \ref{assumption:general-model}, for any $G,G' \in \cG_N$, we have
\begin{equation}\label{eq:Q2identity}
\sum_{i,j=1}^N \langle \phi_i \bullet \phi_j,\phi_i' \bullet \phi_j'
\rangle_\cK
=\sum_{i,j=1}^N \langle \phi_i \otimes \phi_i',\phi_j \otimes \phi_j'
\rangle_\cL=N^2\|Q(G,G')\|_\cL^2.
\end{equation}
Applying (\ref{eq:qQidentity}) and (\ref{eq:Q2identity}) to the above gives
\begin{equation}\label{eq:lowerboundderiv}
\widetilde \cF'(t)=\E_{G_*,Z} \inangle{
\frac{1}{2}\|Q(G,G_*)\|_\cL^2-\frac{1}{4}\|Q(G,G')\|_\cL^2
-\langle \overlap,Q(G,G_*) \rangle_\cL
+\frac{1}{2}\langle \overlap,Q(G,G') \rangle_\cL}_t.
\end{equation}
Finally, by Nishimori's identity,
$\E_{G_*,Z} \inangle{f(G,G')}_t=\E_{G_*,Z} \inangle{f(G,G_*)}_t$, so
\[\widetilde \cF'(t)=\E_{G_*,Z} \inangle{
\frac{1}{4}\|Q(G,G_*)\|_\cL^2-\frac{1}{2}\langle \overlap,Q(G,G_*)
\rangle_\cL}_t
=-\frac{1}{4}\|\overlap\|_\cL^2
+\frac{1}{4}\E_{G_*,Z} \inangle{\|Q(G,G_*)-\overlap\|_\cL^2}_t.\]
\end{proof}

\begin{proofof}{Lemma \ref{thm:lower-bound-general-bound}}
For any $\overlap \in \cQ$, applying (\ref{eq:varphi0-general-model}) and
Proposition \ref{prop:derivative-varphi-general-model} with
$\|Q(G,G_*)-\overlap\|_\cL^2 \geq 0$, we have
$\widetilde \cF=\widetilde \cF(0)+\int_0^1 \widetilde \cF'(t)\ud t
\geq \Psi(\overlap)$,
and the result follows upon taking a supremum over $\overlap \in \cQ$.
\end{proofof}

%%%%%%%%%%%%%%%%%%

\subsection{Free energy upper bound via the Franz-Parisi potential}
\label{sec:RS_upperBound_general_model}

In this section, we now prove the following upper bound for $\widetilde \cF$.

\begin{lemma}\label{thm:upper-bound-general-model}
In the setting of Theorem \ref{thm:free-energy-general-model},
for any $\epsilon>0$,
\begin{align}
\widetilde \cF \leq \sup_{\overlap \in \cQ} \Psi(\overlap)
+D(\cG_N)\sqrt{\frac{L(\epsilon^{1/2};\cG_N)}{N}}
+\frac{L(\epsilon^{1/2};\cG_N)}{N}+\frac{\epsilon}{2}.
\end{align}
\end{lemma}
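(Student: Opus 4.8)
The plan is to follow the method of \cite{el2018estimation}: rather than interpolating the full free energy, decompose it into contributions from posterior samples whose overlap $Q(G,G_*)$ with the planted signal lies in a narrow neighborhood of a fixed value $\overlap^\circ$, and then interpolate the \emph{Franz--Parisi potential} at each such $\overlap^\circ$ against the linear model at the matching overlap. Concretely, I would fix an $\epsilon^{1/2}$-cover $\{\overlap^\circ_k\}_{k=1}^M$ of $\image(Q)$ in $\|\cdot\|_\cL$, with $\log M = L(\epsilon^{1/2};\cG_N)$ and $\|\overlap^\circ_k\|_\cL \le D(\cG_N)$. Writing $S_k = \{G : \|Q(G,G_*) - \overlap^\circ_k\|_\cL \le \epsilon^{1/2}\}$, one has a covering $\cG_N = \bigcup_k S_k$, hence
\[
\E_G \exp\widetilde H(G;G_*,Z) \le \sum_{k=1}^M \E_G\big[\mathbf{1}\{G \in S_k\}\exp\widetilde H(G;G_*,Z)\big],
\]
so $\widetilde\cF \le \frac{\log M}{N} + \max_k \frac{1}{N}\E_{G_*,Z}\log \E_G[\mathbf{1}\{G \in S_k\}\exp\widetilde H(G;G_*,Z)]$, since taking the max out of the $\log$-sum costs exactly $\frac{1}{N}\log M = \frac{L(\epsilon^{1/2};\cG_N)}{N}$. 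The remaining task is to bound each restricted free energy $\widetilde\cF_k := \frac{1}{N}\E_{G_*,Z}\log\E_G[\mathbf{1}\{G\in S_k\}\exp\widetilde H(G;G_*,Z)]$ by $\Psi(\overlap^\circ_k) + O(D(\cG_N)\sqrt{L/N} + \epsilon)$, where here I would choose $\overlap^\circ_k$ already lying in $\cQ$ (replacing it by $|\overlap^\circ_k|$ if needed, which does not change $Q$-values up to the group symmetry).

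For the key step, fix $k$ and set $\overlap = \overlap^\circ_k \in \cQ$. Run the \emph{reverse} interpolation: for $t \in [0,1]$ define an interpolating Hamiltonian as in \eqref{eq:interpolating-hamiltonian-general-model-lower-bound} but with the indicator $\mathbf{1}\{G \in S_k\}$ inserted inside $\E_G$, and let $\widetilde\cF_k(t)$ be the corresponding restricted interpolating free energy, so $\widetilde\cF_k(1) = \widetilde\cF_k$. The Gaussian-integration-by-parts computation of Proposition \ref{prop:derivative-varphi-general-model} goes through unchanged with the tilted (restricted) measure $\langle\cdot\rangle_{t,k}$, because the indicator is $Z$-independent and the Nishimori identity still applies to the Bayesian model with the restricted prior; this yields
\[
\widetilde\cF_k{}'(t) = -\tfrac{1}{4}\|\overlap\|_\cL^2 + \tfrac{1}{4}\,\E_{G_*,Z}\big\langle\|Q(G,G_*)-\overlap\|_\cL^2\big\rangle_{t,k}.
\]
Now on the event $G \in S_k$ we have $\|Q(G,G_*)-\overlap\|_\cL^2 \le \epsilon$, so $\widetilde\cF_k{}'(t) \le -\tfrac14\|\overlap\|_\cL^2 + \tfrac{\epsilon}{4}$. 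Integrating from $0$ to $1$ gives $\widetilde\cF_k \le \widetilde\cF_k(0) - \tfrac14\|\overlap\|_\cL^2 + \tfrac{\epsilon}{4}$. Finally $\widetilde\cF_k(0)$ is the restricted version of \eqref{eq:varphi0-general-model}: using \eqref{eq:qQidentity} and $Q(G_*,G_*) = Q(\iden,\iden)$,
\[
\widetilde\cF_k(0) = -\tfrac12\langle\overlap,Q(\iden,\iden)\rangle_\cL + \tfrac1N\E_{G_*,Z}\log\E_G\big[\mathbf{1}\{G\in S_k\}\exp(N\langle\overlap,Q(G,G_*)\rangle_\cL + \textstyle\sum_i\langle\overlap^{1/2}\phi_i,\bz_i\rangle_\cH)\big] \le \Psi(\overlap) + \tfrac14\|\overlap\|_\cL^2,
\]
where the inequality is just dropping the indicator $\mathbf{1}\{G\in S_k\} \le 1$ inside the expectation (which can only increase it). Combining, $\widetilde\cF_k \le \Psi(\overlap^\circ_k) + \tfrac{\epsilon}{4}$, and hence $\widetilde\cF \le \sup_{\overlap\in\cQ}\Psi(\overlap) + \tfrac{L(\epsilon^{1/2};\cG_N)}{N} + \tfrac{\epsilon}{4}$.

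This last bound is actually slightly \emph{stronger} than the stated lemma, which suggests the authors' route is marginally different — they likely do not insist $\overlap^\circ_k \in \cQ$, and instead pay the $D(\cG_N)\sqrt{L(\epsilon^{1/2};\cG_N)/N}$ term to pass from $\Psi(\overlap^\circ_k)$ for a cover point in $\image(Q)$ (which need not be positive-semidefinite) to $\sup_{\overlap\in\cQ}\Psi$, e.g.\ via a Lipschitz/continuity estimate on $\overlap \mapsto \Psi(\overlap)$ whose modulus is controlled by $D(\cG_N)$ on the relevant range, or from needing the cover points to simultaneously serve both the restriction of $\E_G$ and the evaluation of $\Psi$. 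Accordingly, I would present the argument with this controlled slack: bound $\widetilde\cF_k \le \Psi(\overlap^\circ_k) + \tfrac{\epsilon}{4}$ as above when $\overlap^\circ_k \in \cQ$, and otherwise absorb the discrepancy $|\Psi(|\overlap^\circ_k|) - \Psi(\overlap^\circ_k)|$ or the cross-term arising from a suboptimal choice of the linear-model overlap into $D(\cG_N)\sqrt{L(\epsilon^{1/2};\cG_N)/N}$. \textbf{The main obstacle} is precisely this bookkeeping: ensuring that the single fixed overlap $\overlap$ driving the interpolation can be taken in $\cQ$ \emph{and} be an $\epsilon^{1/2}$-close representative of the posterior overlap on $S_k$ — i.e.\ that restricting to the psd cone and covering $\image(Q)$ are compatible up to the advertised error — and verifying that Gaussian integration by parts and the Nishimori identity survive the insertion of the overlap-restriction indicator (they do, since the indicator is deterministic given $G,G_*$ and the restricted model is still a valid Bayesian posterior). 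Everything else is a direct transcription of the computations already carried out for the lower bound.
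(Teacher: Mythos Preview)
There are two genuine gaps in your argument.

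\textbf{First, the interchange of $\E_{G_*,Z}$ and $\max_k$.} After covering, you write $\widetilde\cF \le \frac{\log M}{N} + \max_k \frac{1}{N}\E_{G_*,Z}\log\E_G[\mathbf{1}\{G\in S_k\}\exp\widetilde H]$. But the sum-to-max step only gives $\frac{\log M}{N} + \frac{1}{N}\E_{G_*,Z}\max_k\log\E_G[\ldots]$, with the expectation \emph{outside} the maximum; swapping them is not free. In the paper this swap is made quantitative via Gaussian concentration: each restricted free energy is $D(\cG_N)/\sqrt{2N}$-Lipschitz in $Z$ (c.f.\ \eqref{eq:PhiLipschitz}), so the price of the swap is precisely the term $D(\cG_N)\sqrt{L(\epsilon^{1/2};\cG_N)/N}$. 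The expectation over $G_*$ then drops out for free, because the group symmetry $Q(G,G_*)=Q(G_*^{-1}G,\iden)$ makes $\E_Z\widetilde\Phi_\epsilon(G_*,Z;\overlapm)$ independent of $G_*$ (c.f.\ \eqref{eq:PhiInvariant}). Your speculation at the end --- that the $D\sqrt{L/N}$ term arises from pushing cover points into the psd cone --- is incorrect; it is entirely a concentration cost.

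\textbf{Second, the Nishimori identity fails for the restricted measure.} The indicator $\mathbf{1}\{G\in S_k\}=\mathbf{1}\{\|Q(G,G_*)-\overlap^\circ_k\|\le\epsilon^{1/2}\}$ depends on $G_*$, so the restricted Gibbs measure $\langle\cdot\rangle_{t,k}$ is not a Bayesian posterior for any prior on $G$, and the planted element $G_*$ is not exchangeable with an independent replica $G'$. Your derivative formula $-\tfrac14\|\overlap\|_\cL^2+\tfrac14\E\langle\|Q(G,G_*)-\overlap\|_\cL^2\rangle_{t,k}$ is therefore unjustified. The paper avoids Nishimori at this step altogether: it interpolates with \emph{two} overlap parameters --- $\overlap\in\cQ$ in the noise channel and a general $\overlapm\in\cL$ in the signal channel --- and the raw derivative (as in \eqref{eq:lowerboundderiv}) is rewritten as
\[
\widetilde\Phi_\epsilon'(t;\overlapm)=-\tfrac14\E\big\langle\|Q(G,G')-\overlap\|_\cL^2\big\rangle_t+\tfrac12\E\big\langle\|Q(G,G_*)-\overlapm\|_\cL^2\big\rangle_t+\tfrac14\|\overlap\|_\cL^2-\tfrac12\|\overlapm\|_\cL^2,
\]
completing the square separately in $Q(G,G')$ and $Q(G,G_*)$. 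One discards the negative first term and bounds the second by $\epsilon/2$ on the restricted event, obtaining $\widetilde\Phi_\epsilon(\overlapm)\le\Psi(\overlap,\overlapm)+\epsilon/2$ for every $\overlap\in\cQ$. The final passage to $\sup_{\cQ}\Psi$ is via a separate KL-positivity argument (Lemma~\ref{lem:psi-positive-general-model}): choosing $\overlap=|\overlapm^\top|$ gives $\Psi(|\overlapm^\top|,\overlapm)\le\Psi(|\overlapm|)$. That your route produced a strictly sharper constant ($\epsilon/4$ instead of $\epsilon/2$) was the tell that Nishimori had been misapplied.
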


Recall that $Q(G,G_*)=N^{-1}\sum_{i=1}^N \phi(G)_i \otimes \phi(G_*)_i \in \cL$. 
For any $\overlapm \in \cL$ and $\epsilon>0$, define the Franz-Parisi potential
\begin{align}
\label{eq:truncated-free-energy}
    \widetilde \Phi_\epsilon(\overlapm) = \frac{1}{N}\, \E_{G_*,Z} \log \E_G
\insquare{\mathbf{1}\{\|Q(G,G_*)-\overlapm\|_\cL^2 \le \epsilon\} \exp \widetilde
H(G;G_*,Z)}.
\end{align}
This is the restriction of the free energy to samples $G$ for which $Q(G,G_*)$
falls close to $\overlapm$. It is clear that $\widetilde \cF \geq
\widetilde \Phi_\epsilon(\overlapm)$ for all $\overlapm \in \cL$;
the following lemma provides a complementary upper bound.

\begin{lemma}\label{lem:upper-bound-Fn-Phi-general-model}
In the setting of Theorem \ref{thm:free-energy-general-model},
for any $\epsilon>0$,
\begin{align}
\label{eq:upper-bound-Fn-1-general-model}
\widetilde \cF \le \sup_{\overlapm \in \cL} \widetilde
\Phi_\epsilon(\overlapm) + D(\cG_N)\sqrt{\frac{L(\epsilon^{1/2};\cG_N)}{N}}
+\frac{L(\epsilon^{1/2};\cG_N)}{N}.
\end{align}
\end{lemma}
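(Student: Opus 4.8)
The plan is to bound $\widetilde\cF$ by $\sup_{\overlapm\in\cL}\widetilde\Phi_\epsilon(\overlapm)$ via a covering/union-bound over the range of the overlap map, and then to control the resulting maximum over finitely many Franz--Parisi-type partition functions by Gaussian concentration.

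\emph{The covering step.} First I would fix $\epsilon>0$ and choose a minimal $\epsilon^{1/2}$-net $\overlapm_1,\dots,\overlapm_M$ of $\image(Q)$ in $\|\cdot\|_\cL$, so that $\log M\le L(\epsilon^{1/2};\cG_N)$ and, without loss, each $\overlapm_a\in\image(Q)$. Since $Q(G,G_*)\in\image(Q)$ for every $G$ and $G_*$, one can measurably assign to each $G$ (say by smallest index) a label $a(G)$ with $\|Q(G,G_*)-\overlapm_{a(G)}\|_\cL^2\le\epsilon$; writing $\widetilde Z_a:=\E_G[\mathbf{1}\{\|Q(G,G_*)-\overlapm_a\|_\cL^2\le\epsilon\}\exp\widetilde H(G;G_*,Z)]$, this gives $\E_G\exp\widetilde H(G;G_*,Z)\le\sum_{a\le M}\widetilde Z_a\le M\max_a\widetilde Z_a$, whence
\[
N\widetilde\cF=\E_{G_*,Z}\log\E_G\exp\widetilde H\ \le\ \log M+\E_{G_*,Z}\max_{a\le M}\log\widetilde Z_a ,
\]
and by definition of the Franz--Parisi potential $\E_{G_*,Z}\log\widetilde Z_a=N\widetilde\Phi_\epsilon(\overlapm_a)$. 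It then remains to show $\E_{G_*,Z}\max_a\log\widetilde Z_a\le N\sup_{\overlapm}\widetilde\Phi_\epsilon(\overlapm)+D(\cG_N)\sqrt{N\,L(\epsilon^{1/2};\cG_N)}$.

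\emph{The key structural fact.} The decisive point I would establish is that $\E_Z[\log\widetilde Z_a\mid G_*]$ does not depend on $G_*$. By left-invariance of Haar measure on the compact group $\cG_N$ I substitute $G\mapsto G_*G$ inside $\E_G$; by Assumption~\ref{assumption:general-model}(d) the constraint becomes $\|Q(G,\iden)-\overlapm_a\|_\cL^2\le\epsilon$ (free of $G_*$), while $\widetilde H(G_*G;G_*,Z)$ is an affine function of the Gaussian vector $Z$ whose mean --- obtained by expanding the squares in $\widetilde H$ and using the compatibility relation \eqref{eq:compatibility} together with part (d) --- equals $-\tfrac N4\|Q(\iden,\iden)\|_\cL^2+\tfrac N2\|Q(G,\iden)\|_\cL^2$, and whose covariance between two labels $G,G'$ equals $\tfrac N2\|Q(G_*G,G_*G')\|_\cL^2=\tfrac N2\|Q(G,G')\|_\cL^2$; both are free of $G_*$. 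Hence the $\cG_N$-indexed Gaussian process $\{\widetilde H(G_*G;G_*,Z)\}_G$ has a law over $Z$ that does not depend on $G_*$, so neither does the law of $\widetilde Z_a$, and therefore $\E_Z[\log\widetilde Z_a\mid G_*]=\E_{G_*,Z}\log\widetilde Z_a=N\widetilde\Phi_\epsilon(\overlapm_a)$ for every $G_*$.

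\emph{The Gaussian fluctuation bound and the maximum.} Conditionally on $G_*$, the map $Z\mapsto\log\widetilde Z_a$ is smooth and its gradient is the restricted posterior average of $\nabla_Z\widetilde H$; using Jensen together with $\sum_{i,j}\|\phi(G)_i\bullet\phi(G)_j\|_\cK^2=N^2\|Q(G,G)\|_\cL^2=N^2\|Q(\iden,\iden)\|_\cL^2\le N^2D(\cG_N)^2$ I would get $\|\nabla_Z\log\widetilde Z_a\|^2\le\tfrac N2 D(\cG_N)^2$ pointwise. Gaussian Lipschitz concentration then makes $\log\widetilde Z_a-N\widetilde\Phi_\epsilon(\overlapm_a)=\log\widetilde Z_a-\E_Z[\log\widetilde Z_a\mid G_*]$ sub-Gaussian with variance proxy $\tfrac N2 D(\cG_N)^2$, uniformly in $G_*$; the standard bound on the maximum of $M$ sub-Gaussian variables gives $\E_Z[\max_a(\log\widetilde Z_a-N\widetilde\Phi_\epsilon(\overlapm_a))\mid G_*]\le D(\cG_N)\sqrt{N\log M}\le D(\cG_N)\sqrt{N\,L(\epsilon^{1/2};\cG_N)}$. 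Averaging over $G_*$, bounding $\max_a\widetilde\Phi_\epsilon(\overlapm_a)\le\sup_{\overlapm\in\cL}\widetilde\Phi_\epsilon(\overlapm)$ and $\log M\le L(\epsilon^{1/2};\cG_N)$, and dividing by $N$ yields the asserted inequality.

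\emph{Main difficulty.} The heart of the argument is the $G_*$-invariance in the second step: without it, $\E_{G_*,Z}\max_a\log\widetilde Z_a$ would also pick up fluctuations in $G_*$, for which there is no general concentration tool once the prior on $\cG_N$ lacks independent coordinates --- this is precisely where the group symmetry of $Q$ and the Haar prior enter. Everything else is routine; the only mild care needed is to take the net inside $\image(Q)$, so that each $\widetilde Z_a$ is a.s.\ positive (a nonempty open subset of a compact group has positive Haar mass), guaranteeing $\log\widetilde Z_a$ is finite and globally Lipschitz in $Z$.
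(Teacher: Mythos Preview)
Your proof is correct and follows essentially the same approach as the paper: a $\sqrt{\epsilon}$-cover of $\image(Q)$, a union bound giving the $\log M/N$ term, Gaussian-Lipschitz concentration of each restricted log-partition function in $Z$ with Lipschitz constant $D(\cG_N)\sqrt{N/2}$, the standard bound on the expected maximum of $M$ sub-Gaussians, and the group-symmetry argument showing $\E_Z[\log\widetilde Z_a\mid G_*]$ is independent of $G_*$. The only cosmetic difference is that you establish the $G_*$-invariance before the concentration step, whereas the paper does concentration first and then removes the $\E_{G_*}$; your extra care in taking the net inside $\image(Q)$ to ensure $\widetilde Z_a>0$ a.s.\ is a valid point the paper glosses over.
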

\begin{proof}
Let $\cM$ be a $\sqrt{\epsilon}$-cover of $\operatorname{image}(Q)$
in the norm $\|\cdot\|_\cL$ with cardinality
$\log |\cM|=L(\epsilon^{1/2};\cG_N)$.
Then for any $G,G_* \in \cG_N$, some point of $\cM$ must be within
$\sqrt{\epsilon}$-distance of $Q(G,G_*) \in \cL$, so we have
\begin{align}
    \widetilde \cF &\leq \frac{1}{N}\,\E_{G_*, Z} \log \sum_{\overlapm \in
\cM} \E_G \insquare{\mathbf{1}\{\|Q(G,G_*)-\overlapm \|_\cL^2 \le \epsilon\} \exp \widetilde H(G;G_*,Z)} \nonumber \\
    &\leq  \E_{G_*,Z} \max_{\overlapm \in \cM} \underbrace{\frac{1}{N}\log \E_G
\insquare{\mathbf{1}\{\|Q(G,G_*) -\overlapm \|_\cL^2 \le \epsilon\} \exp
\widetilde H(G;G_*,Z)}}_{:=\widetilde \Phi_\epsilon(G_*,Z;\overlapm)}
+\frac{\log |\cM|}{N}.
    \label{eq:upper-bound-Fn-before-conc-measure}
\end{align}

We apply concentration over $Z=\{\bz_{ij}\}_{i \leq j}$
to pass $\E_Z$ inside $\max_{\overlapm \in \cM}$. Define
$\widetilde \Phi_\epsilon(G_*,Z;\overlapm)$ as in
\eqref{eq:upper-bound-Fn-before-conc-measure}, and denote the corresponding
Gibbs average
$\langle f(G) \rangle=\frac{\E_G[f(G)\mathbf{1}\{\|Q(G,G_*)-\overlapm
\|_\cL^2 \le \epsilon\} \exp \widetilde H(G;G_*,Z)]}
{\E_G[\mathbf{1}\{\|Q(G,G_*)-\overlapm\|_\cL^2 \le \epsilon\}
\exp \widetilde H(G;G_*,Z)]}$ (again not to be confused with the inner-products
$\langle \cdot,\cdot \rangle_\cK$ and $\langle \cdot,\cdot \rangle_\cL$).
Then, differentiating
(\ref{eq:mainhamiltonian}) and applying
\eqref{eq:Q2identity} and Jensen's inequality,
\begin{align}
&\sum_{i<j} \|\nabla_{\bz_{ij}} \widetilde \Phi_\epsilon(G_*,Z;\overlapm)\|_\cK^2
+\sum_{i=1}^N \|\nabla_{\bz_{ii}} \widetilde \Phi_\epsilon(G_*,Z;\overlapm)\|_\cK^2
\nonumber\\
&=\sum_{i<j} \left\|\frac{1}{N}\Big\langle \nabla_{\bz_{ij}} \widetilde H(G;G_*,Z)
\Big\rangle\right\|_\cK^2
+\sum_{i=1}^N \left\|\frac{1}{N}\Big\langle \nabla_{\bz_{ii}} \widetilde
H(G;G_*,Z) \Big\rangle\right\|_\cK^2\nonumber\\
&=\frac{1}{2N^3}\sum_{i,j=1}^N \left\|\big\langle \phi_i \bullet \phi_j
\big\rangle\right\|_\cK^2
\leq \frac{1}{2N^3}\inangle{
\sum_{i,j=1}^N \left\|\phi_i \bullet \phi_j \right\|_\cK^2}
=\frac{1}{2N}\inangle{\|Q(G,G)\|_\cL^2} \leq \frac{D(\cG_N)^2}{2N}.
\label{eq:PhiLipschitz}
\end{align}
Therefore, $\widetilde \Phi_\epsilon(G_*,Z;\overlapm)$ is $D(\cG_N)/\sqrt{2N}$-Lipschitz,
so $\E_Z \,e^{\lambda
\inparen{\widetilde \Phi(G_*,Z;\overlapm)-\E_Z\widetilde \Phi(G_*,Z;\overlapm)}}
\leq e^{\lambda^2 D(\cG_N)^2/4N}$ for any $\lambda>0$ by Gaussian concentration of
measure \cite[Theorem 5.5]{boucheron2003concentration}.
Thus, applying also Jensen's inequality,
\begin{align*}
    &\E_Z \max_{\overlapm \in \mathcal{M}}
\inparen{\widetilde \Phi_\epsilon(G_*,Z;\overlapm) - \E_Z \widetilde \Phi_\epsilon(G_*,Z;\overlapm)}
\le \E_Z\,\frac{1}{\lambda} \log 
\sum_{\overlapm \in \mathcal{M}}
e^{\lambda\inparen{\widetilde \Phi_\epsilon(G_*,Z;\overlapm) - \E_Z \widetilde \Phi_\epsilon(G_*,Z;\overlapm)}} \\
    &\hspace{1in}
\le \frac{1}{\lambda} \log \sum_{\overlapm \in \mathcal{M}} \E_Z e^{\lambda
\inparen{\widetilde \Phi_\epsilon(G_*,Z;\overlapm) - \E_Z \widetilde \Phi_\epsilon(G_*,Z;\overlapm)}}
\le \frac{1}{\lambda} \log |\cM|+\frac{\lambda D(\cG_N)^2}{4N}.
\end{align*}
Optimizing over $\lambda$ and applying this to
\eqref{eq:upper-bound-Fn-before-conc-measure}, we get
\begin{equation}\label{eq:before-conc-gstar}
    \widetilde \cF \le \E_{G_*} \max_{\overlapm \in \mathcal{M}} 
\E_Z \widetilde
\Phi_\epsilon(G_*,Z;\overlapm)+D(\cG_N)\sqrt{\frac{\log|\cM|}{N}}
+\frac{\log |\cM|}{N}.
\end{equation}

Next, we claim by the group symmetry $Q(G,G_*)=Q(G_*^{-1}G,\iden)$ that
$\E_Z \widetilde \Phi_\epsilon(G_*,Z;\overlapm)$
has the same value for all $G_* \in \cG_N$, and hence equals
$\widetilde \Phi_\epsilon(\overlapm)$. Indeed, denoting
\[Z(G)=\sum_{i<j} \frac{1}{\sqrt{N}}\langle \phi_i \bullet \phi_j,\bz_{ij}
\rangle_\cK+\sum_{i=1}^N \frac{1}{\sqrt{2N}}\langle \phi_i \bullet \phi_i,\bz_{ii}
\rangle_\cK\]
and applying the definition of $\widetilde H(G;G_*,Z)$ from
\eqref{eq:mainhamiltonian} and the identity \eqref{eq:Q2identity}, we have
\begin{align*}
\widetilde H(G;G_*,Z)
&=-\frac{N}{4}\,Q(G,G)+\frac{N}{2}\,Q(G,G_*)+Z(G)
=-\frac{N}{4}\,Q(\iden,\iden)+\frac{N}{2}\,Q(G_*^{-1}G,\iden)+Z(G).
\end{align*}
Here, $\{Z(G)\}_{G \in \cG_N}$ is a mean-zero Gaussian process
with covariance
\[\E[Z(G)Z(G')]=\sum_{i<j} \frac{1}{N}\langle \phi_i \bullet \phi_j,
\phi_i' \bullet \phi_j' \rangle_\cK
+\sum_{i=1}^N \frac{1}{2N}\langle \phi_i \bullet \phi_i,
\phi_i' \bullet \phi_i' \rangle_\cK=\frac{N}{2}\,Q(G,G').\]
For any fixed $G_* \in \cG_N$, the process $\{Z(G)\}_{G \in \cG_N}$ is then
equal in law to $\{Z(G_*^{-1}G)\}_{G \in \cG_N}$,
because the latter process is also mean-zero with the same covariance
\[\E[Z(G_*^{-1}G)Z(G_*^{-1}G')]
=\frac{N}{2}\,Q(G_*^{-1}G,G_*^{-1}G')=\frac{N}{2}\,Q(G,G').\]
Then, applying also the invariance of Haar measure, this implies
\begin{align}
&\E_Z \widetilde \Phi_\epsilon(G_*,Z;\overlapm)\nonumber\\
&=\frac{1}{N}\,\E_Z \log \E_G
\insquare{\mathbf{1}\{\|Q(G_*^{-1}G,\iden)-\overlapm\|_\cL^2 \le \epsilon\}
\exp\left(-\frac{N}{4}Q(\iden,\iden)+\frac{N}{2}Q(G_*^{-1}G,\iden)
+Z(G_*^{-1}G)\right)}\nonumber\\
&=\frac{1}{N}\,\E_Z \log \E_G
\insquare{\mathbf{1}\{\|Q(G,\iden)-\overlapm\|_\cL^2 \le \epsilon\}
\exp\left(-\frac{N}{4}Q(\iden,\iden)+\frac{N}{2}Q(G,\iden)+Z(G)\right)}
=\E_Z \widetilde \Phi_\epsilon(\iden,Z;\overlapm)\label{eq:PhiInvariant}
\end{align}
so $\E_Z\widetilde \Phi_\epsilon(G_*,Z;\overlapm)=\widetilde \Phi_\epsilon(\overlapm)$ for all $G_* \in
\cG_N$, as claimed. Then \eqref{eq:before-conc-gstar} is equal to the desired
upper bound for $\widetilde \cF$, completing the proof.
\end{proof}

Next, we apply an interpolation argument to upper bound the Franz-Parisi
potential~\eqref{eq:truncated-free-energy}. For any $\overlapm \in \cL$ and
$\overlap \in \cQ \subset \cL$, define
\begin{align*}
\Psi(\overlap,\overlapm)&=\frac{1}{4}\|\overlap\|_\cL^2
-\frac{1}{2}\|\overlapm\|_\cL^2-\frac{1}{2}\langle \overlap,Q(\iden,\iden)
\rangle_\cL\\
&\hspace{1in}
+\frac{1}{N}\,\E_{G_*,Z}\log \E_G \exp\left(N\langle \overlapm,Q(G,G_*)
\rangle_\cL+\sum_{i=1}^N \langle \overlap^{1/2}\phi(G)_i,\bz_i \rangle_\cH\right)
\end{align*}
Note that $\Psi(\overlap,\overlap)=\Psi(\overlap)$ as defined in
\eqref{eq:RS-potential-general-model}.

\begin{lemma}\label{lemma:FPupperbound}
For any $\overlapm \in \cL$, $\overlap \in \cQ$, and $\epsilon>0$,
\[\widetilde \Phi_\epsilon(\overlapm) \leq \inf_{\overlap \in \cQ}
\Psi(\overlap,\overlapm)+\frac{\epsilon}{2}.\]
\end{lemma}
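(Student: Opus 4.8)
The plan is to follow the Franz-Parisi interpolation scheme of \cite{el2018estimation}: use the overlap constraint to replace the quadratic signal term $\tfrac N2\|Q(G,G_*)\|_\cL^2$ by its linearization around $\overlapm$, and then run a Gaussian interpolation that exchanges the ``quadratic'' noise process inside $\widetilde H$ for the ``linear'' noise process $\sum_i\langle\overlap^{1/2}\phi_i,\bz_i\rangle_\cH$ appearing in $\Psi(\overlap,\overlapm)$. The extra freedom to choose $\overlap\in\cQ$ will enter only through a deterministic slack term, which turns out to equal the discrepancy between the naive bound and $\Psi(\overlap,\overlapm)$.

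First I would recall from the proof of Lemma \ref{lem:upper-bound-Fn-Phi-general-model} that, by \eqref{eq:Q2identity} and the group symmetry $Q(G,G)=Q(\iden,\iden)$, the perturbed Hamiltonian \eqref{eq:mainhamiltonian} equals $\widetilde H(G;G_*,Z)=-\tfrac N4\|Q(\iden,\iden)\|_\cL^2+\tfrac N2\|Q(G,G_*)\|_\cL^2+Z(G)$, where $\{Z(G)\}_{G\in\cG_N}$ is the centered Gaussian process with covariance $\E[Z(G)Z(G')]=\tfrac N2\|Q(G,G')\|_\cL^2$ (a continuous version exists on the compact group by continuity of $\phi$). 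On the event $\{\|Q(G,G_*)-\overlapm\|_\cL^2\le\epsilon\}$ one has $\tfrac12\|Q(G,G_*)\|_\cL^2\le\langle\overlapm,Q(G,G_*)\rangle_\cL-\tfrac12\|\overlapm\|_\cL^2+\tfrac\epsilon2$; inserting this into \eqref{eq:truncated-free-energy} and then dropping the indicator (which is at most $1$) yields
\[\widetilde\Phi_\epsilon(\overlapm)\le-\tfrac14\|Q(\iden,\iden)\|_\cL^2-\tfrac12\|\overlapm\|_\cL^2+\tfrac\epsilon2+\tfrac1N\E_{G_*,Z}\log\E_G\exp\big(N\langle\overlapm,Q(G,G_*)\rangle_\cL+Z(G)\big).\]

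Next, fixing $\overlap\in\cQ$, I would introduce an independent centered Gaussian process $Z'(G)=\sum_{i=1}^N\langle\overlap^{1/2}\phi_i,\bz_i\rangle_\cH$, which by \eqref{eq:qQidentity} has covariance $\E[Z'(G)Z'(G')]=N\langle\overlap,Q(G,G')\rangle_\cL$; crucially, $\E[Z(G)Z(G)]-\E[Z'(G)Z'(G)]=\tfrac N2\|Q(\iden,\iden)\|_\cL^2-N\langle\overlap,Q(\iden,\iden)\rangle_\cL$ is independent of $G$, again by the group symmetry of $Q$. Conditioning on $G_*$ and interpolating via $\psi(t)=\E_Z\log\E_G\exp(\sqrt t\,Z(G)+\sqrt{1-t}\,Z'(G)+N\langle\overlapm,Q(G,G_*)\rangle_\cL)$ on $t\in[0,1]$, a standard Gaussian integration-by-parts computation gives, with $\sigma,\sigma'$ two i.i.d.\ samples from the associated Gibbs measure $\langle\cdot\rangle_t$,
\[\psi'(t)=\tfrac N4\|Q(\iden,\iden)-\overlap\|_\cL^2-\tfrac N4\,\E\big\langle\|Q(\sigma,\sigma')-\overlap\|_\cL^2\big\rangle_t\le\tfrac N4\|Q(\iden,\iden)-\overlap\|_\cL^2,\]
where the constant-diagonal structure lets me rewrite both the replica-diagonal and replica-off-diagonal covariance differences through the single quantity $\tfrac12\|Q-\overlap\|_\cL^2-\tfrac12\|\overlap\|_\cL^2$, and the surviving off-diagonal (replica-overlap) term is nonnegative. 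Integrating over $t$, dividing by $N$, and taking $\E_{G_*}$ produces
\[\tfrac1N\E_{G_*,Z}\log\E_G e^{N\langle\overlapm,Q(G,G_*)\rangle_\cL+Z(G)}\le\tfrac1N\E_{G_*,Z}\log\E_G e^{N\langle\overlapm,Q(G,G_*)\rangle_\cL+Z'(G)}+\tfrac14\|Q(\iden,\iden)-\overlap\|_\cL^2.\]

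Finally I would combine the last two displays and use the identity $-\tfrac14\|Q(\iden,\iden)\|_\cL^2+\tfrac14\|Q(\iden,\iden)-\overlap\|_\cL^2=\tfrac14\|\overlap\|_\cL^2-\tfrac12\langle\overlap,Q(\iden,\iden)\rangle_\cL$ to recognize the right-hand side as exactly $\Psi(\overlap,\overlapm)+\tfrac\epsilon2$; taking the infimum over $\overlap\in\cQ$ gives the lemma. The main obstacle is the interpolation bookkeeping: one must check that it is precisely Assumption \ref{assumption:general-model}(d) that makes $\E[Z(G)Z(G)]-\E[Z'(G)Z'(G)]$ constant in $G$, so that $\psi'(t)$ collapses to one nonnegative replica term plus the \emph{deterministic} slack $\tfrac N4\|Q(\iden,\iden)-\overlap\|_\cL^2$, and that this slack is exactly what upgrades the naive bound into $\Psi(\overlap,\overlapm)$. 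The ancillary points---existence of continuous versions of $Z,Z'$ on $\cG_N$, and the $t\to0,1$ behaviour of $\psi'$---are handled routinely, as in the lower-bound argument above.
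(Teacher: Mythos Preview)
Your proof is correct but follows a genuinely different route from the paper. The paper keeps the indicator constraint $\mathbf{1}\{\|Q(G,G_*)-\overlapm\|_\cL^2\le\epsilon\}$ throughout a single interpolation whose Hamiltonian $\widetilde H_t$ simultaneously deforms both the quadratic signal term and the quadratic noise process into their linear counterparts; the derivative of this constrained free energy contains a term $\tfrac12\E\langle\|Q(G,G_*)-\overlapm\|_\cL^2\rangle_t$ that is controlled by $\epsilon$ precisely because the constraint is present at every intermediate~$t$. You instead spend the constraint once, up front, to replace $\tfrac12\|Q(G,G_*)\|_\cL^2$ by its supporting affine function at $\overlapm$, drop the indicator, and only then run a pure Guerra-style Gaussian comparison between the two noise processes $Z(G)$ and $Z'(G)$; the signal variable $G_*$ is frozen during this second step and never reappears in the derivative. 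What your decomposition buys is a clean separation of concerns: the role of $\overlapm$ (signal linearization) and the role of $\overlap$ (noise comparison) are isolated into two distinct, elementary steps, and the interpolation calculus reduces to the textbook Gaussian covariance formula with constant diagonal, so there is no need to revisit the full integration-by-parts computation of Proposition~\ref{prop:derivative-varphi-general-model}. What the paper's approach buys is structural parallelism with the lower bound (the same interpolation path is reused), and it makes transparent that the $\epsilon/2$ loss comes from a single $\|Q(G,G_*)-\overlapm\|_\cL^2$ term in the derivative rather than from an a~priori algebraic inequality. Both routes rely on exactly the same structural input, namely $Q(G,G)=Q(\iden,\iden)$ from Assumption~\ref{assumption:general-model}(d), and arrive at the identical bound.
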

\begin{proof}
Fix any $\overlapm \in \cL$ and $\overlap \in \cQ$. For $0 \leq t
\leq 1$, define now the interpolating Hamiltonian
\[\begin{aligned}
\widetilde H_t(G;G_*,Z)&=\sum_{1 \leq i<j \leq N} -\frac{t}{2N}\|\phi_i \bullet
\phi_j\|_\cK^2+\frac{t}{N}\langle \phi_i \bullet \phi_j,
\phi_{*i} \bullet \phi_{*j} \rangle_\cK+\sqrt{\frac{t}{N}}\langle 
\phi_i \bullet \phi_j, \bz_{ij} \rangle_\cK\\
&\qquad + \sum_{i=1}^N -\frac{t}{4N}\|\phi_i \bullet \phi_i\|_\cK^2
+\frac{t}{2N}\langle \phi_i \bullet \phi_i, \phi_{*i} \bullet \phi_{*i}
\rangle_\cK+\sqrt{\frac{t}{2N}}\langle \phi_i \bullet \phi_i,\bz_{ii}
\rangle_\cK\\
    &\qquad +\sum_{i=1}^N
-\frac{(1-t)}{2}\|\overlap^{1/2}\phi_i\|_\cH^2
+(1-t)\langle \phi_i,\overlapm \phi_{*i} \rangle_\cH
+\sqrt{1-t}\,\langle \overlap^{1/2} \phi_i,\bz_i \rangle_\cH
\end{aligned}\]
which differs from
\eqref{eq:interpolating-hamiltonian-general-model-lower-bound} only by the term
$(1-t)\langle \phi_i,\overlapm \phi_{*i} \rangle_\cH$ in place of
$(1-t)\langle \overlap^{1/2} \phi_i,\overlap^{1/2}\phi_{*i} \rangle_\cH$.

Let
\begin{align}
    \widetilde \Phi_\epsilon(t;\overlapm)=\frac{1}{N}\,\E_{G_*,Z} \log \E_G
\insquare{\mathbf{1}\{\|Q(G,G_*)-\overlapm\|_\cL^2 \le \epsilon\} \exp
\widetilde{H}_t(G;G_*,Z)}
\end{align}
and let $\langle f(G) \rangle_t=\frac{\E_G[f(G)\mathbf{1}\{\|Q(G,G_*)-\overlapm
\|_\cL^2 \le \epsilon\} \exp \widetilde H_t(G;G_*,Z)]}
{\E_G[\mathbf{1}\{\|Q(G,G_*)-\overlapm\|_\cL^2 \le \epsilon\}
\exp \widetilde H_t(G;G_*,Z)]}$ be the corresponding Gibbs average.
Note that $\widetilde \Phi_\epsilon(1;\overlapm)=\widetilde \Phi_\epsilon (\overlapm)$, and when $t=0$ we
have the trivial bound analogous to \eqref{eq:varphi0-general-model}
\begin{align*}
    \widetilde \Phi_\epsilon(0;\overlapm) &\le \frac{1}{N}\,\E_{G_*,Z} \log \E_G \exp\widetilde
H_0(G;G_*,Z)=\Psi(\overlap,\overlapm)-\frac{1}{4}\|\overlap\|_\cL^2
+\frac{1}{2}\|\overlapm\|_\cL^2.
\end{align*}
Applying the same calculation as in the proof of
Proposition~\ref{prop:derivative-varphi-general-model}, we have also analogous
to \eqref{eq:lowerboundderiv} that
\begin{align*}
\widetilde \Phi_\epsilon'(t;\overlapm)
&=\E_{G_*,Z}\inangle{\frac{1}{2}\|Q(G,G_*)\|_\cL^2
-\frac{1}{4}\|Q(G,G')\|_\cL^2-\langle \overlapm,Q(G,G_*) \rangle_\cL
+\frac{1}{2}\langle \overlap,Q(G,G') \rangle_\cL}_t\\
&=-\frac{1}{4}\,\E_{G_*,Z} \inangle{  \|Q(G,G') - \overlap\|_\cL^2 }_t
+\frac{1}{2}\,\E_{G_*,Z} \inangle{ \|Q(G,G_*) - \overlapm\|_\cL^2}_t
+\frac{1}{4}\|\overlap\|_\cL^2-\frac{1}{2}\|\overlapm\|_\cL^2.
\end{align*}
Upper bounding the first negative term by 0 and applying
$\|Q(G,G_*)-\overlapm\|_\cL^2 \leq \epsilon$ with probability 1 under the Gibbs
measure defining $\langle \cdot \rangle_t$, we obtain
\begin{align*}
    \widetilde \Phi_\epsilon'(t;\overlapm) \le
\frac{1}{4}\|\overlap\|_\cL^2-\frac{1}{2}\|\overlapm\|_\cL^2
+\frac{\epsilon}{2}.
\end{align*}
Thus $\Phi_\epsilon(\overlapm)=
\widetilde \Phi_\epsilon(0;\overlapm)+\int_0^1 \widetilde
\Phi_\epsilon'(t;\overlapm)\,\ud t
\leq \Psi(\overlap,\overlapm)+\epsilon/2$, and the lemma follows upon taking
the infimum over $\overlap \in \cQ$.
\end{proof}

Finally, appealing to the closure properties of Assumption
\ref{assumption:general-model}(c), denote by
$|\overlapm|,|\overlapm^\top| \in \cQ$ the elements for which
$\iota(|\overlapm|)^2=\iota(\overlapm)^\top\iota(\overlapm)$ and
$\iota(|\overlapm^\top|)^2=\iota(\overlapm)\iota(\overlapm)^\top$.
The following lemma will allow us to
pass the maximization over $\overlapm \in \cL$ to $|\overlapm| \in \cQ$.

\begin{lemma}\label{lem:psi-positive-general-model}
For any $\overlapm \in \cL$,
\[\Psi(|\overlapm^\top|,\overlapm) \leq \Psi(|\overlapm|,|\overlapm|).\]
\end{lemma}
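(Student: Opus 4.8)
The plan is to rewrite both $\Psi(|\overlapm^\top|,\overlapm)$ and $\Psi(|\overlapm|,|\overlapm|)$ as (constant-shifted) free energies of one and the same linear observation model, and then to identify their difference as a Kullback--Leibler divergence, which is automatically nonnegative. Fix a singular value decomposition $\iota(\overlapm)=UDV^\top$. By Assumption~\ref{assumption:general-model}(c) and \eqref{eq:mUV} we have $\iota(|\overlapm^\top|)=UDU^\top$, $\iota(|\overlapm|)=VDV^\top$, $\overlapm_U^\top\overlapm_U=\iota(|\overlapm^\top|)$, $\overlapm_V^\top\overlapm_V=\iota(|\overlapm|)$, $\overlapm_U^\top\overlapm_V=\iota(\overlapm)$, and $\|\overlapm\|_\cL=\||\overlapm|\|_\cL=\||\overlapm^\top|\|_\cL$. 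First I would use the inclusion identity \eqref{eq:inclusion}, summed over $i$, to turn each term $\langle\,\cdot\,,Q(\cdot,\cdot)\rangle_\cL$ appearing in the two potentials into a sum of $\cH$-inner products, e.g.\ $N\langle\overlapm,Q(G,G_*)\rangle_\cL=\sum_i\langle\overlapm_U\phi(G)_i,\overlapm_V\phi(G_*)_i\rangle_\cH$ and $N\langle|\overlapm|,Q(G,G_*)\rangle_\cL=\sum_i\langle\overlapm_V\phi(G)_i,\overlapm_V\phi(G_*)_i\rangle_\cH$. Since $\iota(|\overlapm^\top|)^{1/2}=UD^{1/2}U^\top$ with $U$ orthogonal and the $\bz_i$ are i.i.d.\ standard Gaussian, inside $\E_Z$ I may replace the noise term $\sum_i\langle|\overlapm^\top|^{1/2}\phi(G)_i,\bz_i\rangle_\cH=\sum_i\langle\overlapm_U\phi(G)_i,U^\top\bz_i\rangle_\cH$ by $\sum_i\langle\overlapm_U\phi(G)_i,\bz_i\rangle_\cH$, and similarly for the $|\overlapm|$-version with $V$.

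Next I would introduce the linear observations $\by_i=\overlapm_V\phi(G_*)_i+\bz_i$ for $i=1,\dots,N$, with $G_*\sim\Haar(\cG_N)$, and set $Y_\lin=\{\by_i\}$. After the substitution above, the exponent in $\Psi(|\overlapm^\top|,\overlapm)$ becomes $\sum_i\langle\overlapm_U\phi(G)_i,\by_i\rangle_\cH$; completing the square and using that $\sum_i\|\overlapm_U\phi(G)_i\|_\cH^2=N\langle|\overlapm^\top|,Q(G,G)\rangle_\cL=N\langle|\overlapm^\top|,Q(\iden,\iden)\rangle_\cL$ is independent of $G$ by Assumption~\ref{assumption:general-model}(d), this exponent equals $-\tfrac12\sum_i\|\by_i-\overlapm_U\phi(G)_i\|_\cH^2$ up to a $G$-independent constant. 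Carrying this through and checking that the residual $\langle\,\cdot\,,Q(\iden,\iden)\rangle_\cL$ terms cancel (using $\||\overlapm^\top|\|_\cL=\|\overlapm\|_\cL$), and repeating the computation for $\Psi(|\overlapm|,|\overlapm|)$, I obtain
\[
\Psi(|\overlapm^\top|,\overlapm)-\Psi(|\overlapm|,|\overlapm|)
=\frac1N\,\E_{Y_\lin}\Bigl[\log\E_G e^{-\frac12\sum_i\|\by_i-\overlapm_U\phi(G)_i\|_\cH^2}
-\log\E_G e^{-\frac12\sum_i\|\by_i-\overlapm_V\phi(G)_i\|_\cH^2}\Bigr].
\]
Finally, for any $A\in B(\cH)$ one has $\E_G\,e^{-\frac12\sum_i\|\by_i-A\phi(G)_i\|_\cH^2}=(2\pi)^{N\dim\cH/2}\,p_A(Y_\lin)$, where $p_A$ is the marginal density of $\{\by_i=A\phi(G_*)_i+\bz_i\}$ as in the definition preceding \eqref{eq:mUV} (using that $G$ and $G_*$ are both Haar-distributed). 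Since $Y_\lin$ has, by construction, the law $p_{\overlapm_V}$, the displayed difference equals $-\tfrac1N\,\E_{Y_\lin}\log\frac{p_{\overlapm_V}(Y_\lin)}{p_{\overlapm_U}(Y_\lin)}=-\tfrac1N D_{\mathrm{KL}}(p_{\overlapm_V}\,\|\,p_{\overlapm_U})\le0$, which is the claim. (Reassuringly, the resulting gap is exactly the relative-entropy quantity appearing in $\cL_*(\epsilon)$ in \eqref{eq:overlap_concentration_set}.)

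The only substantive input beyond these rewritings is the nonnegativity of relative entropy; the step needing the most care is the square-completion bookkeeping together with the verification that $\sum_i\|\overlapm_U\phi(G)_i\|_\cH^2$ and $\sum_i\|\overlapm_V\phi(G)_i\|_\cH^2$ are genuinely $G$-independent, which is where Assumption~\ref{assumption:general-model}(d) and the fact that $\overlapm_U^\top\overlapm_U,\overlapm_V^\top\overlapm_V$ lie in $\iota(\cL)$ get used. Integrability of $\log p_A$ against $p_{\overlapm_V}$ (so that the KL divergence is finite) is automatic since $\phi$ is continuous on the compact group $\cG_N$, making each $p_A$ a Gaussian mixture with uniformly bounded means. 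I do not expect any serious obstacle.
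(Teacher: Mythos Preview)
Your proposal is correct and follows essentially the same approach as the paper: both identify $\Psi(|\overlapm|,|\overlapm|)-\Psi(|\overlapm^\top|,\overlapm)$ with $\tfrac{1}{N}D_{\mathrm{KL}}(p_{\overlapm_V}\|p_{\overlapm_U})$ in the linear model $\by_i=A\phi(G_*)_i+\bz_i$ and conclude by nonnegativity of relative entropy. The paper presents the computation by writing out $D_{\mathrm{KL}}(p_A\|p_B)$ directly and then specializing to $A=\overlapm_V$, $B=\overlapm_U$, whereas you reach the same identity via square-completion; the bookkeeping is the same, and your use of Assumption~\ref{assumption:general-model}(d) to make $\sum_i\|\overlapm_U\phi(G)_i\|_\cH^2$ and $\sum_i\|\overlapm_V\phi(G)_i\|_\cH^2$ $G$-independent is exactly the step the paper encodes by $Q(G,G)=Q(\iden,\iden)$.
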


\begin{proof}
Consider the linear observation model
\[\by_i=A\phi_{*i} + \bz_i \text{ for } i=1,\ldots,N\]
indexed by a parameter $A \in B(\cH)$, where $\phi_{*i}=\phi(G_*)_i$,
$G_* \sim \Haar(\cG_N)$, and $\{\bz_i\}_{i=1}^N$ are i.i.d.\ standard Gaussian
noise vectors in $\cH$. Writing $\phi_i=\phi(G)_i$,
the marginal log-likelihood of $Y_\lin=\{\by_i\}_{i=1}^N$ is given by
\begin{align*}
    \log p_A(Y_\lin)&=\log \E_G \exp \inparen{-\frac{1}{2}\sum_{i=1}^N
\langle A\phi_i,A\phi_i \rangle_\cH +\sum_{i=1}^N \langle A\phi_i,\by_i \rangle_\cH}
-\frac{1}{2}\sum_{i=1}^N\|\by_i\|_\cH^2-\frac{N\dim(\cH)}{2} \log 2\pi.
\end{align*}
Then for $A,B \in B(\cH)$, the Kullback-Liebler divergence in this model is
\begin{align*}
D_{\mathrm{KL}} \inparen{p_A\|p_B}
&:=\E_{Y_\lin \sim p_A} \insquare{\log p_A(Y_\lin)-\log p_B(Y_\lin)}\\ 
    &=\E_{G_*,Z} \log \E_G \exp \inparen{ - \frac{1}{2}\sum_{i=1}^N
\langle A\phi_i,A\phi_i \rangle_\cH
+\sum_{i=1}^N \langle A\phi_i,A\phi_{*i} \rangle_\cH
+\sum_{i=1}^N \langle A\phi_i,\bz_i \rangle_\cH}\\
    &\qquad-\E_{G_*,Z} \log \E_G \exp \inparen{ - \frac{1}{2}\sum_{i=1}^N
\langle B\phi_i,B\phi_i \rangle_\cH
+\sum_{i=1}^N \langle B\phi_i,A\phi_{*i} \rangle_\cH
+\sum_{i=1}^N \langle B\phi_i,\bz_i \rangle_\cH}
\end{align*}

For any $\overlapm \in \cL$, let us write the singular value decomposition
$\iota(\overlapm)=UDV^\top$ and specialize the above to $A=\overlapm_V$
and $B=\overlapm_U$ as defined in \eqref{eq:mUV}.
Then $\iota(\overlapm)=B^\top A$, $\iota(|\overlapm|)=A^\top A$, and
$\iota(|\overlapm^\top|)=B^\top B$, so
\begin{align*}
D_{\mathrm{KL}} \inparen{p_{\overlapm_V}\|p_{\overlapm_U}}
&=\E_{G_*,Z} \log \E_G \exp \inparen{ - \frac{N}{2}\langle |\overlapm|,
Q(G,G)\rangle_\cL+N\langle |\overlapm|,Q(G,G_*) \rangle_\cL
+\sum_{i=1}^N \langle |\overlapm|^{1/2}\phi_i,V^\top \bz_i \rangle_\cH}\\
    &\qquad-\E_{G_*,Z} \log \E_G \exp \inparen{ - \frac{N}{2}\langle
|\overlapm^\top|,Q(G,G)\rangle_\cL+N\langle \overlapm,Q(G,G_*) \rangle_\cL
+\sum_{i=1}^N \langle |\overlapm^\top|^{1/2}\phi_i,U^\top \bz_i \rangle_\cH}.
\end{align*}
Applying that $\{V^\top \bz_i\}_{i=1}^N$ and $\{U^\top \bz_i\}_{i=1}^N$ are both
equal in law to $\{\bz_i\}_{i=1}^N$, that $Q(G,G)=Q(\iden,\iden)$,
and that $\|\overlapm\|_\cL^2=\||\overlapm|\|_\cL^2=\||\overlapm^\top|\|_\cL^2$,
we get $D_{\mathrm{KL}} \inparen{p_{\overlapm_V}\|p_{\overlapm_U}}
=\Psi(|\overlapm|,|\overlapm|)-\Psi(|\overlapm^\top|,\overlapm)$, and the
lemma follows from non-negativity of Kullback–Leibler divergence.
\end{proof}

\begin{proofof}{Lemma \ref{thm:upper-bound-general-model}}
Combining Lemmas \ref{lemma:FPupperbound} and
\ref{lem:psi-positive-general-model}, for any $\overlapm \in \cL$ and
$\epsilon>0$,
\[\widetilde \Phi_\epsilon(\overlapm)
\leq \Psi(|\overlapm^\top|,\overlapm)+\frac{\epsilon}{2}
\leq \Psi(|\overlapm|,|\overlapm|)+\frac{\epsilon}{2}
=\Psi(|\overlapm|)+\frac{\epsilon}{2}.\]
The result follows upon
applying this to Lemma \ref{lem:upper-bound-Fn-Phi-general-model}, and
upper bounding the supremum over $\{|\overlapm|:\overlapm \in \cL\}$ by that
over $\overlap \in \cQ$.
\end{proofof}

Finally, we conclude the proof of Theorem \ref{thm:free-energy-general-model}
by comparing the perturbed free energy $\widetilde \cF$ with the original
free energy $\cF$.

\begin{proofof}{Theorem \ref{thm:free-energy-general-model}}
Define
\[H_t(G;G_*,Z)=H(G;Y)+\sum_{i=1}^N {-}\frac{t}{4N}\|\phi_i \bullet
\phi_i\|_\cK^2+\frac{t}{2N}\langle \phi_i \bullet \phi_i,\phi_{*i} \bullet
\phi_{*i}\rangle_\cK+\sqrt{\frac{t}{2N}}\langle \phi_i \bullet \phi_i,\bz_{ii}
\rangle_\cK,\]
which equals $H(G;Y)$ at $t=0$ and $\widetilde H(G;G_*,Z)$ at $t=1$. 
Set $\cF(t;G_*,Z)=N^{-1}\log \E_G \exp H_t(G;G_*,Z)$, and write $\langle \cdot
\rangle_t$ for the average over the corresponding law of $G$. Then a
calculation similar to that in Proposition
\ref{prop:derivative-varphi-general-model} using Gaussian integration-by-parts
(omitted for brevity) shows, for any fixed $G_* \in \cG_N$,
\[\E_Z \cF'(t;G_*,Z)=\E_Z\bigg[\frac{1}{2N^2}\sum_{i=1}^N 
\Big\langle \langle \phi_i \bullet \phi_i \rangle_t,\phi_{*i} \bullet \phi_{*i}
\Big\rangle_\cK
-\frac{1}{4N^2}\sum_{i=1}^N \Big\|\langle \phi_i \bullet \phi_i
\rangle_t\Big\|_\cK^2\bigg].\]
Recalling $K(\cG_N)$ in \eqref{eq:KGN}, this implies
$|\E_Z \cF'(t;G_*,Z)| \leq 3K(\cG_N)/(4N)$ for all $t \in [0,1]$. Then,
denoting $\cF(G_*,Z)=\cF(0;G_*,Z)$ and $\widetilde \cF(G_*,Z)=\cF(1;G_*,Z)$ and
integrating over $t \in [0,1]$, we have
\begin{equation}\label{eq:perturbationbound}
|\E_Z \cF(G_*,Z)-\E_Z \widetilde \cF(G_*,Z)| \leq 3K(\cG_N)/(4N).
\end{equation}
Then also $|\cF-\widetilde \cF|=|\E_{G_*,Z}\cF(G_*,Z)
-\E_{G_*,Z} \widetilde \cF(G_*,Z)| \leq 3K(\cG_N)/4N$,
and combining with Lemmas \ref{thm:lower-bound-general-bound}
and \ref{thm:upper-bound-general-model} concludes the proof.
\end{proofof}

%%%%%%%%%%%%%%%%%%%%%%%%%%
\subsection{Overlap concentration}
\label{sec:pf-overlapConc}

\begin{proofof}{Corollary \ref{corollary:overlap_concentration_general_model}}
Define the restricted free energy
\[\widetilde \Phi_\epsilon=\frac{1}{N}\,\E_{G_*,Z}\log \E_G \Big[
\mathbf{1}\{\|Q(G,G_*)-\cL_*(\epsilon)\|_\cL^2>\epsilon\}
\exp \widetilde H(G;G_*,Z)\Big].\]
Recall the $\sqrt{\epsilon}$-cover $\cM$ from the proof of Lemma
\ref{lem:upper-bound-Fn-Phi-general-model}.
If $\norm{Q(G,G_*)-\cL_*(\epsilon)}_\cL^2>\epsilon$, then the point
$\overlapm \in \cM$ for which $\norm{Q(G,G_*)-\overlapm}_\cL \leq
\sqrt{\epsilon}$ must not belong to $\cL_*(\epsilon)$, hence
\[\mathbf{1}\{\norm{Q(G,G_*)-\cL_*(\epsilon)}_\cL^2>\epsilon\}
\leq \sum_{\overlapm \in \cM \setminus \cL_*(\epsilon)}
\mathbf{1}\{\norm{Q(G,G_*)- \overlapm}_\cL^2 \leq \epsilon\}.\]
Then the same argument as that of Lemma
\ref{lem:upper-bound-Fn-Phi-general-model} shows
\begin{align*}
\widetilde \Phi_\epsilon
&\leq \sup_{\overlapm \in \cL \setminus \cL_*(\epsilon)}
\widetilde \Phi_\epsilon(\overlapm)+
D(\cG_N)\sqrt{\frac{L(\epsilon^{1/2};\cG_N)}{N}}+\frac{L(\epsilon^{1/2};\cG_N)}{N}.
\end{align*}
By Lemma \ref{lemma:FPupperbound} and the argument in
Lemma \ref{lem:psi-positive-general-model},
\[\widetilde \Phi_\epsilon(\overlapm) \leq
\Psi(|\overlapm^\top|,\overlapm)+\frac{\epsilon}{2}
\leq \Psi(|\overlapm|)-D_{\mathrm{KL}}(p_{\overlapm_V}\|p_{\overlapm_U})
+\frac{\epsilon}{2}.\]
If $\overlapm \notin \cL_*(\epsilon)$, then either
$\Psi(|\overlapm|) \leq \sup_{\overlap \in \cQ} \Psi(\overlap)-\epsilon$ or
${-}D_{\mathrm{KL}}(p_{\overlapm_V}\|p_{\overlapm_U}) \leq {-}\epsilon$. Hence
\begin{equation}\label{eq:restrictedFEupper}
\widetilde \Phi_\epsilon
\leq \sup_{\overlap \in \cQ} \Psi(\overlap)-\frac{\epsilon}{2}
+D(\cG_N)\sqrt{\frac{L(\epsilon^{1/2};\cG_N)}{N}}
+\frac{L(\epsilon^{1/2};\cG_N)}{N}.
\end{equation}

Now recall the original Hamiltonian $H(G;Y)$ from
\eqref{eq:hamiltonian-general-model}, and define
    \begin{align*}
        \widetilde{\Phi}_\epsilon(G_*,Z)&=\frac{1}{N} \log \E_G \Big[
\mathbf{1}\{\|Q(G,G_*)-\cL_*(\epsilon)\|_\cL^2>\epsilon\}
\exp \widetilde H(G;G_*,Z)\Big],\\
        \Phi_\epsilon(G_*,Z)&=\frac{1}{N} \log \E_G \Big[
\mathbf{1}\{\|Q(G,G_*)-\cL_*(\epsilon)\|_\cL^2>\epsilon\} \exp H(G;Y)\Big],\\
        \widetilde{\cF}(G_*,Z)&=\frac{1}{N} \log \E_G \insquare{\exp
\widetilde H(G;G_*,Z)},\\
        \cF(G_*,Z)&=\frac{1}{N} \log \E_G \insquare{\exp H(G;Y)}.
    \end{align*}
The same argument as \eqref{eq:PhiLipschitz} shows that all four quantities are
$D(\cG_N)/\sqrt{2N}$-Lipschitz in $Z$ for any fixed $G_* \in \cG_N$,
the same argument as \eqref{eq:perturbationbound} shows
\[|\E_Z \Phi_\epsilon(G_*,Z)-\E_Z \widetilde \Phi_\epsilon(G_*,Z)|,
|\E_Z \cF_\epsilon(G_*,Z)-\E_Z \widetilde \cF_\epsilon(G_*,Z)|
\leq \frac{3K(\cG_N)}{4N},\]
and the same argument as \eqref{eq:PhiInvariant} shows
$\E_Z \widetilde{\Phi}_\epsilon(G_*,Z)=\widetilde \Phi_\epsilon$ and
$\E_Z \widetilde \cF(G_*,Z)=\widetilde \cF$ for every $G_* \in \cG_N$.
Then by Gaussian concentration of measure
\cite[Theorem 5.6]{boucheron2003concentration}, for any $u>0$,
    \begin{align*}
        \PP_{G_*,Z}\insquare{\Phi_\epsilon(G_*,Z) \geq \widetilde \Phi_\epsilon + u
+\frac{3K(\cG_N)}{4N}}
\leq \PP_{G_*,Z}\insquare{\widetilde \Phi_\epsilon(G_*,Z) \geq \E_Z
\widetilde \Phi_\epsilon(G_*,Z) + u}
\leq \exp\inparen{ -\frac{4Nu^2}{D(\cG_N)^2}},\\
        \PP_{G_*,Z}\insquare{\cF(G_*,Z) \leq \widetilde{\cF} - u -\frac{3K(\cG_N)}{4N}}
\leq \PP_{G_*,Z}\insquare{\widetilde \cF(G_*,Z) \leq \E_Z \widetilde{\cF}(G_*,Z) - u}
\leq \exp\inparen{ - \frac{4Nu^2}{D(\cG_N)^2}}.
    \end{align*}
Choosing $u=\epsilon/8$, it follows that
    \begin{align*}
        &\E_{G_*,Z}\Big\langle{\mathbf{1}\{\norm{Q(G,G_*)-\cL_*(\epsilon)}_\cL^2
>\epsilon\}\Big\rangle}=
\E_{G_*,Z} \frac{\exp N \Phi_\epsilon(G_*,Z)}
{\exp N \cF(G_*,Z)}\\
        &\leq \exp N\inparen{\widetilde \Phi_\epsilon-\widetilde \cF
+ 2\left(\frac{\epsilon}{8}+\frac{3K(\cG_N)}{4N}\right)}
+ 2\exp \inparen{ - \frac{N\epsilon^2}{16D(\cG_N)^2}}.
\end{align*}
Applying \eqref{eq:restrictedFEupper} and ${-}\widetilde \cF \leq
-\sup_{\overlap \in \cQ} \Psi(\overlap)$ from Lemma
\ref{thm:lower-bound-general-bound}, this gives
\begin{align*}
&\E_{G_*,Z}\Big\langle\mathbf{1}\{\norm{Q(G,G_*)-\cL_*(\epsilon)}_\cL^2 >
\epsilon\} \Big\rangle\\
&\leq \exp N\left({-}\frac{\epsilon}{4}
+D(\cG_N)\sqrt{\frac{L(\epsilon^{1/2};\cG_N)}{N}}
+\frac{L(\epsilon^{1/2};\cG_N)}{N}+\frac{3K(\cG_N)}{2N}\right)
+2\exp \inparen{ - \frac{N\epsilon^2}{16D(\cG_N)^2}},
\end{align*}
implying the corollary.
\end{proofof}

\subsection{Mutual information}\label{subsec:MI}
We verify the mutual information relations \eqref{eq:quadraticMI}
and \eqref{eq:linearMI}. For \eqref{eq:quadraticMI},
\begin{align}
\frac{1}{N}\,I(G_*,Y)&=\frac{1}{N}\,\E_{G_*,Z}\log p(Y \mid G_*)
-\frac{1}{N}\,\E_{G_*,Z}\log p(Y)\notag\\
&=\frac{1}{N}\,\E_{G_*,Z}\left[\sum_{i<j} -\frac{1}{2N}\|\by_{ij}
-\phi_{*i} \bullet \phi_{*j}\|_\cK^2\right]
-\frac{1}{N}\,\E_{G_*,Z}\log \E_G \exp\left(\sum_{i<j} -\frac{1}{2N}\|\by_{ij}
-\phi_i \bullet \phi_j\|_\cK^2\right)\notag\\
&=\frac{1}{N}\,\E_{G_*,Z}\left[\sum_{i<j} -\frac{1}{2N}\,\|\phi_{*i} \bullet
\phi_{*j}\|_\cK^2+\frac{1}{N}\langle \by_{ij},\phi_{*i} \bullet \phi_{*j}
\rangle_\cK\right]-\cF\notag\\
&=\frac{1}{2N^2}\,\E_{G_*}\sum_{i<j} \|\phi_{*i} \bullet  \phi_{*j}\|_\cK^2
-\cF
=\frac{1}{4N^2}\,\E_{G_*}\sum_{i,j=1}^N \|\phi_{*i} \bullet  \phi_{*j}\|_\cK^2
-\cF+O\left(\frac{K(\cG_N)}{N}\right)\notag\\
&=\frac{1}{4}\|Q(\iden,\iden)\|_\cL^2-\cF+O\left(\frac{K(\cG_N)}{N}\right).
\label{eq:MIcalculation}
\end{align}
For \eqref{eq:linearMI},
\begin{align*}
\frac{1}{N}\,i(G_*,Y_\lin)&=\frac{1}{N}\,\E_{G_*,Z}\log p(Y_\lin \mid G_*)
-\frac{1}{N}\,\E_{G_*,Z}\log p(Y_\lin)\\
&=\frac{1}{N}\,\E_{G_*,Z}\left[\sum_{i=1}^N -\frac{1}{2}\|\by_i
-\overlap^{1/2}\phi_{*i}\|_\cH^2\right]-
\frac{1}{N}\,\E_{G_*,Z}\log \E_G \exp\left(\sum_{i=1}^N -\frac{1}{2}\|\by_i
-\overlap^{1/2}\phi_i\|_\cH^2\right)\\
&=\frac{1}{N}\,\E_{G_*,Z}\left[\sum_{i=1}^N
-\frac{1}{2}\|\overlap^{1/2}\phi_{*i}\|_\cH^2
+\langle \by_i,\overlap^{1/2}\phi_{*i} \rangle_\cH\right]
-\left(\Psi(\overlap)+\frac{1}{4}\|\overlap\|_\cL^2\right)\\
&=\frac{1}{2N}\,\E_{G_*}\sum_{i=1}^N \langle \overlap,\phi_{*i} \otimes\phi_{*i}
\rangle_\cL-\left(\Psi(\overlap)+\frac{1}{4}\|\overlap\|_\cL^2\right)
=-\frac{1}{4}\|\overlap\|_\cL^2+\frac{1}{2}
\langle \overlap,Q(\iden,\iden)\rangle_\cL-\Psi(\overlap).
\end{align*}

%%%%%%%%%%%%%%%%%%%%
\section{Proofs for group synchronization}\label{sec:pf-group-sync}

%%%%%%%%%%%%%%%%%%
\subsection{Asymptotic mutual information and MMSE}

%For ease of notation and calculation, we start with the simplified group synchronization~\eqref{eq:group-synchr-model}, which corresponds to the case when all the SNR parameters are the same, and prove Corollary~\ref{corollary:free-energy-group-syn}. The posterior distribution of the model~\eqref{eq:group-synchr-model} is given by 
%\begin{align*}
%    p(G \mid Y) = \frac{\exp \inparen{ H_n(G)}}{Z_n(\lambda)}
%\end{align*}
%where
%\begin{align}
%\label{eq:Hamiltonian_plantedProductGroup}
%H_n(G) \coloneqq - \frac{\lambda}{2n}\sum_{1 \le i < j \le n} \|\gb_i \gb_j^\top \|_F^2 + \frac{\lambda}{n} \sum_{1 \le i < j \le n} \tr \gb_j^* {\gb_i^*}^\top \gb_i \gb_j^\top + \sqrt{\frac{\lambda}{n}} \sum_{1 \le i < j \le n} \tr \zb_{ij}^\top \gb_i \gb_j^\top
%\end{align}
%and $Z_n(\lambda) = \E \exp\inparen{H_n(G)}$ is the normalizing constant, here the expectation is over $\gb_i \overset{i.i.d.}{\sim} \haar(\cG)$. This model falls into the general framework described in Section~\ref{sec: general_model}. Here, $\cG_N = \cG^{\otimes n} \subseteq \cO(N)$ where $N=nk$, and the feature map $\phi$ is $G \in \cG_N \mapsto \lambda^{1/4}k^{-1/4}n^{-1/2} \inparen{\gb_1, \gb_2, \cdots, \gb_n} \in \R^{nk \times k}$. For any $G^a, G^b \in \cG_N$, the overlap map is
%\begin{align*}
%    \overlaprv{ab} = \frac{\sqrt{\lambda}}{\sqrt{k}n}\sum_{i=1}^n \gb_i^{a\top} \gb_i^{b} \in \R^{k \times k}
%\end{align*}
%and the self-overlap $\overlaprv{aa} = \sqrt{\lambda/k} I_k$.

\begin{proofof}{Theorem
\ref{theorem:free-energy-group-syn-multi-representation}}
Define $\cG_N=\cG^N$,
$\cH=\cK=\cL=\prod_{\ell=1}^L \R^{k_\ell \times k_\ell}$,
the feature map $\phi:\cG^N \to \cH^N$ by \eqref{eq:GSphi}, and the bilinear
maps $\bullet,\otimes$ and inclusion map $\iota$ by \eqref{eq:GSbilinearmaps}.
We check the conditions of Assumption \ref{assumption:general-model}:
The compatibility relation \eqref{eq:compatibility} follows from
\[\langle \ab \bullet \bb, \ab' \bullet \bb' \rangle_\cK
=\sum_{\ell=1}^L \lambda_\ell \tr (\ab_\ell \bb_\ell^\top)^\top
(\ab_\ell'\bb_\ell'^\top)
=\sum_{\ell=1}^L \lambda_\ell \tr (\ab_\ell^\top\ab_\ell')^\top
(\bb_\ell^\top\bb_\ell')=\langle \ab \otimes \ab',\bb \otimes \bb' \rangle_\cL\]
for all $\ab,\ab',\bb,\bb' \in \cH$.
The inclusion relation \eqref{eq:inclusion} follows from
\[\langle \ab,\iota(\overlap)\bb \rangle_\cH
=\sum_{\ell=1}^L \tr \ab_\ell^\top
(\sqrt{\lambda_\ell}\bb_\ell\overlap_\ell^\top)
=\sum_{\ell=1}^L \sqrt{\lambda_\ell}\tr \overlap_\ell^\top \ab_\ell^\top
\bb_\ell=\langle \overlap,\ab \otimes \bb \rangle_\cL.\]
From this form, we see that
$\langle \ab,\iota(\overlap)\bb \rangle_\cH=\langle
\bb,\iota(\overlap)\ab \rangle_\cH$ 
and $\langle \ab,\iota(\overlap)\ab \rangle_\cH \geq 0$
for all $\ab,\bb \in \cH$ if and only if, for every $\ell=1,\ldots,L$,
we have $\tr \overlap_\ell^\top \overlapm_\ell=\tr \overlap_\ell^\top
\overlapm_\ell^\top$ for all $\overlapm_\ell \in \R^{k_\ell \times k_\ell}$
and $\tr \overlap_\ell^\top \overlapm_\ell \geq 0$ for all
$\overlapm_\ell \in \Sym_{\succeq 0}^{k_\ell \times k_\ell}$,
i.e.\ if and only if each
$\overlap_\ell$ is symmetric positive-semidefinite. Thus the set $\cQ$ in
\eqref{eq:Qdef} is
\[\cQ=\Sym_{\succeq 0}:=\prod_{\ell=1}^L \Sym_{\succeq 0}^{k_\ell \times
k_\ell}.\]

Let us write the shorthands
\[I=(I_{k_\ell \times k_\ell})_{\ell=1}^L,
\quad \overlapm^\top=(\overlapm_\ell^\top)_{\ell=1}^L,
\quad \overlapm\overlapm'=(\overlapm_\ell\overlapm_\ell')_{\ell=1}^L
\text{ for any } \overlapm,\overlapm' \in \cL.\]
Note then that inclusion map $\iota$ in \eqref{eq:GSbilinearmaps} satisfies
$\iota(\overlapm)^\top=\iota(\overlapm^\top)$ and
$\iota(I)\iota(\overlapm\overlapm')=\iota(\overlapm)\iota(\overlapm')$.
For any $\overlapm \in \cL$,
defining $|\overlapm|=((\overlapm_\ell^\top\overlapm_\ell)^{1/2})_{\ell=1}^L \in
\cQ$ and
$|\overlapm^\top|=((\overlapm_\ell\overlapm_\ell^\top)^{1/2})_{\ell=1}^L \in
\cQ$,
we then have $\iota(\overlapm)^\top \iota(\overlapm)
=\iota(I)\iota(\overlapm^\top \overlapm)
=\iota(|\overlapm|)\iota(|\overlapm|)$ and similarly
$\iota(\overlapm) \iota(\overlapm)^\top
=\iota(|\overlapm^\top|)\iota(|\overlapm^\top|)$. Furthermore
$\|\overlapm_\ell\|_F^2=\||\overlapm|_\ell\|_F^2=\||\overlapm^\top|_\ell\|_F^2$,
verifying the conditions of \eqref{eq:operatorabsolutevalue}. Finally, 
for any $\gb,\hb \in \cG$ and each $\ell=1,\ldots,L$, we have
$(\hb^{-1}\gb)_\ell=\hb_\ell^\top\gb_\ell$ since
$\gb \mapsto \gb_\ell$ is an orthogonal representation of $\cG$. Hence
\[Q(G,H)=\left(\sqrt{\lambda_\ell} \cdot
\frac{1}{N}\sum_{i=1}^N \gb_\ell^{(i)\top}\hb_\ell^{(i)}
\right)_{\ell=1}^L=Q(H^{-1}G,\iden).\]
This verifies all the conditions of Assumption \ref{assumption:general-model}.

\paragraph{Proof of (a):} 
By the independence of the components $(\gb_*^{(i)},\zb^{(i)})_{i=1}^N$, the
expectation $\E_{G_*,Z}$ in \eqref{eq:RS-potential-general-model} is
separable across samples $i=1,\ldots,N$,
yielding together with the above definitions that
\[\Psi_N(\overlap)
=\sum_{\ell=1}^L {-}\frac{1}{4}\|\overlap_\ell\|_F^2
-\frac{\sqrt{\lambda_\ell}}{2}\tr \overlap_\ell
+\E_{\gb_*,\zb}\log \E_{\gb}
\exp\left(\sqrt{\lambda_\ell} \tr \overlap_\ell \gb_\ell^\top\gb_{*\ell}
+\lambda_\ell^{1/4}\overlap_\ell^{1/2}\gb_\ell^\top\zb_\ell\right).\]
In particular, $\Psi_N(\overlap)$ does not depend on $N$.
Defining the change of variables
$\overlap_\ell=\sqrt{\lambda_\ell}\tilde \overlap_\ell$, we then have
$\Psi_N(\overlap)=\Psi_\gs(\tilde \overlap)$.
The quantities $K(\cG_N)$, $D(\cG_N)$, and $L(\sqrt{\epsilon};\cG_N)$ for any
fixed $\epsilon>0$ are bounded by constants
independent of $N$, so Theorem~\ref{thm:free-energy-general-model} implies
\[\lim_{N \to \infty} \cF_N=\sup_{\overlap \in \cQ} \Psi_N(\overlap)
=\sup_{\tilde\overlap \in \cQ} \Psi_\gs(\tilde\overlap).\]
Part (a) of the theorem then follows from \eqref{eq:quadraticMI}, where
$\|Q(\iden,\iden)\|_\cL^2=\sum_{\ell=1}^L \|\sqrt{\lambda_\ell} I_{k_\ell \times
k_\ell}\|_F^2=\sum_{\ell=1}^L \lambda_\ell k_\ell$.

%\paragraph{Proof of (2):} The posterior distribution of $\gb$ given $\yb_0$ in model~\eqref{eq:single-letter-group-sync} is given by
%\begin{align*}
%    p(\gb \mid \yb_0) \propto \exp \inparen{-\frac{\lambda}{2}\tr \overlap + \lambda\tr \overlap \gb^\top \gb_0 + \sqrt{\lambda} \tr \overlap^{1/2} \gb^\top \zb_0 }.
%\end{align*}
%The mutual information $i(\lambda, \overlap) = I(\gb_0, \yb_0)$ of the model is
%\begin{align*}
%    i(\lambda, \overlap) = \E \log \frac{p(\gb_0, \yb_0)}{p(\gb_0) p(\yb_0)} = \lambda \tr \overlap - \E_{\zb_0, \gb_0} \log \E_g \exp \inparen{\lambda\tr \overlap \gb^\top \gb_0 + \sqrt{\lambda} \tr \overlap^{1/2} \gb^\top \zb_0}.
%\end{align*}
%By the orthogonal invariance of Gaussian random variables and the group symmetry, we can simplify the above expression:
%\begin{align}
%\label{eq:mutual-information-single-letter}
%     i(\lambda, \overlap) = \lambda \tr \overlap - \E_{\zb_0} \log \E_\gb \exp \inparen{\lambda\tr \overlap \gb + \sqrt{\lambda} \tr\overlap^{1/2} \gb\zb_0}.
%\end{align}
%The claim then follows.

\paragraph{Proof of (b):} The following I-MMSE relation is standard and follows
from similar arguments as in \cite{guo2005mutual}, but we
include a brief proof for convenience. From \eqref{eq:hamiltonian-general-model}
and \eqref{eq:quadraticMI}, we have
\begin{align*}
H(G;G_*, Z)&=-\frac{1}{2N}\sum_{i<j} \sum_{\ell=1}^L
\lambda_\ell \|\gb_\ell^{(i)}\gb_\ell^{(j)\top}\|_F^2
+\frac{1}{N}\sum_{i<j} \sum_{\ell=1}^L
\lambda_\ell \tr (\gb_\ell^{(i)}\gb_\ell^{(j)\top})^\top
(\gb_{*\ell}^{(i)}\gb_{*\ell}^{(j)\top})\\
&\hspace{1in}
+\frac{1}{\sqrt{N}}\sum_{i<j} \sum_{\ell=1}^L \sqrt{\lambda_\ell}
\tr (\gb_\ell^{(i)}\gb_\ell^{(j)\top})^\top \zb_\ell^{(ij)},\\
\frac{1}{N}I(G_*,Y)&=\sum_{\ell=1}^L \frac{\lambda_\ell k_\ell}{4}
-\frac{1}{N}\E_{G_*,Z} \log \E_G \exp H(G; G_*, Z).
\end{align*}
Taking the derivative with respect to $\lambda_\ell$ and applying Gaussian
integration-by-parts gives
\begin{align*}
\partial_{\lambda_\ell} \frac{1}{N} I(G_*,Y) &= \frac{k_\ell}{4} -
\frac{1}{N}\,\E_{G_*,Z} \Bigg\langle - \frac{1}{2N}\sum_{i<j} \|\gb_\ell^{(i)}
\gb_\ell^{(j)\top}\|_F^2 + \frac{1}{N}\sum_{i < j} 
\tr (\gb_\ell^{(i)}\gb_\ell^{(j)\top})^\top
(\gb_{*\ell}^{(i)}\gb_{*\ell}^{(j)\top})\\
&\hspace{2in}+\frac{1}{2\sqrt{\lambda_\ell N}} \sum_{i < j} \tr
(\gb_\ell^{(i)}\gb_\ell^{(j)\top})^\top \zb_\ell^{(ij)} \Bigg\rangle\\
&=\frac{k_\ell}{4} -
\frac{1}{N}\,\E_{G_*,Z} \left[\frac{1}{N}\sum_{i < j} 
\tr \langle \gb_\ell^{(i)}\gb_\ell^{(j)\top} \rangle^\top
\gb_{*\ell}^{(i)}\gb_{*\ell}^{(j)\top}
-\frac{1}{2N} \sum_{i < j} \|\langle \gb_\ell^{(i)}\gb_\ell^{(j)\top}
\rangle\|_F^2\right].
\end{align*}
Then, completing the square and applying
$\|\gb_{*\ell}^{(i)}\gb_{*\ell}^{(j)}\|_F^2=k_\ell$, we obtain the desired
I-MMSE relation
\begin{align*}
\partial_{\lambda_\ell} \frac{1}{N} I(G_*,Y) 
    &= \frac{1}{2N^2} \sum_{i < j} \E_{G_*,Z}
\|\gb_{*\ell}^{(i)}\gb_{*\ell}^{(j)\top} - \langle \gb_\ell^{(i)}
\gb_\ell^{(j)\top} \rangle\|_F^2
+\frac{k_\ell}{4N} = \frac{1 - N^{-1}}{4} \mmse_\ell + \frac{k_\ell}{4N}.
\end{align*}

Fixing any $\{\lambda_{\ell'}:\ell' \neq \ell\}$, observe by properties of
conditional expectation that $\mmse_\ell$ is non-increasing in $\lambda_\ell$,
so this I-MMSE relation implies
$\lambda_\ell \mapsto -\frac{1}{N}I(G_*,Y)$ is convex. Then its pointwise limit
\[\cI(\lambda_\ell):=
\lim_{N \to \infty}{-}\frac{1}{N}I(G_*,Y)
=\sup_{\overlap \in \Sym_{\succeq
0}} \Psi_\gs(\overlap)-\sum_{\ell'=1}^L \frac{\lambda_{\ell'} k_{\ell'}}{4}\]
is also convex, the set $D \subseteq (0,\infty)$ where $\cI(\cdot)$ is
differentiable has full Lebesgue measure,
and for all $\lambda_\ell \in D$ we have $\lim_{N \to \infty}
\partial_{\lambda_\ell}[{-}\frac{1}{N}I(G_*,Y)]=\cI'(\lambda_\ell)$
\cite[Theorems 10.8, 24.6, 25.3]{rockafellar2015convex}.
Applying a change of variables
$\overlapm=(\lambda_\ell \overlap_\ell)_{\ell=1}^L$, we may express
\[\cI(\lambda_\ell)=\sup_{\overlapm \in \Sym_{\succeq 0}}
\cI(\lambda_\ell,\overlapm),
\qquad \cI(\lambda_\ell,\overlapm):=\sum_{\ell'=1}^L
\Big({-}\frac{\lambda_{\ell'} k_{\ell'}}{4}-
\frac{1}{4\lambda_{\ell'}}\|\overlapm_{\ell'}\|_F^2\Big)
+F(\overlapm)\]
for a function $F(\overlapm)$ not depending on $\lambda_\ell$. It may be checked
(via the gradient calculation in Proposition \ref{prop:GSderivatives})
that this function $F(\overlapm)$ is Lipschitz in $\overlapm$, and hence for any
fixed and bounded range of values $\lambda_\ell>0$ the
supremum $\sup_{\overlapm \in \Sym_{\succeq 0}} \cI(\lambda_\ell,\overlapm)$ is
attained on a compact subset of $\Sym_{\succeq 0}$. Then
by the envelope theorem \cite[Corollary 4]{milgrom2002envelope}, $D$ is
precisely the set where $\partial_{\lambda_\ell}
\cI(\lambda_\ell,\overlapm_*)
=-\frac{k_\ell}{4}+\frac{1}{4\lambda_\ell^2} \|\overlapm_{*\ell}\|_F^2$
takes the same value for all
$\overlapm_* \in \argmax_{\overlapm \in \Sym_{\succeq 0}}
\cI(\lambda_\ell,\overlapm)$, and
$\cI'(\lambda_\ell)=-\frac{k_\ell}{4}+\frac{1}{4\lambda_\ell^2} \|\overlapm_{*\ell}\|_F^2$
for any such $\overlapm_*$. Restating this in terms of the original variable
$\overlap$, $D$ is the set where
$\|\overlap_{*\ell}\|_F^2$ takes the same value for all $\overlap_* \in 
\argmax_{\overlap \in \Sym_{\succeq 0}} \Psi_\gs(\overlap)$, and
for any $\lambda_\ell \in D$ we have
\[\lim_{N \to \infty} \mmse_\ell =  4 \lim_{N \to \infty}
\partial_{\lambda_\ell} \frac{1}{N} I(G_*,Y) = -4 \cI'(\lambda_\ell)
=k_\ell-\|\overlap_{*\ell}\|_F^2,\]
showing part (b).

\paragraph{Proof of (c):} We apply
Corollary \ref{corollary:overlap_concentration_general_model}. Consider any
$\overlapm \in \cL$, and write the singular value decompositions
$\overlapm_\ell=\bu_\ell\bd_\ell\bv_\ell^\top \in \R^{k_\ell \times k_\ell}$.
Then $\iota(\overlapm)$ admits a singular value decomposition
$\iota(\overlapm)=UDV^\top$ where $U,V,D \in B(\cH)$ are orthogonal and 
diagonal linear operators defined by
\[U\ab=(\ab_\ell\ub_\ell^\top)_{\ell=1}^L,
\qquad V\ab=(\ab_\ell\vb_\ell^\top)_{\ell=1}^L,
\qquad D\ab=(\sqrt{\lambda_\ell}\ab_\ell \db_\ell)_{\ell=1}^L.\]
So $\overlapm_U,\overlapm_V \in B(\cH)$ are given by
$\overlapm_U\ab=(\lambda_\ell^{1/4}\ab_\ell\ub_\ell\db_\ell^{1/2})_{\ell=1}^L$ and
$\overlapm_V\ab=(\lambda_\ell^{1/4}\ab_\ell\vb_\ell\db_\ell^{1/2})_{\ell=1}^L$, and
$p_{\overlapm_U}$ is the marginal density of
$Y_\lin=\{\by_\ell^{(i)}\}_{1 \leq i \leq N,\, 1 \leq \ell \leq L}$ in the model
with observations
\[\by_\ell^{(i)}=\lambda_\ell^{1/4} \bg_{*\ell}^{(i)}
\bu_\ell\bd_\ell^{1/2}+\bz_\ell^{(i)}.\]
By independence of components for $i=1,\ldots,N$,
$\frac{1}{N}D_{\mathrm{KL}}(p_{\overlapm_V}\|p_{\overlapm_U})$ is equal to
the Kullback-Liebler divergence between the $N=1$ models
\[\{\by_\ell=\lambda_\ell^{1/4}\bg_{*\ell}\bv_\ell\bd_\ell^{1/2}+\bz_\ell\}_{\ell=1}^L
\qquad \text{ and } \qquad
\{\by_\ell=\lambda_\ell^{1/4}\bg_{*\ell}\bu_\ell\bd_\ell^{1/2}+\bz_\ell\}_{\ell=1}^L.\]
In particular, $\frac{1}{N}D_{\mathrm{KL}}(p_{\overlapm_V}\|p_{\overlapm_U})$
does not depend on $N$.

Consider $\cL_*(0)$ corresponding to \eqref{eq:overlap_concentration_set} with
$\epsilon=0$, and suppose $\overlapm \in \cL_*(0)$ where
$\overlapm_\ell=\bu_\ell\bd_\ell\bv_\ell^\top$. Then
$|\overlapm|=(\bv_\ell\bd_\ell\bv_\ell^\top)_{\ell=1}^L
\in \argmax_{\overlap \in \cQ}\Psi_N(\overlap)$, 
and also $\frac{1}{N}D_{\mathrm{KL}}(p_{\overlapm_V}\|p_{\overlapm_U})=0$.
Since $\lambda_\ell>0$, and the law
of any compactly supported random variable $X \in \R^d$ is uniquely determined
by that of $X+Z \in \R^d$ when $Z \sim \cN(0,I)$,
the above characterization of
$\frac{1}{N}D_{\mathrm{KL}}(p_{\overlapm_V}\|p_{\overlapm_U})$
implies that $(\bg_{*\ell}\bv_\ell\bd_\ell^{1/2})_{\ell=1}^L$ is equal in law to
$(\bg_{*\ell}\bu_\ell\bd_\ell^{1/2})_{\ell=1}^L$. Comparing the supports
of these two laws, there must exist $\bg \in \cG$ for which
$(\bg_\ell\bv_\ell\bd_\ell^{1/2})_{\ell=1}^L=(I_{k_\ell \times k_\ell}
\bu_\ell\bd_\ell^{1/2})_{\ell=1}^L$, so
$\overlapm=(\bu_\ell\bd_\ell\bv_\ell^\top)_{\ell=1}^L
=(\bg_\ell\bv_\ell\bd_\ell\bv_\ell^\top)_{\ell=1}^L
=\bg|\overlapm|$. Thus $\cL_*(0) \subseteq \cS:=
\{\bg\bq_*:\bg \in \cG,\,\bq_* \in \argmax_{\overlap \in
\cQ}\Psi_N(\overlap)\}$. The reverse inclusion $\cS \subseteq \cL_*(0)$ is also
evident from reversing these arguments. By the relation
$\Psi_N(\overlap)=\Psi_\gs(\tilde \overlap)$ shown in part (a) where
$\overlap_\ell=\sqrt{\lambda_\ell}\tilde \overlap_\ell$, we have
$\cS=\{(\sqrt{\lambda_\ell}\overlapm_{*\ell})_{\ell=1}^L:\overlapm_* \in
\cL_{*,\gs}\}$ for $\cL_{*,\gs}$ defined in \eqref{eq:GSoverlapconcset}.
So this establishes
\begin{equation}\label{eq:overlapconcexact}
\cL_*(0)=\left\{(\sqrt{\lambda_\ell} \overlapm_{*\ell})_{\ell=1}^L:
\overlapm_* \in \cL_{*,\gs}\right\}.
\end{equation}

Since $\Psi_N(|\overlapm|)$ and
$\frac{1}{N}D_{\mathrm{KL}}(p_{\overlapm_V}\|p_{\overlapm_U})$ defining
$\cL_*(\cdot)$ are both independent of $N$ and continuous in $\overlapm$,
for any $\epsilon>0$, there
must exist $\delta:=\delta(\epsilon)>0$ independent of $N$ for which
\[\cL_*(\delta) \subseteq
\{\overlapm \in \cL:\|\overlapm-\cL_*(0)\|_\cL^2<\epsilon/2\}.\]
Choosing $\delta:=\delta(\epsilon)$ sufficiently small,
by Corollary \ref{corollary:overlap_concentration_general_model}, there then
exist constants $C,c>0$ for which
\[\E \Big\langle \mathbf{1}\Big\{\|Q(G,G_*)-\cL_*(0)\|_\cL^2 \geq \epsilon \Big\} \Big\rangle
\leq \E \Big\langle \mathbf{1}\Big\{\|Q(G,G_*)-\cL_*(\delta)\|_\cL^2>\delta \Big\}
\Big\rangle \leq Ce^{-cN}\]
Applying $Q(G,G_*)=(\sqrt{\lambda_\ell} \cdot \frac{1}{N}\sum_{i=1}^N
\gb_\ell^{(i)\top}\gb_{*\ell}^{(i)})_{\ell=1}^L$, the characterization of
$\cL_*(0)$ in \eqref{eq:overlapconcexact}, and the definition of
$\cL_{*,\gs}(\epsilon)$ in the theorem statement, we have exactly
\[\Big\{\|Q(G,G_*)-\cL_*(0)\|_\cL^2 \geq \epsilon \Big\}
=\left\{\left(\frac{1}{N}\sum_{i=1}^N
\gb_\ell^{(i)\top}\gb_{*\ell}^{(i)}\right)_{\ell=1}^L \notin
\cL_{*,\gs}(\epsilon)\right\},\]
showing part (c).
\end{proofof}

\subsection{Derivatives of the replica potential}

Throughout this section, we abbreviate $\langle \cdot \rangle \equiv \langle
\cdot \rangle_\qb$.

\begin{proofof}{Proposition \ref{prop:GSderivatives}}
Equip $\Sym=\prod_{\ell=1}^L \Sym^{k_\ell \times k_\ell}$ with the
inner-product $\langle\ab,\bb\rangle \mapsto \sum_{\ell=1}^L \tr
\ab_\ell\bb_\ell$, and
consider the orthonormal basis $\{\be_{ij}^\ell:1 \leq \ell \leq L,\,
1 \leq i \leq j \leq k_\ell\}$ of $\Sym$ given by
\[\be_{ij}^\ell=\begin{cases} \be_i\be_i^\top & \text{ if } i=j \\
\frac{1}{\sqrt{2}}(\be_i\be_j^\top+\be_j\be_i^\top) & \text{ if } i<j \end{cases}
\qquad \text{ where }
\be_1,\ldots,\be_{k_\ell} \text{ are the standard basis vectors in }
\R^{k_\ell}.\]
Denote the partial derivatives of a function $f(\cdot)$ in this basis by
\[\partial_{\ell ij} f(\bq)=\lim_{\delta \to 0}
\frac{1}{\delta}[f(\bq_1,\ldots,\bq_{\ell-1},
\bq_\ell+\delta \be_{ij}^\ell,\bq_{\ell+1},\ldots,\bq_L)- f(\bq_1,\ldots, \bq_L)].\]
For any $\bq$ in the interior of $\Sym_{\succeq 0}$, we then have the basis
representations
\begin{align}
\nabla \Psi_\gs(\bq)[\bx]&=\sum_{\ell=1}^L \sum_{1 \leq i \leq j \leq k_\ell}
(\bx \cdot \be_{ij}^\ell)
\partial_{\ell ij} \Psi_\gs(\bq),\label{eq:GradPsicoords}\\
\nabla^2 \Psi_\gs(\bq)[\bx,\bx']&=\sum_{\ell,\ell'=1}^L
\sum_{1 \leq i \leq j \leq k_\ell}
\sum_{1 \leq i' \leq j' \leq k_{\ell'}}
(\bx \cdot \be_{ij}^\ell)(\bx' \cdot \be_{i'j'}^{\ell'})
\partial_{\ell ij} \partial_{\ell' i'j'} \Psi_\gs(\bq),
\label{eq:HessPsicoords}
\end{align}
so it suffices to compute these first- and second-order partial derivatives of
$\Psi_\gs(\overlap)$.

Recall that the replica potential is
\[\Psi_\gs(\overlap)=-\frac{1}{4}\sum_{\ell=1}^L
\lambda_\ell\|\overlap_\ell\|_F^2-\frac{1}{2}\sum_{\ell=1}^L
\lambda_\ell \tr \overlap_\ell
+\E_{\bg_*,\bz} \log \E_{\bg}\exp\left(\sum_{\ell=1}^L \tr
\Big(\lambda_\ell \overlap_\ell \bg_{*\ell}^\top+\sqrt{\lambda_\ell}\,
\overlap_\ell^{1/2}\bz_\ell^\top\Big)\bg_\ell\right).\]
For part (a), consider any $\bq$ in the interior of $\Sym_{\succeq 0}$.
From the definition of $\partial_{\ell ij}$, we have
$\partial_{\ell ij} \bq_{\ell'}=0$ if $\ell \neq \ell'$. For $\ell=\ell'$, we
have
\[\partial_{\ell ij} \bq_\ell=\be_{ij}^\ell,
\qquad \partial_{\ell ij} \bq_\ell^2=
\be_{ij}^\ell\bq_\ell+\bq_\ell \be_{ij}^\ell.\]
Noting that $\bq \mapsto \bq_\ell^{1/2}$ is smooth on the
interior of $\Sym_{\succeq 0}$, denote its partial derivatives by
$\partial_{\ell ij} [\bq_\ell^{1/2}]$. Then, applying $\tr
\be_{ij}^\ell=\mathbf{1}\{i=j\}$, we have
\[\partial_{\ell ij} \Psi_\gs(\bq)
=-\frac{\lambda_\ell}{2}\tr \be_{ij}^\ell{\bq_\ell}
-\frac{\lambda_\ell}{2}\mathbf{1}\{i=j\}
+\E_{\bg_*,\bz}\left[\tr \Big(\lambda_\ell \be_{ij}^\ell \bg_{*\ell}^\top 
+\sqrt{\lambda_\ell}\partial_{\ell ij}[\bq_\ell^{1/2}]\bz_\ell^\top \Big)
\langle \bg_\ell\rangle\right].\]
Setting $f(\bg)=\sqrt{\lambda_\ell}\,\bg_\ell \partial_{\ell ij}[\bq_\ell^{1/2}]
\in \R^{k_\ell \times k_\ell}$ and applying Gaussian
integration-by-parts in the form
\[\E_{\bz} \tr \bz_\ell^\top \langle f(\bg) \rangle
=\sum_{i,j=1}^{k_\ell} \E_{\bz} \frac{\partial}{\partial z_{\ell ij}}
\langle f_{ij}(\bg) \rangle=\sqrt{\lambda_\ell}\,
\E_{\bz} \tr \Big(\bq_\ell^{1/2} \langle \bg_\ell^\top f(\bg) \rangle
-\bq_\ell^{1/2} \langle \bg_\ell \rangle^\top \langle f(\bg) \rangle\Big)\]
we obtain
\begin{align}
\partial_{\ell ij} \Psi_\gs(\bq)
&=-\frac{\lambda_\ell}{2}\tr \be_{ij}^\ell{\bq_\ell}
-\frac{\lambda_\ell}{2}\mathbf{1}\{i=j\}+\lambda_\ell\,\E_{\bg_*,\bz} 
\tr \be_{ij}^\ell \bg_{*\ell}^\top \langle \bg_\ell \rangle\notag\\
&\hspace{1in}
+\lambda_\ell\,\E_{\bg_*,\bz} \Big\langle \tr \bq_\ell^{1/2}\bg_\ell^\top 
 \bg_\ell \partial_{\ell ij}[\bq_\ell^{1/2}]\Big \rangle
-\lambda_\ell\,\E_{\bg_*,\bz} \tr \Big[\bq_\ell^{1/2}\langle \bg_\ell\rangle^\top 
 \langle \bg_\ell\rangle \partial_{\ell ij}[\bq_\ell^{1/2}]\Big].
\label{eq:firstderivtmp}
\end{align}
Differentiating implicitly
$\bq_\ell^{1/2}\bq_\ell^{1/2}=\bq_\ell$ gives
$\partial_{\ell ij}[\bq_\ell^{1/2}]\bq_\ell^{1/2}
+\bq_\ell^{1/2}\partial_{\ell ij}[\bq_\ell^{1/2}]=\be_{ij}^\ell$.
Thus, since $\bq_\ell^{1/2}$, $\partial_{\ell ij} [\bq_\ell^{1/2}]$, and
$\be_{ij}^\ell$ are all symmetric, for any $\ba \in \R^{k_\ell \times k_\ell}$
we have
\begin{equation}\label{eq:rootqidentity}
\tr \Big(\bq_\ell^{1/2}\ba^\top \ba\partial_{\ell ij}[\bq_\ell^{1/2}]\Big)
=\frac{1}{2}\tr \Big(\big[
\partial_{\ell ij}[\bq_\ell^{1/2}]\bq_\ell^{1/2}
+\bq_\ell^{1/2}\partial_{\ell ij}[\bq_\ell^{1/2}]\big]\ba^\top\ba\Big)
=\frac{1}{2}\tr \be_{ij}^\ell\ba^\top\ba.
\end{equation}
Applying this to (\ref{eq:firstderivtmp}) and noting that
$\bg_\ell^\top\bg_\ell=I_{k_\ell \times k_\ell}$
because $\bg_\ell$ is orthogonal,
the second and fourth terms of (\ref{eq:firstderivtmp}) cancel and we obtain
\[\partial_{\ell ij} \Psi_\gs(\bq)
=-\frac{\lambda_\ell}{2}\tr \be_{ij}^\ell\bq_\ell
+\lambda_\ell\,\E_{\bg_*,\bz} \tr \be_{ij}^\ell
\bg_{*\ell}^\top\langle\bg_\ell \rangle
-\frac{\lambda_\ell}{2}\,\E_{\bg_*,\bz} \tr \be_{ij}^\ell \langle
\bg_\ell \rangle^\top \langle \bg_\ell \rangle.\]
By the Nishimori identity,
\[\E_{\bg_*,\bz} \tr \be_{ij}^\ell \bg_{*\ell}^\top
\langle \bg_\ell \rangle
=\E_{\bg_*,\bz} \tr \be_{ij}^\ell \langle \bg_\ell \rangle^\top
\langle \bg_\ell \rangle.\]
Thus
\[\partial_{\ell ij} \Psi_\gs(\bq)
=-\frac{\lambda_\ell}{2}\tr \be_{ij}^\ell \Big(\bq_\ell-
\E_{\bg_*,\bz}\bg_{*\ell}^\top\langle \bg_\ell \rangle\Big)
=-\frac{\lambda_\ell}{2}\tr \be_{ij}^\ell \Big(\bq_\ell-
\E_{\bg_*,\bz} \langle \bg_\ell \rangle^\top \langle \bg_\ell \rangle\Big).\]
Applying this in (\ref{eq:GradPsicoords}) and using
$\sum_{1 \leq i \leq j \leq k_\ell} (\bx \cdot \be_{ij}^\ell)\be_{ij}^\ell
=\bx_\ell$ gives (\ref{eq:GradPsi}). It is clear that the right side of
(\ref{eq:GradPsi}) extends continuously to the boundary of $\Sym_{\succeq 0}$,
thus establishing (\ref{eq:GradPsi}) for all $\bq \in \Sym_{\succeq 0}$.
Also from this form (\ref{eq:GradPsi}), we have
$\nabla \Psi_\gs(\bq)=0$, i.e.\ $\nabla \Psi_\gs(\bq)[\bx]=0$ for all $\bx \in \Sym$,
if and only if $\bq_\ell=\E_{\bg_*,\bz} \langle \bg_\ell \rangle^\top
\langle \bg_\ell \rangle$ for all $\ell=1,\ldots,L$. This
shows all claims of part (a).

For part (b), let us compute the second partial derivatives from the form
of the first derivative
\[\partial_{\ell' i'j'} \Psi_\gs(\bq)
=-\frac{\lambda_{\ell'}}{2}\tr \be_{i'j'}^{\ell'}\Big(\bq_{\ell'}-
\E_{\bg_*,\bz}\bg_{*\ell'}^\top \langle \bg_{\ell'} \rangle \Big).\]
For $\bq$ in the interior of $\Sym_{\succeq 0}$, taking
$\partial_{\ell ij}$ using the orthogonality relation
\[\partial_{\ell ij}\tr \be_{i'j'}^{\ell'}\bq_{\ell'}
=\mathbf{1}\{\ell=\ell'\}\tr \be_{i'j'}^\ell \be_{ij}^\ell
=\mathbf{1}\{(\ell,i,j)=(\ell',i',j')\}\]
for the first term, we get
\begin{equation}\label{eq:HessPsitmp}
\partial_{\ell ij}\partial_{\ell'i'j'}\Psi_\gs(\bq)
=-\frac{\lambda_\ell}{2}\mathbf{1}\{(\ell,i,j)=(\ell',i',j')\}
+\frac{\lambda_{\ell'}}{2}\,\E_{\bg_*,\bz}\tr \be_{i'j'}^{\ell'}
\bg_{*\ell'}^\top \partial_{\ell ij} \langle \bg_{\ell'} \rangle.
\end{equation}

Let us abbreviate
\[\bbm_\ell=\lambda_\ell \be_{ij}^\ell \bg_{*\ell}^\top \bg_\ell
+\sqrt{\lambda_\ell}\partial_{\ell ij}[\bq_\ell^{1/2}]\bz_\ell^\top \bg_\ell,\]
momentarily write $\otimes$ for the usual vector space tensor product (in this
calculation only, not to be confused with the bilinear map $\otimes$ in the
rest of the paper), and denote the linear maps
$(\tr \otimes \id)(\ba \otimes \bb)=(\tr \ba)\,\bb$ and
$(\tr \otimes \tr)(\ba \otimes \bb)=(\tr \ba)(\tr \bb)$. Then
\begin{align}
\partial_{\ell ij} \langle \bg_{\ell'} \rangle
&=\langle (\tr \bbm_\ell) \bg_{\ell'} \rangle
-\langle \tr \bbm_\ell \rangle \langle \bg_{\ell'} \rangle
=\tr \otimes \id\Big(\langle \bbm_\ell \otimes \bg_{\ell'} \rangle
-\langle \bbm_\ell \rangle \otimes \langle \bg_{\ell'} \rangle\Big).
\label{eq:dgavg}
\end{align}
To simplify the contributions from the second term of $\bbm_\ell$ involving
$\bz_\ell$, we apply Gaussian integration-by-parts in the forms
\begin{align*}
\E_{\bz} \tr \otimes \id \Big\langle
\bz_\ell^\top f(\bg_\ell) \otimes \bg_{\ell'}\Big\rangle
&=\E_{\bz} \sum_{i,j=1}^{k_\ell}
\frac{\partial}{\partial z_{\ell ij}}
\langle f_{ij}(\bg_\ell) \bg_{\ell'} \rangle\\
&=\sqrt{\lambda_\ell}\,\E_{\bz} \tr \otimes \id\left[
\Big\langle \bq_\ell^{1/2} \bg_\ell^\top f(\bg_\ell) \otimes \bg_{\ell'}
\Big\rangle-\Big\langle \bq_\ell^{1/2} \langle \bg_\ell \rangle^\top f(\bg_\ell)
\otimes \bg_{\ell'} \Big\rangle\right],\\
\E_{\bz} \tr \otimes \id \Big[\langle \bz_\ell^\top f(\bg_\ell) \rangle
\otimes \langle \bg_{\ell'} \rangle\Big]
&=\E_{\bz} \sum_{i,j=1}^{k_\ell}
\frac{\partial}{\partial z_{\ell ij}}
[\langle f_{ij}(\bg_\ell) \rangle \langle \bg_{\ell'} \rangle]\\
&=\sqrt{\lambda_\ell}\,\E_{\bz}
\tr \otimes \id \Big[\langle \bq_\ell^{1/2}\bg_\ell^\top f(\bg_\ell)
\rangle \otimes \langle \bg_{\ell'} \rangle
+\Big\langle \bq_\ell^{1/2}\bg_\ell^\top \langle f(\bg_\ell) \rangle
\otimes \bg_{\ell'} \Big\rangle\\
&\hspace{1in}-2\bq_\ell^{1/2}\langle \bg_\ell\rangle^\top \langle f(\bg_\ell)
\rangle \otimes \langle \bg_{\ell'} \rangle\Big].
\end{align*}
Setting $f(\bg_\ell)=\sqrt{\lambda_\ell}\,\bg_\ell
\partial_{\ell ij}[\bq_\ell^{1/2}]$ as before and
taking the difference of the above two expressions,
\begin{align*}
&\E_{\bz} \tr \otimes \id
\Big[\Big\langle \sqrt{\lambda_\ell} \bz_\ell^\top \bg_\ell\partial_{\ell
ij}[\bq_\ell^{1/2}] \otimes \bg_{\ell'}\Big\rangle
-\Big\langle \sqrt{\lambda_\ell} \bz_\ell^\top \bg_\ell\partial_{\ell
ij}[\bq_\ell^{1/2}] \Big \rangle \otimes \langle \bg_{\ell'}\rangle
\Big]\\
&=\lambda_\ell\,\E_{\bz} \tr \otimes \id
\Big[\Big\langle \bq_\ell^{1/2}
(\bg_\ell-\langle \bg_\ell \rangle)^\top
(\bg_\ell-\langle \bg_\ell \rangle) 
\partial_{\ell ij}[\bq_\ell^{1/2}]
\otimes \bg_{\ell'} \Big \rangle\\
&\hspace{1in}-\Big\langle \bq_\ell^{1/2}\bg_\ell^\top \bg_\ell
\partial_{\ell ij}[\bq_\ell^{1/2}]
\Big\rangle \otimes \langle \bg_{\ell'} \rangle
+\bq_\ell^{1/2}\langle \bg_\ell\rangle^\top 
\langle \bg_\ell \rangle \partial_{\ell ij}[\bq_\ell^{1/2}]
\otimes \langle \bg_{\ell'} \rangle \Big]\\
&=\frac{\lambda_\ell}{2}\,\E_{\bz} \tr \otimes \id
\Big[\Big\langle \be_{ij}^\ell
(\bg_\ell-\langle \bg_\ell \rangle)^\top
(\bg_\ell-\langle \bg_\ell \rangle) \otimes \bg_{\ell'} \Big \rangle
-\langle \be_{ij}^\ell \bg_\ell^\top \bg_\ell\rangle
\otimes \langle \bg_{\ell'} \rangle
+\be_{ij}^\ell \langle \bg_\ell \rangle^\top\langle \bg_\ell\rangle
\otimes \langle \bg_{\ell'} \rangle \Big]
\end{align*}
where the last equality applies (\ref{eq:rootqidentity}) with $\ba \in
\{\bg_\ell-\langle \bg_\ell \rangle,\bg_\ell,\langle \bg_\ell \rangle\}$.
Expanding the square in the first term, cancelling the terms involving
$\tr \be_{ij}^\ell \bg_\ell^\top\bg_\ell=\mathbf{1}\{i=j\}$,
and applying
$\tr \be_{ij}^\ell \langle \bg_\ell \rangle^\top \bg_\ell
=\tr \be_{ij}^\ell \bg_\ell^\top \langle \bg_\ell\rangle$, we get
\begin{align*}
&\E_{\bz} \tr \otimes \id
\Big[\Big\langle \sqrt{\lambda_\ell} \bz_\ell^\top \bg_\ell\partial_{\ell
ij}[\bq_\ell^{1/2}] \otimes \bg_{\ell'}\Big\rangle
-\Big\langle \sqrt{\lambda_\ell} \bz_\ell^\top \bg_\ell\partial_{\ell
ij}[\bq_\ell^{1/2}] \Big \rangle \otimes \langle \bg_{\ell'}\rangle
\Big]\\
&=\lambda_\ell\,\E_\bz \tr \otimes \id\Big[{-}\langle
\be_{ij}^\ell \bg_\ell^\top\langle \bg_\ell \rangle \otimes \bg_{\ell'}
\rangle+\be_{ij}^\ell \langle \bg_\ell \rangle^\top\langle \bg_\ell \rangle
\otimes \langle \bg_{\ell'} \rangle\Big]
\end{align*}
Combining this with the contributions from the first term of $\bbm_\ell$
and substituting into (\ref{eq:dgavg}), we arrive at
\[\E_{\bz} \partial_{\ell ij}\langle \bg_{\ell'} \rangle
=\lambda_\ell\,\E_{\bz} \tr \otimes \id\Big[\langle
\be_{ij}^\ell \bg_{*\ell}^\top \bg_\ell \otimes \bg_{\ell'} \rangle
-\be_{ij}^\ell \bg_{*\ell}^\top \langle \bg_\ell \rangle \otimes
\langle \bg_{\ell'} \rangle
-\langle \be_{ij}^\ell \bg_\ell^\top \langle \bg_\ell \rangle \otimes
\bg_{\ell'} \rangle
+\be_{ij}^\ell \langle \bg_\ell \rangle^\top\langle \bg_\ell \rangle
\otimes \langle \bg_{\ell'} \rangle\Big].\]
Applying this back to (\ref{eq:HessPsitmp}) and using again 
$\tr \be_{ij}^\ell \langle \bg_\ell \rangle^\top\bg_\ell
=\tr \be_{ij}^\ell \bg_\ell^\top \langle \bg_\ell\rangle$ and Nishimori's
identity,
\begin{align*}
\partial_{\ell ij}\partial_{\ell'i'j'}\Psi_\gs(\bq)
&=-\frac{\lambda_\ell}{2}\mathbf{1}\{(\ell,i,j)=(\ell',i',j')\}\\
&\qquad
+\frac{\lambda_\ell\lambda_{\ell'}}{2}\,\E_{\bg_*,\bz}\tr \otimes \tr
\Big[\langle \be_{ij}^\ell \bg_{*\ell}^\top \bg_\ell \otimes
\be_{i'j'}^{\ell'} \bg_{*\ell'}^\top \bg_{\ell'} \rangle
-\be_{ij}^\ell \bg_{*\ell}^\top \langle \bg_\ell \rangle \otimes
\be_{i'j'}^{\ell'}\bg_{*\ell'}^\top \langle \bg_{\ell'} \rangle\\
&\hspace{1in}-\langle
\be_{ij}^\ell \bg_\ell \langle \bg_\ell^\top \rangle \otimes 
\be_{i'j'}^{\ell'}\bg_{*\ell'}^\top \bg_{\ell'}\rangle
+\be_{ij}^\ell \langle \bg_\ell \rangle^\top\langle \bg_\ell \rangle
\otimes \be_{i'j'}^{\ell'}\bg_{*\ell'}^\top\langle \bg_{\ell'} \rangle
\Big]\\
&=-\frac{\lambda_\ell}{2}\mathbf{1}\{(\ell,i,j)=(\ell',i',j')\}\\
&\qquad
+\frac{\lambda_\ell\lambda_{\ell'}}{2}\,\E_{\bg_*,\bz} 
\Big[\langle \tr \be_{ij}^\ell \bg_{*\ell}^\top \bg_\ell
\tr \be_{i'j'}^{\ell'} \bg_{*\ell'}^\top \bg_{\ell'} \rangle
-2\tr \be_{ij}^\ell \bg_{*\ell}^\top \langle \bg_\ell \rangle
\tr \be_{i'j'}^{\ell'}\bg_{*\ell'}^\top \langle \bg_{\ell'} \rangle\\
&\hspace{1in}
+\tr \be_{ij}^\ell \langle \bg_\ell \rangle^\top\langle \bg_\ell \rangle
\tr \be_{i'j'}^{\ell'}\langle \bg_{\ell'}
\rangle^\top\langle\bg_{\ell'}\rangle\Big].
\end{align*}
Applying this in (\ref{eq:HessPsicoords}) and using 
$\sum_{1 \leq i \leq j \leq k_\ell} (\bx \cdot \be_{ij}^\ell)
(\bx' \cdot \be_{ij}^\ell)=\tr \bx_\ell\bx_\ell'$,
$\sum_{1 \leq i \leq j \leq
k_\ell} (\bx \cdot \be_{ij}^\ell)\be_{ij}^\ell=\bx_\ell$, and the analogous
identity for $\bx'$ gives (\ref{eq:HessPsi}). Again, the right side of
(\ref{eq:HessPsi}) extends continuously to the boundary of $\Sym_{\succeq 0}$,
establishing (\ref{eq:HessPsi}) for all $\bq \in \Sym_{\succeq 0}$ and showing
part (b).
\end{proofof}

\begin{proofof}{Proposition \ref{prop:GSlocaloptimality}}
Let us write $\E$ for the expectation over independent and uniformly random
elements $\bg,\bh \sim \Haar(\cG)$, with corresponding representations
$(\bg_1,\ldots,\bg_L)$ and $(\bh_1,\ldots,\bh_L)$.

We use the definition of the type of a real-irreducible representation
$\bg_\ell$ following Theorem \ref{thm:realclassification}.
If the representation $\bg_\ell$ is of real type, then it is $\C$-irreducible.
Since it is also non-trivial, Schur orthogonality (Theorem
\ref{thm:schurorthogonality}(a)) implies that
$\E[g_{\ell ij} \cdot 1]=0$ for each entry $(i,j)$ of $\bg_\ell$,
where 1 represents the trivial representation in $\C^{1 \times 1}$;
thus $\E[\bg_\ell]=0$.
If $\bg_\ell$ is of complex or quaternionic type, then the
same argument applies to the entries of the two $\C$-irreducible
sub-representations of $\bg_\ell$. Thus in all cases, $\E[\bg_\ell]=0$.

At $\bq=\bzero$, a sample $\bg$ from the posterior
measure defining $\langle \cdot \rangle_\bg$
is uniform over $\cG$ and independent of $\bg_*$. Thus
\[\nabla \Psi(\bzero)[\bx]=\sum_{\ell=1}^L \frac{\lambda_\ell}{2}
\tr \bx_\ell (\E\bg_\ell)^\top(\E\bg_\ell)=0\]
for any $\bx \in \Sym$, showing the first claim that
$\nabla \Psi(\bzero)=0$. Furthermore, applying $\E \bg_\ell=0$,
\[\nabla^2 \Psi(\bzero)[\bx,\bx']=\sum_{\ell=1}^L -\frac{\lambda_\ell}{2}
\tr \bx_\ell\bx_\ell'+\sum_{\ell,\ell'=1}^L
\frac{\lambda_\ell\lambda_{\ell'}}{2}
\E\Big[(\tr \bx_\ell\bg_\ell^\top\bh_\ell)
(\tr \bx_{\ell'}'\bg_{\ell'}^\top\bh_{\ell'})\Big].\]
By invariance of Haar measure, we have the equality in law
$\bg^\top\bh \overset{L}{=}\bg$.
Furthermore, if $\ell \neq \ell'$, then $\bg_\ell$ and $\bg_{\ell'}$ are
distinct and real-irreducible, so the $\C$-irreducible
sub-representations of $\bg_\ell$ are distinct from those of $\bg_{\ell'}$
(c.f.\ Theorem \ref{thm:realclassification}).
Then Schur orthogonality (Theorem \ref{thm:schurorthogonality}(a)) 
implies $\E[(\tr \bx_\ell\bg_\ell)(\tr
\bx_{\ell'}\bg_{\ell'})]=0$. Thus
\[\nabla^2 \Psi(\bzero)[\bx,\bx']=\sum_{\ell=1}^L
\underbrace{-\frac{\lambda_\ell}{2}
\tr \bx_\ell\bx_\ell'+\frac{\lambda_\ell^2}{2}
\E\Big[(\tr \bx_\ell\bg_\ell)(\tr
\bx_\ell'\bg_\ell)\Big]}_{=:H_\ell[\bx_\ell,\bx_\ell']}\]
This shows that $\nabla^2 \Psi(\bzero)$ is block-diagonal in the $L \times L$
block decomposition with respect to $\bx=(\bx_1,\ldots,\bx_L)$, with blocks
$\{H_\ell\}_{\ell=1}^L$, so its largest eigenvalue satisfies
\[\lambda_{\max}(\nabla^2 \Psi(\bzero))=\max_{\ell=1}^L \lambda_{\max}(H_\ell)
=\max_{\ell=1}^L \sup_{\bx_\ell \in \Sym^{k_\ell \times
k_\ell}:\|\bx_\ell\|_F^2=k_\ell} \frac{1}{k_\ell}\,H_\ell[\bx_\ell,\bx_\ell].\]

If $\bg_\ell$ is of real type, then it is $\C$-irreducible, and
Theorem \ref{thm:schurorthogonality}(a) gives
\[\E[(\tr \bx_\ell\bg_\ell)^2]
=\E[(\tr \bx_\ell\bg_\ell)(\tr \overline{\bx_\ell\bg_\ell})]
=\sum_{i,j,i',j'=1}^{k_\ell} x_{\ell ij} \overline{x_{\ell i'j'}}
\E[g_{\ell ij}\overline{g_{\ell i'j'}}]=\frac{1}{k_\ell}\sum_{i,j=1}^{k_\ell}
x_{\ell ij}\overline{x_{\ell i'j'}}=\frac{1}{k_\ell}\|\bx_\ell\|_F^2.\]
Thus
\[\sup_{\bx_\ell:\|\bx_\ell\|_F^2=k_\ell} \E[(\tr \bx_\ell\bg_\ell)^2]
=\E[(\tr \bg_\ell)^2]=1\]
where the first equality holds because the supremum is attained at any
$\bx_\ell$ satisfying $\|\bx_\ell\|_F^2=k_\ell$, and in particular at
$\bx_\ell=I_{k_\ell \times k_\ell}$.

If $\bg_\ell$ is of complex type, then there exists a
unitary matrix $(\bv_1\;\bv_2) \in \C^{k_\ell \times k_\ell}$ for which
\begin{equation}\label{eq:gelldecomp}
\bg_\ell=\begin{pmatrix} \bv_1 & \bv_2 \end{pmatrix}
\begin{pmatrix} \bg_\ell^{(1)} & \bzero \\ \bzero & \bg_\ell^{(2)}
\end{pmatrix}\begin{pmatrix} \bv_1^* \\ \bv_2^* \end{pmatrix}
\end{equation}
and $\bg_\ell^{(1)},\bg_\ell^{(2)} \in \C^{k_\ell/2 \times k_\ell/2}$ are the
two $\C$-irreducible unitary sub-representations of $\bg_\ell$
(c.f.\ Theorem \ref{thm:completereducibility}). Here,
$\bg_\ell^{(2)}$ is distinct from $\bg_\ell^{(1)}$
and isomorphic to the complex conjugate representation
$\bar{\bg}_\ell^{(1)}$. Then Theorem \ref{thm:schurorthogonality}(a) 
gives, similarly as above,
\[\E[(\tr \bx_\ell \bg_\ell)^2]
=\E[(\tr \bv_1^*\bx_\ell \bv_1 \bg_\ell^{(1)}
+\tr \bv_2^*\bx_\ell \bv_2 \bg_\ell^{(2)})^2]
=\frac{1}{k_\ell/2}\|\bv_1^*\bx_\ell \bv_1\|_F^2
+\frac{1}{k_\ell/2}\|\bv_2^*\bx_\ell \bv_2\|_F^2.\]
We have $\|\bv_1^*\bx_\ell \bv_1\|_F^2+\|\bv_2^*\bx_\ell \bv_2\|_F^2
\leq \|\bx_\ell\|_F^2$, where equality is again attained at
$\bx_\ell=I_{k_\ell \times k_\ell}$. Then
\[\sup_{\bx_\ell:\|\bx_\ell\|_F^2=k_\ell} \E[(\tr \bx_\ell\bg_\ell)^2]
=\E[(\tr \bg_\ell)^2]=2.\]

Finally, if $\bg_\ell$ is of quaternionic type, then again (\ref{eq:gelldecomp})
holds where, now, $\bg_\ell^{(1)},\bg_\ell^{(2)}$ are isomorphic
$\C$-irreducible sub-representations of $\bg_\ell$ (and both isomorphic to
$\bar{\bg}_\ell^{(1)}$). Then there exists a unitary matrix
$\bu \in \C^{k_\ell/2 \times k_\ell/2}$ for which
$\bg_\ell^{(1)}=\bu^*\bg_\ell^{(2)}\bu$ (c.f.\ Proposition
\ref{prop:unitaryequivalence}). Replacing $(\bv_2,\bg_\ell^{(2)})$
by $(\bv_2\bu,\bu^*\bg_\ell^{(2)}\bu)$, we may assume that
$\bg_\ell^{(1)}=\bg_\ell^{(2)}$. Then by
Theorem \ref{thm:schurorthogonality}(a),
\[\E[(\tr \bx_\ell \bg_\ell)^2]
=\E\Big[\Big(\tr (\bv_1^*\bx_\ell \bv_1+\bv_2^*\bx_\ell \bv_2)
\bg_\ell^{(1)}\Big)^2\Big]
=\frac{1}{k_\ell/2}\|\bv_1^*\bx_\ell \bv_1+\bv_2^*\bx_\ell\bv_2\|_F^2.\]
We have $\|\bv_1^*\bx_\ell \bv_1+\bv_2^*\bx_\ell\bv_2\|_F^2
\leq 2\|\bv_1^*\bx_\ell \bv_1\|_F^2+2\|\bv_2^*\bx_\ell\bv_2\|_F^2
\leq 2\|\bx_\ell\|_F^2$, where both equalities are attained at
$\bx_\ell=I_{k_\ell \times k_\ell}$
(since then $\bv_1^*\bx_\ell\bv_1=\bv_2^*\bx_\ell\bv_2=I_{k_\ell/2 \times
k_\ell/2}$). Thus
\[\sup_{\bx_\ell:\|\bx_\ell\|_F^2=k_\ell} \E[(\tr \bx_\ell\bg_\ell)^2]
=\E[(\tr \bg_\ell)^2]=4.\]
Defining $\rho_\ell:=\E[(\tr \bg_\ell)^2]$, this verifies in all cases that
\[\lambda_{\max}(\nabla^2 \Psi(\bzero))
=\max_{\ell=1}^L \lambda_{\max}(H_\ell)
=\max_{\ell=1}^L \frac{1}{k_\ell}\left(-\frac{\lambda_\ell}{2}\,k_\ell
+\frac{\lambda_\ell^2}{2}\,\rho_\ell\right).\]
Then setting
$\tilde \lambda_\ell=\lambda_\ell \rho_\ell/k_\ell$, we have that
$\lambda_{\max}(\nabla^2 \Psi(\bzero))<0$
when $\max_\ell \tilde \lambda_\ell<1$, and
$\lambda_{\max}(\nabla^2 \Psi(\bzero))>0$ when $\max_\ell \tilde
\lambda_\ell>1$, as claimed in parts (a) and (b) of the proposition.

Finally, to conclude the statements about $\bzero$ being a local maximizer of
$\Psi(\bq)$,
observe that since $\Sym_{\succeq 0}$ is a (convex) cone, we have
\[B_\epsilon(\bzero):=\{\bq \in \Sym_{\succeq 0}:\|\bq\|_F \leq \epsilon\}
=\{t\,\bx:\bx \in \Sym_{\succeq 0},\,\|\bx\|_F=1,\,t \in
[0,\epsilon]\}.\]
For any such $\bq=t\bx \in B_\epsilon(\bzero)$, Taylor expansion along the line from
$\bzero$ to $\bq$ gives
\[\Psi(\bq)-\Psi(\bzero)
=\int_0^t \nabla \Psi(s\bx)[\bx]\,ds
=\int_0^t \nabla \Psi(s\bx)[\bx]-\nabla \Psi(\bzero)[\bx]
\,ds=\int_{0 \leq r \leq s \leq t} \nabla^2
\Psi(r\bx)[\bx,\bx]\,dr\,ds\]
where the second equality uses $\nabla\Psi(\bzero)=0$. 
If $\max_\ell \tilde \lambda_\ell<1$, then $\lambda_{\max}(\nabla^2
\Psi(\bzero))<0$, so by continuity there is some $\epsilon>0$ such that
$\lambda_{\max}(\nabla^2 \Psi(r\bx)) \leq -\epsilon$ for all
$r\bx \in B_\epsilon(\bzero)$. The above then implies $\Psi(\bq)<\Psi(\bzero)$
for all $\bq \in B_\epsilon(\bzero)$, so
$\bq=\bzero$ is a local maximizer of $\Psi(\bq)$.
Conversely, if $\tilde \lambda_\ell>1$ for some $\ell$, then choosing
$\bx \in \Sym_{\succeq 0}$ with $\bx_\ell=I_{k_\ell \times k_\ell}/\sqrt{k_\ell}$ and
$\bx_{\ell'}=\bzero$ for all $\ell' \neq \ell$,
the above proof verifies that
$\nabla^2 \Psi(\bzero)[\bx,\bx]>0$. Then by continuity,
$\nabla^2 \Psi(r\bx)[\bx,\bx]>\epsilon>0$ for some $\epsilon>0$ and all $r \in
[0,\epsilon]$. Then the above shows $\Psi(\bq)>\Psi(\bzero)$ for $\bq=t\bx$
and all $t \in (0,\epsilon)$, so $\bq=\bzero$ is not a local maximizer of
$\Psi(\bq)$.

\end{proofof}

%%%%%%%%%%%%%%%%%%%%%%%%%%%
\subsection{$\SO(2)$-synchronization}\label{sec:SOk}

We now prove
Theorem \ref{thm:phase-transition-so2}, providing a global analysis of the
optimization problem $\sup_{\overlap \in \cQ} \Psi_\gs(\overlap)$ for the
single-channel $\SO(2)$-synchronization model.

\begin{proofof}{Proposition \ref{prop:abelian}}
If $\overlap \in \Sym_{\succeq 0}$ is a critical point of $\Psi_\gs$, then
Proposition \ref{prop:GSderivatives}(a) shows
$\overlap_\ell=\E\langle \gb_\ell \rangle_\qb^\top\langle \gb_\ell\rangle_\qb$
for all $\ell$. Then $\overlap_\ell$ is a symmetric matrix that commutes with
$\hb_\ell$ for every $\hb \in \cG$ because $\cG$ is abelian,
so it is a multiple of the identity by Schur's lemma
(c.f.\ Theorem \ref{thm:Schurs_lemma_forRealIrredRep}).

To establish the result also for local maximizers on the boundery of
$\Sym_{\succeq 0}$, consider any $\overlap \in \Sym_{\succeq 0}$
for which some $\overlap_\ell$ is not a multiple of the identity.
The above implies
$\E\langle \gb_\ell \rangle_\qb^\top\langle \gb_\ell\rangle_\qb=\mu_\ell I$ for
some $\mu_\ell \geq 0$. If $\overlap_\ell$ has a strictly positive eigenvalue
different from $\mu_\ell$, with eigenvector $\vb_\ell$, then defining
$\bx$ by $\bx_\ell=\vb_\ell\vb_\ell^\top$ and $\bx_{\ell'}=\bzero$ for all
$\ell' \neq \ell$, Proposition \ref{prop:GSderivatives}(a) shows
$\nabla \Psi_\gs(\overlap)[\bx] \neq 0$. Then the point
$\overlap'=\overlap \pm \epsilon \bx$
for some choice of sign $\pm$ and any sufficiently small $\epsilon>0$ satisfies
$\overlap' \in \Sym_{\succeq 0}$ and $\Psi_\gs(\overlap')>\Psi_\gs(\overlap)$.
If $\overlap_\ell$ does not have a strictly positive eigenvalue
different from $\mu_\ell$, then $\overlap_\ell$ must have all eigenvalues equal
to 0 and $\mu_\ell \neq 0$. In this case, let $\vb_\ell$ be an eigenvector
corresponding to 0, and define $\bx$ in the same way.
Proposition \ref{prop:GSderivatives}(a) shows
$\nabla \Psi_\gs(\overlap)[\bx]>0$, so the point
$\overlap'=\overlap+\epsilon \bx$ for any sufficiently small $\epsilon>0$
also satisfies $\overlap' \in \Sym_{\succeq 0}$ and
$\Psi_\gs(\overlap')>\Psi_\gs(\overlap)$. In both cases, $\overlap$ is not a
local maximizer of $\Psi_\gs$, implying the proposition.
\end{proofof}

To show Theorem \ref{thm:phase-transition-so2},
since $\SO(2)$ is abelian, Proposition \ref{prop:abelian} allows us to restrict
attention to the single-letter model \eqref{eq:SO2singleletter} with
mean-squared-error function $\operatorname{mmse}(\gamma)$. The main technical
lemma is the following.

\begin{lemma}\label{lemma:SO2fproperties}
Let $F(\gamma)=1-\frac{1}{2}\operatorname{mmse}(\gamma)$.
Then $F(0)=0$, $F'(0)=1$, and $F(\gamma)$ is strictly increasing and strictly
concave over $\gamma \in (0,\infty)$.
\end{lemma}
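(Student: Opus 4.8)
The plan is to reduce the single‑letter $\SO(2)$ model \eqref{eq:SO2singleletter} to the equivalent $\U(1)$/von Mises model, obtain an explicit Bessel‑function integral representation of $F$, and then read off the three qualitative claims from calculus together with classical properties of the Bessel ratio $\rho(\kappa):=I_1(\kappa)/I_0(\kappa)$ (the mean resultant length of a von Mises law): namely $I_0'=I_1$, $I_1'=I_0-I_1/\kappa$, equivalently $\rho'=1-\rho/\kappa-\rho^2>0$, and the classical facts $0<\rho(\kappa)<\min(\kappa/2,1)$ and $\rho''<0$ on $(0,\infty)$.

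\emph{Reduction and integral representation.} Following the reduction recorded just after \eqref{eq:model-SO2}, the entries of $\by$ in $\by=\sqrt\gamma\,\gb_*+\zb$ decompose into the sufficient statistic $w=\tfrac12(y_{11}+y_{22})+\tfrac{i}{2}(y_{21}-y_{12})=\sqrt\gamma\,e^{i\theta}+n$, with $\Re n,\Im n$ i.i.d.\ $\cN(0,\tfrac12)$, together with components that are pure noise independent of $\theta$. The posterior of $\theta$ given $\by$ is then von Mises with concentration $\kappa=2\sqrt\gamma\,|w|$, so $\E[\gb\mid\by]$ is $\rho(\kappa)$ times the rotation by $\arg w$; using $\operatorname{mmse}(\gamma)=2-\E\|\E[\gb\mid\by]\|_F^2=2-2\,\E[\rho(\kappa)^2]$ we get $F(\gamma)=\E[\rho(2\sqrt\gamma\,|w|)^2]$. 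Conditioning on $\theta=0$ (legitimate since the integrand depends only on $|w|$) and integrating out $\arg w$ via $\int_0^{2\pi}e^{c\cos\phi}\,d\phi=2\pi I_0(c)$ yields the one‑variable representation
\[
F(\gamma)=\int_0^\infty \rho\big(2\sqrt{\gamma v}\big)^2\,e^{-v-\gamma}\,I_0\big(2\sqrt{\gamma v}\big)\,dv
=\frac{1}{2\gamma}\int_0^\infty t\,\frac{I_1(t)^2}{I_0(t)}\,e^{-t^2/(4\gamma)-\gamma}\,dt ,
\]
the second equality by $t=2\sqrt{\gamma v}$. Differentiating under the integral (dominated by $0\le\rho\le1$ and the decay of $I_1^2/I_0$), $F$ is real‑analytic on $(0,\infty)$.

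\emph{Boundary values and strict monotonicity.} Since $\rho(\kappa)=\kappa/2+O(\kappa^3)$ and $\rho(\kappa)\le\kappa/2$, for $\gamma\in(0,1]$ we have $\gamma^{-1}\rho(2\sqrt\gamma\,|w|)^2\le |w|^2\le(1+|n|)^2$ and $\gamma^{-1}\rho(2\sqrt\gamma\,|w|)^2\to|n|^2$ pointwise as $\gamma\downarrow0$; dominated convergence and $\E|n|^2=1$ give $F(\gamma)/\gamma\to1$, i.e.\ $F(0)=0$ and $F'(0^+)=1$. For monotonicity, the noise‑splitting coupling $\by(\gamma_1)\overset{d}{=}\sqrt{\gamma_1/\gamma_2}\,\by(\gamma_2)+(\text{extra Gaussian noise})$ shows $\operatorname{mmse}$ is non‑increasing, hence $F'\ge0$; as $F$ is real‑analytic and non‑constant ($F(0)=0$, $F(\gamma)\to1$ as $\gamma\to\infty$), $F'$ has only isolated zeros, so $F$ is strictly increasing on $(0,\infty)$. (The same conclusion follows more explicitly by differentiating the integral representation and using Gaussian integration by parts in $w$: one obtains $F'(\gamma)=4\int_0^\infty r\,e^{-r^2-\gamma}\,I_0(2\sqrt\gamma r)\,\rho(2\sqrt\gamma r)\,\rho'(2\sqrt\gamma r)\,\big(\rho(2\sqrt\gamma r)+r/\sqrt\gamma\big)\,dr$, which is manifestly positive because $\rho,\rho'>0$.)

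\textbf{The main obstacle is strict concavity}: MMSE need not be convex for a general prior, so the von Mises structure is essential here. The plan is to compute $F''$ from the integral representation by the chain rule and Gaussian integration by parts in $w$ (in the spirit of Proposition \ref{prop:derivative-varphi-general-model}), thereby reducing $F''<0$ to a pointwise inequality in the single variable $\kappa$ involving $\rho$, $\rho'$, $\rho''$; one then substitutes $\rho'=1-\rho/\kappa-\rho^2$ and uses $\rho''<0$ and $0<\rho(\kappa)<\min(\kappa/2,1)$ to pin down the sign. By real‑analyticity it suffices to establish the weak inequality $F''\le0$ together with $F''\not\equiv0$ (immediate from $F'(0^+)=1$ and $F'(\gamma)\to0$). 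I expect the delicate point to be controlling the sign of $F''$ after accounting simultaneously for the noncentrality of the underlying $\chi^2$‑type radial measure and the curvature of $\rho$, with the strict positivity of the mean resultant length $\rho(\kappa)$ being what forces the cross‑terms to have the right sign.
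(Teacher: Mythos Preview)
Your reduction to the von Mises model, the integral representation $F(\gamma)=\E[\rho(2\sqrt{\gamma}|w|)^2]$, and the arguments for $F(0)=0$, $F'(0^+)=1$, and strict monotonicity are all correct (the monotonicity via data-processing plus real-analyticity is a clean alternative to the paper's direct computation).

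The concavity argument, however, is a genuine gap: you have only sketched a plan, not a proof. The specific obstacle is your claim that computing $F''$ ``reduces $F''<0$ to a pointwise inequality in the single variable $\kappa$.'' This is not clear. The law of $\kappa=2\sqrt{\gamma}|w|$ depends on $\gamma$ both through the scaling $2\sqrt{\gamma}$ and through the noncentrality of $|w|$; differentiating $F'$ in $\gamma$ therefore produces terms coming from the $\gamma$-dependence of the underlying measure, and these do not combine into a single-signed integrand in $\kappa$ in any obvious way. The ingredients you list ($\rho''<0$, $\rho<\min(\kappa/2,1)$) are not evidently sufficient to control all cross-terms, and you have not carried out the computation to check.

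The paper avoids this difficulty by a structural observation rather than a brute-force second derivative. It first shows (via Gaussian integration by parts and the Nishimori identity) that $F'(\gamma)=\E\big[(1-u_1^2)^2+(u_2-u_1^2)^2\big]$, where $u_j=I_j(2x)/I_0(2x)$ and $x=\sqrt{\gamma}|y|$. The key point is that this integrand depends on $(\gamma,y)$ \emph{only} through the scalar $x=\sqrt{\gamma}|y|$. Concavity then follows from two decoupled facts: (i) the law of $x$ is strictly stochastically increasing in $\gamma$ (noncentral $\chi^2$ monotonicity), and (ii) the function $x\mapsto (1-u_1^2)^2+(u_2-u_1^2)^2$ is strictly decreasing, which the paper verifies by an explicit Bessel-series computation (invoking Vandermonde's identity to show $u_2-u_1^2<0$ and $u_1+u_3-2u_1u_2>0$). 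This two-step decomposition is what makes the sign analysis tractable; your direct attack on $F''$ would need to rediscover an equivalent structure to succeed.
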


\begin{proof}
It will be convenient to work with the complex variable
$u=e^{i\theta} \in \U(1)$ representing
\[\bg=\begin{pmatrix} \cos \theta & -\sin\theta \\ \sin \theta & \cos \theta
\end{pmatrix} \in \SO(2).\]
Observing $\by$ in the single-letter model \eqref{eq:SO2singleletter} is
equivalent to observing the sufficient statistic
$\sqrt{\gamma}(\cos \theta_*,\sin \theta_*)
+(\frac{z_{11}+z_{22}}{2},\frac{z_{21}-z_{12}}{2})$, which we may represent by
the complex observation
\[y=\sqrt{\gamma}u_*+z \in \C\]
where $u_*=e^{i\theta_*} \sim \Haar(\U(1))$ and $\Re z,\Im z \overset{iid}{\sim}
\N(0,\frac{1}{2})$. Then $p(u \mid y) \propto
e^{-|y-\sqrt{\gamma} u|^2}\propto e^{H(u;y)}$ for the Hamiltonian
\[H(u;y)=\sqrt{\gamma}(y\bar u+u \bar y)
=\gamma(u_*\bar u+u\bar u_*)+\sqrt{\gamma}(z \bar u+u\bar z).\]
Abbreviating $\E=\E_{u_*,z}$ and $\langle \cdot \rangle$ for the
posterior mean under $p(\cdot \mid y)$, we have
\begin{align*}
\operatorname{mmse}(\gamma)
&=\E[2(\cos \theta_*-\langle \cos \theta \rangle)^2
+2(\sin \theta_*-\langle \sin \theta \rangle)^2]\\
&=2\E|u_*-\langle u \rangle|^2
=2(1-\E \bar u_* \langle u \rangle)
=2(1-\E \langle \bar u \rangle \langle u \rangle),
\end{align*}
so $F(\gamma)=1-\frac{1}{2}\operatorname{mmse}(\gamma)
=\E \bar u_* \langle u \rangle=\E \langle \bar u \rangle\langle u \rangle$.
At $\gamma=0$ we have $\langle u \rangle=0$, so $F(0)=0$.

Differentiating in $\gamma$ and applying the Gaussian
integration-by-parts formulas $\E zf(z,\bar z)=\E \partial_{\bar z} f(z,\bar z)$
and $\E \bar zf(z,\bar z)=\E \partial_z f(z,\bar z)$, we get
\begin{align*}
F'(\gamma)&=\E \bar u_* \inangle{u\left(u_*\bar u+u\bar u_*
+\frac{1}{2\sqrt{\gamma}}(z\bar u+u\bar z)\right)}
-\E \bar u_* \langle u \rangle \inangle{u_*\bar u+u\bar u_*
+\frac{1}{2\sqrt{\gamma}}(z\bar u+u\bar z)}\\
&=\E\left[1-\langle \bar u \rangle \langle u \rangle
+\bar u_*^2 (\langle u^2 \rangle-\langle u \rangle^2)
+\frac{z\bar u_*}{2\sqrt{\gamma}}(1-\langle \bar u \rangle \langle u \rangle)
+\frac{\bar z \bar u_*}{2\sqrt{\gamma}}
(\langle u^2 \rangle-\langle u \rangle^2)\right]\\
&=\E\left[1-\langle \bar u \rangle \langle u \rangle
+\bar u_*^2 (\langle u^2 \rangle-\langle u \rangle^2)
+\frac{\bar u_*}{2}(-\langle u \rangle
-\langle \bar u \rangle \langle u^2 \rangle
+2\langle \bar u \rangle \langle u \rangle^2)
+\frac{\bar u_*}{2}(-\langle u\rangle-\langle \bar u \rangle
\langle u^2 \rangle+2\langle \bar u \rangle\langle u \rangle^2)\right]\\
&=\E\left[1-\langle \bar u \rangle \langle u \rangle
+\bar u_*^2 (\langle u^2 \rangle-\langle u \rangle^2)
-\bar u_* \langle u \rangle
-\bar u_* \langle \bar u \rangle \langle u^2 \rangle
+2\bar u_* \langle \bar u \rangle \langle u \rangle^2\right]\\
&=\E\left[1-2\langle \bar u \rangle \langle u \rangle
+\langle \bar u \rangle^2 \langle u \rangle^2
+(\langle \bar u^2 \rangle-\langle \bar u \rangle^2)
(\langle u^2 \rangle-\langle u \rangle^2)\right]\\
&=\E\left[(1-|\langle u \rangle|^2)^2
+|\langle u^2 \rangle-\langle u \rangle^2|^2\right].
\end{align*}
Here, both terms are non-negative. For any (finite) $\gamma>0$ the
posterior law of $u$ is not a point mass on the circle $\U(1)$,
so $|\langle u \rangle|<1$ with probability 1 over $y$. Then the first
term is strictly positive, showing that $F(\gamma)$ is strictly increasing.
At $\gamma=0$, we have
$\langle u \rangle=\langle u^2 \rangle=0$, so $F'(0)=1$.

It remains to show that $F(\gamma)$ is strictly concave. For this, 
observe first that the Hamiltonian
$H(u;y)$ defining the posterior mean $\langle \cdot \rangle$
depends on $(\gamma,y)$ only via $\sqrt{\gamma} y$. Observe next that
by rotational symmetry of the model about the origin in the complex plane,
the function $\sqrt{\gamma} y \mapsto (1-|\langle u \rangle|^2)^2
+|\langle u^2 \rangle-\langle u \rangle^2|^2$ depends only on the modulus
$\sqrt{\gamma}|y|$. Thus, setting $x=\sqrt{\gamma}|y|$, we may define
\[f(x)=(1-|\langle u \rangle|^2)^2
+|\langle u^2 \rangle-\langle u \rangle^2|^2
\text{ where } \langle u^j \rangle=\frac{\E_{u \sim \Haar(\U(1))}
u^j e^{x(u+\bar u)}}{\E_{u \sim \Haar(\U(1))} e^{x(u+\bar u)}}\]
for real arguments $x \geq 0$, and we have $F'(\gamma)=\E f(\sqrt{\gamma}|y|)$.
It then suffices to show
\begin{enumerate}
\item For any $\gamma_1>\gamma_2>0$, the law of $x_1=\sqrt{\gamma_1}|y|$
stochastically dominates that of $x_2=\sqrt{\gamma_2}|y|$, in the sense
$\P[x_1 \geq t]>\P[x_2 \geq t]$ for all $t>0$.
\item $f'(x)<0$ for all $x>0$.
\end{enumerate}
Indeed, then there would exist a coupling of $(x_1,x_2)$ so that $x_1>x_2$ with
probability 1, hence $F'(\gamma_1)-F'(\gamma_2)=\E[f(x_1)-f(x_2)]
=\E[\int_{x_2}^{x_1} f'(t)\ud t]<0$, implying strictly concavity of $F(\gamma)$.

To show claim (1), observe that $2|y|^2 \sim \chi_2^2(2\gamma)$ which is
stochastically increasing in the chi-squared non-centrality parameter $2\gamma$
(as this represents the power of a chi-squared statistical test against a
family of alternatives ordered by $\gamma$). Thus
$\P_{\gamma_1}[|y| \geq t]>\P_{\gamma_2}[|y| \geq t]$ for any $t>0$, implying
also $\P[x_1 \geq t]=\P_{\gamma_1}[\sqrt{\gamma_1} |y| \geq t]
>\P_{\gamma_1}[\sqrt{\gamma_2}|y| \geq t]>\P_{\gamma_2}[\sqrt{\gamma_2}|y| \geq
t]=\P[x_2 \geq t]$.

To show claim (2), observe that $\langle u^j \rangle$ is real for any $j \geq
0$, since the law $p(u) \propto e^{x(u+\bar u)}=e^{2x\cos\theta}$ is
conjugation-symmetric. More precisely, $p(u)$ is a von Mises
distribution on the circle, for which
\begin{equation}\label{eq:circularmoments}
u_j:=\langle u^j \rangle=I_j(2x)/I_0(2x)
\end{equation}
where $I_j(\cdot)$ is the modified Bessel function of the first kind
\begin{equation}\label{eq:bessel}
I_j(2x)=\sum_{m \geq 0}
\frac{1}{m!(m+j)!}x^{2m+j}.
\end{equation}
We have $\partial_x u_j=\langle u^{j+1}+u^{j-1} \rangle-\langle u^j \rangle
\langle u+\bar u \rangle=u_{j-1}+u_{j+1}-2u_1u_j$ and
$f(x)=(1-u_1^2)^2+(u_2-u_1^2)^2$, so
\begin{align*}
f'(x)&=-4u_1(1-u_1^2)(1+u_2-2u_1^2)+2(u_2-u_1^2)[(u_1+u_3-2u_1u_2)-2u_1(1+u_2-2u_1^2)]\\
&={-}4u_1(1+ u_2 -2u_1^2)^2+ 2(u_2-u_1^2)(u_1 + u_3 - 2u_1 u_2).
\end{align*}
We then make the following observations:
\begin{itemize}
\item It is clear from definition that $I_j(2x)>0$ for any $x>0$ and $j
\geq 0$, hence $u_1 = I_1(2x)/I_0(2x)>0$.
\item We have $u_2 - u_1^2 =
I_0(2x)^{-2}(I_2(2x)I_0(2x)-I_1(2x)^2)$, where
$I_0(2x)^{-2}>0$ and
\begin{align*}
I_2(2x)I_0(2x)-I_1(2x)^2
&=\sum_{p,q \geq 0} \frac{1}{p!(p+2)!}x^{2p+2}\frac{1}{q!q!}x^{2q}
-\sum_{p,q \geq 0} \frac{1}{p!(p+1)!}x^{2p+1}
\frac{1}{q!(q+1)!}x^{2q+1}\\
&=\sum_{k \geq 0}x^{2k+2}
\sum_{p,q:\,p+q=k} \frac{1}{p!(p+2)!q!q!}-\frac{1}{p!(p+1)!q!(q+1)!}\\
&=\sum_{k \geq 0}x^{2k+2}
\sum_{p,q:\,p+q=k} \frac{1}{k!(k+2)!}\binom{k}{p}\binom{k+2}{q}
-\frac{1}{(k+1)!(k+1)!}\binom{k+1}{p}\binom{k+1}{q}\\
&=\sum_{k \geq 0}x^{2k+2}
\Big(\frac{1}{k!(k+2)!}-\frac{1}{(k+1)!(k+1)!}\Big)\binom{2k+2}{k},
\end{align*}
the last equality using Vandermonde's identity. Here
$\frac{1}{k!(k+2)!}-\frac{1}{(k+1)!(k+1)!}<0$ for every $k \geq 0$,
so $u_2 - u_1^2<0$.
\item We have similarly $u_1+u_3-2u_1u_2=I_0(2x)^{-2}(I_3(2x)I_0(2x)
+I_1(2x)I_0(2x)-2I_2(2x)I_1(2x))$, where $I_0(2x)^{-2}>0$
and
\begin{align*}
&I_3(2x)I_0(2x)+I_1(2x)I_0(2x)-2I_2(2x)I_1(2x)\\
&=\sum_{p,q \geq 0} \frac{1}{p!(p+3)!}x^{2p+3}
\frac{1}{q!q!}x^{2q}+\sum_{p,q \geq 0}\frac{1}{p!(p+1)!}x^{2p+1}
\frac{1}{q!q!}x^{2q}-2\sum_{p,q \geq 0} \frac{1}{p!(p+2)!}x^{2p+2}
\frac{1}{q!(q+1)!}x^{2q+1}\\
&=x+\sum_{k \geq 0} x^{2k+3}
\Bigg(\sum_{p,q:\,p+q=k} \frac{1}{p!(p+3)!q!q!}
+\!\!\!\sum_{p,q:\,p+q=k+1} \frac{1}{p!(p+1)!q!q!}
-2\!\!\!\sum_{p,q:\,p+q=k} \frac{1}{p!(p+2)!q!(q+1)!}\Bigg)\\
&=x+\sum_{k \geq 0} x^{2k+3}
\Bigg(\sum_{p,q:\,p+q=k} \frac{1}{k!(k+3)!}\binom{k}{p}\binom{k+3}{q}
+\!\!\!\sum_{p,q:\,p+q=k+1} \frac{1}{(k+1)!(k+2)!}\binom{k+1}{p}\binom{k+2}{q}\\
&\hspace{1in}
-2\!\!\!\sum_{p,q:\,p+q=k} \frac{1}{(k+1)!(k+2)!}\binom{k+1}{p}\binom{k+2}{q}
\Bigg)\\
&=x+\sum_{k \geq 0} x^{2k+3}
\Bigg(\frac{1}{k!(k+3)!}\binom{2k+3}{k}
+\frac{1}{(k+1)!(k+2)!}\binom{2k+3}{k+1}
-\frac{2}{(k+1)!(k+2)!}\binom{2k+3}{k}\Bigg)\\
&=x+\sum_{k \geq 0} x^{2k+3}
\frac{1}{(k+1)!(k+2)!}\binom{2k+3}{k}
\Bigg(\frac{k+1}{k+3}+\frac{k+3}{k+1}-2\Bigg).
\end{align*}
This summand is positive for every $k \geq 0$, hence
$u_1 + u_3 - 2u_1 u_2 >0$.
\end{itemize}
Combining the above yields $f'(x)<0$ as desired, which concludes the proof.
\end{proof}

\begin{proofof}{Theorem \ref{thm:phase-transition-so2}}
The fixed-point equation \eqref{eq:critical-point-so2} is
$q=F(\lambda q)$, for the function $F(\gamma)$ of
Lemma \ref{lemma:SO2fproperties}. Here $q=0$ is a fixed point because $F(0)=0$.
Since $q \mapsto F(\lambda q)$ is bounded, increasing, and strictly concave,
this is the only fixed point when $1 \geq \partial_q F(\lambda q)|_{q=0}
=\lambda$, and there exists a unique other positive fixed point $q_*>0$
when $1<\partial_q F(\lambda q)|_{q=0}=\lambda$.
Furthermore, $\partial_q \Psi_\gs(qI)
=-\lambda q+\lambda-\frac{\lambda}{2}\operatorname{mmse}(\lambda q)
=\lambda[F(\lambda q)-q]$. When $\lambda \in (0,1]$,
we have $q>F(\lambda q)$ for all $q>0$, so $\Psi_\gs(qI)$ attains its unique
maximum at $q=0$.  When $\lambda>1$, we have $q<F(\lambda q)$ for
$q<q_*$ and $q>F(\lambda q)$ for $q>q_*$, so $\Psi_\gs(qI)$ attains its unique
maximum at $q=q_*$.

Proposition \ref{prop:abelian} then implies that
\eqref{eq:diagonalreduction} holds, and
that $\overlap=\bzero$ and $\overlap=q_*I$ are, respectively, the unique global
maximizer of $\Psi_\gs$ in the two cases $\lambda \in (0,1]$ and $\lambda>1$.
The remaining statements on $I(G_*,Y)$, $\mmse$, and overlap concentration then follow from
Theorem \ref{theorem:free-energy-group-syn-multi-representation}.
\end{proofof}

\paragraph{Example of non-identity critical point for $\SO(k)$-synchronization.}

\begin{proposition}\label{prop:SOkcounterexample}
Consider the single-channel $\SO(k)$-synchronization model of Example
\ref{example:SOk}, with $k \geq 3$.
If $\lambda>\lambda_c:=k$, then there exists a scalar
value $q_*>0$ for which $\nabla \Psi_\gs(\diag(q_*,0,\ldots,0))=0$.
\end{proposition}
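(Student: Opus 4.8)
The plan is to apply the critical-point characterization in Proposition~\ref{prop:GSderivatives}(a): in this single-channel ($L=1$) model, $\nabla\Psi_\gs(\overlap)=0$ if and only if $\overlap=\E_{\gb_*,\zb}\langle\gb\rangle_\qb^\top\langle\gb\rangle_\qb$, where $\langle\cdot\rangle_\qb$ denotes the posterior mean of $\gb$ in the single-sample model~\eqref{eq:GSscalarmodel}. Thus it suffices to produce $q_*>0$ for which $\diag(q_*,0,\ldots,0)$ satisfies this fixed-point relation, and the first step is to evaluate the right-hand side along the ray $\overlap=q\,\be_1\be_1^\top$ for $q\ge 0$.

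For $\overlap=q\,\be_1\be_1^\top$, so $\overlap^{1/2}=\sqrt q\,\be_1\be_1^\top$, the posterior in~\eqref{eq:GSscalarmodel} simplifies: the term $\tfrac{\lambda}{2}\norm{\gb\overlap^{1/2}}_F^2=\tfrac{\lambda}{2}\tr\overlap$ is constant in $\gb$ (since $\gb^\top\gb=I$), so $p(\gb\mid\yb)\propto\exp\!\big(\sqrt{\lambda q}\,(\gb\be_1)^\top\wb\big)$ with $\wb:=\yb\be_1=\sqrt{\lambda q}\,\vb_*+\zb\be_1$, $\vb_*:=\gb_*\be_1$, and $\zb\be_1\sim\N(0,I_k)$. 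Hence the posterior of $\gb$ depends on $\yb$ only through $\wb\in\R^k$, and since $\SO(k)$ acts transitively on $\SS^{k-1}$ with stabilizer $\SO(k-1)$, the vector $\vb_*$ is uniform on $\SS^{k-1}$; so $\wb$ is the observation of $\vb_*$ in a spiked spherical model with signal strength $\gamma=\lambda q$. Conditionally on $\gb\be_1$ the likelihood is constant in the other columns, so their posterior conditional law equals the Haar conditional law; and for $k\ge 3$, right-multiplication by the element $\diag(1,\ldots,-1,\ldots,-1,\ldots)\in\SO(k)$ that negates column $b$ and one other column $\ne b$ (possible precisely because $k\ge 3$) is a Haar-preserving symmetry fixing $\gb\be_1$, forcing $\E[\gb\be_b\mid\gb\be_1]=0$ for every $b\ge 2$. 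Therefore $\langle\gb\rangle_\qb=\mb\,\be_1^\top$ with $\mb:=\E[\vb\mid\wb]$, whence $\langle\gb\rangle_\qb^\top\langle\gb\rangle_\qb=\norm{\mb}^2\,\be_1\be_1^\top$ and
\[
\E_{\gb_*,\zb}\langle\gb\rangle_\qb^\top\langle\gb\rangle_\qb=F_S(\lambda q)\,\be_1\be_1^\top,\qquad F_S(\gamma):=\E\,\norm{\E[\vb\mid\sqrt{\gamma}\,\vb_*+\zb']}^2=1-\operatorname{mmse}_S(\gamma),
\]
where $\vb_*\sim\unif(\SS^{k-1})$, $\zb'\sim\N(0,I_k)$, and $\operatorname{mmse}_S(\gamma)$ is the MMSE of estimating $\vb_*$ from $\sqrt{\gamma}\,\vb_*+\zb'$. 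Consequently $\nabla\Psi_\gs(\diag(q_*,0,\ldots,0))=0$ is equivalent to the scalar fixed-point equation $q_*=F_S(\lambda q_*)$.

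It then remains to show $q=F_S(\lambda q)$ has a strictly positive solution when $\lambda>k$. I would record three facts about $F_S$: (i) $F_S(0)=0$, since $\E[\vb]$ is the centroid of $\SS^{k-1}$ and vanishes; (ii) $F_S\le 1$, since $\norm{\E[\vb\mid\wb]}\le\E[\norm{\vb}\mid\wb]=1$; (iii) $F_S$ is continuous on $[0,\infty)$ and differentiable at $0$ with $F_S'(0)=1/k$, which follows from the small-$\gamma$ expansion $\E[\vb\mid\wb]=(\sqrt{\gamma}/k)\,\wb+O(\gamma^{3/2})$ (using $\int_{\SS^{k-1}}\vb\,d\vb=0$, $\int_{\SS^{k-1}}\vb\vb^\top\,d\vb=\tfrac{1}{k}I_k$, and the $\vb\mapsto-\vb$ symmetry) together with $\E\norm{\wb}^2=\gamma+k$. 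Setting $g(q):=F_S(\lambda q)-q$, we get $g(0)=0$ and $g'(0)=\lambda/k-1>0$ when $\lambda>k$, so $g>0$ on a right-neighborhood of $0$, while $g(q)\le 1-q<0$ for $q>1$; by continuity and the intermediate value theorem there is $q_*>0$ with $g(q_*)=0$, and $\diag(q_*,0,\ldots,0)$ is then the claimed critical point.

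The step demanding the most care is the structural reduction $\langle\gb\rangle_\qb=\mb\,\be_1^\top$ — the vanishing of the posterior mean outside the $\be_1\be_1^\top$ block — which is exactly where the hypothesis $k\ge 3$ enters (the column-negation symmetry requires a spare column, and the statement genuinely fails for $k=2$, consistent with the abelian analysis behind Theorem~\ref{thm:phase-transition-so2}). The identification $F_S'(0)=1/k$ is a routine Taylor expansion, and the existence of $q_*$ is elementary single-variable calculus.
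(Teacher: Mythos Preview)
Your proof is correct and follows essentially the same route as the paper: reduce to a scalar fixed-point equation by showing the posterior mean vanishes outside the first column (via a $k\ge 3$ symmetry), compute the derivative at zero to get slope $\lambda/k>1$, and conclude by boundedness and the intermediate value theorem. The only differences are in execution---the paper uses averaging over the right $\SO(k-1)$-action for the structural step (you use double-column negation) and reads the derivative at zero directly off the Hessian formula in Proposition~\ref{prop:GSderivatives}(b) (you Taylor-expand the posterior mean, which is fine but requires a touch more care since the remainder depends on the unbounded $\wb$).
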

\begin{proof}
Write $F(\overlap)=\E_{\gb_*,\zb} \inangle{\gb}_\overlap^\top
\inangle{\gb}_\overlap$, so $\nabla \Psi_\gs(\overlap)=0$ if and only if
$\overlap = F(\overlap)$. 

We claim that for any $\overlap$ of the form $\overlap=\diag(q,0,\ldots,0)$,
we have $F(\overlap)=\diag(q',0,\ldots,0)$ for some other $q' \geq 0$.
To see this, momentarily let $\bg_1 \in \R^k$ and $\bg_{2:k} \in \R^{k \times
(k-1)}$ denote the first and remaining $k-1$ columns of $\bg$. Observe
that when $\overlap=\diag(q,0,\ldots,0)$, $\by$ is independent of $\bg_{2:k}$
given $\bg_1$. Hence, for any fixed $\bh \in \SO(k-1)$, we have
$\E[\bg_{2:k}\bh \mid \by]=\E[\E[\bg_{2:k}\bh \mid \bg_1,\by]
\mid \by]=\E[\E[\bg_{2:k}\bh \mid \bg_1] \mid \by]$. Fixing any $\bg_1$, we
have $\E[\bg_{2:k}\bh \mid \bg_1]=\E[\bg_{2:k} \mid \bg_1]$ by invariance of
Haar measure. Thus $\E[\bg_{2:k}\bh \mid \by]=\E[\bg_{2:k} \mid \by]$ for every
$\bh \in \SO(k-1)$. Then, taking the average
over $\bh \sim \Haar(\SO(k-1))$ which has mean 0 for $k \geq 3$, we get
$\langle \bg_{2:k} \rangle_\bq=\E[\bg_{2:k} \mid \by]=0$, so
$F(\bq)$ is non-zero in only the $(1,1)$ entry, as claimed.

Thus $\diag(q,0,\ldots,0)$ is a fixed point if and only if $q=F_{11}(q)$
where $F_{11}(q)$ denotes the $(1,1)$-entry of $F(\diag(q,0,\ldots,0))$.
We note that $F_{11}(0)=0$.
By specializing Proposition \ref{prop:GSderivatives}(b)
to $L=1$, $\bq=\diag(q,0,\ldots,0)$, and $\bx=\bx'=\diag(1,0,\ldots,0)$, we have
\begin{align*}
F'_{11}(q) = \lambda\,\E_{\bg_*,\bz}\left[\inangle{ (\gb_*^\top
\gb)_{11}^2}_{\bq} - 2 (\gb_*^\top \langle\gb\rangle_{\bq})_{11}^2 +
(\langle \gb \rangle_\bq^\top \langle \gb \rangle_\bq)_{11}^2\right].
\end{align*}
When $\bq=\bzero$,
we have $\langle \bg \rangle_\bq=0$ and $\bg_*^\top \bg$ is equal in 
law to $\bg \sim \Haar(\SO(k))$, so this gives simply
$F_{11}'(0)=\lambda\,\E_{\bg}[(\bg)_{11}^2]=\frac{\lambda}{k}$.
Therefore, if $\lambda>k$, then $F_{11}'(0)>1$. As $F_{11}(q)$ is continuous
and bounded,
there must exist a solution $q_*>0$ to $q=F_{11}(q)$, and hence a fixed point
$\diag(q_*,0,\ldots,0)$ of $\Psi_\gs(\overlap)$.
\end{proof}

%%%%%%%%%%%%%%%%%%%%%%%%%%%%%
\section{Proofs for quadratic assignment}
\label{sec:pf-quadratic-assign}

In this section, we analyze the quadratic assignment model \eqref{eq:QAmodel}.
We start by studying the model with linear observations
\eqref{eq:QAlinearmodel} and showing Lemma \ref{lemma:QAlinearMI}
in Section \ref{sec:QAlinear}. We then prove
Theorem \ref{thm:qua-assgn-mutual-info} in Section \ref{sec:qua-assgn-RS_VP_FE},
applying the general result of Theorem \ref{thm:free-energy-general-model}
and formalizing an approximation of the free energy by that in a model with the
truncated kernel $\kappa^L$.

We will use throughout the following elementary observations: Since $\kappa$ is
continuous and $\cX$ is compact, there exists a constant $K_0<\infty$ for which
\begin{equation}\label{eq:kernelbound}
    |\kappa(x,y)|<K_0 \text{ for all } x,y \in \cX.
\end{equation}
Furthermore, since $f_\ell(x)=\mu_\ell^{-1} \int
\kappa(x,y)f_\ell(y)\rho(\ud y)$ and $\int |\kappa(x,y)f_\ell(y)|\rho(\ud y)
<K_0(\int f_\ell(y)^2\rho(\ud y))^{1/2}<\infty$, by the dominated convergence
theorem $\lim_{x' \to x} f_\ell(x')=f_\ell(x)$. Thus each $f_\ell(x)$ is also
continuous on $\cX$, so there exist constants $C_\ell<\infty$ for which
\begin{equation}\label{eq:eigenfunctionbound}
|f_\ell(x)|<C_\ell \text{ for all } x \in \cX \text{ and } \ell \geq 1.
\end{equation}

\subsection{Mutual information of the linear model}\label{sec:QAlinear}

\begin{proofof}{Lemma \ref{lemma:QAlinearMI}}
We apply the result of \cite{greenshtein2009asymptotic} for Bayesian estimation
in compound decision models. Fixing $\{x_i\}_{i=1}^N$,
let us compare the two observation models
\begin{align}
\by_i &= \sqrt{\lambda}\,\overlap^{1/2}\ub(x_{\pi_*(i)})+\bz_i
\text{ for } i=1,\ldots,N \label{eq:permutation_model} \\
\by_i' &= \sqrt{\lambda}\,\overlap^{1/2}\bv_{*i}+\bz_i'
\text{ for } i=1,\ldots,N \label{eq:bayes_model}
\end{align}
where $\{\bv_{*i}\}_{i=1}^N$ are drawn i.i.d.\ (with replacement)
from the empirical distribution of $\{\ub(x_i)\}_{i=1}^N$, and
$\bz_i,\bz_i' \overset{iid}{\sim} \cN(0,I)$.
Let $i_\lambda(\pi_*,Y_\lin)$ be the mutual
information between $\pi_*$ and $Y_\lin=(\by_i)_{i=1}^N$ in the
model~\eqref{eq:permutation_model}, and
let $i_\lambda(V_*,Y_\lin')$ be the mutual information between
$V_*=(\bv_{*i})_{i=1}^N$ and $Y_\lin'=(\by_i')_{i=1}^N$ in the
model~\eqref{eq:bayes_model}. In \eqref{eq:bayes_model}, the samples
$(\bv_{*i},\by_i')$ are i.i.d.\ given $\{x_i\}_{i=1}^N$, and a direct
calculation gives
\[\frac{1}{N} i_{\lambda}(V_*, Y_\lin')=\E_{\vb_*,\zb'}\left[\frac{\lambda}{2}\,
\vb_*^\top \overlap \vb_* - \log \E_\vb \exp\inparen{-\frac{\lambda}{2} \vb^\top
\overlap \vb + \lambda \vb^\top \overlap \vb_* + \sqrt{\lambda} \vb^\top
\overlap^{1/2}\zb'}\right]\]
where $\E_\vb,\E_{\vb_*}$ are expectations over $\vb,\vb_* \in \R^L$ sampled
uniformly at random from the empirical distribution of $\{\bu(x_i)\}_{i=1}^N$.

By the i-mmse relation \cite{guo2005mutual}, we have 
\begin{align*}
& \frac{\partial}{\partial \lambda} i_\lambda(\pi_*, Y_\lin) = \frac{1}{2}
\sum_{i=1}^N
 \E \norm{ \overlap^{1/2}\bu(x_{\pi_*(i)})-
\overlap^{1/2}\E[\bu(x_{\pi(i)}) \mid Y_\lin]}_2^2=:
\frac{1}{2} \textnormal{mmse}_{\pi_*}(\lambda), \\
& \frac{\partial}{\partial \lambda} i_\lambda(V_*, Y_\lin') = \frac{1}{2}
\sum_{i=1}^N
\E \norm{ \overlap^{1/2}\vb_{*i}- \overlap^{1/2}\E[\vb_i \mid Y_\lin']}_2^2 =:
\frac{1}{2} \textnormal{mmse}_{V_*} (\lambda).
\end{align*}
The analyses of \cite[Theorem 5.1, Corollary 5.2]{greenshtein2009asymptotic}
extend verbatim to a multivariate setting, to show
$|\textnormal{mmse}_{\pi_*}(\lambda)-
\textnormal{mmse}_{V_*} (\lambda)| \leq C_\lambda$
for a constant $C_\lambda$ depending only on $\max_{i=1}^N
\|\sqrt{\lambda}\,\ub(x_i)\overlap^{1/2}\|_2$. 
Then, applying $i_0(\pi_*,Y_\lin)=i_0(V_*,Y_\lin')=0$ and integrating over $\lambda \in
[0,1]$, we obtain
\[\abs{\frac{1}{N} i_1(\pi_*,Y_\lin) - \frac{1}{N} i_1(V_*, Y_\lin') } \le \frac{C}{N}\]
for some constant $C>0$ depending on $(C_\ell)_{\ell \leq L}$ from
\eqref{eq:eigenfunctionbound}.

Here, $i_1(\pi_*,Y_\lin)=i(\pi_*,Y_\lin)$ is the mutual information of interest in the
model \eqref{eq:QAlinearmodel}.
Since the empirical law of $\{x_i\}_{i=1}^N$ converges weakly to
$\rho$, by continuity of $\ub(x)$ we have that the law of $\vb,\vb_*$ (i.e.\
the empirical law of $\{\ub(x_i)\}_{i=1}^N$) converges
weakly to the law of $\ub(x)$ when $x \sim \rho$.
Then by the dominated convergence theorem,
$\lim_{N \to \infty} \frac{1}{N} i_1(V_*,Y_\lin')=i(x,\by)$ as defined in the lemma.
\end{proofof}

\subsection{Mutual information of the quadratic model}
\label{sec:qua-assgn-RS_VP_FE}

We now bound the discrepancy in mutual information due to
truncation of the kernel.

\begin{lemma}
Suppose Assumption \ref{asmpt:kernel} holds, and let $K_0$ satisfy
\eqref{eq:kernelbound}. Let $I(\pi_*,Y)$ be the signal-observation
mutual information in the model \eqref{eq:QAmodel}, and let $I(\pi_*,Y^L)$ be
that in the analogous model with kernel $\kappa^L$ defined by
\eqref{eq:truncated-kernel}. Then for any $\epsilon > 0$, there exists
$L_0 = L_0(\epsilon)$ such that for all $L \geq L_0$ and $N \geq 1$,
        \[\frac{1}{N}\abs{I(\pi_*,Y)-I(\pi_*,Y^L)}
\leq K_0\epsilon.\]
%\abs{\cF_N^\infty - \cF_N^L} \leq K_0\epsilon.\]
    \label{proposition:truncKernel_fullKernel_FE_close}
\end{lemma}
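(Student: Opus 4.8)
The plan is to interpolate directly between the two models and bound the derivative, exploiting that Mercer's theorem (Theorem~\ref{thm:mercer}) furnishes \emph{uniform} convergence of $\kappa^L$ to $\kappa$. Write $r^L:=\kappa-\kappa^L$; by uniform convergence there is $L_0(\epsilon)$ so that $\sup_{x,y\in\cX}|r^L(x,y)|<\epsilon$ for all $L\ge L_0$. I will also use the elementary bound $|\kappa^L(x,y)|\le K_0$, which holds since $\kappa^L$ is positive-semidefinite and $\kappa^L(x,x)=\sum_{\ell\le L}\mu_\ell f_\ell(x)^2\le\kappa(x,x)\le K_0$, so $\kappa^L(x,y)^2\le\kappa^L(x,x)\kappa^L(y,y)\le K_0^2$; consequently $|\kappa^L_t|\le K_0$ for every convex combination $\kappa^L_t:=(1-t)\kappa^L+t\kappa=\kappa^L+t\,r^L$, $t\in[0,1]$.

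Consider, for $t\in[0,1]$, the observations $y_{ij}(t)=\kappa^L_t(x_{\pi_*(i)},x_{\pi_*(j)})+\sqrt N\,z_{ij}$ for $i<j$, and let $I(t)$ be the mutual information between $\pi_*$ and $Y(t)=\{y_{ij}(t)\}_{i<j}$, so that $I(0)=I(\pi_*,Y^L)$ and $I(1)=I(\pi_*,Y)$. Repeating the elementary computation \eqref{eq:MIcalculation} verbatim for an arbitrary bounded kernel (keeping only the off-diagonal terms, which makes it an exact identity), together with the observation that $\sum_{i<j}\kappa^L_t(x_{\pi_*(i)},x_{\pi_*(j)})^2=\sum_{i<j}\kappa^L_t(x_i,x_j)^2$ is independent of $\pi_*$, yields
\[\tfrac1N I(t)=\tfrac{1}{2N^2}\sum_{i<j}\kappa^L_t(x_i,x_j)^2-\cF(t),\qquad \cF(t):=\tfrac1N\E_{\pi_*,Z}\log\E_\pi\exp H_t(\pi;Y(t)),\]
where $H_t(\pi;y)=\sum_{i<j}\big({-}\tfrac1{2N}\kappa^L_t(x_{\pi(i)},x_{\pi(j)})^2+\tfrac1N y_{ij}\,\kappa^L_t(x_{\pi(i)},x_{\pi(j)})\big)$. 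Subtracting the cases $t=1$ and $t=0$ gives $\tfrac1N|I(\pi_*,Y)-I(\pi_*,Y^L)|\le\tfrac{1}{2N^2}\sum_{i<j}|\kappa(x_i,x_j)^2-\kappa^L(x_i,x_j)^2|+|\cF(1)-\cF(0)|$, and the first term is at most $\tfrac{1}{2N^2}\binom N2\cdot 2K_0\sup_{x,y}|r^L(x,y)|\le\tfrac12 K_0\epsilon$.

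It remains to bound $|\cF(1)-\cF(0)|=\big|\int_0^1\cF'(t)\,dt\big|$. Computing $\cF'(t)$ by Gaussian integration by parts in $\{z_{ij}\}$, exactly as in the proof of Proposition~\ref{prop:derivative-varphi-general-model}, the terms $\langle r^L(x_{\pi(i)},x_{\pi(j)})\,\kappa^L_t(x_{\pi(i)},x_{\pi(j)})\rangle_t$ that arise cancel, leaving
\[\cF'(t)=\tfrac{1}{N^2}\,\E_{\pi_*,Z}\sum_{i<j}\Big(\kappa^L_t(x_{\pi_*(i)},x_{\pi_*(j)})\,\langle r^L_{ij}\rangle_t+r^L(x_{\pi_*(i)},x_{\pi_*(j)})\,\langle\kappa^L_{t,ij}\rangle_t-\langle r^L_{ij}\rangle_t\langle\kappa^L_{t,ij}\rangle_t\Big),\]
where I abbreviate $r^L_{ij}=r^L(x_{\pi(i)},x_{\pi(j)})$ and $\kappa^L_{t,ij}=\kappa^L_t(x_{\pi(i)},x_{\pi(j)})$ inside the Gibbs average. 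Each summand is a product of one factor bounded by $\sup|r^L|<\epsilon$ and one bounded by $K_0$, so $|\cF'(t)|\le\tfrac1{N^2}\binom N2\cdot 3K_0\epsilon\le\tfrac32 K_0\epsilon$; hence $|\cF(1)-\cF(0)|\le\tfrac32 K_0\epsilon$ and $\tfrac1N|I(\pi_*,Y)-I(\pi_*,Y^L)|\le 2K_0\epsilon$. Applying this with $\epsilon/2$ in place of $\epsilon$ when choosing $L_0$ gives the claimed bound $K_0\epsilon$. Standard dominated-convergence arguments (using $|\kappa^L_t|\le K_0$ and the Gaussian tails of $Z$) justify interchanging the derivative with the expectations. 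The only delicate point is the integration-by-parts bookkeeping for $\cF'(t)$ and identifying the cancellation; everything else is elementary, and the role of Mercer's theorem is precisely to make every $r^L$-factor uniformly $O(\epsilon)$ across the $\binom N2$ summands, so that the error does not grow with $N$.
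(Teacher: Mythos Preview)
Your proof is correct and takes a genuinely different route from the paper. The paper bounds $\tfrac1N\E_{\pi_*,Z}\sup_\pi|H(\pi)-H^L(\pi)|$ directly: the deterministic pieces are handled as you do, but the noise piece $\tfrac1{\sqrt N}\sum_{i<j}(\kappa_{ij}-\kappa_{ij}^L)z_{ij}$ is controlled by writing it as $\tfrac1{2\sqrt N}\tr Z(K-K^L)$, applying the von~Neumann trace inequality, using that $K-K^L\succeq 0$ so that $\|K-K^L\|_*=\tr(K-K^L)\le N\epsilon$, and finally invoking $\E\|Z\|_{\mathrm{op}}\le 2\sqrt N$ via Sudakov--Fernique. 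Your interpolation-plus-IBP instead makes the dangerous noise contribution cancel exactly against the $-\tfrac1N\langle r^L_{ij}\kappa^L_{t,ij}\rangle_t$ term coming from $\partial_t H_t$, leaving only three terms that are each pointwise $O(K_0\epsilon)$. This is more elementary (no spectral estimates, no nuclear norm), and it is also slightly more robust: your argument uses only $\sup|r^L|<\epsilon$ and $\sup|\kappa^L_t|\le K_0$, so it would apply to any uniformly small perturbation of the kernel, whereas the paper's argument leans on the positive-semidefiniteness of $\kappa-\kappa^L$ specific to Mercer truncation. The paper's approach, on the other hand, is one-shot and avoids having to track an interpolating Gibbs measure. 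Your IBP bookkeeping and cancellation are correct as stated.
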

\begin{proof}
By the uniform convergence of $\kappa^L$ to $\kappa$ given by Mercer's theorem (Theorem \ref{thm:mercer}), for any $\epsilon > 0$, there exists an $L_0 = L_0(\epsilon)$ such that for all $L \geq L_0$,
    \begin{align}
        \sup_{x,y} \abs{ \kappa^L(x,y) - \kappa(x,y)} < \epsilon.
        \label{eq:truncFullKernel_Mercer_uniformConvegence}
    \end{align}
    From here on, fix any $L \geq L_0$. 
Write as shorthand
\[\kappa_{ij}=\kappa(x_{\pi(i)},x_{\pi(j)}),
\quad \kappa_{*ij}=\kappa(x_{\pi_*(i)},x_{\pi_*(j)}),
\quad \kappa_{ij}^L=\kappa^L(x_{\pi(i)},x_{\pi(j)}),
\quad \kappa_{*ij}^L=\kappa^L(x_{\pi_*(i)},x_{\pi_*(j)}).\]
The Hamiltonians associated to the model \eqref{eq:QAmodel} and the one defined
by $\kappa^L$ in place of $\kappa$ are, respectively,
\begin{align}
    \label{eq:Ham_fullKernel}
    H(\pi; \pi_*, Z) &:= -\frac{1}{2N} \sum_{i < j} \kappa_{ij}^2 + \frac{1}{N}
\sum_{i < j} \kappa_{*ij} \kappa_{ij} + \frac{1}{\sqrt{N}} \sum_{i < j}
\kappa_{ij} z_{ij}, \\
    \label{eq:Ham_truncKernel}
    H^L(\pi; \pi_*, Z) &:= -\frac{1}{2N} \sum_{i < j} (\kappa_{ij}^L)^2 +
\frac{1}{N} \sum_{i < j} \kappa_{*ij}^L\kappa_{ij}^L + \frac{1}{\sqrt{N}}
\sum_{i < j} \kappa_{ij}^L z_{ij}.
\end{align}
Let $\cF_N^\infty$ and $\cF_N^L$ denote the free energies associated with these
Hamiltonians,
\begin{align}
    \cF_N^\infty \coloneqq \frac{1}{N} \E_{\pi_*, Z} \log \E_\pi \exp H(\pi; \pi_*, Z) 
    \quad \textnormal{and} \quad 
    \cF_N^L \coloneqq \frac{1}{N} \E_{\pi_*, Z} \log \E_\pi \exp H^L(\pi; \pi_*, Z).
    \label{eq:free_energies_kernel_models}
\end{align}
Then by the same calculations as \eqref{eq:MIcalculation},
\[\frac{1}{N}I(\pi_*,Y)=\frac{1}{2N^2}\E_{\pi_*} \sum_{i<j} \kappa_{*ij}^2
-\cF_N^\infty, \qquad \frac{1}{N}I(\pi_*,Y^L)=\frac{1}{2N^2}\E_{\pi_*} \sum_{i<j}
(\kappa_{*ij}^L)^2-\cF_N^L.\]
Thus, with $H$ and $H^L$ defined in \eqref{eq:Ham_fullKernel} and \eqref{eq:Ham_truncKernel}, we have
    \begin{align}
        \frac{1}{N}\abs{I(\pi_*,Y)-I(\pi_*,Y^L)} \leq
\frac{1}{2N^2}\E_{\pi_*} \sum_{i<j} |\kappa_{*ij}^2-(\kappa_{*ij}^L)^2|
+\frac{1}{N}\,\E_{\pi_*, Z} \sup_{\pi
\in \SS_N} \abs{  H(\pi; \pi_*, Z) - H^L(\pi; \pi_*, Z) }.
        \label{eq:truncFullFE_infinityNormBound}
    \end{align}
By the boundedness of the kernels \eqref{eq:kernelbound}, and
\eqref{eq:truncFullKernel_Mercer_uniformConvegence}, for any $\pi,\pi_*$
we have 
    \begin{align*}
        \abs{ \kappa_{ij}^2 -  (\kappa_{ij}^L)^2},\,
\abs{ \kappa_{*ij} \kappa_{ij} - \kappa_{*ij}^L \kappa_{ij}^L},\,
\abs{ \kappa_{*ij}^2 - (\kappa_{*ij}^L)^2}
        \leq 2K_0 \epsilon.
    \end{align*}
Set $z_{ii}=0$ and $\kappa_{ii}=\kappa(x_{\pi(i)},x_{\pi(i)})$
for all $i=1,\ldots,N$, set $\kappa_{ij}=\kappa_{ji}$,
$\kappa_{ij}^L=\kappa_{ji}^L$, $z_{ij}=z_{ji}$ for all $i>j$, and define
the symmetric matrices $K=(\kappa_{ij})_{i,j=1}^N$,
$K^L=(\kappa_{ij}^L)_{i,j=1}^N$, and $Z=(z_{ij})_{i,j=1}^N$. Then, applying
the von-Neumann trace inequality,
\[\left|\sum_{i<j}
\kappa_{ij}z_{ij}-\kappa_{ij}^Lz_{ij}\right|
=\frac{1}{2}\abs{\tr Z(K-K^L)} \leq \frac{1}{2} \norm{Z}_{\textnormal{op}}
\norm{K-K^L}_{*} = \frac{1}{2} \norm{Z}_{\textnormal{op}} \tr(K-K^L)\]
    where $\norm{\cdot}_{*}$ denotes the nuclear norm, and the last
equality follows because $\kappa-\kappa^L$ remains a positive-semidefinite
kernel, so $K-K^L$ is a positive-semidefinite matrix. Applying $\tr (K-K^L)
\leq K_0N\epsilon$ by \eqref{eq:truncFullKernel_Mercer_uniformConvegence}
and combining the above into \eqref{eq:truncFullFE_infinityNormBound}, we obtain
    \begin{align*}
        \frac{1}{N}\abs{I(\pi_*,Y) - I(\pi_*,Y^L)} \leq 2K_0 \epsilon +
\frac{K_0\epsilon}{2\sqrt{N}} \E\norm{Z}_{\textnormal{op}}.
    \end{align*}

Denote by $\tilde Z$ a copy of $Z$ with diagonal entries replaced by independent
$\cN(0,2)$ variables, and observe that $\E[(u^\top Zu-v^\top Zv)^2]
\leq \E[(u^\top \tilde Zu-v^\top \tilde Zv)^2]$ for any unit vectors $u,v \in
\R^N$. Then by a standard application of the Sudakov-Fernique inequality (see
e.g.\ \cite[Exercise 7.3.5]{vershynin2018}),
$\E\|Z\|_{\textnormal{op}} \leq \E\|\tilde Z\|_{\textnormal{op}}
\leq 2\sqrt{N}$, and the result follows upon adjusting the value of $\epsilon$.
\end{proof}

We are now ready to prove Theorem \ref{thm:qua-assgn-mutual-info}.

\begin{proofof}{Theorem~\ref{thm:qua-assgn-mutual-info}}
We apply Theorem~\ref{thm:free-energy-general-model}. Fixing any $L \geq 1$,
define $\cG_N=\SS_N$, the feature map $\phi:\SS_N \to (\R^L)^N$ by
\eqref{eq:QAfeaturemap}, and the bilinear forms $\bullet,\otimes$ and inclusion
map $\iota(\cdot)$ by \eqref{eq:QAbilinearforms}. It is then direct to check
that all conditions of Assumption \ref{assumption:general-model} hold. 
The quantities $K(\cG_N)$, $D(\cG_N)$, and $L(\epsilon;\cG_N)$ for any fixed
$\epsilon>0$ in Theorem~\ref{thm:free-energy-general-model}
are bounded by a constant due to \eqref{eq:kernelbound}, and as $N \to \infty$,
\begin{align*}
\|Q(\iden,\iden)\|_\cL^2&=\left\|\frac{1}{N}\sum_{i=1}^N
\ub(x_i)\ub(x_i)^\top\right\|_F^2 \to \left\|\E_{x \sim \rho}\ub(x)\ub(x)^\top
\right\|_F^2
=\E_{x,x' \overset{iid}{\sim} \rho} (\ub(x)^\top \ub(x'))^2
=\E_{x,x' \overset{iid}{\sim} \rho} [\kappa^L(x,x')^2],\\
\langle \overlap,Q(\iden,\iden) \rangle_\cL
&=\tr \overlap\left(\frac{1}{N}\sum_{i=1}^N \ub(x_i)\ub(x_i)^\top\right)
\to \E_{x_* \sim \rho} \tr \overlap \ub(x_*)\ub(x_*)^\top
=\E_{x_* \sim \rho} \ub(x_*)^\top\overlap \ub(x_*)
\end{align*}
under Assumption \ref{asmpt:kernel}.
Hence from Theorem~\ref{thm:free-energy-general-model},
Lemma~\ref{lemma:QAlinearMI}, and the forms \eqref{eq:quadraticMI}
and \eqref{eq:linearMI} for the mutual informations, we have
\begin{align}
\label{eq:asym-mutual-information-truncated}
    \lim_{N \to \infty} \frac{1}{N} I(\pi_*,Y^L) =
     \frac{1}{4}\,\E_{x, x' \overset{iid}{\sim} \rho}
     \insquare{\kappa^L(x, x')^2} - \sup_{\overlap \in \Sym_{\succeq 0}^{L \times L}}\Psi_\qs^L (\overlap).
\end{align} 
Here, $\sup_{\overlap \in \Sym_{\succeq 0}^{L \times L}}\Psi_\qs^L (\overlap)$
is non-decreasing in $L$, as a restriction of this supremum to $\overlap \in
\Sym_{\succeq 0}^{L \times L}$ having last row and column equal to 0 gives the
optimization for dimension $L-1$. Thus the limit $\Psi_\infty$ exists in
$(-\infty,\infty]$, and
\[\lim_{L \to \infty} \lim_{N \to \infty} \frac{1}{N} I(\pi_*,Y^L)
=\frac{1}{4}\,\E_{x, x' \overset{iid}{\sim} \rho}
     \insquare{\kappa(x, x')^2} - \Psi_\infty.\]
Finally, Lemma \ref{proposition:truncKernel_fullKernel_FE_close} shows that
$N^{-1}I(\pi_*,Y^L)$ converges to $N^{-1}I(\pi_*,Y)$ as $L \to \infty$,
uniformly over all $N \geq 1$. Thus the limits in $L$ and $N$ on the left side
may be interchanged. Since $I(\pi_*,Y)$ is bounded below by 0 and
$\E\insquare{\kappa(x, x')^2}$ is bounded above due to
\eqref{eq:kernelbound}, this implies that $\Psi_\infty$ is finite,
concluding the proof.
\end{proofof}
\subsection{Implication on matrix MMSE}
\begin{proofof}{Corollary \ref{corollary:QAMMSE}}
    Let $Y^{(L,\lambda)} = \{y_{ij}^{(L, \lambda)}\}_{1 \le i < j \le N}$ be the observations in model \eqref{eq:QAmodel_with_lambda} defined with the 
    truncated kernel $\kappa^L$.
    Then applying Theorem \ref{thm:qua-assgn-mutual-info} with 
    $\sqrt{\lambda}\kappa$ in place of $\kappa$, the mutual information 
    $I(\pi_*, Y^{(\lambda)})$ between $\pi_*$ and $Y^{(\lambda)} = \{y_{ij}^{(\lambda)}\}$
    and the mutual information 
    $I(\pi_*, Y^{(L, \lambda)})$ for the truncated model
    satisfy
    \begin{align*}
        & \lim_{N \to \infty} \frac{1}{N}I(\pi_*, Y^{(L, \lambda)}) =
        \frac{\lambda}{4}\E_{x,x'\overset{iid}{\sim}\rho}[\kappa^L(x,x')^2] 
        - \sup_{\overlap \in \Sym_{\succeq 0}^{L \times L}}
        \Psi_\qs^L (\lambda, \overlap) \\
        & \lim_{N \to \infty} \frac{1}{N}I(\pi_*, Y^{(\lambda)}) = 
        \frac{\lambda}{4}\E_{x,x'\overset{iid}{\sim}\rho}[\kappa(x,x')^2] 
        - \Psi_\infty(\lambda)
    \end{align*}
    where 
    \begin{align*}
        & \Psi_\infty(\lambda) = \lim_{L \to \infty} 
        \sup_{\overlap \in \Sym_{\succeq 0}^{L \times L}}
        \Psi_\qs^L (\lambda, \overlap), \text{ and} \\
        & \Psi_\qs^L (\lambda, \overlap) =-\frac{\lambda}{4}\|\overlap\|_F^2
        + \E_{x_*,\bz} \log \E_x \exp\inparen{-\frac{\lambda}{2}\,\ub(x)^\top \overlap
        \ub(x)+ \lambda \ub(x)^\top \overlap \ub(x_*) + \sqrt{\lambda}\ub(x)^\top \overlap^{1/2}\zb}.
    \end{align*}
    Recall the following standard I-MMSE relation:
    \begin{align*}
        & \frac{\partial}{\partial \lambda} \frac{1}{N}I(\pi_*, Y^{(L, \lambda)}) = 
        \frac{1}{4 n^2} \E \sum_{i,j=1}^n \inparen{\kappa^L(x_{\pi_*(i)}, x_{\pi_*(j)}) 
        - \inangle{\kappa^L(x_{\pi(i)}, x_{\pi(j)})}}^2 
        =\colon \frac{1}{4} \mmse^L_\qs(\lambda) \\
        & \frac{\partial}{\partial \lambda} \frac{1}{N}I(\pi_*, Y^{(\lambda)}) = 
        \frac{1}{4 n^2} \E \sum_{i,j=1}^n \inparen{\kappa(x_{\pi_*(i)}, x_{\pi_*(j)}) 
        - \inangle{\kappa(x_{\pi(i)}, x_{\pi(j)})}}^2 
        = \frac{1}{4} \mmse_\qs(\lambda).
    \end{align*}
    Together with the fact that $\mmse^L_\qs$ is non-increasing in $\lambda$, we know
    $\lambda \mapsto -\frac{1}{N}I(\pi_*, Y^{(L, \lambda)})$ is convex. Then
    its pointwise limit as $N \to \infty$
    \[\cI^L(\lambda) \coloneqq \lim_{N \to \infty} -\frac{1}{N}I(\pi_*, Y^{(L, \lambda)})\]
    is also convex, and the set $D^L \subseteq [0, \infty)$ where $\cI^L(\lambda)$
    is differentiable has full Lebesgue measure. By the same proof as in Theorem
    \ref{theorem:free-energy-group-syn-multi-representation}(b), $\lambda \in D^L$ if and only if all the maximizers $\overlap_*$ of $\Psi_\qs^L(\lambda, \overlap)$ have the same Frobenius norm
    denoted by $q_*^L(\lambda)$, in which case
    \begin{align*}
        \lim_{N \to \infty} \mmse_\qs^L(\lambda) = 
        \E_{x,x'\overset{iid}{\sim}\rho}[\kappa^L(x,x')^2] - q_*^L(\lambda)^2.
    \end{align*}
    Furthermore, the pointwise limit as $L \to \infty$
    \[\cI(\lambda) \coloneqq \lim_{L \to \infty} \cI^L(\lambda)\]
    is also convex and differentiable on a set $D \subseteq [0, \infty)$ with
    full Lebesgue measure, and for any $\lambda \in D$,
    \begin{align*}
        \frac{\partial}{\partial \lambda} \cI(\lambda) = \lim_{L \to \infty} 
         \frac{\partial}{\partial \lambda} \cI^L(\lambda) =
         -\frac{1}{4} \lim_{L \to \infty}\lim_{N \to \infty} \mmse_\qs^L(\lambda).
         %= \frac{1}{4}\lim_{L \to \infty} 
         %\E_{x,x'\overset{iid}{\sim}\rho}[\kappa^L(x,x')^2] - q_*^L(\lambda)^2.
    \end{align*}
    On the other hand, since $\cI(\lambda)$ is also the pointwise limit of 
    $-\frac{1}{N} I(\pi_*, Y^{(\lambda)})$ as $N \to \infty$, and since $\lambda \mapsto -\frac{1}{N} I(\pi_*, Y^{(\lambda)})$ is convex, for any $\lambda \in D$,
    \begin{align*}
        \frac{\partial}{\partial \lambda} \cI(\lambda) = - \lim_{N \to \infty}
         \frac{\partial}{\partial \lambda} \frac{1}{N} I(\pi_*, Y^{(\lambda)}) 
         = -\frac{1}{4} \lim_{N \to \infty} \mmse_\qs(\lambda).
    \end{align*}
    Hence, $\lim_{L \to \infty} q_*^L(\lambda)$ exists and
    \[\lim_{N \to \infty} \mmse_\qs(\lambda) = 
    \E_{x,x'\overset{iid}{\sim}\rho}[\kappa(x,x')^2] - \lim_{L \to \infty} q_*^L(\lambda)^2.\]
\end{proofof}

\section{Group representations}

We give a brief review of relevant notions from the
representation theory of compact groups, and refer readers to
\cite[Chapter 2]{brocker2013representations} and \cite[Chapter
1]{knapp2001representation} for further background.

Throughout, $\cG$ is a compact group, and representations are always
finite-dimensional and continuous. We will choose to fix the bases and
inner-product structures for $\C^k$ and $\R^k$, thus identifying
representations as $k \times k$ matrices.

\subsection{Complex representations}\label{appendix:complexrepr}

A \emph{(complex) representation} of $\cG$ is a
continuous map $\phi:\cG \to \C^{k \times k}$ satisfying the group homomorphism
properties $\phi(gh)=\phi(g)\phi(h)$, $\phi(g^{-1})=\phi(g)^{-1}$,
and $\phi(\iden)=I_{k \times k}$.
The representation is \emph{trivial} if $\phi(g)=I_{k \times k}$ for all $g \in
\cG$, and \emph{non-trivial} otherwise.

\begin{definition}
Given a representation $\phi:\cG \to \C^{k \times k}$,
a complex linear subspace $W \subseteq \C^k$
is invariant if $\phi(g)w \in W$ for every $g \in \cG$ and
$w \in W$. The representation $\phi$ is \emph{$\C$-irreducible} if there are
no complex invariant subspaces other than $W=\{0\}$ and $W=\C^k$.
\label{definition:G_invariantSubspace,subrepresentation,irreducible}
\end{definition}

\begin{definition}
Given two representations $\phi:\cG \to \C^{k \times k}$ 
and $\phi':\cG \to \C^{k' \times k'}$, a map $U \in
\C^{k' \times k}$ is an \emph{intertwining map} of $\phi$ with $\phi'$
if $U\phi(g)=\phi'(g)U$ for all $g \in \cG$. It is
an \emph{isomorphism} if $k=k'$ and $U$ is invertible. The representations
$\phi,\phi'$ are \emph{isomorphic} (denoted
$\phi \cong \phi'$) if there exists such an isomorphism, i.e.\ an invertible
map $U \in \C^{k \times k}$ such that $\phi(g)=U^{-1}\phi'(g)U$ for all $g
\in \cG$; otherwise $\phi$ and $\phi'$ are \emph{distinct}.
\end{definition}

\begin{theorem}[Schur's Lemma, \cite{brocker2013representations} Theorem 2.1.10]
Let $\phi:\cG \to \C^{k \times k}$ and $\phi':\cG \to \C^{k' \times
k'}$ be two $\C$-irreducible representations of $\cG$.
\begin{enumerate}[(a)]
    \item If $U \in \C^{k' \times k}$ is an intertwining map of
$\phi$ with $\phi'$, then either $U=0$ or $U$ is an isomorphism.
    \item If $U \in \C^{k \times k}$ is an intertwining map of $\phi$ with
itself, then $U=\lambda I_{k \times k}$ for some $\lambda \in \C$.
    %\item If $U_1,U_2$ are two isomorphisms of $\phi$ with $\phi'$, then
%$U_1=\lambda U_2$ for some $\lambda \in \C$.
\end{enumerate}
\label{thm:Schurs_lemma}
\end{theorem}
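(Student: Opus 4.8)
The plan is to give the classical proof of Schur's Lemma, using $\C$-irreducibility to force every invariant subspace produced by an intertwining map to be trivial, and then exploiting algebraic closedness of $\C$ for part~(b).

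For part~(a), suppose $U\phi(g)=\phi'(g)U$ for all $g\in\cG$. First I would note that $\ker U\subseteq\C^k$ is invariant under $\phi$: if $Uw=0$ then $U\phi(g)w=\phi'(g)Uw=0$, so $\phi(g)w\in\ker U$. Since $\phi$ is $\C$-irreducible, either $\ker U=\C^k$, giving $U=0$, or $\ker U=\{0\}$, giving $U$ injective. Symmetrically, $\image U\subseteq\C^{k'}$ is invariant under $\phi'$: for $v=Uw$ we have $\phi'(g)v=\phi'(g)Uw=U\phi(g)w\in\image U$. By $\C$-irreducibility of $\phi'$, either $\image U=\{0\}$, giving $U=0$, or $\image U=\C^{k'}$, giving $U$ surjective. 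Hence if $U\neq0$ it is both injective and surjective, which forces $k=k'$ and makes $U$ invertible, i.e.\ an isomorphism.

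For part~(b), let $U\in\C^{k\times k}$ intertwine $\phi$ with itself. Since $\C$ is algebraically closed, $U$ has an eigenvalue $\lambda\in\C$. Then $U-\lambda I_{k\times k}$ is again an intertwining map of $\phi$ with itself, because $\phi(g)$ commutes with $I_{k\times k}$; moreover $U-\lambda I_{k\times k}$ has nonzero kernel (the $\lambda$-eigenspace), hence is not invertible and so not an isomorphism. Applying part~(a) with $\phi'=\phi$, we conclude $U-\lambda I_{k\times k}=0$, i.e.\ $U=\lambda I_{k\times k}$.

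There is no genuine obstacle here; the one point worth flagging is that part~(b) truly uses that the base field is $\C$ (algebraic closedness is what guarantees an eigenvalue), and the analogous statement is false over $\R$ — precisely the phenomenon underlying the real/complex/quaternionic-type classification of real-irreducible representations recalled in Appendix~\ref{appendix:realrepr} and used in Proposition~\ref{prop:GSlocaloptimality}.
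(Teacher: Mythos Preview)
Your proof is correct and is exactly the classical argument. Note, however, that the paper does not actually prove this theorem: it is quoted as background from \cite{brocker2013representations} without proof. The paper does give a proof of the real analogue, Theorem~\ref{thm:Schurs_lemma_forRealIrredRep}, and that proof follows precisely the same two steps you use---invariance of $\ker U$ and $\image U$ for part~(a), and invariance of the eigenspace $V_\lambda=\ker(U-\lambda I)$ for part~(b)---with the only difference being that over $\R$ one must \emph{assume} a real eigenvalue exists rather than deduce it from algebraic closedness, exactly as you flag in your final remark.
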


An intertwining map $U$ of $\phi$ with itself is a map that commutes with
$\phi(g)$ for all $g \in \cG$; part (b) states that when $\phi$ is
$\C$-irreducible, any such map is a multiple
of the identity. If $\cG$ is abelian, then $U=\phi(g_0)$ is such an
intertwining map for any $g_0 \in \cG$, so an immediate consequence is the
following.

\begin{corollary}[\cite{brocker2013representations} Proposition 2.1.13]
\label{cor:onedim}
If $\cG$ is abelian and $\phi:\cG \to \C^{k \times k}$ is $\C$-irreducible, then
$k=1$.
\end{corollary}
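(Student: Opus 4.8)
The plan is to derive this as an immediate consequence of Schur's Lemma, Theorem~\ref{thm:Schurs_lemma}(b). First I would fix an arbitrary element $g_0 \in \cG$ and consider the matrix $U := \phi(g_0) \in \C^{k \times k}$. Using the homomorphism property together with commutativity of $\cG$, for every $h \in \cG$ we have $\phi(g_0)\phi(h) = \phi(g_0 h) = \phi(h g_0) = \phi(h)\phi(g_0)$, so $U$ is an intertwining map of $\phi$ with itself in the sense of the definition preceding Theorem~\ref{thm:Schurs_lemma}. Since $\phi$ is assumed $\C$-irreducible, part~(b) of that theorem then forces $\phi(g_0) = \lambda(g_0)\, I_{k \times k}$ for some scalar $\lambda(g_0) \in \C$. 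Because $g_0$ was arbitrary, this shows every operator $\phi(g)$ is a scalar multiple of the identity.

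Next I would conclude by inspecting invariant subspaces in the sense of Definition~\ref{definition:G_invariantSubspace,subrepresentation,irreducible}. Since each $\phi(g)$ acts on $\C^k$ as multiplication by a scalar, \emph{every} complex linear subspace $W \subseteq \C^k$ satisfies $\phi(g)w \in W$ for all $g \in \cG$ and $w \in W$, i.e.\ is invariant. If $k \geq 2$, then the line $W = \C e_1$ spanned by the first standard basis vector is such an invariant subspace with $W \neq \{0\}$ and $W \neq \C^k$, contradicting $\C$-irreducibility of $\phi$. Since the representation space is nonzero by convention ($k \geq 1$), this leaves only $k = 1$.

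There is no real obstacle here; the statement is a textbook corollary. The only point requiring a small amount of care is matching the argument precisely to the definitions as stated in the excerpt --- in particular, reading off from Theorem~\ref{thm:Schurs_lemma}(b) that a self-intertwining map of a $\C$-irreducible representation is scalar, and from Definition~\ref{definition:G_invariantSubspace,subrepresentation,irreducible} that $\C$-irreducibility excludes any proper nonzero invariant subspace. I would keep the write-up to a few sentences following exactly the three steps above.
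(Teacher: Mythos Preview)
Your proposal is correct and follows exactly the same approach as the paper: the paper's argument (given in the sentence preceding the corollary) observes that for abelian $\cG$, each $\phi(g_0)$ is a self-intertwining map and hence scalar by Theorem~\ref{thm:Schurs_lemma}(b), which is precisely your first step, and the conclusion $k=1$ then follows immediately as you argue.
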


A representation $\phi:\cG \to \C^{k \times k}$ is \emph{unitary} if
$\phi(g)$ is a unitary matrix for all $g \in \cG$,
i.e.\ $\phi(g)^*\phi(g)=I_{k \times k}$. For compact $\cG$, any
representation is isomorphic to a unitary representation \cite[Theorem
2.1.7]{brocker2013representations}.

\begin{theorem}[Complete reducibility, \cite{knapp2001representation}
Theorem 1.12(d)]
\label{thm:completereducibility}
Let $\phi:\cG \to \C^{k \times k}$ be a unitary representation of a compact
group $\cG$. Then
there exists a unitary map $U \in \C^{k \times k}$ and $\C$-irreducible
unitary representations $\phi_\ell:\cG \to \C^{k_\ell \times k_\ell}$
for $\ell=1,\ldots,L$ with $k_1+\ldots+k_L=k$, such that
\begin{equation}\label{eq:directsum}
\phi(g)=U \begin{pmatrix} \phi_1(g) & & \\ & \ddots & \\ & & \phi_L(g)
\end{pmatrix} U^{-1}.
\end{equation}
\end{theorem}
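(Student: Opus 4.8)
The plan is to prove the statement by induction on the dimension $k$, using as the key observation that for a \emph{unitary} representation the orthogonal complement of any invariant subspace is again invariant. Concretely, if $\phi:\cG \to \C^{k\times k}$ is unitary and $W \subseteq \C^k$ is invariant, then for any $v \in W^\perp$, $w \in W$, and $g \in \cG$, unitarity gives $\langle \phi(g)v, w \rangle = \langle v, \phi(g)^* w \rangle = \langle v, \phi(g^{-1})w \rangle = 0$, since $\phi(g^{-1})w \in W$; hence $\phi(g)v \in W^\perp$, so $W^\perp$ is invariant as well. This lets one peel off invariant summands while keeping everything unitary, and the dimension strictly drops at each step.

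First I would dispose of the base case: a one-dimensional representation is vacuously $\C$-irreducible, so the claim holds with $L=1$, $k_1=1$, $U=I_{1\times 1}$. For the inductive step, assume the statement for all dimensions below $k$ and let $\phi:\cG \to \C^{k\times k}$ be unitary. If $\phi$ is $\C$-irreducible, take $L=1$, $\phi_1=\phi$, $U=I_{k\times k}$. Otherwise, by Definition \ref{definition:G_invariantSubspace,subrepresentation,irreducible} there is an invariant subspace $W$ with $\{0\} \neq W \neq \C^k$; by the observation above, $\C^k = W \oplus W^\perp$ is a decomposition into invariant subspaces of dimensions strictly between $0$ and $k$. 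The restrictions $\phi|_W$ and $\phi|_{W^\perp}$ are unitary representations on these subspaces, so the inductive hypothesis supplies orthonormal bases of $W$ and of $W^\perp$ in which each restriction is block-diagonal with $\C$-irreducible unitary blocks.

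Concatenating these two orthonormal bases yields an orthonormal basis of $\C^k$; letting $U$ be the corresponding change-of-basis matrix — which is unitary precisely because the basis is orthonormal — one obtains that $U^{-1}\phi(g)U$ is block-diagonal with $\C$-irreducible unitary blocks $\phi_1(g),\dots,\phi_L(g)$ and $k_1+\cdots+k_L=k$, which is \eqref{eq:directsum}. This is a textbook argument with no serious obstacle; the only point that must be watched is that the basis produced at every stage is orthonormal, so that the accumulated $U$ is unitary rather than merely invertible — this is automatic since orthogonal complements are used throughout and the inductive hypothesis already delivers orthonormal bases. (No averaging/unitarization step is needed here because $\phi$ is assumed unitary from the start; compactness of $\cG$ is used only in the separately-cited fact that every representation is isomorphic to a unitary one.)
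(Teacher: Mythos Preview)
Your argument is correct. The paper does not actually include a proof of Theorem~\ref{thm:completereducibility}; it is stated as a background result with a citation to \cite{knapp2001representation}. That said, the paper \emph{does} prove the real analogue (Theorem~\ref{thm:realreprdecomp}) in full, and the proof there is step-for-step the same as yours: show that the orthogonal complement of an invariant subspace is invariant (using $\phi(g)^\top=\phi(g^{-1})$), split $\R^k=W\oplus W^\perp$, pick orthonormal bases for each piece so that the change-of-basis matrix is orthogonal, and induct on the dimension. Your complex version simply replaces ``orthogonal'' by ``unitary'' and $\phi(g)^\top$ by $\phi(g)^*$, so there is nothing to add.
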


If a representation $\phi:\cG \to \C^{k \times k}$
admits a decomposition of the form \eqref{eq:directsum} for some 
invertible map $U \in \C^{k \times k}$ (where $\phi,U$ and
$\phi_1,\ldots,\phi_L$ are not necessarily unitary), then we say that
$\phi_1,\ldots,\phi_L$ are \emph{sub-representations} contained in
$\phi$, and $\phi$ is a \emph{direct sum} of $\phi_1,\ldots,\phi_L$, denoted
$\phi \cong \phi_1 \oplus \ldots \oplus \phi_L$.

\begin{definition}
The \emph{character} $\chi_\phi:\cG \to \C$ of a representation
$\phi:\cG \to \C^{k \times k}$ is the function
\[\chi_\phi(g)=\tr \phi(g).\]
\end{definition}

\begin{theorem}[Schur orthogonality, \cite{knapp2001representation} Theorem
1.12(b), \cite{brocker2013representations} Theorem 2.4.11]
\label{thm:schurorthogonality}
Let $\cG$ be a compact group, and let $\phi_\ell:\cG \to \C^{k_\ell \times
k_\ell}$ be any distinct, $\C$-irreducible, and unitary representations of
$\cG$, with corresponding characters $\chi_\ell:\cG \to \C$.
\begin{enumerate}[(a)]
\item The normalized matrix entry functions
\[\{k_\ell^{1/2}\phi_\ell(\cdot)_{ij}\}_{\ell=1,\ldots,L,\,1 \leq i,j \leq
k_\ell}\]
are orthonormal in the complex inner-product space $L^2(\cG)$ with respect to
Haar measure on $\cG$.
\item The characters $\{\chi_\ell:\ell=1,\ldots,L\}$ are also orthonormal
in $L^2(\cG)$.
\end{enumerate}
\end{theorem}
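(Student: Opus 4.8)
The plan is to derive both parts from Schur's Lemma (Theorem~\ref{thm:Schurs_lemma}) via a standard averaging argument over Haar measure on $\cG$. For part~(a), I would fix indices $\ell,\ell'$ and, for an arbitrary matrix $A \in \C^{k_{\ell'} \times k_\ell}$, form the Haar average
\[
\tilde A = \int_\cG \phi_{\ell'}(g)\,A\,\phi_\ell(g)^{-1}\,\ud g.
\]
Using left-invariance of Haar measure one checks directly that $\phi_{\ell'}(h)\tilde A = \tilde A\,\phi_\ell(h)$ for all $h \in \cG$, so $\tilde A$ is an intertwining map of $\phi_\ell$ with $\phi_{\ell'}$. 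If $\ell \ne \ell'$, then $\phi_\ell$ and $\phi_{\ell'}$ are non-isomorphic (which is what ``distinct'' means), so Theorem~\ref{thm:Schurs_lemma}(a) forces $\tilde A = 0$. If $\ell = \ell'$, then Theorem~\ref{thm:Schurs_lemma}(b) gives $\tilde A = \lambda(A)\,I_{k_\ell \times k_\ell}$ for a scalar $\lambda(A)$, and taking traces (using $\tr(\phi_\ell(g)\,A\,\phi_\ell(g)^{-1}) = \tr A$) pins down $\lambda(A) = k_\ell^{-1}\tr A$.

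Next I would specialize $A$ to the matrix units $\be_a\be_b^\top$ and read off the $(c,d)$ entry of $\tilde A$, namely $\int_\cG \phi_{\ell'}(g)_{ca}\,(\phi_\ell(g)^{-1})_{bd}\,\ud g$. Since $\phi_\ell$ is unitary, $\phi_\ell(g)^{-1} = \phi_\ell(g)^*$, so $(\phi_\ell(g)^{-1})_{bd} = \overline{\phi_\ell(g)_{db}}$. Combining the two cases above then yields the orthogonality relation
\[
\int_\cG \phi_\ell(g)_{ab}\,\overline{\phi_{\ell'}(g)_{cd}}\,\ud g = \frac{1}{k_\ell}\,\delta_{\ell\ell'}\,\delta_{ac}\,\delta_{bd},
\]
which is exactly the assertion that $\{k_\ell^{1/2}\phi_\ell(\cdot)_{ij}\}$ is an orthonormal system in $L^2(\cG)$. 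Part~(b) then follows immediately by expanding characters as traces: since $\chi_\ell(g) = \sum_a \phi_\ell(g)_{aa}$,
\[
\int_\cG \chi_\ell(g)\,\overline{\chi_{\ell'}(g)}\,\ud g = \sum_{a=1}^{k_\ell}\sum_{c=1}^{k_{\ell'}} \int_\cG \phi_\ell(g)_{aa}\,\overline{\phi_{\ell'}(g)_{cc}}\,\ud g = \sum_{a,c}\frac{1}{k_\ell}\,\delta_{\ell\ell'}\,\delta_{ac} = \delta_{\ell\ell'},
\]
where the middle step uses part~(a).

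This argument is essentially routine, and I do not anticipate a genuine obstacle. The only points that require care are: interpreting ``distinct'' as ``non-isomorphic'' so that Theorem~\ref{thm:Schurs_lemma}(a) applies in the off-diagonal case; and the use of unitarity to rewrite $(\phi_\ell(g)^{-1})_{bd}$ as $\overline{\phi_\ell(g)_{db}}$, which is precisely what converts the intertwiner identity $\tilde A = \lambda(A)\,I$ into a statement about complex conjugates of matrix-entry functions. With unitarity in hand, the bookkeeping with the Kronecker deltas is the only remaining work.
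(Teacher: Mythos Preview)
Your argument is the standard and correct proof of Schur orthogonality via Haar averaging and Schur's Lemma; there is no gap. Note, however, that the paper does not actually prove this theorem---it is stated in the background appendix with citations to \cite{knapp2001representation} and \cite{brocker2013representations} and invoked without proof, so there is no ``paper's own proof'' to compare against. Your write-up matches the textbook argument found in those references.
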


We remark that if $\phi \cong \phi'$ and $\phi \cong \phi_1 \oplus \ldots \oplus
\phi_L$, then by definition, their characters satisfy
$\chi_\phi=\chi_{\phi'}$ and $\chi_\phi=\chi_{\phi_1}+\ldots+\chi_{\phi_L}$. An
immediate consequence of this and Theorem \ref{thm:schurorthogonality}(b) is
the following.

\begin{corollary}[\cite{brocker2013representations} Theorem 2.4.12]
Two representations $\phi,\phi'$ of $\cG$ are
isomorphic if and only if $\chi_\phi=\chi_{\phi'}$, i.e.\
$\chi_\phi(g)=\chi_{\phi'}(g)$ for all $g \in \cG$.
\end{corollary}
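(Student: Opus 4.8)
The plan is to handle the two implications separately. The forward implication is essentially contained in the remark immediately preceding the statement: if $\phi \cong \phi'$, say $\phi(g) = U^{-1}\phi'(g)U$ for all $g \in \cG$ with $U$ invertible, then cyclicity of the trace gives $\chi_\phi(g) = \tr \phi(g) = \tr\big(U^{-1}\phi'(g)U\big) = \tr \phi'(g) = \chi_{\phi'}(g)$, so $\chi_\phi = \chi_{\phi'}$. The substantive direction is the converse, for which I would combine complete reducibility (Theorem~\ref{thm:completereducibility}) with the orthonormality of irreducible characters (Theorem~\ref{thm:schurorthogonality}(b)).

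For the converse, suppose $\chi_\phi = \chi_{\phi'}$. First I would replace $\phi$ and $\phi'$ by isomorphic unitary representations, which is permissible since $\cG$ is compact and, by the forward implication just proved, does not change their characters; so assume $\phi,\phi'$ are unitary. By Theorem~\ref{thm:completereducibility}, each decomposes as a direct sum of $\C$-irreducible unitary sub-representations. Collecting these sub-representations into isomorphism classes and choosing a single finite list $\psi_1,\dots,\psi_r$ of pairwise-distinct $\C$-irreducible unitary representatives that exhausts every isomorphism class occurring in either decomposition, we may write $\phi \cong \psi_1^{\oplus m_1}\oplus\cdots\oplus\psi_r^{\oplus m_r}$ and $\phi' \cong \psi_1^{\oplus m_1'}\oplus\cdots\oplus\psi_r^{\oplus m_r'}$ with multiplicities $m_\ell, m_\ell' \ge 0$ (some possibly zero). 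Taking characters and using additivity under direct sums (noted in the remark) gives $\chi_\phi = \sum_{\ell=1}^r m_\ell\, \chi_{\psi_\ell}$ and $\chi_{\phi'} = \sum_{\ell=1}^r m_\ell'\, \chi_{\psi_\ell}$. Since the $\psi_\ell$ are distinct $\C$-irreducible unitary representations, Theorem~\ref{thm:schurorthogonality}(b) says $\{\chi_{\psi_\ell}\}_{\ell=1}^r$ is orthonormal in $L^2(\cG)$; in particular $m_\ell = \langle \chi_\phi, \chi_{\psi_\ell}\rangle_{L^2(\cG)}$ and $m_\ell' = \langle \chi_{\phi'}, \chi_{\psi_\ell}\rangle_{L^2(\cG)}$, so $\chi_\phi = \chi_{\phi'}$ forces $m_\ell = m_\ell'$ for all $\ell$. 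Hence $\phi$ and $\phi'$ are direct sums of the same list of irreducibles with the same multiplicities, so each is isomorphic to the common block-diagonal representation, and therefore $\phi \cong \phi'$ by transitivity of isomorphism.

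The only place requiring any care is the bookkeeping in the reverse direction: the two decompositions coming out of Theorem~\ref{thm:completereducibility} may a priori involve different (though pairwise isomorphic within each class) irreducible blocks, so one must pass to a common list $\psi_1,\dots,\psi_r$ of isomorphism-class representatives before applying orthogonality. Once that reduction is in place, everything else is routine, since both nontrivial ingredients — complete reducibility and Schur orthogonality of characters — are already available. I would not expect any genuine obstacle here.
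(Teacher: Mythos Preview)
Your proposal is correct and follows exactly the approach the paper indicates: the forward implication is the trace identity noted in the remark, and the converse combines complete reducibility (Theorem~\ref{thm:completereducibility}) with the orthonormality of irreducible characters (Theorem~\ref{thm:schurorthogonality}(b)) to match multiplicities. The paper does not spell out the details beyond calling it an ``immediate consequence,'' so your write-up is simply a fleshed-out version of the same argument.
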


We conclude with a basic proposition showing that if two unitary
representations are isomorphic, then the isomorphism between these
representations may also be taken to be a unitary transform.

\begin{proposition}\label{prop:unitaryequivalence}
Let $\phi,\phi':\cG \to \C^{k \times k}$ be isomorphic unitary
representations. Then there exists a unitary matrix $U \in \C^{k \times k}$
for which $\phi'(g)=U\phi(g)U^*$ for all $g \in \cG$.
\end{proposition}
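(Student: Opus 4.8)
The plan is to run the standard ``unitarization'' argument on the given intertwiner. By definition of $\phi \cong \phi'$, there is an invertible matrix $S \in \C^{k \times k}$ with $S\phi(g) = \phi'(g)S$ for all $g \in \cG$. First I would show that the positive-definite Hermitian matrix $S^*S$ commutes with $\phi(g)$ for every $g$. Taking adjoints in the intertwining relation gives $\phi(g)^*S^* = S^*\phi'(g)^*$; since $\phi'$ is unitary, $\phi'(g)^* = \phi'(g)^{-1}$, and combining this with $S\phi(g) = \phi'(g)S$ and with $\phi(g)^* = \phi(g)^{-1}$ (unitarity of $\phi$) yields $\phi(g)^*(S^*S)\phi(g) = S^*S$, i.e.\ $(S^*S)\phi(g) = \phi(g)(S^*S)$.

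Next I would set $P = (S^*S)^{1/2}$, the unique positive-definite Hermitian square root, and define $U = SP^{-1}$. Since $P$ is a function of $S^*S$ in the sense of the spectral calculus, $P$ also commutes with every $\phi(g)$. A direct computation gives $U^*U = P^{-1}(S^*S)P^{-1} = I$ and $UU^* = S(S^*S)^{-1}S^* = I$, so $U$ is unitary with $U^{-1} = U^* = P^{-1}S^*$. Then, using that $P$ commutes with $\phi(g)$,
\[
U\phi(g)U^{-1} = SP^{-1}\phi(g)PS^{-1} = S\phi(g)S^{-1} = \phi'(g),
\]
the last equality being the intertwining relation itself. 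This gives $\phi'(g) = U\phi(g)U^*$ for all $g \in \cG$, as claimed.

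The only step needing a bit of care is the assertion that $P = (S^*S)^{1/2}$ commutes with each $\phi(g)$; I would justify this by observing that $\phi(g)$ preserves each eigenspace of $S^*S$ (as it commutes with $S^*S$) while $P$ acts as a scalar on that eigenspace, or equivalently by the fact that any matrix commuting with a Hermitian matrix $H$ commutes with $f(H)$ for every function $f$ on the spectrum of $H$. Everything else is routine matrix algebra, so I do not expect a genuine obstacle here.
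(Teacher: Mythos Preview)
Your proof is correct and is the standard polar-decomposition argument: from an arbitrary invertible intertwiner $S$ you extract its unitary part $U = S(S^*S)^{-1/2}$ and check that the commutation of $S^*S$ with each $\phi(g)$ passes to its square root, so $U$ is still an intertwiner. Every step checks out.

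The paper argues differently. It first reduces to the $\C$-irreducible case via complete reducibility and character orthogonality, matching the irreducible constituents of $\phi$ and $\phi'$; then, for $\phi,\phi'$ irreducible, it observes that $UU^*$ commutes with $\phi'$ and invokes Schur's lemma to conclude $UU^* = \alpha I$, so rescaling $U$ by $1/\sqrt{\alpha}$ gives a unitary. Your route is more self-contained: you never decompose into irreducibles or appeal to Schur's lemma, and the spectral-calculus step replaces that machinery with a single observation about functions of commuting Hermitian matrices. The paper's route, on the other hand, uses only tools already set up in the appendix and makes the matching of irreducible summands explicit, which is in the spirit of how the result is later used. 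Both are standard; yours is the slicker one for this isolated statement.
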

\begin{proof}
Applying Theorem \ref{thm:completereducibility}, we may write $\phi(g)$ in the
form \eqref{eq:directsum} for some unitary matrix $U \in \C^{k \times k}$ and
unitary $\C$-irreducible sub-representations $\phi_1,\ldots,\phi_L$, and
similarly for $\phi'$ and some $U',\phi_1',\ldots,\phi_M'$. Since
$\chi_{\phi_1}+\ldots+\chi_{\phi_L}=\chi_{\phi_1'}+\ldots+\chi_{\phi_M'}$
and characters of distinct irreducible representations are distinct
orthogonal functions in $L^2(\cG)$,
this implies that $L=M$ and $\phi_1 \cong \phi_1',\ldots,\phi_L
\cong \phi_L'$ under some ordering of these irreducible sub-representations.
Absorbing this ordering as a permutation into $U$ and $U'$,
it suffices to prove the
proposition in the case where $\phi,\phi'$ are isomorphic and $\C$-irreducible.

Since $\phi,\phi'$ are isomorphic, there exists an invertible
matrix $U \in \C^{k \times k}$ for which $\phi(g)=U^{-1}\phi'(g)U$ for all $g
\in \cG$; we must show that we may take $U$ to be unitary.
Since $\phi(g)$ is unitary,
we have $I=\phi(g)\phi(g)^*=U^{-1}\phi'(g)UU^*\phi'(g)^*U^{-*}$.
Then, since $\phi'(g)$ is unitary, rearranging this gives $UU^*\phi'(g)
=\phi'(g)UU^*$. Thus $UU^*$ is an intertwining map of $\phi'$ with itself. Since
$\phi'$ is irreducible,
Schur's lemma implies $UU^*=\alpha I$ for some $\alpha \in \C$.
We must have $\alpha \in \R$ and $\alpha>0$ because $UU^*$ is
Hermitian positive-definite. Thus $\tilde U=U/\sqrt{\alpha}$ is unitary
and $\phi(g)=\tilde U^*\phi'(g)U$ as claimed.
\end{proof}

\subsection{Real representations}\label{appendix:realrepr}

A representation $\phi$ of $\cG$ is a \emph{real representation} if
$\phi(g)$ is real-valued for all $g \in \cG$,
i.e.\ $\phi$ is a map $\phi:\cG \to \R^{k \times k}$. It is
\emph{orthogonal} if furthermore $\phi(g)$ is an orthogonal matrix for all
$g \in \cG$, i.e.\ $\phi(g)^\top \phi(g)=I_{k \times k}$.

\begin{definition}
Given a real representation $\phi:\cG \to \R^{k \times k}$,
a real linear subspace $W \subseteq \R^k$ is invariant if $\phi(g)w \in W$
for every $g \in \cG$ and $w \in W$. The representation is
\emph{real-irreducible} (or \emph{$\R$-irreducible}) if it has no real
invariant subspaces other than $W=\{0\}$ and $W=\R^k$.
\end{definition}

The following statements are analogues of Schur's lemma and complete
reducibility in the real setting. We include their proofs for completeness,
which are similar to their complex counterparts.

\begin{theorem}
\label{thm:Schurs_lemma_forRealIrredRep}
Let $\phi:\cG \to \R^{k \times k}$ and $\phi':\cG \to \R^{k' \times k'}$ be two
real-irreducible representations of $\cG$.
\begin{enumerate}[(a)]
\item If $\U \in \R^{k' \times k}$ is an intertwining map of $\phi$ with
$\phi'$, then either $U=0$ or $U$ is an isomorphism.
\item If $U \in \R^{k \times k}$ is an intertwining map of $\phi$ with itself,
and $U$ has at least one real eigenvalue, then $U=\lambda I_{k \times k}$ for
some $\lambda \in \R$.
\end{enumerate}
\end{theorem}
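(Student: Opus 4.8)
The plan is to follow the standard proof of the complex Schur's lemma (Theorem~\ref{thm:Schurs_lemma}), using the fact that kernels and images of intertwining maps are invariant subspaces, and then to extract part (b) from part (a) by passing to $U - \lambda I_{k \times k}$.

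For part (a), I would first observe that if $U \in \R^{k' \times k}$ satisfies $U\phi(g) = \phi'(g)U$ for all $g \in \cG$, then $\ker U \subseteq \R^k$ is $\phi$-invariant, since $Uw = 0$ implies $U\phi(g)w = \phi'(g)Uw = 0$; and $\image U \subseteq \R^{k'}$ is $\phi'$-invariant, since $\phi'(g)(Uw) = U(\phi(g)w) \in \image U$. By real-irreducibility of $\phi$, $\ker U$ is either $\{0\}$ or $\R^k$, and by real-irreducibility of $\phi'$, $\image U$ is either $\{0\}$ or $\R^{k'}$. If $U \neq 0$ then $\ker U \neq \R^k$ and $\image U \neq \{0\}$, forcing $\ker U = \{0\}$ and $\image U = \R^{k'}$; hence $U$ is both injective and surjective, so $k = k'$ and $U$ is an isomorphism.

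For part (b), given $U \in \R^{k \times k}$ with $U\phi(g) = \phi(g)U$ for all $g$ and a real eigenvalue $\lambda$ of $U$, I would note that $U - \lambda I_{k \times k}$ is again an intertwining map of $\phi$ with itself, and it is not invertible because its kernel contains a corresponding real eigenvector. Applying part (a) with $\phi' = \phi$ and $U$ replaced by $U - \lambda I_{k \times k}$, since this map is not an isomorphism it must equal $0$, i.e.\ $U = \lambda I_{k \times k}$.

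The one point to be careful about — and the reason the statement of (b) is weaker than its complex analogue — is precisely the hypothesis that $U$ has a real eigenvalue: over $\C$ every operator has an eigenvalue, so Theorem~\ref{thm:Schurs_lemma}(b) gives $U = \lambda I$ unconditionally, whereas over $\R$ an intertwining map of a real-irreducible representation with itself can fail to be scalar (its commutant is a real division algebra, isomorphic to $\R$, $\C$, or $\mathbb{H}$ in the real, complex, or quaternionic cases), and only those $U$ possessing a real eigenvalue are forced to be scalar. Beyond keeping this distinction in mind, I do not anticipate any genuine obstacle; the argument is elementary linear algebra together with the definition of real-irreducibility.
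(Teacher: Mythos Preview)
Your proposal is correct and matches the paper's proof essentially line for line. For part (a) the paper does exactly what you describe; for part (b) the paper works directly with the eigenspace $V_\lambda = \ker(U-\lambda I)$, showing it is invariant and nonzero hence all of $\R^k$, which is the same argument as yours (applying part (a) to $U-\lambda I$) unwound by one step.
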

\begin{proof}
For (a), since $U\phi=\phi'U$, we have that $\operatorname{ker} U \subseteq
\R^k$ is a real invariant subspace of $\phi$, and $\operatorname{im} U
\subseteq \R^{k'}$ is a real invariant subspace of $\phi'$. Thus either
$\operatorname{ker} U=\R^k$ in which case $U=0$, or $\operatorname{ker} U=0$
and $\operatorname{im} U=\R^{k'}$ in which case $k=k'$ and $U$ is an
isomorphism.

For part (b), let $\lambda \in \R$ be an eigenvalue of $U$, and let
$V_\lambda=\operatorname{ker}(U-\lambda I) \subseteq \R^k$ be its corresponding
eigenspace. For any $v \in V_\lambda$ and $g \in \cG$, we have
$U\phi(g)v=\phi(g)Uv=\lambda \phi(g)v$, so $\phi(g)v \in V_\lambda$.
Thus $V_\lambda$ is a real invariant subspace.
We have $V_\lambda \neq \{0\}$ since $\lambda$ is an eigenvalue, so
$V_\lambda=\R^k$ and $U=\lambda I_{k \times k}$.
\end{proof}

\begin{theorem}\label{thm:realreprdecomp}
Let $\phi:\cG \to \R^{k \times k}$ be an orthogonal representation of a compact
group $\cG$. Then it is an orthogonal direct sum of real-irreducible components,
i.e., there exists an orthogonal map $U \in \R^{k \times k}$ and
real-irreducible orthogonal representations
$\phi_\ell:\cG \to \R^{k_\ell \times k_\ell}$
for $\ell=1,\ldots,L$ with $k_1+\ldots+k_L=k$, such that
\[\phi(g)=U\begin{pmatrix} \phi_1(g) && \\ &\ddots& \\ && \phi_L(g)
\end{pmatrix}U^\top\]
\end{theorem}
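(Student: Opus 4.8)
The plan is to argue by induction on the dimension $k$, using orthogonality of $\phi$ in an essential way to produce invariant complements. If $\phi$ is real-irreducible, there is nothing to prove: one takes $L=1$ and $U=I_{k \times k}$. Otherwise, by definition there is a real invariant subspace $W \subseteq \R^k$ with $0 \neq W \neq \R^k$. The first key step is to observe that $W^\perp$ is then also invariant: for any $w \in W$, $v \in W^\perp$, and $g \in \cG$, orthogonality of $\phi(g)$ together with the homomorphism property give
\[
\langle \phi(g)v, w \rangle = \langle v, \phi(g)^\top w \rangle = \langle v, \phi(g)^{-1} w \rangle = \langle v, \phi(g^{-1})w \rangle = 0,
\]
since $\phi(g^{-1})w \in W$; hence $\phi(g)v \in W^\perp$. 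Thus $\R^k = W \oplus W^\perp$ is an orthogonal decomposition into $\phi$-invariant subspaces.

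Next I would choose orthonormal bases of $W$ and of $W^\perp$ and let $U_0 \in \R^{k \times k}$ be the orthogonal matrix whose columns list these basis vectors. In the new basis $\phi$ is block-diagonal, $U_0^\top \phi(g) U_0 = \diag(\psi(g),\psi'(g))$, where $\psi:\cG \to \R^{\dim W \times \dim W}$ and $\psi':\cG \to \R^{\dim W^\perp \times \dim W^\perp}$ are the restrictions of $\phi$ to the two invariant subspaces. One checks directly that $\psi,\psi'$ are again continuous representations, and that they are orthogonal because $\phi$ is orthogonal and the bases are orthonormal. Since $\dim W,\dim W^\perp < k$, the inductive hypothesis applies to each, yielding orthogonal matrices $U_1,U_2$ that conjugate $\psi,\psi'$ respectively into block-diagonal form with real-irreducible orthogonal blocks. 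Setting $U = U_0\,\diag(U_1,U_2)$, which is orthogonal as a product of orthogonal matrices, conjugates $\phi(g)$ into a block-diagonal matrix all of whose blocks are real-irreducible orthogonal representations, completing the induction.

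The argument is essentially routine, and I do not anticipate a genuine obstacle. The two points requiring a little care are: (i) the verification that $W^\perp$ is invariant, which fails for general real representations and is exactly where orthogonality of $\phi$ is used (for a non-orthogonal $\phi$ one would first replace it by an isomorphic orthogonal representation via Haar averaging of an inner product, but this is unnecessary here since $\phi$ is assumed orthogonal); and (ii) the bookkeeping that restrictions of an orthogonal representation to complementary invariant subspaces remain orthogonal and that orthogonality of the change-of-basis matrix is preserved under the composition $U_0\,\diag(U_1,U_2)$. Both are straightforward.
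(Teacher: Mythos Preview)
Your proof is correct and follows essentially the same approach as the paper: both argue by induction on $k$, use orthogonality of $\phi$ to show that $W^\perp$ is invariant whenever $W$ is, choose orthonormal bases to obtain an orthogonal block-diagonalizing matrix, and recurse on the blocks. Your write-up is slightly more explicit about composing the change-of-basis matrices across levels of the induction, but the argument is the same.
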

\begin{proof}
If $\phi$ is real-irreducible, then the statement holds trivially with $L=1$ and
$U=I$. Otherwise, let $W \subset \R^k$ be a real invariant subspace not equal to
$\{0\}$ or $\R^k$, and let $W^\perp$ be its orthogonal complement.
For any $v \in W$, $w \in W^\perp$, and $g \in \cG$ we have $(\phi(g)w)^\top v
=w^\top \phi(g^{-1})v=0$ because $\phi(g^{-1})v \in W$. So $\phi(g)w \in
W^\perp$, implying that $W^\perp$ is also invariant. Thus $\phi(g)$ acts as two
separate linear maps on $W$ and $W^\perp$ for all $g \in \cG$. Choosing $U$
where the first $k_1$ columns and last $k_2=k-k_1$ columns
form orthonormal bases for $W$ and $W^\perp$, respectively, this implies that
each $\phi(g)$ takes the form
\begin{equation}\label{eq:reduction}
\phi(g)=U\begin{pmatrix} \phi_1(g) & \\ & \phi_2(g) \end{pmatrix}
U^\top \qquad \Leftrightarrow
\qquad \begin{pmatrix} \phi_1(g) & \\ & \phi_2(g) \end{pmatrix}
=U^\top \phi(g)U
\end{equation}
for some functions
$\phi_1:\cG \to \R^{k_1 \times k_1}$ and $\phi_2:\cG \to \R^{k_2 \times k_2}$.
Continuity, orthogonality, and the group representation properties of
$\phi_1,\phi_2$ follow from the equality on the right side
of \eqref{eq:reduction} and the corresponding properties for $\phi$.
Thus $\phi_1,\phi_2$ are real orthogonal sub-representations of $\cG$, of lower
dimensionalities $k_1,k_2<k$, and the result follows from induction on $k$.
\end{proof}

Any real representation $\phi:\cG \to \R^{k \times k}$ that is $\C$-irreducible
(when viewed as a complex representation under the embedding $\R^{k \times k}
\subset \C^{k \times k}$) is, by definition, also real-irreducible.
However the converse is not true, and real-irreducible representations may be
reducible in the complex sense. An example is the standard representation
of $\cG=\SO(2)$ in \eqref{eq:SO2}, which has no real invariant subspaces, but
two orthogonal complex invariant subspaces spanned by $(1,i)$ and $(i,1)$. This
example shows also that the extra assumption in
Theorem \ref{thm:Schurs_lemma_forRealIrredRep}(b) of $U$ having a real
eigenvalue cannot, in general, be removed: $\cG=\SO(2)$ is abelian, so any
$U \in \SO(2)$ is an intertwining map of $\SO(2)$ with itself, but $U$ may not
be a multiple of the identity.

\begin{theorem}[Classification of real-irreducible representations,
\cite{brocker2013representations} Table 2.6.2, Theorem 2.6.3]\label{thm:realclassification}
Let $\phi:\cG \to \R^{k \times k}$ be a real-irreducible representation. Then,
as a complex representation in $\C^{k \times k}$, it is either
\begin{enumerate}[(a)]
\item $\C$-irreducible.
\item Isomorphic to the
direct sum $\psi \oplus \bar \psi$ of two $\C$-irreducible
representations $\psi,\bar\psi$ such that
$\psi,\bar\psi$ are distinct (i.e.\ $\psi \not\cong \bar\psi$),
where $\bar \psi$ denotes the complex conjugate
representation $\bar \psi(g)=\overline{\psi (g)}$ for all $g \in \cG$.
\item Isomorphic to the direct sum $\psi \oplus \psi$ of a $\C$-irreducible
representation $\psi$ with itself, such that $\psi \cong \bar \psi$.
\end{enumerate}
\end{theorem}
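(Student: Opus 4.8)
The plan is to regard $\phi$ as a complex representation $\phi_\C:\cG\to\C^{k\times k}$ acting on $\C^k$, decompose it into $\C$-irreducible constituents, and then exploit that the matrices $\phi(g)$ are \emph{real} by noting that complex conjugation on $\C^k$ intertwines $\phi_\C$ with itself. First I would reduce to the case that $\phi$ is orthogonal: averaging the Euclidean inner product on $\R^k$ over $\Haar(\cG)$ yields a $\cG$-invariant inner product, and passing to an orthonormal basis for it makes $\phi$ orthogonal by a real change of basis, an operation that preserves real-irreducibility and all three of the conclusions (a)--(c), which are stated only up to isomorphism. Then $\phi_\C$ is unitary, so if it is $\C$-irreducible we are in case (a); otherwise, Theorem \ref{thm:completereducibility} supplies a proper nonzero $\C$-irreducible $\phi_\C$-invariant subspace $W\subsetneq\C^k$, and I set $\psi=\phi_\C|_W$.

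Next I would introduce the conjugation map $c:\C^k\to\C^k$, $c(v)=\bar v$. It is conjugate-linear and satisfies $c\circ\phi(g)=\phi(g)\circ c$ for all $g\in\cG$ since $\phi(g)$ is real; hence $\bar W:=c(W)$ is again a $\phi_\C$-invariant subspace, and expressing $\phi(g)|_{\bar W}$ in the conjugate of a basis of $W$ shows $\phi_\C|_{\bar W}\cong\bar\psi$, the complex-conjugate representation. The key elementary fact is that any nonzero $\phi_\C$-invariant complex subspace $V\subseteq\C^k$ with $c(V)=V$ must be all of $\C^k$: such a $V$ equals $V_\R\oplus iV_\R$ with $V_\R=V\cap\R^k$ (because for $v\in V$ both $\tfrac12(v+\bar v)$ and $\tfrac1{2i}(v-\bar v)$ lie in $V$ and are real), and $V_\R$ is then a nonzero $\phi$-invariant \emph{real} subspace, forcing $V_\R=\R^k$ by real-irreducibility. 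Applying this to the conjugation-stable invariant subspace $W\cap\bar W$ and using $W\ne\C^k$, I conclude $W\cap\bar W=\{0\}$; then $W\oplus\bar W$ is a direct sum which is $\phi_\C$-invariant and, since $c$ swaps its summands, conjugation-stable, hence equals $\C^k$. Therefore $\phi_\C\cong\psi\oplus\bar\psi$ with $\psi$ being $\C$-irreducible, and the proof finishes by dichotomy: if $\psi\not\cong\bar\psi$ this is case (b), while if $\psi\cong\bar\psi$ the decomposition becomes $\phi_\C\cong\psi\oplus\psi$ with $\psi\cong\bar\psi$, which is case (c).

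I expect the only real difficulty to be bookkeeping rather than substance: one must carefully distinguish real from complex invariant subspaces and remember that $c$ is conjugate-linear, so that the passage from the subspace $\bar W$ to the representation $\bar\psi$ goes through conjugate bases rather than a $\C$-linear isomorphism; one should also verify that the identity $V=V_\R\oplus iV_\R$ uses only that $V$ is a complex subspace stable under $c$, and that the reduction to orthogonal $\phi$ is legitimate. Beyond these points every step is either an application of Theorem \ref{thm:completereducibility} (together with real-irreducibility of $\phi$) or the short conjugation argument above, and no appeal to Schur orthogonality is needed.
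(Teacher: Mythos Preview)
The paper does not give its own proof of this theorem: it is quoted from \cite{brocker2013representations} (Table~2.6.2, Theorem~2.6.3) and used as background, so there is no in-paper argument to compare against. Your proposal is correct and is essentially the standard proof one finds in that reference: complexify, use that complex conjugation $c(v)=\bar v$ commutes with the real matrices $\phi(g)$, observe that any nonzero conjugation-stable invariant complex subspace has nonzero real part and hence is everything, and deduce $\C^k=W\oplus\bar W$ for any proper $\C$-irreducible $W$. The bookkeeping points you flag (conjugate-linearity of $c$, the identity $V=V_\R\oplus iV_\R$, and the harmless reduction to orthogonal $\phi$) are exactly the places where care is needed, and you have handled them correctly.
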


We remark that since the sub-representations $\psi$ in case (b) are distinct
from those in case (c), the $\C$-irreducible sub-representations
of $\phi$ must be distinct from those of $\phi'$ if $\phi,\phi'$ are
real-irreducible and distinct. This implies also by Theorem
\ref{thm:schurorthogonality} that the corresponding characters
$\chi_\phi,\chi_{\phi'}$ are orthogonal (although not necessarily orthonormal)
in $L^2(\cG)$.

Following the terminology of \cite[Section 2.6]{brocker2013representations}, we call $\phi$ of ``real type'',
``complex type'', and ``quaternionic type'' in these cases (a), (b), and (c)
respectively. From the character relations 
$\chi_\phi=\chi_{\psi}+\chi_{\bar\psi}$ and $\chi_\phi=2\chi_{\psi}$ in the
latter two cases, and the Schur orthogonality of characters for $\C$-irreducible
representations (Theorem \ref{thm:schurorthogonality}), it is readily deduced
that $\rho:=\E_{g \sim \Haar(\cG)}[(\tr \phi(g))^2]$ takes the value 1, 2, or 4
when $\phi$ is of real, complex, or quaternionic type respectively, as stated
in \eqref{eq:irredcharacters}.

A direct consequence of Theorem \ref{thm:realclassification} and Corollary
\ref{cor:onedim} is the following.

\begin{corollary}\label{cor:twodim}
If $\cG$ is abelian and $\phi:\cG \to \R^{k \times k}$ is real-irreducible,
then $k=1$ if $\phi$ is of real type, and $k=2$ if $\phi$ is of complex or
quaternionic type.
\end{corollary}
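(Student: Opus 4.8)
The plan is to invoke the classification of real-irreducible representations (Theorem~\ref{thm:realclassification}) to split into the three possible types, and then in each case reduce to the complex-irreducible situation where Corollary~\ref{cor:onedim} applies because $\cG$ is abelian.

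First I would treat the real type. By definition this means that $\phi$, viewed as a complex representation $\cG \to \C^{k \times k}$ via the embedding $\R^{k \times k} \subset \C^{k \times k}$, is $\C$-irreducible. Since $\cG$ is abelian, Corollary~\ref{cor:onedim} immediately gives $k=1$.

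Next, for the complex and quaternionic types, Theorem~\ref{thm:realclassification} furnishes an isomorphism of complex representations $\phi \cong \psi \oplus \bar\psi$ (complex type) or $\phi \cong \psi \oplus \psi$ (quaternionic type), where $\psi$ is $\C$-irreducible. Comparing complex dimensions and using $\dim \bar\psi = \dim \psi$, in both cases $k = 2\dim\psi$. Since $\cG$ is abelian, Corollary~\ref{cor:onedim} forces $\dim\psi = 1$, and hence $k = 2$. Combining the three cases gives the claim.

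For a corollary of this kind there is essentially no obstacle; the only point requiring a little care is the dimension bookkeeping, namely that a real representation on $\R^k$ is regarded as a complex representation on $\C^k$ of complex dimension $k$ (not $2k$), so that the direct-sum decompositions above really are into pieces whose dimensions sum to $k$. One could additionally note that the quaternionic case never actually occurs for abelian $\cG$ — a $1$-dimensional $\psi$ with $\psi \cong \bar\psi$ is necessarily real-valued, making $\psi \oplus \psi$ reducible over $\R$ — but this observation is not needed for the statement as phrased.
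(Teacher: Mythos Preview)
Your proposal is correct and matches the paper's approach exactly: the paper states this corollary as ``a direct consequence of Theorem~\ref{thm:realclassification} and Corollary~\ref{cor:onedim}'' without writing out the details, and your argument is precisely the natural unpacking of that sentence. Your extra remark that the quaternionic case is in fact vacuous for abelian $\cG$ is a nice observation beyond what the paper records.
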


Finally, the following is an analogue of
Proposition \ref{prop:unitaryequivalence}.

\begin{proposition}\label{prop:orthogonalequivalence}
Let $\phi,\phi':\cG \to \R^{k \times k}$ be real representations that are
isomorphic. Then there exists a (real) invertible map $U \in \R^{k \times k}$
such that $\phi(g)=U^{-1}\phi'(g)U$ for all $g \in \cG$. If furthermore
$\phi,\phi'$ are orthogonal, then there exists such a map $U$ which is also
orthogonal.
\end{proposition}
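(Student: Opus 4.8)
The plan is to prove the two assertions in turn: first to descend a complex intertwining isomorphism to a real one, and then, under the additional orthogonality hypothesis, to replace the real isomorphism by an orthogonal one, following the proof of Proposition~\ref{prop:unitaryequivalence} with transposes in place of conjugate-transposes.

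For the first assertion, suppose $U \in \C^{k \times k}$ is invertible with $U\phi(g)=\phi'(g)U$ for all $g \in \cG$. I would write $U=A+iB$ with $A,B \in \R^{k \times k}$, and observe that since $\phi(g)$ and $\phi'(g)$ are real-valued, comparing real and imaginary parts of $U\phi(g)=\phi'(g)U$ shows that both $A$ and $B$ are real intertwining maps of $\phi$ with $\phi'$. Hence $A+tB$ is a real intertwining map for every scalar $t$, and $p(t):=\det(A+tB)$ is a polynomial with real coefficients which does not vanish identically, because $p(i)=\det(A+iB)=\det U \neq 0$. Thus $p$ has only finitely many roots, so there is some $t_0 \in \R$ with $p(t_0)\neq 0$, and $U_0:=A+t_0B$ is a real invertible intertwining map, giving $\phi(g)=U_0^{-1}\phi'(g)U_0$ for all $g$. (If one instead reads ``isomorphic'' as already meaning real-isomorphic, this step is vacuous and one proceeds directly to the next.)

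For the second assertion, I would begin from a real invertible $U$ with $U\phi(g)U^{-1}=\phi'(g)$ and use orthogonality to symmetrize it. Transposing $U\phi(g)=\phi'(g)U$ and using $\phi(g)^\top=\phi(g^{-1})$ and $\phi'(g)^\top=\phi'(g^{-1})$ (valid since both representations are orthogonal) yields $\phi(h)U^\top=U^\top\phi'(h)$ for all $h \in \cG$, i.e.\ $U^\top\phi'(h)U^{-\top}=\phi(h)$. Comparing this with $U^{-1}\phi'(h)U=\phi(h)$ gives $M\phi'(h)=\phi'(h)M$ for all $h$, where $M:=UU^\top$ is real, symmetric, and positive-definite (as $x^\top M x=\|U^\top x\|_2^2>0$ for $x\neq 0$). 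Since $M$ commutes with every $\phi'(h)$, its spectral projections do as well, and hence so do $M^{1/2}$ and $M^{-1/2}$. Setting $O:=M^{-1/2}U$, one checks $OO^\top=M^{-1/2}(UU^\top)M^{-1/2}=I$, so $O$ is orthogonal, and $O\phi(g)O^{-1}=M^{-1/2}\bigl(U\phi(g)U^{-1}\bigr)M^{1/2}=M^{-1/2}\phi'(g)M^{1/2}=\phi'(g)$, i.e.\ $\phi(g)=O^{-1}\phi'(g)O$ with $O$ orthogonal.

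The only genuinely delicate point is the first step — that an isomorphism over $\C$ forces an isomorphism over $\R$ — and even there the determinant-polynomial argument is elementary and requires no representation theory. The orthogonalization is routine once one notices that $UU^\top$ lies in the commutant of $\phi'$; the main things to watch are the bookkeeping of transposes versus inverses in the orthogonal case and the fact that the functional calculus for the symmetric positive-definite matrix $M$ preserves its commutant. I expect no substantive obstacle.
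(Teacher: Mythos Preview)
Your proof is correct. The first assertion is handled identically to the paper: write the complex intertwiner as $U=A+iB$, observe both parts intertwine, and use that $\det(A+tB)$ is a nonzero polynomial to find a real $t_0$ with $A+t_0B$ invertible.

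For the second assertion, however, your route differs from the paper's. The paper first reduces to the case where $\phi,\phi'$ are real-irreducible (by decomposing via Theorem~\ref{thm:realreprdecomp} and matching irreducible components through their characters), and then applies the real Schur lemma (Theorem~\ref{thm:Schurs_lemma_forRealIrredRep}(b)) to the symmetric matrix $UU^\top$ in the commutant of $\phi'$, concluding that $UU^\top=\alpha I$ for some $\alpha>0$, so that $U/\sqrt{\alpha}$ is already orthogonal. Your argument instead stays at the level of the full (possibly reducible) representation and uses functional calculus: since $M=UU^\top$ commutes with every $\phi'(h)$, so does $M^{-1/2}$, and $O=M^{-1/2}U$ is the desired orthogonal intertwiner. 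Your approach is more self-contained---it avoids both the character-theoretic reduction and Schur's lemma---while the paper's approach yields the slightly sharper structural fact that, in the irreducible case, any real intertwiner is a scalar multiple of an orthogonal one.
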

\begin{proof}
Since characters of distinct real-irreducible representations are distinct and
orthogonal functions of $L^2(\cG)$, the same argument as
in Proposition \ref{prop:unitaryequivalence} shows that $\phi_1 \cong
\phi_1',\ldots,\phi_L \cong \phi_L'$ for some ordering of the real-irreducible
sub-representations of $\phi,\phi'$, so it suffices to prove the statements
when $\phi,\phi'$ are isomorphic and real-irreducible.

Since $\phi,\phi'$ are isomorphic, there exists an invertible matrix
$U \in \C^{k \times k}$ for
which $\phi(g)=U^{-1}\phi'(g)U$; we must show that we may take $U$ to be real.
Writing the real and imaginary parts $U=P+iQ$, we have
$(P+iQ)\phi(g)=\phi'(g)(P+iQ)$. Then $P\phi(g)=\phi'(g)P$ and
$Q\phi(g)=\phi'(g)Q$ since $\phi$ and $\phi'$ are real, so
$(P+\lambda Q)\phi(g)=\phi'(g)(P+\lambda Q)$ for all $\lambda \in \R$. The
complex polynomial $f(\lambda)=\det(P+\lambda Q)$ is not identically 0 because
it is non-zero at $\lambda=i$. Then there exists also some $\lambda \in \R$ for
which $f(\lambda) \neq 0$, implying that $\tilde U=P+\lambda Q \in \R^{k \times
k}$ is invertible,
and $\phi(g)=\tilde U^{-1}\phi'(g)\tilde U$. This shows the first statement.

Now letting $U \in \R^{k \times k}$ be such that
$\phi(g)=U^{-1}\phi'(g)U$, if furthermore $\phi,\phi'$ are orthogonal, then
the same argument as in Proposition \ref{prop:unitaryequivalence} shows
$UU^\top\phi'(g)=\phi'(g)UU^\top$. Here $UU^\top$ is symmetric
positive-semidefinite, having all real eigenvalues, so Schur's lemma in the
form of Theorem \ref{thm:Schurs_lemma_forRealIrredRep}(b) implies
$UU^\top=\alpha I$ for some $\alpha>0$. Thus $\tilde U=U/\sqrt{\alpha}$ is
orthogonal and $\phi(g)=\tilde U^\top \phi'(g)U$, showing the second statement.
\end{proof}

\subsection{Canonical form for the group synchronization model}
\label{appendix:GSreduction}

Consider observations from a
model \eqref{eq:group-synchronization-multi-representation-model} with real
orthogonal representations $\phi_\ell:\cG \to \R^{k_\ell \times k_\ell}$.
By Theorem
\ref{thm:realreprdecomp} and the invariance in law of $\bz_\ell^{(ij)}$ under
the rotation $\bz_\ell^{(ij)} \mapsto
\bu^\top\bz_\ell^{(ij)}\bu$ for any orthogonal matrix
$\bu \in \R^{k_\ell \times k_\ell}$, each observation $\by_\ell^{(ij)}$ is
equivalent to observing
\[\tilde \by_\ell^{(ij)}
=\begin{pmatrix} \phi_{\ell,1}(\bg) && \\ & \ddots & \\ && \phi_{\ell,M}(\bg)
\end{pmatrix}^\top
\begin{pmatrix} \phi_{\ell,1}(\bg) && \\ & \ddots & \\ && \phi_{\ell,M}(\bg)
\end{pmatrix}+\tilde \bz_\ell^{(ij)}\]
where $\phi_{\ell,1},\ldots,\phi_{\ell,M}$ are real-irreducible orthogonal
sub-representations of $\phi_\ell$, and $\{\tilde \bz_\ell^{(ij)}\}$ are standard
Gaussian noise matrices equal in law to $\{\bz_\ell^{(ij)}\}$. The coordinates
of $\tilde \by_\ell^{(ij)}$ outside the $M$ diagonal blocks, as well as the
coordinates of any diagonal block corresponding to a trivial representation
$\phi_{\ell,m}$, carry no information about $\bg$ and hence may be discarded.
Thus the observation model
\eqref{eq:group-synchronization-multi-representation-model} is equivalent to a
model in which each representation $\phi_\ell$ is
real-irreducible and non-trivial.

If two such representations $\phi_\ell,\phi_{\ell'}$ are isomorphic, then
Proposition \ref{prop:orthogonalequivalence} implies that there exists an
orthogonal matrix $\bu \in
\R^{k_\ell \times k_\ell}$ for which $\phi_\ell(\bg)=\bu\phi_{\ell'}\bu^\top$.
Then, replacing $\{\by_\ell^{(ij)}\}$ by the equivalent observations
$\tilde \by_\ell^{(ij)}=\bu^\top \by_\ell^{(ij)}\bu$ as above, we may assume
that $\phi_\ell(\bg)=\phi_{\ell'}(\bg)$ for all $\bg \in \cG$. We may then
replace the observations in the two channels
$\{\by_\ell^{(ij)}\}$ and $\{\by_{\ell'}^{(ij)}\}$ by their sufficient
statistics $\{\frac{\by_\ell^{(ij)}+\by_{\ell'}^{(ij)}}{\sqrt{2}}\}$, 
which yields a new channel for the representation $\gb_\ell$ having the same
standard Gaussian law for the noise, and with a new signal-to-noise parameter
$\sqrt{\lambda}=\frac{\sqrt{\lambda_\ell}+\sqrt{\lambda_{\ell'}}}{\sqrt{2}}$.
Applying this replacement iteratively, the observation model
\eqref{eq:group-synchronization-multi-representation-model} is then
equivalent to a model in which the real-irreducible
representations $\{\phi_\ell\}_{\ell=1}^L$ are also distinct.

%%%%%%%%%%%%%%%%%%%%%%%%%
\newpage
{\small
\bibliographystyle{alpha}
\bibliography{ref}
}

\newpage

%%%%%%%%%%%%%%%%%
%\subfile{archived}
\end{document}